%% file: arxiv.tex
\documentclass[11pt]{article}

\usepackage[a4paper,margin=1in]{geometry}
\usepackage{amsthm}
\usepackage{graphicx,subfigure}

\theoremstyle{plain}
\newtheorem{theorem}{Theorem}
\newtheorem{corollary}{Corollary}
\newtheorem{lemma}{Lemma}
\newtheorem{proposition}{Proposition}

\theoremstyle{definition}
\newtheorem{definition}{Definition}
\newtheorem{example}{Example}

\theoremstyle{remark}
\newtheorem{remark}{Remark}

\theoremstyle{plain}
\input{shared/macro.tex}

\colorlet{blue}{black}
\usepackage{hyperref}

\title{From Manifold Identification to Newton Acceleration on Intersections: Sparse Stiefel Optimization}

\author{
Shixiang Chen\thanks{School of Mathematical Sciences, Key Laboratory of the Ministry of Education for Mathematical Foundations and Applications of Digital Technology, University of Science and Technology of China, Hefei, Anhui, China. Email: \texttt{shxchen@ustc.edu.cn}.}
\and
Wen Huang\thanks{School of Mathematical Sciences, Xiamen University, Xiamen, China. Email: \texttt{wen.huang@xmu.edu.cn}.}
}

\date{}

\begin{document}

\maketitle

\begin{abstract}
We study a Newton acceleration for sparse composite optimization
on the Stiefel manifold. The main difficulty is geometric: the active
manifold identified by the nonsmooth regularizer may fail to intersect the
Stiefel manifold transversely, which obstructs a Riemannian Newton step
on the identified manifold. In the transverse case, we prove local
identification of the ManPG tangent proximal mapping. For nontransverse
cases, we introduce an off-diagonally perturbed Stiefel family that
generically restores the identification geometry while yielding an
\(O(\|\Delta\|_F)\)-KKT guarantee for the original problem. We also derive
verifiable support-level conditions for clean intersection, which cover
nontransverse sparse patterns and yield the smooth moving local models
used by the Newton correction. Based on these results, we propose MIX, a safeguarded ManPG/Newton-CG
method on moving identified intersections. In the general clean-intersection
setting, we prove global descent and KKT-residual guarantees for MIX.
In the transverse or generically perturbed cases,
{\color{blue} if the sequence has an accumulation point satisfying certain
	regularity assumptions and the second-order sufficient condition (SOSC), then the full sequence converges
	to that point, with finite active-manifold identification and a local
	Q-superlinear rate.}  Numerical experiments on compressed modes and sparse PCA show
that MIX substantially improves efficiency while preserving solution quality.  Beyond the Stiefel manifold, we also outline how the safeguarded global-convergence mechanism of MIX extends to general smooth equality-constrained manifolds.
\end{abstract}

\noindent\textbf{Keywords:} Stiefel manifold;  Manifold
identification; Partial smoothness; Clean intersection; Transversality;  Newton-CG

\setcounter{tocdepth}{2}
\compacttableofcontents

\input{shared/section_introduction}
\input{shared/section_preliminary}
\input{shared/section_identification_ManPG}
\input{shared/section_geometry_intersection}
\input{shared/section_optimization_on_intersection}

\input{shared/section_algorithm_intersection}
\input{shared/section_num_exp}

\section*{AI Use Disclosure}

The central research questions and methodological framework—including the formulation of the sparse Stiefel problem, the perturbation strategy, the design of MIX, and the use of Newton corrections on identified intersections—were formulated by the authors. ChatGPT assisted with specific searches of clean-intersection counterexamples.

In preparing Version 3 of this manuscript, the authors used ChatGPT
(GPT-5.6 Sol) to assist in developing and checking the revised local
convergence analysis, in particular the proof that removes the
sequence convergence  assumption from the local convergence analysis of MIX.
ChatGPT was additionally used for language
polishing and implementation-level code refactoring. All AI-assisted mathematical arguments and examples were independently
checked and verified by the authors, who take full responsibility for
all mathematical statements, proofs, algorithms, numerical results,
and conclusions in this manuscript.
\bibliographystyle{unsrt}
\bibliography{manifold}
\addtocontents{toc}{\protect\setcounter{tocdepth}{1}}
\input{shared/appendix}

\end{document}

%% file: shared/macro.tex
\usepackage{amsmath}
\usepackage{amssymb}
\usepackage{mathrsfs}
\usepackage{bbm}
\usepackage{algpseudocode}
\usepackage{algorithm}
\usepackage{placeins}
\usepackage{xcolor}
\usepackage{tikz,caption}
\usetikzlibrary{arrows.meta,calc}
 \usepackage{enumitem}

\usepackage{multirow}

\newtheorem{assumption}{Assumption}

\numberwithin{equation}{section}
\newcounter{manualtaggedequation}
\newenvironment{manualtaggedequation}{%
	\stepcounter{manualtaggedequation}%
}{}

\numberwithin{lemma}{section}
\numberwithin{proposition}{section}
\numberwithin{corollary}{section}
\numberwithin{definition}{section}
\numberwithin{remark}{section}

\usepackage{booktabs}

\makeatletter
\newif\ifmm@svjour
\@ifclassloaded{svjour3}{\mm@svjourtrue}{\mm@svjourfalse}
\newcommand{\compacttableofcontents}{%
	\ifmm@svjour
		\tableofcontents
	\else
		\begingroup
		\setlength{\parskip}{0pt}%
		\renewcommand*\l@section[2]{%
			\ifnum \c@tocdepth >\z@
				\addpenalty\@secpenalty
				\addvspace{0.35em \@plus\p@}%
				\setlength\@tempdima{1.5em}%
				\begingroup
					\parindent \z@ \rightskip \@pnumwidth
					\parfillskip -\@pnumwidth
					\leavevmode \bfseries
					\advance\leftskip\@tempdima
					\hskip -\leftskip
					##1\nobreak\hfil \nobreak\hb@xt@\@pnumwidth{\hss ##2}\par
				\endgroup
			\fi}%
		\tableofcontents
		\endgroup
	\fi
}
\makeatother

\newcommand{\M}{\mathcal{M}}
\newcommand{\N}{\mathcal{N}}

\newcommand{\rmN}{\mathrm{N}}

\newcommand{\rmD}{\mathrm{D}}

\newcommand{\Ob}{\mathrm{Ob}}

\newcommand{\U}{\mathbf{U}}

\newcommand{\R}{\mathbb{R}}
\newcommand{\E}{\mathbb{E}}

\newcommand{\be}{\begin{equation}}
\newcommand{\ee}{\end{equation}}
\newcommand{\ba}{\begin{array}}
\newcommand{\ea}{\end{array}}
\newcommand{\bad}{\begin{aligned}}
\newcommand{\ead}{\end{aligned}}

\newcommand{\normfro}[1]{\| #1 \|_{\text{F}}}

\newcommand{\inp}[2]{\left\langle #1, #2 \right\rangle}
\newcommand{\argmin}{\mathop{\rm argmin}}

\newcommand{\ri}{\mathrm{ri}}              
\newcommand{\aff}{\mathrm{aff}}            
\newcommand{\para}{\mathrm{par}}            

\newcommand{\ip}[2]{\left\langle #1,#2\right\rangle}

\newcommand{\dist}{\mathrm{dist}}

\newcommand{\rank}{\mathrm{rank}}
\newcommand{\range}{\mathrm{range}}

\newcommand{\diag}{\mathrm{diag}}

\newcommand{\Retr}{\mathrm{Retr}}

\newcommand{\Tr}{\mathrm{Tr}}

\newcommand{\T}{\mathrm{T}}
 
\newcommand{\st}{\mathrm{s.t. }}

\newcommand{\vet}{\mathrm{vec}}
\newcommand{\St}{\mathrm{St}}

 \newcommand{\grad}{\mathrm{grad}}
 \newcommand{\Hess}{\mathrm{Hess}}
\providecommand{\vvec}{\mathrm{vec}}
\newcommand{\svec}{\overline{\mathrm{vec}}}

\newcommand{\Proj}{\mathcal{P}}

\newcommand{\Exp}{\mathrm{Exp}}

\newcommand{\supp}{\operatorname{supp}}

%% file: shared/section_introduction.tex
\section{Introduction}
In this paper, we consider the following composite optimization problem on the
Stiefel manifold:
\begin{manualtaggedequation}
\begin{equation}\label{prob:nonsmooth_stiefel}\tag{$\mathrm{P}_0$}
	\min_X F(X):=f(X)+g(X),
	\quad \st \quad
	X\in\St(n,r),
\end{equation}
\end{manualtaggedequation}
where
\[
\St(n,r):=\{X\in\R^{n\times r}\mid X^\top X=I_r\}.
\]
Here \(f:\R^{n\times r}\to\R\) is smooth with Lipschitz continuous Euclidean
gradient, and \(g:\R^{n\times r}\to\R\) is convex and Lipschitz continuous.
Representative examples include the entrywise \(\ell_1\) regularizer and the
group-sparsity \(\ell_{2,1}\) regularizer,
\[
g(X)=\lambda\|X\|_1
=
\lambda\sum_{i=1}^n\sum_{j=1}^r |X_{ij}|,
\qquad
\text{or}
\qquad
g(X)=\lambda\|X\|_{2,1}
=
\lambda\sum_{i=1}^n \|X_{i,:}\|_2,
\]
where \(X_{i,:}\) denotes the \(i\)-th row of \(X\). In these cases,
problem~\eqref{prob:nonsmooth_stiefel} seeks an orthogonal matrix with
entrywise or row-wise sparsity. Applications arise in sparse PCA \cite{Zou-spca-2006,d2007direct,journee2010generalized,cai2013sparse}, compressed modes \cite{Ozolins2013,Lai2014}, and other structured matrix optimization models; see \cite{chen2024nonsmooth}.

A number of algorithms have been developed for nonsmooth
optimization on Riemannian manifolds. ManPG~\cite{chen2020proximal} extends
the classical proximal-gradient method to embedded submanifolds. At each
iteration, it computes a tangent proximal step
\begin{equation}\label{eq:manpg_subprob_intro}
	V_k
	=
	\arg\min_{V\in \T_{X_k}\St(n,r)}
	\big\langle \nabla f(X_k),V\big\rangle
	+\frac{1}{2t}\|V\|_{\mathrm F}^2
	+g(X_k+V),
\end{equation}
and then retracts along this direction. General Riemannian
proximal-gradient frameworks and accelerated variants based on
retraction-type models were developed by Huang and
Wei~\cite{HuangWei2022RPG}. Other first-order approaches include dynamic
smoothing methods~\cite{BeckRosset2023DynamicSmoothing}, which apply
Riemannian gradient steps to smooth approximations with decreasing smoothing
parameters, and primal-dual methods~\cite{LiMaSrivastava2024RADMM}\cite{deng2025oracle}\cite{xu2026riemannian}.

Several second-order   methods have
also been proposed. ManPQN~\cite{wang2023proximal} incorporates second-order
information of the smooth term into the ManPG subproblem, while
RPN~\cite{si2023riemannian} and RPNCG~\cite{huang2024riemannian} develop
Riemannian proximal Newton-type methods based on second-order manifold
geometry and semismooth analysis. Augmented-Lagrangian methods with semismooth
Newton subproblem solvers have also been studied for nonsmooth optimization on
matrix manifolds~\cite{ZhouBaoDingZhu2023ALMSSN}. These works demonstrate that
Newton and semismooth-Newton techniques can be effective, and in some settings
can yield local superlinear convergence under certain regularity assumptions.
However, for sparse Stiefel problems, the local fast-convergence conditions
depend on a constraint qualification for the intersection between the sparse
active manifold and the Stiefel constraint. Existing Newton-type frameworks do
not verify this intersection geometry for the sparse Stiefel model. 

This gap leads us to the manifold-identification viewpoint adopted
in this paper. Identifiable surfaces and active constraints were studied in
\cite{burke1988identification,wright1993identifiable}, and partial smoothness
was developed in~\cite{lewis2002active}. In Euclidean composite optimization,
proximal-gradient-type methods identify the active manifold in finite time
under partial smoothness and nondegeneracy; see
\cite{liang2017activity,bareilles2023newton,bareilles2023harnessing}. 
{\color{blue}More recently,  \cite{curtis2025proximal}
developed a proximal-gradient method for regularized optimization with
general nonlinear constraints and established a manifold identification
result under partial smoothness. Their identification theorem additionally
assumes convergence of the associated Lagrange multiplier estimates along
the subsequence converging to the KKT point.}
For
Stiefel-constrained problems, however, the  ManPG proximal step is computed in the
affine tangent space and is then followed by a retraction. Thus the tangent
proximal predictor
\(Y_k=\mathcal T_{t_k}(X_k)=X_k+V_k\) may identify the active manifold, whereas
the retracted iterate need not preserve the identified sparsity pattern. We
therefore study identification at the level of the ManPG tangent proximal
predictor.

Let \(X^\star\) be a nondegenerate critical point and let
\(\M_g(X^\star)\) denote the active manifold of \(g\) at \(X^\star\). Once this
manifold is identified, the nonsmooth problem is locally represented by the
smooth restricted problem on
\[
\St(n,r)\cap \M_g(X^\star).
\]
To apply a Newton-type correction on this restricted problem, one needs a
constraint qualification for the intersection. The standard condition is
transversality. Under transversality, the ManPG tangent proximal mapping
identifies \(\M_g(X^\star)\) locally, and {\color{blue}finite identification
follows once the iterates remain in the corresponding neighborhood.}

For the entrywise \(\ell_1\)-regularized Stiefel problem with \(r=1\),
transversality is verified in \cite{si2023riemannian}. However, we show that  it   may fail for common sparse patterns with $r>1$. To
handle such nontransverse cases, we perturb only the off-diagonal
orthogonality constraints and consider
\begin{manualtaggedequation}
\begin{equation}\label{prob:P-Delta}
	\tag{$\mathrm{P}_\Delta$}
	\min_{X \in \St_\Delta(n,r)} F(X):=f(X)+g(X),
\end{equation}
\end{manualtaggedequation}
where
\begin{equation}\label{def:St-Delta}
	\St_\Delta(n,r)
	:=
	\bigl\{
	X \in \mathbb{R}^{n\times r} :
	X^\top X = I_r+\Delta
	\bigr\}.
\end{equation}
Here
\(\Delta\in\mathbb S_0^r
:=\{\Delta\in\mathbb S^r:\operatorname{diag}(\Delta)=0\}\) and
\(I_r+\Delta\succ0\). The zero-diagonal restriction preserves the unit-norm
constraints on the columns and perturbs only the off-diagonal orthogonality
equations. Treating these off-diagonal equations as parameters allows us to
use Sard-type and parametric transversality arguments to avoid exceptional
degeneracies~\cite{sard1942measure,tang2024feasible}. We also prove that
every KKT point of \eqref{prob:P-Delta} is an
\(O(\|\Delta\|_{\mathrm F})\)-KKT point of
\eqref{prob:nonsmooth_stiefel}.

The off-diagonal perturbation is introduced to obtain a generic
transversality result for the auxiliary perturbed problems~\eqref{prob:P-Delta}.
 Under a finite-family assumption on active manifolds,
we prove that for almost every  \(\Delta\), and around any
nondegenerate critical point \(X^\star\in\St_\Delta(n,r)\), the ManPG tangent
proximal mapping \(\mathcal T_t\) is locally single-valued, \(C^1\), and
identifies   the active manifold associated with $X^*$,
\(\mathcal T_t(X)\in \M_g(X^\star)\)
for all \(X\) sufficiently close to \(X^\star\). If the original
transversality condition already holds, the same conclusion applies with
\(\Delta=0\). Thus no perturbation is needed in transverse cases, such as the
sphere case and the row-sparse \(\ell_{2,1}\)-regularized model.

After identification, the predictor
\(Y_k=\mathcal T_{t_k}(X_k)=X_k+V_k\) already carries the active structure.
It is therefore natural to build the Newton correction at \(Y_k\). The fixed
intersection \(\St_\Delta(n,r)\cap\M_g(Y_k)\), however, may be empty. We
instead use the moving intersection
\[
\M_k
:=
\St_{\Delta_k}(n,r)\cap\M_g(Y_k),
\qquad
\Delta_k:=\Delta+V_k^\top V_k .
\]
This set is nonempty by construction: since
\(V_k\in \T_{X_k}\St_\Delta(n,r)\), one has
\(Y_k^\top Y_k=I_r+\Delta_k\). Moreover, the ManPG safeguard gives
\(V_k\to0\), and hence
\(\Delta_k-\Delta=V_k^\top V_k\to0\). Thus the moving Stiefel constraint is
only a vanishing second-order perturbation of the fixed one.

It remains to ensure that the moving intersection provides a smooth local
model for Newton correction. For sparse active manifolds, this amounts to a
rank condition for the Stiefel constraint restricted to the active manifold,
or equivalently to a clean-intersection condition, which is weaker than
transversality~\cite{drusvyatskiy2015transversality}. We analyze this
condition through the support graph of the identified sparse pattern. In low-rank cases this condition can be verified deterministically: it is
automatic for \(r=1\), holds for \(r=2\) under general perturbations, and holds
for the unperturbed \(r=3\) case. For \(r\ge4\), it is not automatic, and we
provide sufficient conditions, in particular the edge-wise \(2/3\)-row
cover condition, together with a probabilistic estimate for its occurrence.
 Under
these rank/clean-intersection conditions, the resulting intersection manifolds
admit computable second-order retractions based on alternating-projection
methods~\cite{chen2026alternating}.

These ingredients lead to MIX, a safeguarded ManPG/Newton-CG method on
identified intersection manifolds. The perturbation and transversality analysis
serve to obtain finite identification and a well-defined local model in
nontransverse cases. If the Newton-CG correction is not accepted, MIX
falls back to the ManPG step, which provides the global descent safeguard and
KKT-residual guarantee.

Our contributions can be summarized as follows.
\begin{itemize}
	\item We introduce the perturbed problem \eqref{prob:P-Delta} on
	\(\St_\Delta(n,r)\). We prove that its KKT points are
	\(O(\|\Delta\|_{\mathrm F})\)-KKT points of the original problem and that,
	for almost every \(\Delta\), the ManPG tangent proximal mapping identifies the active manifold associated with a nondegenerate critical point $X^*$; see
		Theorem~\ref{thm:id-strong-crit}. If transversality already holds, the same
		identification holds for problem~\eqref{prob:nonsmooth_stiefel}.

		\item For sparse Stiefel problems, we characterize the local
	intersection geometry between the Stiefel constraint and the active
	manifold. We prove deterministic clean-intersection/rank results for low
	ranks, including the sphere case, general \(r=2\), and the unperturbed
	\(r=3\) case, and give support-level and probabilistic verification
	conditions for \(r\ge4\); see
		Theorem~\ref{thm:Mg-intersection-submanifold} and
		Proposition~\ref{prop:row-cover-bernoulli}.

		\item We develop MIX, a safeguarded ManPG/Newton-CG method on moving
		identified intersection manifolds. In the general clean-intersection setting,
		we prove global descent and an iteration-complexity bound for the ManPG
		residual; Theorem~\ref{thm:global_convergence_mix} further yields a
			vanishing KKT residual for the original problem when \(\Delta=0\), and an
			\(O(\|\Delta\|_F)\)-KKT guarantee when \(\Delta\neq0\). In the transverse or generically perturbed local regime, if the
			sequence has an accumulation point satisfying the stated regularity
			assumptions and the SOSC, then the full sequence converges to that
			point, the ManPG predictors identify the active manifold in finitely
			many iterations, and the convergence is locally Q-superlinear; see Lemma~\ref{lem:alpha_one_eventually} and
			Theorem~\ref{thm:local_superlinear_full_chain}.

		\item We further show that the global-safeguard mechanism of MIX extends
	to general \(C^2\) equality-constrained embedded manifolds, provided the
	corresponding local intersection, retraction, and restoration steps are
	available; see Appendix~\ref{app:mix-general-equality}.
\end{itemize}
The paper is organized as follows. Section~\ref{sec:prelim} introduces notation and preliminary material from Riemannian optimization and nonsmooth analysis. Section~\ref{sec:id_manpg} establishes the manifold identification result for ManPG under a finite-family assumption on active manifolds. Section~\ref{sec:intersection_property} studies the geometry of the intersection between the Stiefel manifold and sparse active manifolds. Section~\ref{sec:opt_on_intersection} develops the geometric ingredients for optimization on the intersection manifold. Section~\ref{sec:alg_on_intersection} presents the hybrid algorithm MIX and its convergence analysis. Numerical results are reported in Section~\ref{sec:numerical_exp}.  The appendices collect technical proofs, examples for Section~\ref{sec:intersection_property}, derivations for projection and Hessian formulas, and basic computations on $\St_\Delta$.

%% file: shared/section_preliminary.tex
\section{Preliminaries}\label{sec:prelim}
\subsection{Riemannian optimization}
Let us review some standard results   for  Riemannian geometry and Riemannian optimization. These can also be found in \cite{lee2012smooth,Absil2009,boumal2023introduction}.

Let $\M,\N$ be smooth manifolds, and let $h:\M\rightarrow\N$ be any map. We say that $h$ is a smooth map if for every $p\in \M$, there exists smooth charts $(U,\varphi)$ containing $p$ and $(V,\phi)$ containing $h(p)$ such that $h(U)\subset V$ and the composite map  $\phi\circ h \circ\varphi^{-1}$ is smooth from $\varphi(U)$ to $\phi(V).$
\begin{proposition}\cite[Theorem 5.12]{lee2012smooth}\label{prop:submanifold}
    Let $\M$ and $\N$ be smooth manifolds, and let $h:\M\rightarrow\N$ be a smooth map with constant rank $c$.  Then each nonempty level set of $h$ is a properly embedded submanifold of codimension $c$ in $\M.$
\end{proposition}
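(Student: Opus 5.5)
The plan is the standard constant rank level set argument in local coordinates. Fix a level value \(q\in\N\) with \(S:=h^{-1}(q)\neq\emptyset\), and let \(m=\dim\M\), \(d=\dim\N\). The key tool is the constant rank theorem. Because \(h\) is smooth with constant rank \(c\), for every \(p\in S\) there are smooth charts \((U,\varphi)\) centered at \(p\) and \((V,\phi)\) centered at \(q\), with \(h(U)\subset V\), in which
\[
\phi\circ h\circ\varphi^{-1}(x^1,\dots,x^m)=(x^1,\dots,x^c,0,\dots,0).
\]
I would obtain this by applying the inverse function theorem twice. The first application, in the domain, straightens the first \(c\) components of \(h\), where a nonsingular \(c\times c\) minor of the Jacobian exists. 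The second application, in the codomain, uses the constancy of the rank to show that the remaining components depend only on \(x^1,\dots,x^c\), so they can be absorbed by a change of coordinates in the target.

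In these coordinates, \(S\cap U\) is exactly the set \(\{x^1=\dots=x^c=0\}\), which is an \((m-c)\)-dimensional coordinate slice. Thus every point of \(S\) lies in a slice chart for \(S\) of codimension \(c\). By the slice-chart characterization of embedded submanifolds, \(S\) carries a unique topology and smooth structure making it an embedded submanifold of codimension \(c\). Concretely, the charts are the restrictions of \(\varphi\) to the slice, projected onto the last \(m-c\) coordinates. Smooth compatibility of these restricted charts is inherited from the compatibility of the ambient charts \(\varphi\).

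For properness, I would note that \(S=h^{-1}(\{q\})\) is the preimage of a closed point under a continuous map, so \(S\) is closed in \(\M\). An embedded submanifold that is closed is properly embedded, since the inclusion is then a proper map: compact subsets of \(M\) meet \(S\) in closed subsets of compact sets, which are compact.

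The main obstacle is the second step of the constant rank theorem, namely showing that the remaining components become independent of \(x^{c+1},\dots,x^m\). For this I would use the fact that their Jacobian block must vanish, since otherwise the rank would exceed \(c\). This step requires the rank to be constant on a whole neighborhood, not just at the point \(p\). Everything after that reduces to the routine slice-chart bookkeeping described above.
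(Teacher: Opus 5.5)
Your proposal is correct and follows the standard proof of the cited result (Lee, Theorem 5.12), which the paper invokes without reproving: the constant rank theorem gives charts in which $h^{-1}(q)\cap U$ is a coordinate slice of codimension $c$, the slice-chart criterion makes the level set an embedded submanifold, and closedness of $h^{-1}(q)$ gives properness. One detail inside your sketch of the constant rank theorem should be made explicit: shrink to a neighborhood whose slices in $(x^{c+1},\dots,x^m)$ are connected (e.g., a cube), so that the vanishing Jacobian block actually implies independence from those variables; also, the target coordinate change $(v,w)\mapsto(v,w-\tilde R(v))$ is an explicit diffeomorphism, so no second use of the inverse function theorem is required.
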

\begin{definition}\label{def:local_def}
    A \emph{local defining function} for an embedded submanifold $\mathcal{S} \subset \M$ is a smooth map $\Phi: \mathcal{U}\to \mathbb{R}^k$, defined on an open neighborhood $\mathcal{U}$ of $p \in \mathcal{S} $, such that:
\begin{enumerate}
\item $\mathcal{S} \cap \mathcal{U} = \Phi^{-1}(0)$, and
\item $\mathrm{D}\Phi_q: \T_q\M \to \mathbb{R}^k$ has constant rank $k$ (in particular, is surjective) for all $q \in \mathcal{S}  \cap \mathcal{U} $.
\end{enumerate}
\end{definition}

By \cite[Prop.~5.38]{lee2012smooth}, if $\Phi$ is a local defining function for $\mathcal{S}$, then for every $p \in \mathcal{S}  \cap \mathcal{U} $,
\begin{equation}\label{eq:tangent-kernel}
\T_p \mathcal{S}  =  \ker\big( \mathrm{D}\Phi_p\big), ~\text{where }~  \mathrm{D}\Phi_p: \T_p \M \to \mathbb{R}^k .
\end{equation}
In other words, the tangent space of the level set $S$ at $p$ coincides with the kernel of the differential of its defining function at $p$. %
For the Stiefel manifold, define $h:\R^{n\times r}\rightarrow \mathbb{S}^r$ with
\(h(X)= X^\top X-I_r\), where    $\mathbb{S}^r$  denotes the set of $r\times r$ symmetric matrices.
Then, one has $\St(n,r)=h^{-1}(0)$ and the Jacobian matrix $\mathrm{D}h(X)$ is full rank for any $X\in\St(n,r)$; see \cite[Section 7.3]{boumal2023introduction}.
Therefore, by Proposition~\ref{prop:submanifold}, $\St(n,r)$ is  a smooth embedded submanifold of dimension $nr-\frac{r(r+1)}{2}$ in $\R^{nr}.$
The tangent space and normal space are given by
\begin{align}
    \T_X\St(n,r)&=\{\eta\in\R^{n\times r}\mid X^\top \eta+\eta^\top X=0\}\\
    \rmN_X\St(n,r)& = \{X\Lambda\mid \Lambda\in\mathbb{S}^r\}.\label{eq:normal_Stiefel}
\end{align}
A common condition to ensure that the intersection of two manifolds forms a manifold is transversality, which is also assumed in \cite{si2023riemannian}. The following is a summary of  Theorem 6.30 and Problem 6-10 in \cite{lee2012smooth}.

\begin{definition}[Transversality]\label{def:tranverse}
	Let $\M_1$ and $\M_2$ be two $C^k$submanifolds embedded in $\R^d$. We say that $\M_1$ and $\M_2$ intersect transversely if at each  $X\in\M_1\cap\M_2,$  $\T_X\M_1 + \T_X\M_2 = \R^d,$ or equivalently, $\rmN_X\M_1\cap \rmN_X\M_2=\{0\}$. In this case, the intersection $\M=\M_1\cap \M_2$ is a $C^k-$ embedded submanifold of $\R^d$ whose codimension is the sum of the codimension of $\M_1$ and $\M_2$. Moreover, there holds $\T_X\M=\T_X\M_1\cap \T_X\M_2.$
\end{definition}

Transversality can be too strong: for manifolds given locally by equality constraints, it corresponds to a LICQ-type condition on the Jacobians of the defining functions. The clean-intersection condition below is weaker and, in the equality-defined setting, corresponds to a constant-rank-type qualification rather than LICQ \cite{drusvyatskiy2015transversality}.
\begin{definition}[Clean intersection]
\label{def:M1andM2_intersect_cleanly}
	Let $\M_1,\M_2\subset \mathbb{R}^n$ be two smooth embedded submanifolds, and fix
	$\bar x\in \M_1\cap \M_2$.
	We assume that $\M_1$ and $\M_2$ intersect \emph{cleanly} at $\bar x$, namely,
	$\M:=\M_1\cap \M_2$ is a smooth embedded submanifold in a neighborhood of $\bar x$ and
	there exists a neighborhood $\U_{\bar x}\subset \M$ of $\bar x$ such that
	\[
	\T_{x}\M=\T_{x}\M_1\cap \T_{x}\M_2,
	\qquad \forall x\in \U_{\bar x}.
	\]
\end{definition}
\paragraph{Basic notation.}
Let $\rmN_X\St(n,r)$ denote the $\frac{r(r+1)}{2}$-dimensional normal space at point $X\in \St(n,r).$ When $r=1$, we denote the sphere by $\mathrm{S}^{n-1}$. For a manifold $\M$, we denote $\T\M=\{(X,\eta)\mid X\in\M  \text{ and } \eta\in\T_X\M\}$ as the tangent bundle and $\rmN\M=\{(X,\eta)\mid X\in\M  \text{ and } \eta\in\rmN_X\M\}$ as the normal bundle. We denote $\mathfrak{X}(\M)$ as the set of smooth vector fields on $\M.$ That is, any $V\in\mathfrak{X}(\M)$ is a smooth map satisfying $V:\M\rightarrow\T\M.$ For two matrices $X,Y\in\R^{n\times r},$ their Hadamard product is denoted by $X\odot Y.$ For a nonempty set $S\subset\mathbb{R}^m$,
$\aff(S)$
is the \emph{affine hull} of $S$, and \(\para(S):=\mathrm{span}\{x-y:\ x,y\in S\}=\aff(S)-x_0\) for any $x_0\in \aff(S)$ is the subspace parallel to $S$. In particular, \(\aff(S)=x_0+\para(S)\) for all $x_0\in \aff(S)$.

\paragraph{Vectorization.}
For matrix manifolds, it is convenient to represent the ambient space as a vector space and use an orthogonal basis. Throughout this paper, we use lowercase letters to denote vector representations and uppercase letters to denote matrix representations. For example, a matrix $X \in \mathbb{R}^{n \times r}$ is represented in its vectorized form by stacking all its column vectors into \(\vet(X)=x \in \mathbb{R}^{nr}\).
We fix a linear isomorphism $\overline{\vet}:\mathbb S^r\to\R^{\frac{r(r+1)}{2}}$ that stacks
the upper-triangular entries of a symmetric matrix into a vector:
\begin{equation}\label{def:vet_symm}
\overline{\vet}(\Lambda) =\lambda \in \R^{\frac{r(r+1)}{2}}, \quad \forall \Lambda\in\mathbb{S}^r.
\end{equation}

\paragraph{Metric and norms.}
We consider the Riemannian metric on $\M$ that is induced from the Euclidean inner product; i.e., for any $\xi,\eta \in \T_X\M$, we have $\inp{\xi}{\eta} = \Tr(\xi^\top\eta)$. The Frobenius norm of a matrix $X$ is denoted by $\normfro{X}$, the operator norm of $X$ is denoted by $\|X\|_2$, and the $\ell_2$ norm of a vector $x$ is denoted by $\|x\|.$ We use $B_x\in\R^{nr\times \frac{r(r+1)}{2}}$ to denote a basis matrix of the normal space $\rmN_X\St(n,r)$, where the lowercase vector representation is given by $x=\vet(X)$. The Euclidean ball centered at $x$ with radius $\delta$ in $\R^d$ is denoted by $\mathbb{B}(x,\delta)=\{y\in\R^d: \|y-x\|\leq \delta \}.$ 
For a point $a$ and a set $B$, we define the distance by \(\mathrm{dist}(a,B) = \inf_{b\in B}\|a-b\|\).

\begin{definition}[smooth function on manifold and differential]\label{def:smooth_differential}
  Let $\M$ and $\M'$ be  embedded submanifolds in $\E$ and $\E'$, respectively. Define a  map $f:\M\rightarrow\M'$.
  \begin{enumerate}
      \item  $f$ is smooth at $X\in\M$ if there exists a function $\bar{f}:\mathcal{U}\rightarrow\E'$ which is smooth on a neighborhood $\mathcal{U}$ of $X$ in $\E$ and such that $f$ and $\bar{f}$ coincide on $\M\cap \mathcal{U},$ that is, $f(y)=\bar{f}(y)$ for any $y\in \M\cap \mathcal{U}$. We call $\bar{f}$ a smooth extension of $f$ around $X.$ The map $f$ is smooth if it is smooth at all $X\in\M.$
      \item The differential of $f$ at point $X\in\M$ is the linear map $\mathrm{D}f(X):\T_X\M\rightarrow \T_{f(X)}\M'$ defined by
      \(\mathrm{D}f(X)[\eta] = \mathrm{D}\bar{f}(X)[\eta]\) for all $\eta\in\T_X\M$,
      where $\mathrm{D}\bar{f}$ is the classical differential of the smooth extension function $\bar{f}$.
  \end{enumerate}
\end{definition}
In our problem \eqref{prob:nonsmooth_stiefel}, $f(X)$ is smooth in $\R^{n\times r}$ and hence smooth on $\St(n,r).$ We would directly use the notation $\mathrm{D}f$ to represent the differential on the manifold. The  Riemannian gradient of a smooth function $f:\M\rightarrow\R$ at $X$, denoted by $\grad f(X)$, is the smooth vector field satisfying \(\mathrm{D} f(X)[\eta] = \langle \eta, \grad f(X)\rangle\) for all $\eta\in\T_X\M$.

By our choice of  Riemannian metric, we have
\(\grad f(X)=\Proj_{\T_X\M}\nabla f(X)\),
where    $\Proj_{\T_X\M}$ denotes the orthogonal projection onto the tangent space $\T_X\M$ and $\nabla f(X)$ is the Euclidean gradient.
The Riemannian Hessian of $f$ at $X$ is the linear map $\Hess f(X):\T_X\M\rightarrow\T_X\M$ defined as $\Hess f(X)[\eta]=\nabla_\eta \grad f$.  Here $\nabla$ denotes the Levi-Civita connection induced by the Euclidean metric and is defined by
\begin{equation}\label{def:Levi-Civita}
    \nabla_\eta V = \Proj_{\T_X\M}(\mathrm{D}\bar{V}(X)[\eta]),
\end{equation}
  where $V\in \mathfrak{X}(\M)$ is a smooth vector field on $\M$, and $\bar{V}$ is a smooth extension of $V$.
We also know that  the Riemannian Hessian is given by \({\Hess f(X)}[\eta]=\Proj_{\T_X\M}(\mathrm{D} \grad f(X)[\eta])\).

In Riemannian optimization, a descent direction is defined in the tangent space, followed by a mapping back to the manifold along a curve. While a geodesic curve serves as the natural generalization of a straight line in Euclidean space, it is often computationally expensive to compute. To address this, the notion of a retraction is introduced, providing a structured and efficient approximation of the Riemannian exponential map.
To develop second-order Riemannian methods, we further introduce the concept of a second-order retraction. We first need to  present the definition of acceleration of a smooth curve on a manifold.
\begin{definition}
	Let $\M$ be a Riemannian submanifold embedded in $\R^n$ and  $c \colon I \to \mathcal{M}$ be a smooth curve, where $I$ is an open interval in $\R$. Its velocity is the vector field $c' \in \mathfrak{X}(c)$. The acceleration of $c$ is the smooth vector field $c'' \in \mathfrak{X}(c)$ defined by \(c'' = \frac{\mathrm{D}}{\mathrm{d}t} c'\),
	where $\frac{\mathrm{D}}{\mathrm{d}t}: \mathfrak{X}(c)\rightarrow\mathfrak{X}(c)$ is a covariant derivative on $\M$.
	We also call $c''$ the intrinsic acceleration of $c$.
\end{definition}
When $\mathcal{M}$ is embedded in a linear space $\E$, a smooth curve $c$ on $\mathcal{M}$ is also smooth in $\E$.  We write the classical (extrinsic) acceleration of $c$ in the embedding space as \(\ddot{c} = \frac{\mathrm{d}^2}{\mathrm{d}t^2} c\).
When $ \frac{\mathrm{D}}{\mathrm{d}t}$ is the covariant derivative induced by the Riemannian connection \eqref{def:Levi-Civita}, one has (see \cite[(5.23)]{boumal2023introduction}):
\begin{equation}\label{eq:intrinsic_acc_and_ex_acc}
	c''(t)= \Proj_{\T_{c(t)}\M}(\ddot{c}(t) ).
\end{equation}
In that spirit, we use notations $\dot{c}$ and $c'$ interchangeably for velocity since the two notions coincide.

\begin{definition}[Retraction]\cite{Absil2009}
	\begin{enumerate}
		\item A retraction \textit{on a manifold} $\mathcal{M}$ is a smooth map
		\[
		\mathrm{Retr} \colon \mathcal U \to \mathcal{M} \colon (X, \eta) \mapsto \mathrm{Retr}_X(\eta),
		\]
		where $\mathcal U$ is an open neighborhood of the zero section in $\T\mathcal M$,
		\textit{such that each curve $c(t):=\mathrm{Retr}_X(t\eta)$ satisfies
		$c(0)=X$ and $c'(0)=\eta$ whenever $(X,t\eta)\in\mathcal U$ for all sufficiently small $t$.}

		\item A second-order retraction $\mathrm{Retr}$ on a Riemannian manifold $\mathcal{M}$
		is a retraction such that, for all $(X,t\eta)\in\mathcal U$, the curve $c(t)=\mathrm{Retr}_X(t\eta)$,
		defined for all sufficiently small $t$, has zero intrinsic acceleration at $t=0$,
		that is, $c''(0)=0$.
	\end{enumerate}
\end{definition}

\subsection{Nonsmooth function}
Next, we present some results for nonsmooth functions on embedded submanifold in $\E.$ We say that a function $F$ is locally Lipschitz continuous if for any $X\in\M$ it is Lipschitz continuous in a neighborhood of $X$ in $\E$. Conversely, if $F$ is locally Lipschitz continuous in the Euclidean space $\E$, then it is also locally Lipschitz continuous when restricted to the embedded submanifold $\M$ of $\E$.

\begin{definition}[{generalized Clarke subdifferential \cite{hosseini2011generalized}}]
For a locally Lipschitz function $F$ on $\M$, the Riemannian generalized directional derivative of $F$ at $X\in\M$ in the direction $\eta$ is defined by
\[ 
F^{\circ}(X,\eta) =\limsup\limits_{Y\rightarrow X,t\downarrow 0}\frac{F\circ \phi^{-1}(\phi(Y)+t\mathrm{D}\phi(X )[\eta])-F\circ \phi^{-1}(\phi(Y))}{t},
\] 
where $(\phi,\mathcal{U})$ is a coordinate chart at $X$.
The generalized gradient or the Clarke subdifferential of $F$ at $X\in\M$, denoted by $\partial_C F$, is given by
\[ 
\partial_C F(X)=\{\xi\in \T_X\M :\inp{\xi}{\eta}\leq F^{\circ}(X,\eta) \ \forall \eta\in \T_X\M \}.
\] 
\end{definition}

The following definition generalizes the subdifferential (Clarke) regularity \cite{Rockafellar2009} to the embedded submanifold of $\E.$
\begin{definition}[\cite{Yang-manifold-optimality-2014}]
A function $F$ is said to be regular at $X\in\M$ along $\T_X\M$ if
        \begin{enumerate}
                \item for all $\eta\in \T_X\M$, $F'(X;\eta):=\lim_{t\downarrow 0} \frac{F(X+t\eta)-F(X)}{t}$ exists, and
                \item for all $\eta\in \T_X\M$, $F'(X;\eta) = F^\circ (X;\eta)$.
        \end{enumerate}
\end{definition}
Using Definition \ref{def:smooth_differential},  $F'(X;\eta)=\mathrm{D}F(X)[\eta]$ if $F$ is smooth. The convex function in Euclidean space is subdifferentially regular, and it is also regular on $\M.$
  By \cite[Theorem 5.1]{Yang-manifold-optimality-2014}, for a regular function $F$, we  have $\partial_C F(X)=\Proj_{\T_X\M}(\partial F(X)),$ where $\partial F(X)$ is the standard Clarke subdifferential in Euclidean space. Since $g$ is convex and $f$ is smooth, the function $F=f+g$ in \eqref{prob:nonsmooth_stiefel} is regular according to Lemma 5.1 in \cite{Yang-manifold-optimality-2014}. Therefore, we have $\partial_C F(X)=\Proj_{\T_X\M}(\nabla f(X)+\partial g(X))=\grad f(X)+ \Proj_{\T_X\M}(\partial g(X))$. By Theorem 4.1 in \cite{Yang-manifold-optimality-2014}, the first-order necessary condition of \eqref{prob:nonsmooth_stiefel} is given by $0\in \grad f(X)+ \Proj_{\T_X\M}(\partial g(X))$.   By the normal space structure \eqref{eq:normal_Stiefel},    this is equivalent to the KKT condition in the embedded Euclidean space.
  We now measure first-order optimality of \eqref{prob:nonsmooth_stiefel} at \emph{arbitrary}
$X\in\mathbb{R}^{n\times r}$ via a KKT residual.

\begin{definition}[KKT residual and $\varepsilon$-KKT point for \texorpdfstring{\eqref{prob:nonsmooth_stiefel}}{(P0)}]
\label{def:KKT-residual-P0}
For any $X\in\mathbb{R}^{n\times r}$, the KKT residual of problem
\eqref{prob:nonsmooth_stiefel} at $X$ is defined as
\begin{equation}\label{eq:R0-def}
  \mathcal{R}_0(X)
  :=
  \inf_{\substack{G\in\partial g(X) \\[1pt] \Lambda\in\mathbb{S}^r}}
  \Big(
    \underbrace{\normfro{\nabla f(X) +   G + X\Lambda}}_{\text{first-order (stationarity) residual}}
    +
    \underbrace{\big\|X^\top X - I_r\big\|_{\mathrm{F}}}_{\text{feasibility residual}}
  \Big).
\end{equation}
Given $\varepsilon> 0$, we say that $X$ is an \emph{$\varepsilon$-KKT point}
(or an \emph{$\varepsilon$-first-order point}) of \eqref{prob:nonsmooth_stiefel} if
\begin{equation}\label{eq:eps-KKT}
  \mathcal{R}_0(X) \;\le\; \varepsilon.
\end{equation}
In particular, $X$ is a first-order stationary point of \eqref{prob:nonsmooth_stiefel} if and only if
$\mathcal{R}_0(X)=0$.
\end{definition}


\subsection{Partial smoothness}\label{sec:partial_smoothness}
For a nonsmooth (possibly nonconvex) function $g:\E\rightarrow\R$, partial smoothness captures the common situation in which $g$ is smooth along an active manifold and nonsmooth across it. For example, the $\ell_1$ norm is smooth relative to a fixed sparsity pattern. We present the formal definition as follows.
\begin{definition}\label{def:partial_smoothness}
    A function $g:\E\rightarrow\R$ is $\mathcal{C}^k$ partly smooth at a point $\bar{x}$ relative to a set $\M_g$ containing $\bar{x}$ if $\M_g$ is a $\mathcal{C}^k$ manifold around  $\bar{x}$ and:
    \begin{enumerate}[label={(\arabic*)}]
        \item (smoothness) the restriction of $g$ to $\M_g$ is a $\mathcal{C}^k$ function near $\bar{x}$;
        \item (regularity) The function $g$ is (Clarke) regular at all points $x \in\M_g$ near $\bar{x}$, with $\partial g(x)\footnote{We use $\partial_{\ell} g$ to denote the limiting subdifferential \cite{Rockafellar2009}, which reduces to the convex subdifferential when $g$ is convex. Therefore, we use the same notation for simplicity.
}\neq \emptyset$;
        \item (sharpness) the affine space  $\aff(\partial g(\bar{x}))$ is a translate of $\rmN_{\bar{x}}\M_g$;
        \item (sub-continuity) the set-valued mapping $\partial g$ restricted to $\M_g$ is continuous at $\bar{x}$.
    \end{enumerate}
\end{definition}
The continuity in Definition~\ref{def:partial_smoothness}(4)
is understood in the usual set-valued sense of outer and inner
semicontinuity; see Appendix~\ref{app:prelim-subcontinuity}.

It follows from \cite[Proposition~2.10]{lewis2002active} that if $g$ is partly smooth at
$\bar x$ relative to $\M_g$, then for all $x\in \M_g$ sufficiently close to $\bar x$,
\begin{equation}\label{eq:normal_space_eq_to_parallel_g}
	\para\bigl(\partial g(x)\bigr)=\rmN_x \M_g .
\end{equation}

For the $\ell_1$ norm function $g(x)=\|x\|_1$ and a point $\bar x\in\R^d$, $g$ is partly smooth at $\bar x$ relative to the coordinate manifold
\[
\M_I:= \{ x\in\R^d\mid  x_i=0~\text{ for } ~i\in I\},
\]
where $I$ is the active set defined by  $I=\{i=1,\ldots,d\mid \bar x_i=0\}.$ Since \( g(x) = \|x\|_1 \) is convex, its   subdifferential   is regular with
\[
\partial g(x) = J_1 \times J_2 \times \cdots \times  J_d,
\]
{\color{blue}where, with \(\mathrm{sign}\) denoting the usual single-valued scalar sign
function satisfying \(\mathrm{sign}(0)=0\),
\[
J_i =
\begin{cases}
[-1,1], & x_i=0,\\
\{\mathrm{sign}(x_i)\}, & x_i\neq0.
\end{cases}
\]
Hence, we can verify all conditions in Definition \ref{def:partial_smoothness}.
Similarly, the row-sparsity regularizer
\(g(X)=\|X\|_{2,1}:=\sum_{i=1}^n\|X_{i,:}\|_2\) is partly smooth at
\(\bar X\in\R^{n\times r}\) relative to the row-support manifold
\[
\M_I^{\rm row}:=\{X\in\R^{n\times r}:X_{i,:}=0\ \text{for }i\in I\},
\qquad I:=\{i:\bar X_{i,:}=0\}.
\]
Indeed, \(G\in\partial g(X)\) if and only if, for each \(i\),
\[
G_{i,:}=
\begin{cases}
X_{i,:}/\|X_{i,:}\|_2, & X_{i,:}\neq0,\\
u_i,\ \|u_i\|_2\leq1, & X_{i,:}=0.
\end{cases}
\]
This formula verifies the smoothness, regularity, sharpness, and
subdifferential-continuity conditions in Definition~\ref{def:partial_smoothness}.}
When we know the structure of the manifold $\M_g$ for the local minimizer of a problem, we can design Riemannian Newton-type algorithms to achieve fast convergence locally and obtain a more accurate solution. Due to the Stiefel constraint, problem \eqref{prob:nonsmooth_stiefel} is more complex than the Euclidean setting. We will exploit the structure of $\M_g\cap \St(n,r)$ and develop a new Riemannian optimization algorithm on it.

%% file: shared/section_identification_ManPG.tex
\section{Identification of ManPG}\label{sec:id_manpg}

In this section, we study the manifold identification property of ManPG for general convex functions $g$. In Euclidean settings, manifold identification is well understood through the framework of partial smoothness and transversality \cite{lewis2002active}. In the manifold-constrained setting, however, the situation is more delicate. As we will show in Section~\ref{sec:intersection_property}, even though the intersection $\mathcal{M}_g\cap \St(n,r)$ may remain a smooth submanifold under mild conditions, it can fail to satisfy the transversality condition in the sparse Stiefel setting for the $\ell_1$ regularizer. As a result, the associated proximal mapping may fail to identify the active manifold stably.

To overcome this difficulty, we introduce a perturbed family of Stiefel constraints and analyze the corresponding perturbed problems~\eqref{prob:P-Delta}. Our goal is to obtain the regular intersection geometry needed for identification. We show that, for generic perturbations, the active manifold and the perturbed Stiefel manifold intersect transversely, which restores the manifold identification property of ManPG. Related perturbation-based regularization ideas have also appeared in other contexts, for example in low-rank SDP problems \cite{tang2024feasible}.

\subsection{Perturbed Stiefel problems and geometry}

\paragraph{Perturbed Stiefel constraints.}
We begin with the perturbed Stiefel family defined in \eqref{def:St-Delta}. The case $\Delta=0$ recovers the standard Stiefel manifold $\St(n,r)$. Throughout, we restrict attention to perturbations with small Frobenius norm, say $\|\Delta\|_{\mathrm F}<1$, so that $I_r+\Delta$ remains positive definite and close to $I_r$. Under this condition, $\St_\Delta(n,r)$ is an embedded submanifold of $\R^{n\times r}$. The perturbation serves two purposes. First, it regularizes the local intersection geometry. Second, the original Stiefel manifold may have empty intersection with the relevant active manifold. A further perturbed Stiefel manifold can avoid this difficulty. This point will be used later in Section~\ref{sec:alg_on_intersection}.

\paragraph{Manifold structure.}
The manifold structure can be derived in the same way as in \cite{edelman1998geometry,Absil2009}. Define the constraint mapping
\(\Phi_\Delta : \mathbb{R}^{n\times r} \to \mathbb{S}^r\) by
\[\Phi_\Delta(X) := X^\top X - (I_r + \Delta)\].
Then $\St_\Delta(n,r)$ can be written as the level set
\(\St_\Delta(n,r) = \Phi_\Delta^{-1}(0)\).
The derivative of $\Phi_\Delta$ at $X\in\mathbb{R}^{n\times r}$ in the
direction $H \in \mathbb{R}^{n\times r}$ is given by
\(\mathrm{D}\Phi_\Delta(X)[H] = X^\top H + H^\top X\).
This linear map does not depend on $\Delta$, and coincides with the
derivative of the standard Stiefel constraint.

Since $I_r+\Delta \succ 0$, the rank of $\mathrm{D}\Phi_\Delta(X)$ is equal to
$\frac{r(r+1)}{2}$ for every $X \in \St_\Delta(n,r)$; in particular,
$\mathrm{D}\Phi_\Delta(X)$ is surjective as a map from $\mathbb{R}^{n\times r}$ to
$\mathbb{S}^r$. Consequently, $\St_\Delta(n,r)$ is an embedded submanifold
of $\mathbb{R}^{n\times r}$ of dimension
\(\dim \St_\Delta(n,r) = nr - \frac{r(r+1)}{2}\).

\paragraph{Tangent and normal spaces.}
The tangent and normal spaces of $\St_\Delta(n,r)$ at $X$ admit the same
expressions as for the standard Stiefel manifold. Specifically,
\begin{equation}\label{eq:T-N-St-Delta}
	\begin{aligned}
		\T_X \St_\Delta(n,r)
		&= \bigl\{ H \in \mathbb{R}^{n\times r} : X^\top H + H^\top X = 0 \bigr\},\\[2mm]
		\rmN_X \St_\Delta(n,r)
		&= \bigl\{ X \Lambda : \Lambda \in \mathbb{S}^r \bigr\}.
	\end{aligned}
\end{equation}
Unless otherwise stated, we equip $\St_\Delta(n,r)$ with the Riemannian
metric induced by the ambient Frobenius inner product.
With this choice, the Riemannian gradient at $X \in \St_\Delta(n,r)$ is
given by the orthogonal projection of the Euclidean gradient onto the
tangent space $\T_X \St_\Delta(n,r)$.  Details are deferred to Appendix~\ref{sec:append:normal}.

The KKT conditions for \eqref{prob:P-Delta} at a feasible point
$X\in\mathbb{R}^{n\times r}$ are:
there exist $G_\Delta\in\partial g(X)$ and
$\Lambda_\Delta\in\mathbb{S}^r$ such that
\begin{equation}\label{eq:KKT-P-Delta}
	\nabla f(X) + G_\Delta + X\Lambda_\Delta = 0,
	\qquad
	X^\top X = I_r + \Delta.
\end{equation}

We now show that any KKT point of the perturbed problem \eqref{prob:P-Delta}
is an $\varepsilon$-KKT point of the original problem \eqref{prob:nonsmooth_stiefel}, with
$\varepsilon$ proportional to the size of the perturbation~$\Delta$.

\begin{proposition}
	\label{prop:P-Delta-eps-KKT}
	Let $\Delta\in\mathbb{S}^r$ be such that $I_r+\Delta\succ 0$ and let
	$X_\Delta\in\mathbb{R}^{n\times r}$ be a KKT point of the perturbed
	problem~\eqref{prob:P-Delta}. Then
	\begin{equation}\label{eq:P-Delta-eps-KKT}
		\mathcal{R}_0(X_\Delta) \;\le\; \|\Delta\|_{\mathrm{F}}.
	\end{equation}
\end{proposition}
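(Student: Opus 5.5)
The plan is to bound the infimum in the definition of \(\mathcal{R}_0\) from above by one admissible choice of \((G,\Lambda)\): the multipliers supplied by the KKT system of the perturbed problem. Since \(X_\Delta\) is a KKT point of \eqref{prob:P-Delta}, the conditions \eqref{eq:KKT-P-Delta} give some \(G_\Delta\in\partial g(X_\Delta)\) and \(\Lambda_\Delta\in\mathbb{S}^r\) with
\[
\nabla f(X_\Delta)+G_\Delta+X_\Delta\Lambda_\Delta=0,
\qquad
X_\Delta^\top X_\Delta=I_r+\Delta .
\]
The point is that \eqref{prob:P-Delta} and \eqref{prob:nonsmooth_stiefel} share the same objective. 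They also share the same normal-space form \(\{X\Lambda:\Lambda\in\mathbb{S}^r\}\) from \eqref{eq:T-N-St-Delta} and \eqref{eq:normal_Stiefel}. Hence the only discrepancy between the two KKT systems lies in the feasibility equation.

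The steps are as follows. First, substitute \(G=G_\Delta\) and \(\Lambda=\Lambda_\Delta\) into \eqref{eq:R0-def}. This choice is admissible, since \(G_\Delta\in\partial g(X_\Delta)\) and \(\Lambda_\Delta\) is symmetric, so
\[
\mathcal{R}_0(X_\Delta)\le \normfro{\nabla f(X_\Delta)+G_\Delta+X_\Delta\Lambda_\Delta}+\|X_\Delta^\top X_\Delta-I_r\|_{\mathrm F}.
\]
Second, the stationarity term vanishes exactly by the KKT equation above. Third, the feasibility term equals \(\|(I_r+\Delta)-I_r\|_{\mathrm F}=\|\Delta\|_{\mathrm F}\). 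Together these give \eqref{eq:P-Delta-eps-KKT}.

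No step here is a real obstacle. The one point needing care is that the subdifferential in \(\mathcal{R}_0\) is evaluated at the same point \(X_\Delta\) as in \eqref{eq:KKT-P-Delta}, so no continuity or Lipschitz argument for \(\partial g\) is needed. Likewise, \(\mathcal{R}_0\) is defined for arbitrary \(X\in\mathbb{R}^{n\times r}\) rather than only feasible ones, so the infeasible point \(X_\Delta\) is a legitimate argument. The hypothesis \(I_r+\Delta\succ0\) only ensures that \(\St_\Delta(n,r)\) is a nonempty embedded manifold, so that the KKT notion makes sense; it is not used in the estimate itself.
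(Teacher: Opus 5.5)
Your proposal is correct and matches the paper's proof. Like the paper, you bound the infimum in $\mathcal{R}_0$ by the specific admissible choice $(G,\Lambda)=(G_\Delta,\Lambda_\Delta)$ from the perturbed KKT system, so the stationarity term vanishes and the feasibility term equals $\|X_\Delta^\top X_\Delta-I_r\|_{\mathrm F}=\|\Delta\|_{\mathrm F}$.
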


\begin{proof}
	Since $X_\Delta$ is a KKT point of \eqref{prob:P-Delta}, there exist
	$G_\Delta\in\partial g(X_\Delta)$ and $\Lambda_\Delta\in\mathbb{S}^r$
	such that
	\[
	\nabla f(X_\Delta) +  G_\Delta + X_\Delta\Lambda_\Delta = 0,
	\qquad
	X_\Delta^\top X_\Delta = I_r + \Delta.
	\]
	By the definition of $\mathcal{R}_0$ in~\eqref{eq:R0-def}, we can bound
	$\mathcal{R}_0(X_\Delta)$ from above by evaluating the right-hand side at
	this specific choice $(G,\Lambda)=(G_\Delta,\Lambda_\Delta)$:
	\[
	\mathcal{R}_0(X_\Delta)
	\;\le\;
	\big\|\nabla f(X_\Delta) +  G_\Delta + X_\Delta\Lambda_\Delta\big\|_{\mathrm{F}}
	+
	\big\|X_\Delta^\top X_\Delta - I_r\big\|_{\mathrm{F}}=\|\Delta\|_{\mathrm{F}}.
	\]
\end{proof}

\subsection{\texorpdfstring{Active manifold}{Active manifold}}

For the ManPG algorithm \cite{chen2020proximal}, we rewrite the tangent subproblem~\eqref{eq:manpg_subprob_intro} as a proximal-gradient mapping on the affine tangent space:
\begin{manualtaggedequation}
\begin{equation}\label{eq:tangent_problem}
	\mathcal{T}_{t}(X):=\argmin_{Y\in X+\T_X\St_{\Delta}(n,r)} \rho_t(X,Y),\tag{Tprox}
\end{equation}
\end{manualtaggedequation}
where
\begin{equation}\label{eq:rho_t}
	\rho_t(X,Y):= \frac{1}{2t}\|Y-(X-t\nabla f(X))\|_{\mathrm{F}}^2 +g(Y).
\end{equation}
To relate the   subproblem~\eqref{eq:tangent_problem} to manifold identification in composite optimization, we introduce a local coordinate representation of the affine slice $X+\T_X\St_\Delta(n,r)$.

Let $x:=\vet(X)$ and $d:=\dim\T_X\St_\Delta(n,r)$, and let
$E_x\in\R^{nr\times d}$ be a matrix whose columns form an orthonormal basis of
$\vet(\T_X\St_\Delta(n,r))$.
Then every $Y\in X+\T_X\St_\Delta(n,r)$ can be written as
\(\vet(Y)=\vet(X)+E_x z\) for some $z\in\R^d$.
Identifying $x+E_xz$ with the matrix whose vectorization is $x+E_xz$ in the last term,
the ManPG subproblem~\eqref{eq:tangent_problem} is equivalently
\begin{equation}\label{eq:tprox-z}
	z^\sharp(X)\in\argmin_{z\in\R^d}\;
	\frac{1}{2t}\bigl\|E_x z+t\,\vet(\nabla f(X))\bigr\|_2^2
	+\;g\bigl(x+E_x z\bigr).
\end{equation}
This coordinate viewpoint links our tangent-slice subproblem to the composite framework
$g(c_x(\cdot))$ studied in \cite{bareilles2023harnessing} and related identification theory.
The key difference is that our original problem is constrained on a  Stiefel manifold,
and the composite structure above arises in the ManPG subproblem.
Nevertheless, the role of transversality condition in \cite{bareilles2023harnessing} remains highly relevant for
understanding identification and stability of the ManPG subproblem; we will show that the active manifold is determined by the solution $\mathcal T_t(X)$.

For example, fix a reference point $\bar X\in\St_{\Delta}(n,r)$ and let
$\bar Y:=\mathcal{T}_t(\bar X)$.
Assume that $g(Y)=\lambda\|Y\|_1$ and that $\bar Y$ has $s$ nonzero entries.
Define the support set by
\[\mathcal S:=\mathrm{supp}(\bar Y)\subset [n]\times[r] \quad \text{ and}\quad 
 \mathcal S^c:=\big([n]\times[r]\big)\setminus \mathcal S.\]
For any matrix $U$, define the coordinate projector
\[
(\Proj_{\mathcal S^c}U)_{ij}=
\begin{cases}
U_{ij},&(i,j)\in \mathcal S^c,\\[2pt]
0,&(i,j)\in \mathcal S.
\end{cases}
\]
We define the active manifold (with fixed zero pattern)
\begin{equation}\label{eq:manifold_2}
	\mathcal{M}_g(\bar Y)
	:=\Big\{\,Y\in\mathbb{R}^{n\times r}\ :\ \Proj_{\mathcal S^c}Y=0\,\Big\},
\end{equation}
which is a linear subspace of $\mathbb{R}^{n\times r}$. It is an embedded smooth submanifold of $\mathbb{R}^{n\times r}$ of dimension $|\mathcal S|=s$, with tangent space
\(\T_Y\mathcal{M}_g(\bar Y)=\{Z \in\mathbb{R}^{n\times r}:\ Z_{\mathcal S^c}=0\}\).
Moreover, since $\bar Y_{ij}\neq 0$ for all $(i,j)\in\mathcal S$, there exists a neighborhood of $\bar Y$ in $\mathcal{M}_g(\bar Y)$ on which the sign pattern is fixed. Hence the restriction of $g(Y)=\lambda\|Y\|_1$ to $\mathcal{M}_g(\bar Y)$ is smooth near $\bar Y$, with constant gradient
\(\nabla g(Y)=\lambda\,\mathrm{sign}(\bar Y)\)
and zero Hessian along the manifold directions.

A main difficulty in sparse models is that transversality may fail because of the interaction
between the Stiefel geometry and the subdifferential structure of the $\ell_1$ regularizer.
This leads us to introduce generic perturbations and study the perturbed problem~\eqref{prob:P-Delta}.
To state the resulting genericity results uniformly over all active manifolds that may arise,
we impose a finite-family assumption on active manifolds
(Assumption~\ref{assump:finite-active-manifolds}). This allows the corresponding exceptional parameter sets to be handled by a finite union.

The following assumption can be viewed as a finite, locally stable
active-manifold version of standard stratification frameworks
\cite{whitney1965tangents,bolte2007clarke}: near a point on a stratum, the
same stratum continues to describe the active manifold. To avoid introducing
additional stratification terminology, we state this property directly. This assumption is satisfied by standard partly smooth regularizers, including
the \(\ell_1\) norm with fixed support manifolds, the \(\ell_{2,1}\) norm with
fixed row-support manifolds.

\begin{assumption}[Finite family of active manifolds]
		\label{assump:finite-active-manifolds}
	Assume that there exist finitely many smooth maps
	$h_i:\R^{n\times r}\to\R^{p_i}$, $i=1,\ldots,N$, such that
	\[
	\M_i=\{X:h_i(X)=0\},
	\qquad
	\rank Dh_i(X)=p_i,\quad X\in\M_i,
	\]
	and, for every $\bar X\in E$, $g$ is $C^2$-partly smooth at
	$\bar X$ relative to some $\M_i$.
	Moreover, the selected active manifold is locally stable: whenever
	$g$ is $C^2$-partly smooth at $\bar X$ relative to $\M_i$, there
	exists a neighborhood $U$ of $\bar X$ such that
	\[
	Y\in U\cap\M_i
	\quad\Longrightarrow\quad
	\M_g(Y)=\M_i .
	\]
\end{assumption}
For the sparse regularizers used in this paper, the relevant active manifolds
are linear or row-support manifolds, and hence this smoothness requirement is
automatically satisfied.

\subsection{Structural assumptions and lemmas}

In this section, we introduce the structural assumptions needed for the identification analysis and collect the main auxiliary lemmas. We begin with the optimality condition for the tangent subproblem~\eqref{eq:tangent_problem}.
\begin{lemma}\label{lem:tangt_prox_optcond}\cite[Lem.~5.3]{chen2020proximal}
	Given $\bar{X}\in\St_{\Delta}(n,r)$ and $\bar{Y} = \mathcal{T}_{t}(\bar{X})$, we have
	\[
	0\in \frac{1}{t}(\bar{Y}-\bar{X})+\grad f(\bar{X})+\Proj_{\T_{\bar{X}}\St_{\Delta}(n,r)}\,\partial g(\bar{Y}).
	\]
	Moreover, $\bar X$ is a first-order critical point of problem~\eqref{prob:P-Delta} if and only if
	\(
	\bar Y=\bar X.
	\)
\end{lemma}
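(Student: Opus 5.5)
The plan is to derive both claims of Lemma~\ref{lem:tangt_prox_optcond} from the first-order optimality condition of the convex subproblem \eqref{eq:tangent_problem}, written on the affine slice $\bar X+\T_{\bar X}\St_\Delta(n,r)$.

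\emph{Optimality condition.} The function $Y\mapsto\rho_t(\bar X,Y)$ is strongly convex, since $g$ is convex and the quadratic term has modulus $1/t$. The feasible set is an affine subspace, so the minimizer $\bar Y$ exists and is unique. Convex optimality over an affine set $\bar X+L$, with $L=\T_{\bar X}\St_\Delta(n,r)$, reads
\[
0\in \frac1t\bigl(\bar Y-\bar X+t\nabla f(\bar X)\bigr)+\partial g(\bar Y)+L^\perp .
\]
Here $L^\perp=\rmN_{\bar X}\St_\Delta(n,r)$ by \eqref{eq:T-N-St-Delta}. No constraint qualification is needed, because $g$ is finite everywhere and the constraint is affine; alternatively, one can pass to the unconstrained coordinates \eqref{eq:tprox-z} and apply the chain rule $\partial(g(x+E_xz))=E_x^\top\partial g$.

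Next, apply $\Proj_{\T_{\bar X}\St_\Delta}$ to this inclusion. The normal component is annihilated. The difference $\bar Y-\bar X$ already lies in $L$, so it is unchanged, and $\Proj_L\nabla f(\bar X)=\grad f(\bar X)$. This gives the stated inclusion. In the coordinate form the conclusion is immediate: $0\in E_x^\top(\ldots)$, and multiplying by $E_x$ (using $E_xE_x^\top=\Proj_L$) yields it as well.

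\emph{Characterization of criticality.} For the ``if'' direction, suppose $\bar Y=\bar X$. The inclusion becomes $0\in\grad f(\bar X)+\Proj_{\T}\partial g(\bar X)$. By the discussion following Definition~\ref{def:KKT-residual-P0}, which applies verbatim to $\St_\Delta$ because the normal space has the same form, this is exactly criticality: it is equivalent to $\nabla f(\bar X)+G+\bar X\Lambda=0$ for some $G\in\partial g(\bar X)$ and $\Lambda\in\mathbb S^r$.

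For the ``only if'' direction, suppose $\bar X$ is critical. Then there exists $G\in\partial g(\bar X)$ with $\nabla f(\bar X)+G\in\rmN_{\bar X}\St_\Delta$. This shows that $Y=\bar X$ satisfies the sufficient optimality condition of the strongly convex subproblem, since the inclusion above holds with the zero displacement. By uniqueness of the minimizer, $\bar Y=\bar X$.

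The only delicate point is justifying the subdifferential sum rule on the affine constraint, that is, $\partial(g+\iota_{\bar X+L})=\partial g+L^\perp$. This holds because $g$ is finite and convex on the whole space, so the relative-interior qualification condition is trivially satisfied.
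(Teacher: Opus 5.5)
Your proposal is correct and follows essentially the same route as the paper. The paper likewise gets the inclusion and the ``if'' direction from the optimality condition of the strongly convex tangent subproblem, though it simply cites \cite[Lem.~5.3]{chen2020proximal} for these, whereas you derive them via the convex sum rule on the affine slice and projection onto $\T_{\bar X}\St_\Delta(n,r)$. It proves the ``only if'' direction exactly as you do: criticality makes $Y=\bar X$ satisfy the subproblem's optimality condition, and uniqueness of the minimizer gives $\bar Y=\bar X$.
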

\begin{proof}
The sufficiency part follows from \cite[Lem.~5.3]{chen2020proximal}.

For necessity, suppose that $\bar X$ is a first-order critical point of \eqref{prob:P-Delta}. Then
\[
0\in \grad f(\bar X)+\Proj_{\T_{\bar X}\St_\Delta(n,r)}\,\partial g(\bar X),
\]
so $Y=\bar X$ satisfies the optimality condition of the tangent proximal subproblem \eqref{eq:tangent_problem}. Since the objective in \eqref{eq:tangent_problem} is strongly convex on the affine space $\bar X+\T_{\bar X}\St_\Delta(n,r)$, the minimizer is unique. Hence
	\(
	\bar Y=\bar X.
		\)
\end{proof}
We next recall the nondegeneracy (ND) condition  at a first-order critical point.
In the Euclidean setting, the ND condition used in manifold identification takes the form
	\[
	0\in \nabla f(x^*)+\mathrm{ri}\,\partial g(x^*),
	\]
	which is standard in the literature on partial smoothness and identifiable manifolds; see, for example, \cite{burke1988identification,wright1993identifiable,drusvyatskiy2014optimality,lewis2002active,liang2017activity,bareilles2023newton}. On the Stiefel manifold, since $g$ is regular and $\partial g$ is convex, it is exactly the   projection of the ambient subdifferential onto tangent space:
\[
0\in \grad f(X^*)+\mathrm{ri}\big(\Proj_{\T_{X^*}\St_{\Delta}(n,r)}\partial g(X^*)\big).
\]
\begin{definition}\label{def:r-structured critical}
	A point $X^*\in\St_{\Delta}(n,r)$ is called a nondegenerate critical point of~\eqref{prob:P-Delta} if
	\begin{manualtaggedequation}
	\begin{equation}\label{eq:ND}\tag{ND}
		0\in \grad f(X^*) + \mathrm{ri}\!\left(\Proj_{\T_{X^*}\St_{\Delta}(n,r)}\partial g(X^*)\right),
	\end{equation}
	\end{manualtaggedequation}
	where $\mathrm{ri}$ denotes relative interior.
\end{definition}

Assume the nondegeneracy condition at $X^*$. We now explain the idea behind the manifold identification result for the tangent subproblem~\eqref{eq:tangent_problem}. Rather than working directly with~\eqref{eq:tangent_problem}, we first restrict the subproblem to the active manifold determined by $\bar Y$. Specifically, we define
\begin{equation}\label{eq:tangent_sub_supp}
	Y_t^\sharp(X):=\arg\min_{Y\in \T_{\cap}(X)}\rho_t(X,Y),
\end{equation}
where
\begin{equation}\label{def:slice_tangent_inter}
	\T_{\cap}(X):= \mathcal{M}_g(\bar{Y})\cap\big(X+\T_X\St_{\Delta}(n,r)\big).
\end{equation}
Under the   transversality condition used below, we will show that $Y_t^\sharp(X)$ depends $C^1$-smoothly on $X$ near any strong critical point $X^*$. We then prove that $Y_t^\sharp(X)$ also solves the original subproblem~\eqref{eq:tangent_problem}. Consequently, the tangent proximal mapping $\mathcal{T}_t(X)$ identifies the same support pattern as $Y^*=X^*$ in a neighborhood of $X^*$. In this way, our analysis extends the Euclidean identification framework of \cite{lewis2002active,bareilles2023newton,bareilles2023harnessing} to the Stiefel manifold constrained problem~\eqref{prob:P-Delta} through the transversality condition. Generic perturbations provide one sufficient way to obtain this condition, while the unperturbed case is also covered whenever the same condition holds.

\subsection{Local geometry under generic perturbations}

Throughout this subsection, we fix a reference point $\bar X\in\mathbb{R}^{n\times r}$ and write $\bar Y=\mathcal{T}_t(\bar X)$. We also define
\begin{equation}\label{def:pert_Delta}
    \mathscr P:=\{\Delta\in\mathbb S^r:\ \diag(\Delta)=0,\ I_r+\Delta\succ 0\}.
\end{equation}

We assume that $\mathcal{M}_g(\bar Y)$ is an embedded smooth manifold and that $\St_\Delta(n,r)$ is well defined for some perturbation $\Delta\in\mathscr P$.
If $\bar X$ is a first-order critical point of \eqref{prob:P-Delta}, then $\bar Y=\bar X$, and hence $\mathcal{M}_g(\bar Y)\cap \St_\Delta(n,r)\neq\emptyset$. In the sequel, we state the results in a form that applies whenever this intersection contains a point near $\bar X$. The case of empty intersection will be addressed later.

\paragraph{Generic transversality.}
We next establish a generic transversality result for perturbations
\(
\Delta\in\mathscr P.
\)
Since such perturbations affect only the off-diagonal orthogonality constraints, the diagonal constraints remain fixed and give rise to the oblique manifold
\[
\Ob(n,r):=\{X\in\R^{n\times r}:\diag(X^\top X)=\mathbf 1\}.
\]
The following result shows that, under a transversality condition between the active manifolds and $\Ob(n,r)$, transversality with the perturbed Stiefel manifold holds for generic $\Delta$. This genericity statement follows from the Morse-Sard theorem \cite{sard1942measure} through \cite[Theorem~4]{tang2024feasible}.

\begin{proposition}
	\label{lem:generic-transv}
	Suppose Assumption~\ref{assump:finite-active-manifolds} holds.   Moreover, we assume that for every
	$i\in\{1,\dots,N\}$,
	\begin{equation}\label{eq:oblique-transv-condition}
		\rmN_X \mathcal M_i \cap \rmN_X \Ob(n,r)=\{0\},
		\qquad \forall\,X\in \mathcal M_i\cap \Ob(n,r).
	\end{equation}
	Then there exists a subset
	$\mathscr P_{\rm gen}\subseteq\mathscr P$ such that
	$\mathscr P\setminus\mathscr P_{\rm gen}$ has Lebesgue measure zero in the linear subspace
	\(
	\mathbb S_0^r:=\{\Delta\in\mathbb S^r:\ \diag(\Delta)=0\},
	\)
	and for every $\Delta\in\mathscr P_{\rm gen}$ and every $i\in\{1,\dots,N\}$,
	\begin{equation}\label{eq:transv-Mg-StDelta}
		\rmN_X \mathcal M_i \cap \rmN_X \St_\Delta(n,r)=\{0\},
		\qquad \forall\,X\in \mathcal M_i\cap \St_\Delta(n,r).
	\end{equation}
	In particular, for every \(i\) and every
	\(\Delta\in\mathscr P_{\rm gen}\), the intersection
	\(\mathcal M_i\cap\St_\Delta(n,r)\) is either empty, or else necessarily
	\(\dim(\mathcal M_i)\ge r(r+1)/2\) and it is a smooth embedded submanifold of dimension
	\(
	\dim(\mathcal M_i)-\frac{r(r+1)}{2}.
	\)
\end{proposition}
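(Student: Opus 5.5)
This is a parametric transversality argument via Sard's theorem, run separately for each of the finitely many active manifolds. Fix $i\in\{1,\dots,N\}$ and set $\mathcal W_i:=\mathcal M_i\cap\Ob(n,r)$. By \eqref{eq:oblique-transv-condition} and Definition~\ref{def:tranverse}, $\mathcal W_i$ is a smooth embedded submanifold (possibly empty) of dimension $\dim\mathcal M_i-r$. Its tangent space is $\T_X\mathcal W_i=\T_X\mathcal M_i\cap\T_X\Ob(n,r)$.

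Next define the off-diagonal map
\[
\psi_i:\mathcal W_i\to\mathbb S_0^r,\qquad \psi_i(X):=X^\top X-I_r .
\]
This is well defined, since $\diag(X^\top X)=\mathbf 1$ on $\Ob(n,r)$. For $\Delta\in\mathscr P$ we have $\mathcal M_i\cap\St_\Delta(n,r)=\psi_i^{-1}(\Delta)$. By Sard's theorem \cite{sard1942measure}, in the form of \cite[Theorem~4]{tang2024feasible}, the set $C_i$ of critical values of $\psi_i$ has Lebesgue measure zero in $\mathbb S_0^r$. Set $\mathscr P_{\rm gen}:=\mathscr P\setminus\bigcup_{i=1}^N C_i$. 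A finite union of null sets is null, so $\mathscr P\setminus\mathscr P_{\rm gen}$ is null.

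The key step is the linear-algebra translation: if $\Delta\notin C_i$, then \eqref{eq:transv-Mg-StDelta} holds at every $X\in\psi_i^{-1}(\Delta)$.

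1. Regularity means $\rmD\psi_i(X)[H]=X^\top H+H^\top X$ maps $\T_X\mathcal W_i$ onto $\mathbb S_0^r$.
2. Vectors in $\T_X\mathcal W_i$ already satisfy $\diag(X^\top H+H^\top X)=0$.
3. The diagonal part is handled by $\Ob$-transversality. Given any target $S\in\mathbb S^r$, first choose $H_1\in\T_X\mathcal M_i$ whose induced differential has prescribed diagonal $\diag(S)$. Such an $H_1$ exists because $\T_X\mathcal M_i+\T_X\Ob(n,r)=\R^{n\times r}$ and $\Ob$ is cut out by the diagonal constraints with surjective differential. Then correct the off-diagonal part by some $H_2\in\T_X\mathcal W_i\subset\T_X\mathcal M_i$.
4. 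Hence $H\mapsto X^\top H+H^\top X$ is surjective from $\T_X\mathcal M_i$ onto $\mathbb S^r$.

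Surjectivity is equivalent to the normal-space statement. Suppose $X\Lambda\in\rmN_X\mathcal M_i$ with $\Lambda\in\mathbb S^r$ (using \eqref{eq:T-N-St-Delta}). Then $\langle X\Lambda,H\rangle=\tfrac12\langle\Lambda,X^\top H+H^\top X\rangle=0$ for all $H\in\T_X\mathcal M_i$. Surjectivity gives $\Lambda=0$, so $X\Lambda=0$.

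The final claims follow from Definition~\ref{def:tranverse}. If the intersection is nonempty, it is smooth of codimension $\operatorname{codim}\mathcal M_i+\tfrac{r(r+1)}2$, i.e., of dimension $\dim\mathcal M_i-\tfrac{r(r+1)}2$. Surjectivity onto $\mathbb S^r$ also forces $\dim\mathcal M_i\ge r(r+1)/2$.

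I expect the main obstacle to be step 3, the surjectivity argument that combines the diagonal and off-diagonal parts. One must check that the $\Ob$-transversality really yields a tangent vector of $\mathcal M_i$ with arbitrary diagonal image, namely $\diag(2X^\top H)$ spanning $\R^r$. I would verify this by dualizing: a diagonal $D$ with $XD\in\rmN_X\mathcal M_i$ lies in $\rmN_X\mathcal M_i\cap\rmN_X\Ob(n,r)$, so $XD=0$, and $D=0$ since the columns of $X$ are nonzero.

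A secondary technical point is applying Sard when $\mathcal W_i$ is merely embedded and $\dim\mathcal W_i<\dim\mathbb S_0^r$. In that case every value in the image is critical, but the image itself is null, so the conclusion still holds with empty intersection.
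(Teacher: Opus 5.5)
Your proposal is correct and follows the same route as the paper. The paper also treats $h_i$ together with the diagonal constraints as a fixed LICQ system, using the oblique transversality condition. It then applies the Sard-type parametric transversality result \cite[Theorem~4]{tang2024feasible} to the off-diagonal constraint $h_{\rm off}(X)=d$, which gives a null exceptional set for each $i$; it takes the finite union and concludes with the transverse-intersection theorem. Your version unpacks that citation as Sard's theorem for $\psi_i$ on $\mathcal M_i\cap\Ob(n,r)$. It also spells out the linear algebra that turns a regular value into \eqref{eq:transv-Mg-StDelta}, a step the paper leaves implicit in the phrase ``LICQ for the joint system.''
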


\begin{proof}[Proof sketch]
Separate the Stiefel constraints into the fixed diagonal
part and the off-diagonal parameter \(d=\overline\vet_{\rm off}(\Delta)\).
The oblique transversality condition implies LICQ for the system defining
\(\mathcal M_i\cap\Ob(n,r)\). Applying the parametric transversality result
	of \cite[Theorem~4]{tang2024feasible} 
	to the off-diagonal constraint gives,
for each active manifold \(\mathcal M_i\), a measure-zero exceptional set of
parameters. Taking the finite union over \(i=1,\ldots,N\) yields
\(\mathscr P_{\rm gen}\). For \(\Delta\in\mathscr P_{\rm gen}\), the normal
spaces intersect trivially, and the final dimension statement follows from
the transverse-intersection theorem. The full proof is given in
Appendix~\ref{app:proof-generic-transv}.
\end{proof}

	\begin{remark}
    \begin{enumerate}[label={(\arabic*)}]
        \item Condition \eqref{eq:oblique-transv-condition} is mild. In the sparse setting considered later in Section~\ref{sec:intersection_property}, it is automatically satisfied; see Proposition~\ref{prop:oblique-transversal}. When this condition is not available, one should work with general symmetric perturbations rather than restricting to zero-diagonal perturbations.
             \item When $\Delta = 0$, the intersection
		$\mathcal{M}_g(\bar X) \cap \St(n,r)$ is also a smooth embedded
		submanifold in a neighborhood of $\bar X$ with mild assumptions; however, transversality may
		fail in $\R^{n\times r}$. We will return to this point and show  the clean intersection condition in Section~\ref{sec:intersection_property}.
        \item A necessary condition for the LICQ   is that $p_i+q\le nr$. Equivalently, $\dim(\M_i)\geq \frac{r(r+1)}{2}$. For the sparse problem,  if $\dim(\M_g)<\frac{r(r+1)}{2}$, this does not necessarily imply the intersection is empty, but  the transversality condition may fail; see Proposition~\ref{lem:M_1 and M_2 are not transverse}.
    \end{enumerate}
	\end{remark}

Since the ManPG subproblem is posed on the affine tangent slice
$X+\T_X\St_\Delta(n,r)$, the relevant local geometry for identification is given by its intersection with the active manifold. When $X^*$ is a critical point of \eqref{prob:P-Delta}, Lemma~\ref{lem:tangt_prox_optcond} gives $\mathcal T_t(X^*)=X^*$, so that
$X^*\in \St_\Delta(n,r)\cap \M_g(X^*)$. The following incidence result is stated under the  transversality condition at $X^*$; Proposition~\ref{lem:generic-transv} gives this condition for almost every $\Delta\in\mathscr P$ under its hypotheses.
		\begin{lemma}
			\label{lem:incidence-Tcap}
				Assume Assumption~\ref{assump:finite-active-manifolds}, and let
		$X^*\in\St_\Delta(n,r)$ be a critical point of \eqref{prob:P-Delta}. Suppose  that
		\(\rmN_{X^*}\M_g(X^*)\cap\rmN_{X^*}\St_\Delta(n,r)=\{0\}\). Set \(Y^*=X^*\) and define \(\T_\cap(X)\) in \eqref{def:slice_tangent_inter} with \(\bar Y=Y^*\).
			There exist a neighborhood $\mathcal{U}_0 \subset \St_\Delta(n,r)$
			of $X^*$ and a neighborhood $\mathcal{V}_0 \subset \mathbb{R}^{n\times r}$
			of $Y^*$ such that the restricted incidence set
		\[
		\mathcal{I}_0
		:= \bigl\{ (X,Y) \in \mathcal{U}_0 \times \mathcal{V}_0 :
		Y \in \T_{\cap}(X) \bigr\}
		\]
		is a smooth embedded  submanifold of
		$\St_\Delta(n,r) \times \mathbb{R}^{n\times r}$. In particular, $\T_\cap(X)\cap \mathcal{V}_0$ is also a smooth embedded submanifold of $\R^{n\times r}$ for every $X\in\mathcal{U}_0$.
	\end{lemma}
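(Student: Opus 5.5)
We will realize $\mathcal I_0$ as the zero set of a smooth map and apply the regular-level-set theorem (Proposition~\ref{prop:submanifold}, or the constant-rank version with a local defining function as in Definition~\ref{def:local_def}). By Assumption~\ref{assump:finite-active-manifolds}, $\M_g(X^*)=\M_i=\{h_i=0\}$ for some $i$, with $\rank Dh_i=p_i$ on $\M_i$. The affine slice condition $Y\in X+\T_X\St_\Delta(n,r)$ is $X^\top(Y-X)+(Y-X)^\top X=0$. Define on $\St_\Delta(n,r)\times\R^{n\times r}$ the map
\[
\Psi(X,Y):=\bigl(h_i(Y),\ X^\top(Y-X)+(Y-X)^\top X\bigr)\in\R^{p_i}\times\mathbb S^r .
\]
Then $\mathcal I_0=\Psi^{-1}(0)\cap(\mathcal U_0\times\mathcal V_0)$, once $\mathcal V_0$ is small enough that $h_i$ defines $\M_i$ there.

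The key step is to show that $D\Psi(X^*,Y^*)$ is surjective. It suffices to use partial derivatives in $Y$ alone, namely $H\mapsto (Dh_i(Y^*)[H],\ X^{*\top}H+H^\top X^*)$. Suppose this map is not surjective. Then some nonzero $(\mu,\Lambda)\in\R^{p_i}\times\mathbb S^r$ annihilates its range, giving $Dh_i(Y^*)^*\mu+2X^*\Lambda=0$. Since $Y^*=X^*$, the vector $Dh_i(X^*)^*\mu$ lies in $\rmN_{X^*}\M_g(X^*)$ and $X^*\Lambda$ lies in $\rmN_{X^*}\St_\Delta(n,r)$ by \eqref{eq:T-N-St-Delta}. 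The transversality hypothesis then forces both terms to vanish. Because $Dh_i(X^*)$ has full rank, $\mu=0$. Because $X^*$ has full column rank ($X^{*\top}X^*=I_r+\Delta\succ0$), $\Lambda=0$. This is a contradiction. Hence the $Y$-partial derivative of $\Psi$ is already surjective at $(X^*,Y^*)$.

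Surjectivity is an open condition, so it persists on a neighborhood $\mathcal U_0\times\mathcal V_0$, and $\Psi$ has constant rank $p_i+r(r+1)/2$ there. Proposition~\ref{prop:submanifold} (applied to $\Psi$ restricted to the open set $\mathcal U_0\times\mathcal V_0$ of the manifold $\St_\Delta(n,r)\times\R^{n\times r}$) then shows that $\mathcal I_0$ is an embedded submanifold. For the ``in particular'' claim, fix $X\in\mathcal U_0$ and consider $Y\mapsto\Psi(X,Y)$ on $\mathcal V_0$. The surjectivity obtained above is exactly surjectivity of this partial map, uniformly for $X\in\mathcal U_0$. So $\T_\cap(X)\cap\mathcal V_0$ is a regular level set, hence an embedded submanifold of $\R^{n\times r}$, possibly empty, of codimension $p_i+r(r+1)/2$.

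The main obstacle is modest: one must check that the transversality hypothesis, stated at the single point $X^*\in\St_\Delta\cap\M_g$, transfers to the slice system. This uses $Y^*=X^*$ (from Lemma~\ref{lem:tangt_prox_optcond}), so that the linearized slice constraint in $Y$ coincides with $D\Phi_\Delta(X^*)$. After that, openness of full rank supplies the neighborhoods. A secondary point is to shrink $\mathcal V_0$ so that both $h_i$ describes $\M_g(X^*)$ there and, by local stability, $\M_g(Y)=\M_i$ for $Y\in\mathcal V_0\cap\M_i$.
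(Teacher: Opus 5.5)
Your proof is correct and follows the paper's argument. You realize $\mathcal I_0$ as a regular level set of a map that pairs a defining function of $\M_g(X^*)$ with a linear encoding of the slice condition, use $Y^*=X^*$ to convert the transversality hypothesis into surjectivity of the $Y$-partial derivative, extend surjectivity to a neighborhood by openness, and apply the level-set theorem both jointly and fiberwise. The only cosmetic difference is that you encode $Y-X\in\T_X\St_\Delta(n,r)$ through $X^\top(Y-X)+(Y-X)^\top X=0$ instead of a smooth orthonormal normal frame $Q_N(X)$. This avoids constructing the frame and lets you verify surjectivity explicitly via the adjoint, a step the paper only asserts.
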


	\begin{proof}
			The proof uses local defining functions and a normal-coordinate
			representation of the affine tangent slices; see
			Appendix~\ref{app:proof-incidence-Tcap}.
	\end{proof}

			\subsection{\texorpdfstring{Strong local minimizers and continuity of parallel subspaces}{Strong local minimizers and continuity of parallel subspaces}}\label{subsec:strong-criticality}
			This subsection provides the technical lemmas needed for the identification argument.
			For the sensitivity analysis of the ManPG subproblem \eqref{eq:tangent_problem}, we need the following strong critical point condition.

	\begin{definition}[Strong critical point {\cite[Def.~5.6]{lewis2002active}}]\label{def:strong-critical}
	Let $\E$ be a Euclidean space and let
	$h:\E\to\bar\R$ be partly smooth at $x_0$
	relative to a manifold $\mathcal M\subset \E$.
	We say that $x_0$ is a \emph{strong local minimizer} of $h|_{\mathcal M}$ if
	there exist a neighborhood $U$ of $x_0$ and a constant $\alpha>0$ such that
	\begin{equation}\label{eq:QG-on-M}
		h(x)\ \ge\ h(x_0)+\alpha\|x-x_0\|^2,\qquad \forall\,x\in \mathcal M\cap U.
	\end{equation}
	We call $x_0$ a \emph{strong critical point} of $h$ relative to $\mathcal M$ if
	\begin{enumerate}[label={(\arabic*)}]
		\item $x_0$ is a strong local minimizer of $h|_{\mathcal M}$, and
		\item $0\in \ri\,\partial h(x_0)$.
	\end{enumerate}
\end{definition}
 
We now relate this notion to the tangent proximal subproblem at a nondegenerate critical point $X^*$ of \eqref{prob:P-Delta}. By Lemma~\ref{lem:tangt_prox_optcond}, one has $\mathcal T_t(X^*)=X^*.$
Define the function on the affine tangent slice $X^*+\T_{X^*}\St_\Delta(n,r)$ by
\[
\varphi_{X^*}(Y)
:= \ip{\grad f(X^*)}{Y-X^*}+g(Y)+\frac{1}{2t}\|Y-X^*\|_{\mathrm F}^2,
\qquad
Y\in X^*+\T_{X^*}\St_\Delta(n,r).
\]
Then $\varphi_{X^*}$ is $(1/t)$-strongly convex on $X^*+\T_{X^*}\St_\Delta(n,r)$. Hence its minimizer
\(
Y^*=\mathcal T_t(X^*)=X^*
\)
is unique and satisfies the quadratic growth inequality
\[
\varphi_{X^*}(Y)\ge \varphi_{X^*}(X^*)+\frac{1}{2t}\|Y-X^*\|_{\mathrm F}^2
\qquad
\forall\,Y\in X^*+\T_{X^*}\St_\Delta(n,r).
\]
In particular, $X^*$ is a strong local minimizer of $\varphi_{X^*}$ restricted to the manifold
$\T_\cap(X^*).$
Moreover, the nondegeneracy condition \eqref{eq:ND} is exactly
\[
0\in \ri\,\partial \varphi_{X^*}(X^*).
\]
Therefore, $X^*$ is a strong critical point of $\varphi_{X^*}$ relative to $\T_\cap(X^*)$. The following standard second-order characterization will be used below.

\begin{proposition}\label{lem:QG-implies-PDHess}
	Let $\M$ be a $C^2$ embedded Riemannian manifold and let
	$h:\M\to\R$ be $C^2$ in a neighborhood of $\bar y\in\M$.
	Then $\bar y$ is a strong local minimizer of $h$ if and only if
	\[
	\grad h(\bar y)=0
	\]
	and there exists $\beta>0$ such that
	\[
	\langle \eta,\Hess h(\bar y)[\eta]\rangle
	\geq \beta\|\eta\|^2,
	\qquad
	\forall\,\eta\in \T_{\bar y}\M .
	\]
\end{proposition}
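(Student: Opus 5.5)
The plan is to reduce to a local Taylor expansion in coordinates adapted to $\M$, using a second-order retraction so that the Riemannian Hessian equals the ordinary Hessian of the pullback at the origin. Fix a second-order retraction $\Retr$ on $\M$ near $\bar y$; for instance, the metric projection onto the $C^2$ embedded manifold, $\Retr_{\bar y}(\eta)=\Proj_\M(\bar y+\eta)$, defined for small $\eta$. Set $\hat h(\eta):=h(\Retr_{\bar y}(\eta))$ for $\eta\in\T_{\bar y}\M$ near $0$. Then $\hat h$ is $C^2$, with $\nabla\hat h(0)=\grad h(\bar y)$. Because $c''(0)=0$ for the curves $c(t)=\Retr_{\bar y}(t\eta)$, we also have $\nabla^2\hat h(0)=\Hess h(\bar y)$: differentiating $t\mapsto h(c(t))$ twice gives $\langle \Hess h(\bar y)[\eta],\eta\rangle+\langle\grad h(\bar y),c''(0)\rangle$, and the second term vanishes. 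The other ingredient is that $\Retr_{\bar y}$ is a local diffeomorphism from a neighborhood of $0\in\T_{\bar y}\M$ onto a neighborhood of $\bar y$ in $\M$, with $D\Retr_{\bar y}(0)=\mathrm{id}$. Consequently there are constants $0<c_1\le c_2$ with $c_1\|\eta\|\le\|\Retr_{\bar y}(\eta)-\bar y\|\le c_2\|\eta\|$ for small $\eta$. This norm equivalence transfers quadratic growth back and forth between $h$ near $\bar y$ on $\M$ and $\hat h$ near $0$.

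For sufficiency, assume $\grad h(\bar y)=0$ and the Hessian is $\beta$-coercive. Taylor's theorem with the Peano remainder gives $\hat h(\eta)=\hat h(0)+\tfrac12\langle\nabla^2\hat h(0)\eta,\eta\rangle+o(\|\eta\|^2)\ge \hat h(0)+\tfrac{\beta}{4}\|\eta\|^2$ for small $\eta$. Since every $y\in\M\cap U$ is of the form $\Retr_{\bar y}(\eta)$, we get $h(y)\ge h(\bar y)+\tfrac{\beta}{4c_2^2}\|y-\bar y\|^2$, which is \eqref{eq:QG-on-M} with $\alpha=\beta/(4c_2^2)$.

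For necessity, suppose \eqref{eq:QG-on-M} holds. Then $\bar y$ is a local minimizer of $\hat h$ on an open set of a vector space, so $\nabla\hat h(0)=\grad h(\bar y)=0$. The norm equivalence yields $\hat h(\eta)\ge\hat h(0)+\alpha c_1^2\|\eta\|^2$. Combining this with the second-order expansion $\hat h(s\eta)-\hat h(0)=\tfrac{s^2}{2}\langle\nabla^2\hat h(0)\eta,\eta\rangle+o(s^2)$ for a unit vector $\eta$, dividing by $s^2$, and letting $s\downarrow0$ gives $\langle\Hess h(\bar y)[\eta],\eta\rangle\ge2\alpha c_1^2$. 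So $\beta=2\alpha c_1^2$ works.

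The main obstacle is justifying the retraction facts when $\M$ is only $C^2$. The projection retraction is then only $C^1$ in general, which would break the Hessian identity. I would handle this by avoiding a second-order retraction altogether. The plan is to work with the chart $\psi(\eta)=\bar y+\eta+\nu(\eta)$, which writes $\M$ as a graph over $\T_{\bar y}\M$ with $\nu(\eta)\in\rmN_{\bar y}\M$, $\nu\in C^2$, $\nu(0)=0$, and $D\nu(0)=0$. Then $\nabla^2(h\circ\psi)(0)[\eta,\eta]=\langle\nabla^2\bar h\,\eta,\eta\rangle+\langle\nabla\bar h,D^2\nu(0)[\eta,\eta]\rangle$, where $\bar h$ is a $C^2$ extension of $h$. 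If $h$ is only intrinsically $C^2$, I would first pass to such an extension, and the argument only needs this at a critical point. Since $D^2\nu(0)[\eta,\eta]$ is normal, the last term equals the Weingarten term, and at a critical point the combination coincides with $\langle\Hess h(\bar y)[\eta],\eta\rangle$ by the standard formula. This shows $\nabla^2(h\circ\psi)(0)=\Hess h(\bar y)$ whenever $\grad h(\bar y)=0$, which is the only case both directions use. The implicit function theorem gives $\psi$ as a $C^2$ chart, and $\|\psi(\eta)-\bar y\|\asymp\|\eta\|$ is immediate.
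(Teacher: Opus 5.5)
Your proof is correct, but it takes a different route from the paper's.

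\textbf{The paper's route.} For necessity, the paper restricts the quadratic-growth inequality to geodesics $\gamma(s)=\Exp_{\bar y}(s\eta)$. It reads off $\langle\eta,\Hess h(\bar y)[\eta]\rangle$ from the second-order expansion along $\gamma$, and uses the crude bound $\|\gamma(s)-\bar y\|\ge\frac12|s|\|\eta\|$, which gives $\beta=\alpha/2$. For sufficiency, it takes an arbitrary chart. At a critical point the coordinate Hessian is congruent to the Riemannian Hessian, and $\phi^{-1}$ is locally bi-Lipschitz.

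\textbf{Your route.} You use a single object for both directions: the tangent-space graph parametrization $\psi(\eta)=\bar y+\eta+\nu(\eta)$. The curve $t\mapsto\psi(t\eta)$ has velocity $\eta$ and normal acceleration $D^2\nu(0)[\eta,\eta]$ at $t=0$. So $\psi$ acts as a second-order retraction at the single point $\bar y$, and $\nabla^2(h\circ\psi)(0)=\Hess h(\bar y)$ holds exactly. This is true even without $\grad h(\bar y)=0$, although restricting to the critical case is harmless. Orthogonality of $\eta$ and $\nu(\eta)$ gives the sharp comparison $\|\eta\|\le\|\psi(\eta)-\bar y\|\le(1+o(1))\|\eta\|$, so necessity yields $\beta=2\alpha$ rather than $\alpha/2$.

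\textbf{What each route buys.}
\begin{itemize}
\item Yours works cleanly at the stated $C^2$ regularity, since $\psi$ comes from the inverse function theorem and is $C^2$. You were right to drop the metric-projection retraction, which loses a derivative.
\item The paper's claim that the exponential map of a merely $C^2$ manifold is ``well-defined and smooth'' is an overstatement: the geodesic equation then has only continuous coefficients. Its necessity argument really needs just one $C^2$ curve through $\bar y$ with velocity $\eta$ and zero initial intrinsic acceleration, and your $t\mapsto\psi(t\eta)$ supplies exactly that.
\item The paper's version is shorter and more intrinsic, at the cost of leaning on these smooth-case facts.
\end{itemize}

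One minor slip: in your necessity paragraph it is $0$, not $\bar y$, that is a local minimizer of $\hat h$.
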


	\begin{proof}
			The proof follows by applying the quadratic-growth inequality
			along geodesics through $\bar y$; see
			Appendix~\ref{app:proof-QG-implies-PDHess}.
	\end{proof}

		We now formalize the local behavior of the solution mapping  $Y_t^\sharp(X)$ associated with the structured
	tangent  subproblem~\eqref{eq:tangent_sub_supp}. The result can be viewed as a
	Riemannian analogue of the parametric strong-minimizer theory developed for Euclidean
	problems in \cite{lewis2002active,bareilles2023newton}, tailored here to the ManPG subproblem.
	The transversality condition \eqref{eq:transv-Mg-StDelta} is equivalent to the LICQ condition
	for the two equality constraints $\M_g(\bar X)$ and $\St_{\Delta}$. Consequently, an
	alternative route is to invoke Robinson's strong regularity for parametric KKT systems
	\cite[Thm.~2.1]{robinson1980strongly} (see also the discussion in \cite{lewis2002active}).

In this work, we instead apply a manifold implicit-function argument to keep the proof
self-contained. We also allow the ManPG stepsize $t$ to vary over a
compact interval, so that the resulting local smoothness and identification statements
apply uniformly to varying-step tangent proximal mappings.

\begin{lemma}\label{lem:Ysharp-smooth}
Assume the hypotheses of Lemma~\ref{lem:incidence-Tcap} and the nondegeneracy condition \eqref{eq:ND} at $X^*$.
Let \(\mathcal I=[t_{\min},t_{\max}]\subset(0,\infty)\) be any compact interval. Then there exist neighborhoods
$\mathcal U_1\ni X^*$ in $\St_\Delta(n,r)$ and
$\mathcal V_1\ni X^*$ in $\mathbb R^{n\times r}$ such that, for every
$(X,t)\in\mathcal U_1\times\mathcal I$, the problem
\[
    \min_{Y\in\T_{\cap}(X)\cap\mathcal V_1}\rho_t(X,Y)
\]
admits a unique strong local minimizer $Y_t^\sharp(X)\in\mathcal V_1$.
Moreover, the mapping
\[
    (X,t)\mapsto Y_t^\sharp(X)
\]
is of class $C^1$ on $\mathcal U_1\times\mathcal I$.
\end{lemma}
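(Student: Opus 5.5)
The plan is to reduce the statement to the classical implicit function theorem. I would first build a smooth, $X$-dependent parametrization of the slices $\T_\cap(X)$ near $X^*$. Then I would write the first-order condition of the restricted problem in these coordinates and check that its Jacobian is invertible at $(X^*,t,0)$. Positive definiteness of that Jacobian comes from the strong-criticality discussion preceding Proposition~\ref{lem:QG-implies-PDHess}.

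\textbf{Step 1 (uniform local coordinates on the slices).} Let $h:\R^{n\times r}\to\R^{p}$ be the local defining function of $\M_g(X^*)$ from Assumption~\ref{assump:finite-active-manifolds}. Consider
\[
\Theta(X,Y):=\bigl(h(Y),\;X^\top (Y-X)+(Y-X)^\top X\bigr)\in\R^p\times\mathbb S^r ,
\]
so that $\T_\cap(X)=\{Y:\Theta(X,Y)=0\}$ near $Y^*=X^*$. At $(X^*,X^*)$ the $Y$-derivative of $\Theta$ is surjective. This is precisely the assumed transversality $\rmN_{X^*}\M_g(X^*)\cap\rmN_{X^*}\St_\Delta(n,r)=\{0\}$, which is the same fact underlying Lemma~\ref{lem:incidence-Tcap}. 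Let $W$ be an orthonormal basis of $\T_{X^*}\M_g(X^*)\cap\T_{X^*}\St_\Delta(n,r)$, of dimension $m$, and let $N$ span its orthogonal complement. Write $Y=X^*+Wz+Nw$ and apply the implicit function theorem to $\Theta(X,X^*+Wz+Nw)=0$ in the unknown $w$. This gives a $C^2$ map $w(X,z)$ and hence
\[
\Psi(X,z):=X^*+Wz+Nw(X,z)\in\T_\cap(X),
\]
defined for $X$ near $X^*$ in $\St_\Delta(n,r)$ and $z$ near $0$. For each fixed $X$, $\Psi(X,\cdot)$ is a diffeomorphism onto $\T_\cap(X)\cap\mathcal V_0$, and $\Psi(X^*,0)=X^*$.

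\textbf{Step 2 (reduced problem and its Jacobian).} By $C^2$ partial smoothness, $g$ agrees on $\M_g(X^*)$ near $X^*$ with a $C^2$ function $\tilde g$. Set $\phi(X,t,z):=\rho_t(X,\Psi(X,z))$ with $g$ replaced by $\tilde g$, and define $G(X,t,z):=\nabla_z\phi(X,t,z)$; this map is $C^1$ in all variables.

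At $X=X^*$, on the affine tangent slice we have $\langle\nabla f(X^*),Y-X^*\rangle=\langle\grad f(X^*),Y-X^*\rangle$. Hence $\rho_t(X^*,\cdot)$ equals $\varphi_{X^*}$ up to a constant. Since $X^*$ minimizes $\varphi_{X^*}$ over the slice for every $t$ (Lemma~\ref{lem:tangt_prox_optcond}), we get $G(X^*,t,0)=0$ for all $t\in\mathcal I$.

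Next, $\varphi_{X^*}$ has quadratic growth with constant $1/(2t)\ge 1/(2t_{\max})$ on $\T_\cap(X^*)$. By Proposition~\ref{lem:QG-implies-PDHess}, its Riemannian Hessian on $\T_\cap(X^*)$ is positive definite. Because the gradient vanishes at this point, the coordinate Hessian $\partial_zG(X^*,t,0)=\rmD_z\Psi^\top\,\Hess\,\rmD_z\Psi$ has no curvature term. It is therefore positive definite, with a lower bound uniform in $t\in\mathcal I$.

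\textbf{Step 3 (IFT, uniformity, uniqueness).} Apply the implicit function theorem to $G(X,t,z)=0$ around each $(X^*,t_0,0)$ with $t_0\in\mathcal I$, treating $(X,t)$ as parameters. Cover the compact interval $\mathcal I$ by finitely many such neighborhoods; uniqueness of the implicit solution lets the local solutions patch together. This yields a $C^1$ map $z(X,t)$ on $\mathcal U_1\times\mathcal I$, and I set $Y_t^\sharp(X):=\Psi(X,z(X,t))$.

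For uniqueness and strong minimality, I would shrink the neighborhoods so that $\nabla_z^2\phi\succeq(\beta/2)I$ on a fixed ball $\|z\|\le\epsilon$ for all $(X,t)\in\mathcal U_1\times\mathcal I$, and define $\mathcal V_1:=\Psi(\cdot,\{\|z\|<\epsilon\})$. Then $\phi(X,t,\cdot)$ is strongly convex on that ball, so $z(X,t)$ is its unique minimizer there. Transported through $\Psi$, this means $Y_t^\sharp(X)$ is the unique, and strong, local minimizer of $\rho_t(X,\cdot)$ on $\T_\cap(X)\cap\mathcal V_1$.

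\textbf{Main obstacle.} The main obstacle is Step 1 together with the uniformity in Step 3. The parametrization must depend smoothly on $X$, and the neighborhoods $\mathcal U_1,\mathcal V_1$ must be chosen once for all $t\in\mathcal I$. The key point is that $\mathcal V_1$ must be independent of $X$, which requires care when the $z$-ball is pushed through $\Psi(X,\cdot)$. My first attempt would be to define $\mathcal V_1$ as an ambient ball around $X^*$. I would then use the uniform bi-Lipschitz property of $\Psi(X,\cdot)$ to guarantee that its intersection with $\T_\cap(X)$ lies inside the strongly convex region.
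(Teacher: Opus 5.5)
Your proposal is correct and follows essentially the same route as the paper. Both arguments apply an implicit-function argument to the first-order condition of the subproblem restricted to $\T_\cap(X)$, obtain invertibility from the quadratic growth of $\rho_t(X^*,\cdot)$ via Proposition~\ref{lem:QG-implies-PDHess}, and get uniformity in $t$ from compactness of $\mathcal I$; your explicit graph chart $\Psi(X,z)$ is simply a concrete version of the incidence manifold of Lemma~\ref{lem:incidence-Tcap}, which the paper uses abstractly. The obstacle you flag is harmless: on the graph one has $z=W^\top(Y-X^*)$, so the cylinder $\{X^*+Wz+Nw:\ \|z\|<\epsilon,\ w\in\mathcal C\}$, with $\mathcal C$ the neighborhood in which the implicit solution $w(X,z)$ is unique, is an $X$-independent choice of $\mathcal V_1$ satisfying $\T_\cap(X)\cap\mathcal V_1=\Psi(X,\{\|z\|<\epsilon\})$.
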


\begin{proof}
By Lemma~\ref{lem:incidence-Tcap}, there exist neighborhoods
$\mathcal U_0\subset\St_\Delta(n,r)$ of $X^*$ and
$\mathcal V_0\subset\mathbb R^{n\times r}$ of $X^*$ such that
\[
    \mathcal P_0
    :=
    \{(X,Y)\in\mathcal U_0\times\mathcal V_0:
      Y\in\T_{\cap}(X)\}
\]
is a smooth embedded submanifold. Consider the parameterized family
\[
    \min_{Y\in\T_{\cap}(X)\cap\mathcal V_0}\rho_t(X,Y),
    \qquad
    (X,t)\in\mathcal U_0\times\mathcal I .
\]
Its first-order condition can be written on
$\mathcal P_0\times\mathcal I$ as
\[
    G(X,t,Y)
    :=
    \grad_{\T_{\cap}(X)\cap\mathcal V_0}\rho_t(X,\cdot)(Y)
    =
    0 .
\]
Since $t$ is bounded away from zero, $\rho_t$ and $G$ depend smoothly on
$(X,t,Y)$.

At $X=X^*$, the point $Y=X^*$ solves the tangent proximal subproblem for every
$t\in\mathcal I$. For each fixed $t$, the function
$Y\mapsto \rho_t(X^*,Y)$ is strongly convex on the affine tangent slice
$X^*+\T_{X^*}\St_\Delta(n,r)$, and hence its restriction to
$\T_\cap(X^*)$ has quadratic growth at $X^*$. Since $g$ is smooth along the
active manifold, the restricted objective is smooth on $\T_\cap(X^*)$.
Therefore Proposition~\ref{lem:QG-implies-PDHess} gives the positive
definiteness of the restricted Riemannian Hessian at $X^*$ for each
$t\in\mathcal I$. The restricted Hessian depends continuously on $t$, and the
compactness of $\mathcal I$ yields a uniform positive lower bound.

Applying the parameterized implicit function theorem and using compactness of
$\mathcal I$, there exist neighborhoods
$\mathcal U_1\subset\mathcal U_0$ and
$\mathcal V_1\subset\mathcal V_0$, independent of $t\in\mathcal I$, and a
unique $C^1$ mapping
\[
    (X,t)\mapsto Y_t^\sharp(X)
\]
satisfying
\[
    G\bigl(X,t,Y_t^\sharp(X)\bigr)=0,
    \qquad
    (X,t)\in\mathcal U_1\times\mathcal I .
\]
By uniform positive definiteness and continuity, after shrinking
$\mathcal U_1$ and $\mathcal V_1$ if necessary,
$Y_t^\sharp(X)$ is the unique strong local minimizer of
$\rho_t(X,\cdot)$ on $\T_\cap(X)\cap\mathcal V_1$ for every
$(X,t)\in\mathcal U_1\times\mathcal I$.
\end{proof}

	To complete the identification argument, we will need a separation criterion for points lying outside the relative interior of a convex set. The following is the standard relative-interior separation criterion;
	see~\cite[Theorem~2.39 and Exercise~2.45(e)]{Rockafellar2009}.
	\begin{lemma}\label{lem:ri-separation}
		Let $\E$ be a finite-dimensional Euclidean space and let $C\subset \E$ be a nonempty
		closed convex set with $b\in \aff C$. Then $b\notin \ri\,C$ if and only if there exists
		a nonzero vector $h\in \para( C)$ such that
		\[
		\inf_{z\in C}\ \langle h,z\rangle\ \ge\ \inp{h}{b} .
		\]
	\end{lemma}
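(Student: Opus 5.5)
The plan is to reduce the statement to the classical supporting-hyperplane theorem, working entirely inside the subspace $L:=\para(C)$. The point of this reduction is that it forces the separating vector to lie in $L$. Since $b\in\aff C$, we have $\aff C=b+L$, so the translated set $D:=C-b$ is a nonempty closed convex subset of $L$ with $0\in\aff D=L$. Relative interior is interior relative to the affine hull, so $b\in\ri C$ if and only if $0\in\operatorname{int}_L D$, the interior of $D$ taken in the Euclidean space $L$ with the inherited inner product. The desired inequality $\inf_{z\in C}\langle h,z\rangle\ge\langle h,b\rangle$ is equivalent to $\langle h,w\rangle\ge 0$ for all $w\in D$. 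So it suffices to prove: $0\notin\operatorname{int}_L D$ if and only if there is a nonzero $h\in L$ with $\langle h,w\rangle\ge0$ for all $w\in D$.

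For the direction ``$\Leftarrow$'', argue by contradiction. Suppose such an $h\neq0$ exists and $0\in\operatorname{int}_L D$. Then, since $h\in L$, the point $w:=-\varepsilon h$ lies in $D$ for all sufficiently small $\varepsilon>0$. This gives $\langle h,w\rangle=-\varepsilon\|h\|^2<0$, contradicting the assumed inequality.

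For the direction ``$\Rightarrow$'', split into two cases according to whether $0\in D$.
\begin{itemize}
\item If $0\notin D$, let $p$ be the metric projection of $0$ onto the closed convex set $D$, which lies in $L$. Set $h:=p\neq0$, so $h\in L$. The projection inequality $\langle 0-p,w-p\rangle\le0$ for all $w\in D$ gives $\langle h,w\rangle\ge\|p\|^2>0$.
\item If $0\in D\setminus\operatorname{int}_L D$, then $0$ is a boundary point of $D$ relative to $L$. Choose points $u_k\in L\setminus D$ with $u_k\to0$. Apply the first case to $D-u_k$ to obtain unit vectors $h_k\in L$ with $\langle h_k,w-u_k\rangle\ge0$ for all $w\in D$. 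By compactness of the unit sphere in $L$, pass to a convergent subsequence $h_k\to h$. The limit satisfies $\|h\|=1$, $h\in L$ (as $L$ is closed), and $\langle h,w\rangle\ge0$ for all $w\in D$.
\end{itemize}

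The main point requiring care is the one that distinguishes this statement from ordinary separation in the ambient space $\E$. When $C$ is not full-dimensional, a naive separation of $b$ from $C$ in $\E$ could produce a vector with a component orthogonal to $L$. Such a vector would be useless here, since the statement requires $h\in\para(C)$. Carrying out both the projection and the limiting argument inside $L$ is exactly what guarantees that $h\in\para(C)$ and $h\neq0$. The hypothesis $b\in\aff C$ is what makes this reduction to $L$ legitimate.
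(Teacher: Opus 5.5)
Your proof is correct. The paper gives no proof of its own: it cites the lemma as the standard relative-interior separation criterion from Rockafellar--Wets (Theorem~2.39 and Exercise~2.45(e)). The natural reading of that citation goes through the proper-separation theorem. Since $\ri\{b\}\cap\ri C=\emptyset$ exactly when $b\notin\ri C$, one gets some $h\in\E$ with $\inf_C\langle h,\cdot\rangle\ge\langle h,b\rangle$, where $C$ and $b$ do not both lie in the separating hyperplane. One must then replace $h$ by its orthogonal projection $P_Lh$ onto $L=\para(C)$. This is legitimate because $b\in\aff C$ gives $\langle h,z-b\rangle=\langle P_Lh,z-b\rangle$ for all $z\in C$, and properness is exactly what rules out $P_Lh=0$. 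The converse direction in that route also needs $h\in\para(C)$ to make the separation proper; without that restriction the ``if'' direction fails, e.g.\ for $h$ orthogonal to a segment. The paper leaves these steps implicit.

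Your argument instead translates to $D=C-b\subset L$ and reproves the supporting-hyperplane theorem inside $L$, using the metric projection for $0\notin D$ and a compactness limit for relative boundary points. As a result, $h\in\para(C)$ and $h\neq0$ come out automatically, with no appeal to proper separation. The citation route is shorter because it leans on a known theorem. Yours is fully self-contained and shows exactly where $b\in\aff C$, closedness of $C$, and the restriction $h\in\para(C)$ are used.
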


	In addition, we will need a notion of convergence for subspaces of fixed dimension. The following definition recalls the standard Grassmannian convergence in terms of orthogonal projectors.
	\begin{definition}[Grassmannian convergence of subspaces]\label{def:grassmann-conv}
		Let $\E$ be a finite-dimensional Euclidean space and let $V_k,V\subset \E$
		be linear subspaces with $\dim V_k=\dim V$ for all $k$. We say that $V_k$ converges to $V$
		\emph{in the Grassmannian sense} if the orthogonal projectors onto these subspaces converge
		in operator norm, i.e.,
		\[
		\|P_{V_k}-P_V\|\ \to\ 0,
		\]
		where $P_{V_k}$ and $P_V$ denote the orthogonal projections onto $V_k$ and $V$, respectively.
	\end{definition}

	\subsection{Identification theorem}

We are now ready to show that the local minimizer $Y_t^\sharp(X)$ of the
restricted problem is in fact the minimizer of the ManPG subproblem
\eqref{eq:tangent_problem}, uniformly for the stepsize
$t\in\mathcal I=[t_{\min},t_{\max}]$. This yields the local identification
property of the ManPG tangent proximal mapping with varying stepsizes.

\begin{theorem}\label{thm:id-strong-crit}
Let $g$ be a convex function satisfying
Assumption~\ref{assump:finite-active-manifolds}. Let
$\Delta\in\mathscr P$, and let $X^*\in\St_\Delta(n,r)$ be a nondegenerate
critical point of \eqref{prob:P-Delta}. Assume that 
\(\rmN_{X^*}\M_g(X^*)\cap\rmN_{X^*}\St_\Delta(n,r)=\{0\}\). Let
$\mathcal I=[t_{\min},t_{\max}]\subset(0,\infty)$ be the
compact stepsize interval in Lemma~\ref{lem:Ysharp-smooth}. Then there exists
a neighborhood $\bar{\mathcal U}\subset\St_\Delta(n,r)$ of $X^*$ such that,
for all $X\in\bar{\mathcal U}$ and $t\in\mathcal I$,
\[
    Y_t^\sharp(X)=\mathcal T_t(X),
    \qquad
    \M_g(\mathcal T_t(X))=\M_g(X^*).
\]
\end{theorem}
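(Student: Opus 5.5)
The plan is to show that the restricted minimizer $Y_t^\sharp(X)$ from Lemma~\ref{lem:Ysharp-smooth} satisfies the full optimality condition of the strongly convex subproblem \eqref{eq:tangent_problem}. Uniqueness of the minimizer of \eqref{eq:tangent_problem} then forces $Y_t^\sharp(X)=\mathcal T_t(X)$. Identification of the active manifold comes from continuity, together with the local stability in Assumption~\ref{assump:finite-active-manifolds}.

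\textbf{Step 1 (stationarity on the slice).} Write $Y=Y_t^\sharp(X)$ for $(X,t)\in\mathcal U_1\times\mathcal I$. Since $(X,t)\mapsto Y_t^\sharp(X)$ is $C^1$ and $Y_t^\sharp(X^*)=X^*$ for every $t$, uniform continuity on the compact set $\mathcal I$ lets us shrink $\mathcal U_1$ so that $Y$ stays in any prescribed neighborhood of $X^*$, uniformly in $t$. Then $Y\in\M_g(X^*)\cap U$, where $U$ is the stability neighborhood, so $\M_g(Y)=\M_g(X^*)$ and $g$ is partly smooth at $Y$ relative to this manifold. First-order stationarity of $\rho_t(X,\cdot)$ on $\T_\cap(X)$ reads as follows: the vector $w:=\frac1t(Y-X)+\nabla f(X)+\nabla_{\M_g}g(Y)$, which uses any smooth extension of $g|_{\M_g}$, lies in $\rmN_Y\M_g+\rmN_X\St_\Delta$. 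This uses the transversality of the slice $X+\T_X\St_\Delta$ and $\M_g$, which is inherited near $X^*$ from the hypothesis through Lemma~\ref{lem:incidence-Tcap}. Projecting onto $\T_X\St_\Delta$, we obtain $0\in \frac1t(Y-X)+\grad f(X)+\Proj_{\T_X\St_\Delta}\bigl(\aff\partial g(Y)\bigr)$. This step uses sharpness, in the form $\para\partial g(Y)=\rmN_Y\M_g$ from \eqref{eq:normal_space_eq_to_parallel_g}. Hence there is a unique candidate $s(X,t)$ in the projected affine hull with $0=\frac1t(Y-X)+\grad f(X)+s(X,t)$.

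\textbf{Step 2 (relative interior, the main obstacle).} Set $C_{X,t}:=\Proj_{\T_X\St_\Delta}\partial g(Y_t^\sharp(X))$. We must show $-\frac1t(Y-X)-\grad f(X)\in C_{X,t}$, and by Lemma~\ref{lem:tangt_prox_optcond} this suffices. We argue by contradiction: suppose there are sequences $(X_k,t_k)\to(X^*,\bar t)$ along which membership fails. The points $b_k:=-\frac1{t_k}(Y_k-X_k)-\grad f(X_k)$ lie in $\aff C_{X_k,t_k}$ and are not in its relative interior. Lemma~\ref{lem:ri-separation} then gives unit vectors $h_k\in\para C_{X_k,t_k}$ with $\inf_{z\in C_{X_k,t_k}}\langle h_k,z\rangle\ge\langle h_k,b_k\rangle$. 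Transversality implies that $\Proj_{\T_X\St_\Delta}$ is injective on $\rmN_Y\M_g$ near $X^*$, so the parallel subspaces $\Proj_{\T_{X_k}}\rmN_{Y_k}\M_g$ have constant dimension and converge in the Grassmannian sense (Definition~\ref{def:grassmann-conv}) to $\Proj_{\T_{X^*}}\rmN_{X^*}\M_g$. Passing to a subsequence, $h_k\to h\ne0$ in the limit subspace. Inner semicontinuity of $\partial g$ along $\M_g$ (the sub-continuity in Definition~\ref{def:partial_smoothness}) approximates every $z\in C_{X^*,\bar t}$ by elements $z_k\in C_{X_k,t_k}$. Since $b_k\to-\grad f(X^*)$, the limit gives $\inf_{z\in C_{X^*,\bar t}}\langle h,z\rangle\ge\langle h,-\grad f(X^*)\rangle$. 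By Lemma~\ref{lem:ri-separation} this contradicts \eqref{eq:ND}. The delicate points are the constant dimension of the projected normal spaces and the use of inner semicontinuity, which is why a shrunken neighborhood $\bar{\mathcal U}$ is needed.

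\textbf{Step 3 (conclusion).} On $\bar{\mathcal U}$, Step 2 together with strong convexity yields $\mathcal T_t(X)=Y_t^\sharp(X)$. Step 1 gives $Y_t^\sharp(X)\in\M_g(X^*)\cap U$, so local stability yields $\M_g(\mathcal T_t(X))=\M_g(X^*)$ for all $X\in\bar{\mathcal U}$ and $t\in\mathcal I$.
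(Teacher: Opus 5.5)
Your proposal is correct and follows the paper's proof essentially step for step: the restricted first-order condition together with transversality and sharpness puts $b_t(X)$ in $\aff\,\mathcal S_t(X)$; a contradiction argument via Lemma~\ref{lem:ri-separation} and Grassmannian convergence of the parallel subspaces then yields relative-interior membership by contradicting \eqref{eq:ND}; and strong convexity plus the local stability in Assumption~\ref{assump:finite-active-manifolds} finish the proof. Two of your steps are more explicit than the paper, which compresses them into the claim $\mathcal S_j\to\mathcal S^*$: you use inner semicontinuity to pass to the limit in the separation inequality, and you use injectivity of $\Proj_{\T_X\St_\Delta}$ on $\rmN_Y\M_g$ to get constant dimension.
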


\begin{proof}
Let $\mathcal U_1$ and $\mathcal V_1$ be the neighborhoods given by
Lemma~\ref{lem:Ysharp-smooth}. Shrinking them if necessary, we may assume that
\[
    Y_t^\sharp(X)\in\M_g(X^*),
    \qquad
    \forall\,(X,t)\in\mathcal U_1\times\mathcal I .
\]
Define
\[
    b_t(X)
    :=
    -\frac{1}{t}\bigl(Y_t^\sharp(X)-X\bigr)-\grad f(X),
    \qquad
    \mathcal S_t(X)
    :=
    \Proj_{\T_X\St_\Delta}
    \partial g\bigl(Y_t^\sharp(X)\bigr).
\]
Since \eqref{eq:tangent_problem} is strongly convex, it suffices to prove that
\[
    b_t(X)\in\ri\,\mathcal S_t(X),
    \qquad
    \forall\,(X,t)\in\bar{\mathcal U}\times\mathcal I ,
\]
for some neighborhood $\bar{\mathcal U}$ of $X^*$.

Suppose, to the contrary, that no such neighborhood exists. Then there are
sequences $X_j\to X^*$ and $t_j\in\mathcal I$ such that
\[
    b_{t_j}(X_j)\notin\ri\,\mathcal S_{t_j}(X_j),
    \qquad \forall\,j .
\]
By compactness of $\mathcal I$, after taking a subsequence, one has
$t_j\to\bar t\in\mathcal I$. Set
\[
    Y_j:=Y_{t_j}^\sharp(X_j),
    \qquad
    b_j:=b_{t_j}(X_j),
    \qquad
    \mathcal S_j:=\mathcal S_{t_j}(X_j).
\]
The parameterized smoothness in Lemma~\ref{lem:Ysharp-smooth} gives
$Y_j\to X^*$. Moreover, since $Y_{\bar t}^\sharp(X^*)=X^*$,
\[
    b_j\to
    b^*:=-\grad f(X^*),
    \qquad
    \mathcal S_j\to
    \mathcal S^*
    :=
    \Proj_{\T_{X^*}\St_\Delta}\partial g(X^*).
\]

We now verify the affine-hull inclusion directly. For all large $j$, set
\[
    T_j:=\T_{X_j}\St_\Delta(n,r),
    \qquad
    L_j:=\T_{Y_j}\M_g(X^*).
\]
Since $Y_j$ is the local minimizer of the restricted problem on
$\M_g(X^*)\cap(X_j+T_j)$, the first-order condition gives, for any
$G_j\in\partial g(Y_j)$,
\[
    \left\langle
    \frac{1}{t_j}(Y_j-X_j)+\grad f(X_j)+G_j,\xi
    \right\rangle=0,
    \qquad
    \forall\,\xi\in L_j\cap T_j .
\]
Using the definition of $b_j$ and $b_j\in T_j$, this is equivalent to
\[
    b_j-\Proj_{T_j}G_j\in (L_j\cap T_j)^\perp\cap T_j .
\]
By the transversality condition, for all large $j$,
\[
    (L_j\cap T_j)^\perp\cap T_j=\Proj_{T_j}(L_j^\perp).
\]
Partial smoothness gives
$L_j^\perp=\rmN_{Y_j}\M_g(X^*)=\para\partial g(Y_j)$. Therefore
\[
    b_j\in \Proj_{T_j}G_j+\Proj_{T_j}\para\partial g(Y_j)
    =
    \aff\bigl(\Proj_{T_j}\partial g(Y_j)\bigr)
    =
    \aff\mathcal S_j
\]
for all sufficiently large $j$.
Since $b_j\notin\ri\,\mathcal S_j$, Lemma~\ref{lem:ri-separation} yields unit
vectors $h_j\in\para(\mathcal S_j)$ satisfying
\[
    \inf_{z\in\mathcal S_j}\langle h_j,z\rangle
    \ge
    \langle h_j,b_j\rangle .
\]
The partial smoothness of $g$, the parameterized continuity of
$Y_t^\sharp(X)$, and the transversality condition imply
\[
    \mathcal S_j\to\mathcal S^*,
    \qquad
    \para(\mathcal S_j)\to\para(\mathcal S^*),
\]
where the latter convergence is in the Grassmannian sense of Definition~\ref{def:grassmann-conv}.
Passing to a further subsequence if necessary, let
$h_j\to h^*\in\para(\mathcal S^*)$. Taking the limit in the separation
inequality gives
\[
    \inf_{z\in\mathcal S^*}\langle h^*,z\rangle
    \ge
    \langle h^*,b^*\rangle .
\]
Hence $b^*\notin\ri\,\mathcal S^*$, which contradicts the nondegeneracy
condition~\eqref{eq:ND} at $X^*$.

Therefore, after shrinking $\bar{\mathcal U}$ if necessary,
\[
    b_t(X)\in\ri\,\mathcal S_t(X),
    \qquad
    \forall\,(X,t)\in\bar{\mathcal U}\times\mathcal I .
\]
The strong convexity of \eqref{eq:tangent_problem} then implies
\[
    \mathcal T_t(X)=Y_t^\sharp(X),
    \qquad
    \forall\,(X,t)\in\bar{\mathcal U}\times\mathcal I .
\]
Since $Y_t^\sharp(X)\in\M_g(X^*)$,   by the local stability condition in
Assumption~\ref{assump:finite-active-manifolds}, after shrinking \(\bar U\) if
necessary, we have 
\[
    \M_g(\mathcal T_t(X))
    =
    \M_g\bigl(Y_t^\sharp(X)\bigr)
    =
    \M_g(X^*),
\]
which  completes the proof.

\end{proof}

By Proposition~\ref{lem:generic-transv}, the transversality assumption in Theorem~\ref{thm:id-strong-crit} holds for almost every \(\Delta\in\mathscr P\) under Assumption~\ref{assump:finite-active-manifolds} and \eqref{eq:oblique-transv-condition}; it also covers the unperturbed case \(\Delta=0\) whenever the same transversality condition holds.

		\begin{remark}\label{rmk:identification}
    (i). If transversality holds automatically, the perturbation step is unnecessary.
This happens, for example, in the sphere case (see Proposition~\ref{prop:oblique-transversal})  or  \(g(X)=\|X\|_{2,1}\). Indeed, let
\(I=\{i\in[n]:\|\bar X_{i,:}\|_2\neq 0\}\). Around \(\bar X\), the active manifold of
\(g\) is
\[
    \mathcal M_g
    =
    \{Y\in\mathbb R^{n\times r}:Y_{I^c,:}=0,\ 
    \|Y_{i,:}\|_2\neq 0\ \text{for } i\in I\}.
\]
Thus,
\(
    \T_{\bar X}\mathcal M_g
    =
    \{Z\in\mathbb R^{n\times r}:Z_{I^c,:}=0\}.
\)
Since \(\bar X_{I^c,:}=0\), any matrix supported on \(I^c\) belongs to
\(\T_{\bar X}\mathrm{St}(n,r)\). Hence,
\(
    \T_{\bar X}\mathrm{St}(n,r)
    +
    \T_{\bar X}\mathcal M_g
    =
    \mathbb R^{n\times r}.
\) This will be discussed further for the  SPCA problem in section~\ref{sec:numerical_exp}.
\\
    (ii).
	The neighborhood $\bar{\mathcal U}$ is chosen as the intersection of several
	previously constructed neighborhoods so that all local properties required in the
	proof hold simultaneously. In particular, we use the following facts.

		\begin{enumerate}[label={(\arabic*)}]
			\item \textbf{Local manifold structure of the incidence set.}
			Under the transversality assumption in Theorem~\ref{thm:id-strong-crit},
			Lemma~\ref{lem:incidence-Tcap} provides neighborhoods $\mathcal U_0\ni X^*$ and
			$\mathcal V_0\ni Y^*$ such that the restricted incidence set $\mathcal I_0$ is a
			smooth embedded submanifold. In particular, for every $X\in\mathcal U_0$, the set 
		$\T_\cap(X)\cap\mathcal V_0$ is a smooth embedded submanifold.

		\item \textbf{Stability of the active-manifold geometry of $g$.}
		Since $g$ is convex and partly smooth at $X^*$ relative to $\M_g(X^*)$,
		there exists a neighborhood $\mathcal V_g\ni X^*$ such that
		\eqref{eq:normal_space_eq_to_parallel_g} holds for all
		$Y\in \mathcal V_g\cap \M_g(X^*)$.

		\item \textbf{Smooth local minimizer.}
		Lemma~\ref{lem:Ysharp-smooth} yields a neighborhood $\mathcal U_1\ni X^*$ such that
		the localized tangent subproblem admits a unique strong local minimizer
			 $Y_t^\sharp(X)\in \mathcal V_1$, and the mapping
			 $(X,t)\mapsto Y_t^\sharp(X)$ is $C^1$.
	\end{enumerate}
\end{remark}

Theorem~\ref{thm:id-strong-crit} verifies, for the perturbed problem~\eqref{prob:P-Delta}, the identification property required in \cite[Assumption~3.7]{si2023riemannian} under the transversality condition. We also note that the condition on $g$ can be extended beyond the convex setting, for instance to prox-regular functions; see \cite{bareilles2023newton}. An immediate consequence is that, once the iterates $X_k$ enter a neighborhood of a nondegenerate critical point $\bar X$, the tangent proximal point $\mathcal T_t(X_k)$ lies on the active manifold $\M_g(\bar X)$. This is the key difference from the Euclidean proximal-gradient method, where the proximal point is itself the next iterate \cite{liang2017activity}. For ManPG, however, the actual iterate is obtained by retracting the tangent step back to the Stiefel manifold, and standard retractions on the Stiefel manifold generally do not preserve the active-manifold structure. In particular, for sparse regularizers, the retracted iterate need not preserve the support pattern identified by $\mathcal T_t(X_k)$. This observation is precisely what motivates the algorithmic developments in Section~\ref{sec:alg_on_intersection}.

Lewis and Tian \cite{LewisTian2024} developed a general metric-space framework that yields identification results for manifold proximal point algorithms, including the Riemannian proximal point algorithm \cite{FerreiraOliveira2002}. Such methods require solving a proximal subproblem over the manifold, which may be as difficult as the original problem. In contrast, we establish identification for ManPG, whose tangent-space proximal subproblem is computationally more tractable.

We next state the finite identification property under the assumption that the sequence $\{X_k\}$ converges. This assumption is reasonable in light of the recent sequential convergence result of Li et al.~\cite{li2024proximal} for ManPG, obtained under a Kurdyka--\L ojasiewicz (K\L) assumption on certain Lyapunov function.

	\begin{corollary} \label{thm:manpg_tangent_identify}
		Under the same conditions of Theorem~\ref{thm:id-strong-crit}, suppose  \(t_k\in I\) for all \(k\) and the sequence $\{X_k\}_{k\ge 0}$ generated by ManPG satisfies $X_k \to   X^*$. Then there
		exists $K<\infty$ such that, for all $k\ge K$, one has
		\[
		\M_g(\mathcal{T}_{t_k}(X_k))= \mathcal{M}_g(X^*).
		\]
	\end{corollary}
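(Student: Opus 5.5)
This corollary is a direct consequence of Theorem~\ref{thm:id-strong-crit}, and the plan is simply to match its hypotheses along the tail of the sequence. First, I will apply Theorem~\ref{thm:id-strong-crit} at the nondegenerate critical point \(X^*\) with the compact stepsize interval \(\mathcal I=[t_{\min},t_{\max}]\). This gives a neighborhood \(\bar{\mathcal U}\subset\St_\Delta(n,r)\) of \(X^*\) such that
\[
\M_g(\mathcal T_t(X))=\M_g(X^*)
\qquad\text{for all } X\in\bar{\mathcal U},\ t\in\mathcal I .
\]
The key point is that \(\bar{\mathcal U}\) does not depend on \(t\in\mathcal I\). This uniformity was built into Lemma~\ref{lem:Ysharp-smooth} and into the contradiction argument of the theorem, where \(t_j\to\bar t\) was extracted by compactness.

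Second, I will check that the iterates lie in the set where the theorem applies. ManPG retracts onto the constraint manifold, so \(X_k\in\St_\Delta(n,r)\) for every \(k\); here the retraction is taken onto \(\St_\Delta(n,r)\), which coincides with \(\St(n,r)\) when \(\Delta=0\). The set \(\bar{\mathcal U}\) is open in \(\St_\Delta(n,r)\), so it has the form \(\bar{\mathcal U}=O\cap\St_\Delta(n,r)\) for some open \(O\subset\R^{n\times r}\) containing \(X^*\). Since \(X_k\to X^*\) in the ambient norm, there is a \(K\) with \(X_k\in O\), and hence \(X_k\in\bar{\mathcal U}\), for all \(k\ge K\).

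Third, I combine the two facts. For \(k\ge K\) we have \(X_k\in\bar{\mathcal U}\), and by hypothesis \(t_k\in\mathcal I\). The theorem therefore gives \(\mathcal T_{t_k}(X_k)=Y^\sharp_{t_k}(X_k)\) and \(\M_g(\mathcal T_{t_k}(X_k))=\M_g(X^*)\), which is the claim.

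The only step needing care is the uniformity in the stepsize. If the neighborhood were allowed to depend on \(t\), varying \(t_k\) could defeat the argument, and the hypothesis \(t_k\in\mathcal I\) with \(\mathcal I\) compact is exactly what rules this out. A secondary check is that ManPG iterates are feasible for \eqref{prob:P-Delta}, so that \(X_k\) lies in \(\St_\Delta(n,r)\) and the theorem's neighborhood, which is relative to that manifold, is the right one to use.
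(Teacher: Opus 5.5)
Your proposal is correct and is the same argument the paper relies on. The paper gives no separate proof: it treats the corollary as an immediate consequence of Theorem~\ref{thm:id-strong-crit}. Its neighborhood $\bar{\mathcal U}$ works uniformly for all $t\in\mathcal I$, so $X_k\to X^*$ within $\St_\Delta(n,r)$ together with $t_k\in\mathcal I$ places every tail iterate in the theorem's range, exactly as you argue.
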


%% file: shared/section_geometry_intersection.tex
\section{Geometry of the intersection of the Stiefel manifold and sparse manifolds}\label{sec:intersection_property}

In this section, we restrict attention to sparse regularizers, with particular emphasis on
\(g(X)=\lambda\|X\|_1\).
For such models, the active manifold $\M_g(Y)$ is determined by a sparse pattern and admits an explicit description; see \eqref{eq:manifold_2}. Our goal here is to study the local geometry of its intersection with $\St_\Delta(n,r)$, which will be the basis for the Newton correction developed in  section~\ref{sec:alg_on_intersection}.

The generic transversality result of the previous section provides useful geometric intuition: for almost every perturbation $\Delta\in\mathscr P$, the intersection
\[\overline{\M}_\Delta(Y):=\M_g(Y)\cap \St_\Delta(n,r)\]
is an embedded submanifold of dimension $\dim(\M_g(Y))-\frac{r(r+1)}{2}$.  For the algorithmic construction, however, this result serves only as geometric intuition.

Let $X_k\in \St_\Delta(n,r)$ be an iterate of ManPG, and write
\(Y_k=\mathcal T_t(X_k)\) and \(V_k=Y_k-X_k\).
A natural idea, inspired by \cite{bareilles2023newton}, is to exploit the identified active manifold $\M_g(Y_k)$ and perform a second-order step on a smooth intersection manifold. The difficulty is that the most obvious candidate,
\(\M_g(Y_k)\cap \St_\Delta(n,r)\),
may be empty. A natural alternative is to replace $\St_{\Delta}(n,r)$ by its affine tangent slice and
consider $\M_g(Y_k)\cap\bigl(X_k+\T_{X_k}\St_{\Delta}(n,r)\bigr)$. This approach is not
satisfactory for our purpose. Indeed, a point in the affine tangent slice has the form
$X_k+H$ with $H\in\T_{X_k}\St_\Delta(n,r)$, and
\[(X_k+H)^\top(X_k+H)=(I_r+\Delta)+H^\top H.\]
Thus the deviation from the original Stiefel constraint is given by $H^\top H$. Although this
is a second-order term, it is not tied to the actual ManPG step and is therefore not
uniformly controlled.  

Instead, we exploit the fact that the tangent proximal point $Y_k=X_k+V_k$ itself lies on a
nearby Stiefel manifold. Define
\[
\Delta_k:=\Delta+V_k^\top V_k,
\qquad
\St_{\Delta_k}(n,r):=\{Y\in\R^{n\times r}:Y^\top Y=I_r+\Delta_k\}.
\]
Since $V_k\in\T_{X_k}\St_\Delta(n,r)$, we have
\(Y_k^\top Y_k=(X_k+V_k)^\top(X_k+V_k)=I_r+\Delta+V_k^\top V_k=I_r+\Delta_k\),
and hence $Y_k\in\St_{\Delta_k}(n,r)$. In contrast to the affine tangent slice, the mismatch
between $\St_{\Delta_k}(n,r)$ and the original constraint is exactly the second-order quantity
$V_k^\top V_k$, which is generated by the ManPG step and tends to zero along the iteration.
This makes $\St_{\Delta_k}(n,r)$ a much more suitable local model for the constrained
geometry.  Nevertheless, $\Delta_k$ is generated by the iteration and need not
belong to the generic set $\mathscr P_{\rm gen}$, even when the original perturbation
$\Delta$ does. Thus the generic transversality result from the previous section cannot
be used directly, and a clean-intersection analysis is needed.

This leads to the intersection candidate
\begin{equation}\label{eq:M_k}
	\overline{\M}_k:=\M_g(Y_k)\cap \St_{\Delta_k}(n,r).
\end{equation}
By construction, $\overline{\M}_k$ is nonempty. The goal of this section is to characterize the local manifold structure of $\overline{\M}_k$ and the geometric properties needed later for optimization on this intersection. For convenience, we also use the parameterized notation
\(\overline{\M}_\Delta(Y):=\M_g(Y)\cap \St_\Delta(n,r)\),
where $(Y,\Delta)$ are viewed as fixed parameters.

When $r=1$, the transversality condition between $\M_g(Y)$ and $\St_\Delta(n,r)$ always holds. For $r\ge 2$, however, transversality may fail, and we instead verify a constant-dimension condition under mild assumptions on the support pattern. 
  Under this condition, $\overline{\M}_\Delta(Y)$ is a smooth embedded submanifold in a neighborhood of the reference point; see Proposition~\ref{prop:verify-intersection}.  The possible failure is already visible in the unperturbed case $\Delta=0$: when the supports in $Y$ are disjoint, one typically has
\(\rmN_X\St_\Delta(n,r)\cap \rmN_X\M_g(Y)\neq\{0\}\),
so transversality fails; see Proposition~\ref{lem:M_1 and M_2 are not transverse}.

In the low-dimensional cases, the constant-dimension condition can be verified directly from the normal-space characterization. In particular, when $r=2$, this yields the local smoothness of $\overline{\M}_\Delta(Y)$ deterministically. When $r=3$, the unperturbed case $\Delta=0$ is still deterministic, while nonzero zero-diagonal perturbations require the one-column-witness condition included in the \(2/3\)-row-cover assumption below. The same support-level condition is then used for all \(r\ge3\), together with a probabilistic estimate in the regime \(n\gg r\).

\subsection{Transversality: when to hold and how to fix it}
We begin with the oblique transversality condition appearing in Proposition~\ref{lem:generic-transv}.  

 \begin{proposition}\label{prop:oblique-transversal}
 Fix a sparse pattern
\[
\mathcal S\subset [n]\times[r],
\qquad
\M_{\mathcal S}:=\{Y\in\R^{n\times r}: Y_{\mathcal S^c}=0\}.
\] Let $X\in \Ob(n,r)\cap \M_{\mathcal S}$. Then it follows that
\[
\rmN_X\Ob(n,r)\cap \rmN_X\M_{\mathcal S}=\{0\}.
\]
\end{proposition}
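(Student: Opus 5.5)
The plan is to compute both normal spaces explicitly and check that a common element must vanish. First, describe $\Ob(n,r)$ as the level set of $\psi(X):=\diag(X^\top X)-\mathbf 1$, whose $j$th component is $\|X_{:,j}\|^2-1$. Its differential is $H\mapsto (2\langle X_{:,j},H_{:,j}\rangle)_{j=1}^r$, and it is surjective at every $X\in\Ob(n,r)$ because each column is nonzero. Hence
\[
\rmN_X\Ob(n,r)=\{X D: D=\diag(d_1,\dots,d_r)\},
\]
so a normal vector scales each column of $X$ separately. Second, since $\M_{\mathcal S}$ is a linear subspace, $\rmN_X\M_{\mathcal S}=\{Z: Z_{\mathcal S}=0\}$, i.e.\ the matrices supported on $\mathcal S^c$.

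Now take $Z=XD$ in the intersection. Its $j$th column is $d_jX_{:,j}$. Because $X\in\M_{\mathcal S}$, the support of $X_{:,j}$ lies in $\{i:(i,j)\in\mathcal S\}$. On the other hand, $Z\in\rmN_X\M_{\mathcal S}$ forces $Z$ to vanish on $\mathcal S$. So $d_jX_{:,j}$ is zero on the entries in $\mathcal S$ and also zero off them, hence $d_jX_{:,j}=0$. Since $\|X_{:,j}\|=1$, this gives $d_j=0$ for every $j$, and therefore $Z=0$.

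There is essentially no obstacle here. The only point needing care is the first step: the oblique normal space must be justified through surjectivity of $D\psi(X)$ together with the kernel description \eqref{eq:tangent-kernel}, so that $\rmN_X\Ob$ is exactly the range of the adjoint, namely diagonal column scalings. The argument works because the oblique constraints act column by column while $\M_{\mathcal S}$ is a coordinate subspace containing $X$. This is exactly what fails for $\St$, whose normal directions $X\Lambda$ mix columns through the off-diagonal entries of $\Lambda$.
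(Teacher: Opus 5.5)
Your proof is correct and follows essentially the same route as the paper: both identify $\rmN_X\Ob(n,r)=\{X\Lambda:\Lambda\ \text{diagonal}\}$ and $\rmN_X\M_{\mathcal S}=\{Z:Z_{\mathcal S}=0\}$, then note that $X\Lambda$ inherits the support of $X$ and therefore must vanish. The paper stops at $Z=0$ and does not deduce $d_j=0$ from $\|X_{:,j}\|=1$; that extra step is harmless but unnecessary. The paper also states the oblique normal space without the surjectivity justification you supply.
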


\begin{proof}
The normal space of the oblique manifold is given by 
\[
\rmN_X\Ob(n,r)
=
\{X\Lambda:\ \Lambda\in\mathbb D^r\},
\]
where $\mathbb D^r$ denotes the set of diagonal $r\times r$ matrices. On the other hand, we have 
\[
\rmN_X\M_{\mathcal S}=\{Z\in\R^{n\times r}: Z_{\mathcal S}=0\}.
\]

Now let $Z\in \rmN_X\Ob(n,r)\cap \rmN_X\M_{\mathcal S}$. Then
$Z=X\Lambda$ for some $\Lambda\in\mathbb D^r.$
Since $X\in \M_{\mathcal S}$, we have $X_{\mathcal S^c}=0$, and hence
\(
Z_{\mathcal S^c}=(X\Lambda)_{\mathcal S^c}=0.
\)
But $Z\in \rmN_X\M_{\mathcal S}$ also implies
\(
Z_{\mathcal S}=0.
\)
Therefore $Z=0$, proving
$\rmN_X\Ob(n,r)\cap \rmN_X\M_{\mathcal S}=\{0\}.$
This completes the proof.
\end{proof}
As a special case, when $r=1$, the oblique manifold $\Ob(n,1)$ coincides with the sphere
$\mathbb S^{n-1}$. Hence the transversality
condition between the sparse active manifold and the Stiefel constraint holds automatically in
this case, and no perturbation is needed. Moreover, the normalization map provides a natural
retraction on the intersection manifold and preserves the support pattern.

\medskip
When $r>1$, transversality is no longer automatic. For $Y\in\R^{n\times r}$ and $j\in[r]$, define the support of the $j$-th column by
\[\mathcal I_j^Y:=\{\,i\in[n]:\ Y_{ij}\neq 0\,\}.\]
The next result shows that, once two columns of $Y$ have disjoint supports, the transversality condition fails.

\begin{proposition}\label{lem:M_1 and M_2 are not transverse}
Let $r>1$, let $\Delta\in\mathbb S^r$ satisfy
$I_r+\Delta\succ0$, and let
$Y\in\St_\Delta(n,r)$. Let $\M_g(Y)$ be the submanifold defined in \eqref{eq:manifold_2}. Then
	\[
	\mathrm{span}\{YS_{\ell k}:\ \mathcal I_\ell^Y\cap \mathcal I_k^Y=\emptyset\}
	\subset \rmN_Y\St_\Delta(n,r)\cap \rmN_Y\M_g(Y).
	\]
	In particular, if there exist $\ell\neq k$ such that $\mathcal I_\ell^Y\cap \mathcal I_k^Y=\emptyset$, then $\St_\Delta(n,r)$ and $\M_g(Y)$ are not transverse at $Y$.
\end{proposition}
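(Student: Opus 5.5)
The plan is to check directly that each generator \(YS_{\ell k}\) lies in both normal spaces. Both sets are linear subspaces, so their span then lies in the intersection as well. Here \(S_{\ell k}:=e_\ell e_k^\top+e_k e_\ell^\top\in\mathbb S^r\) denotes the symmetric elementary matrix; the paper has not defined it yet, and this is the natural reading.

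\emph{Stiefel normal space.} By \eqref{eq:T-N-St-Delta}, \(\rmN_Y\St_\Delta(n,r)=\{Y\Lambda:\Lambda\in\mathbb S^r\}\). Since \(S_{\ell k}\) is symmetric, \(YS_{\ell k}\in\rmN_Y\St_\Delta(n,r)\) immediately.

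\emph{Active-manifold normal space.} By \eqref{eq:manifold_2}, \(\M_g(Y)\) is the linear subspace \(\{Z:Z_{\mathcal S^c}=0\}\) with \(\mathcal S=\supp(Y)\). Its normal space is therefore \(\{Z:Z_{\mathcal S}=0\}\), the matrices vanishing on the support of \(Y\). The product \(YS_{\ell k}=Y_{:,\ell}e_k^\top+Y_{:,k}e_\ell^\top\) has only two nonzero columns. Its \(k\)-th column is \(Y_{:,\ell}\) and its \(\ell\)-th column is \(Y_{:,k}\). Take an entry \((i,k)\in\mathcal S\), so that \(i\in\mathcal I_k^Y\). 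The disjointness \(\mathcal I_\ell^Y\cap\mathcal I_k^Y=\emptyset\) gives \(i\notin\mathcal I_\ell^Y\), hence \(Y_{i\ell}=0\), and so the \((i,k)\) entry of \(YS_{\ell k}\) vanishes. The same argument with \(\ell\) and \(k\) swapped handles the entries \((i,\ell)\in\mathcal S\). All other columns of \(YS_{\ell k}\) are zero. Hence \((YS_{\ell k})_{\mathcal S}=0\), that is, \(YS_{\ell k}\in\rmN_Y\M_g(Y)\).

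\emph{Failure of transversality.} It remains to show the intersection is nontrivial when such a pair \(\ell\ne k\) exists, i.e.\ that \(YS_{\ell k}\neq0\). Because \(I_r+\Delta\succ0\), the Gram matrix \(Y^\top Y\) is nonsingular, so \(Y\) has full column rank. The map \(\Lambda\mapsto Y\Lambda\) is therefore injective, and since \(S_{\ell k}\neq0\) we get \(YS_{\ell k}\neq0\). Alternatively, column \(k\) of \(YS_{\ell k}\) equals \(Y_{:,\ell}\), which is nonzero because \(\|Y_{:,\ell}\|^2=1+\Delta_{\ell\ell}>0\). Then \(\rmN_Y\St_\Delta(n,r)\cap\rmN_Y\M_g(Y)\neq\{0\}\), so by Definition~\ref{def:tranverse} the two manifolds are not transverse at \(Y\). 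None of the steps is a real obstacle. The only point needing care is the nonvanishing of \(YS_{\ell k}\), and that is exactly where the hypothesis \(I_r+\Delta\succ0\) enters.
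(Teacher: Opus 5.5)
Your proof is correct and follows the paper's argument. Both proofs show that \(YS_{\ell k}\) is normal to \(\St_\Delta(n,r)\) because \(S_{\ell k}\) is symmetric, and normal to \(\M_g(Y)\) because its only nonzero columns are \(Y_{:,\ell}\) (in column \(k\)) and \(Y_{:,k}\) (in column \(\ell\)), which vanish on the active entries when the supports are disjoint. Your explicit check that \(YS_{\ell k}\neq0\), using the full column rank of \(Y\), is a worthwhile addition, since the paper leaves this step implicit when it concludes non-transversality.
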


\begin{proof}
	Note that $\rmN_Y\St_\Delta(n,r)$ is spanned by $\{YS_{ij}:1\le i\le j\le r\}$,
	where $S_{ij}=e_ie_j^\top+e_je_i^\top$. In particular,
	 {$YS_{\ell k}=Ye_\ell e_k^\top+Ye_k e_\ell^\top$. Thus $YS_{\ell k}$ has only two possibly nonzero columns: its $k$-th column is $Y_\ell$, and its $\ell$-th column is $Y_k$.}
	
	On the other hand, $\rmN_Y \M_g(Y)=\T_Y \M_g(Y)^\perp$, and for the active manifold
	$\M_g(Y)$ defined by \eqref{eq:manifold_2} this normal space consists of matrices whose
	entries vanish on the active coordinates:
	\[
	\rmN_Y \M_g(Y)
	=\{\,Z\in\R^{n\times r}:\ Z_{ij}=0 \ \text{whenever } i\in \mathcal I^Y_j,\ j\in[r]\,\}.
	\]
	If $\mathcal I^Y_\ell\cap \mathcal I^Y_k=\emptyset$, then the $k$-th column of
	$YS_{\ell k}$ is supported only on $\mathcal I^Y_\ell$ and the $\ell$-th column is
	supported only on $\mathcal I^Y_k$. Since these supports are disjoint, it follows that
	$YS_{\ell k}$ vanishes on each $\mathcal I^Y_j$ in the corresponding column and hence
	$YS_{\ell k}\in \rmN_Y \M_g(Y)$. Therefore,
	$YS_{\ell k}\in \rmN_Y\St_\Delta(n,r)\cap \rmN_Y \M_g(Y)$. This completes the proof.
\end{proof}

Empirically, the supports of stationary points in the compressed modes problem can be nearly disjoint across different columns; see \cite[Fig.~5]{chen2020proximal}. This phenomenon is consistent with Proposition~\ref{lem:M_1 and M_2 are not transverse} and suggests that, near such stationary points, $\St(n,r)$ and the associated active manifold $\M_g$ may fail to intersect transversely. Furthermore, transversality may fail even when every pair of supports overlaps, as shown in Example~\ref{example:r_4_wrong} in Appendix. By contrast, for the perturbed manifold $\St_\Delta(n,r)$ with nonzero off-diagonal entries in $\Delta$, completely disjoint supports are generically ruled out, since the constraint $X^\top X=I_r+\Delta$ forces nonzero correlations between different columns. This helps explain why the perturbation is effective in restoring the regular intersection geometry. For nongeneric perturbations $\Delta$, however, transversality need not hold, and the clean-intersection analysis developed below becomes necessary.

\subsection{Intersection graph}

To describe the support interaction across columns, we introduce a graph representation of the active pattern. For $Y\in\R^{n\times r}$ and $\ell\in[n]$, define the active set of the $\ell$-th row by
\[J_\ell^Y:=\{\,k\in[r]:\ Y_{\ell k}\neq 0\,\}.\]
We associate with $Y$ an undirected support graph
\begin{manualtaggedequation}
\begin{equation}\label{eq:graph}\tag{G}
	\mathcal G(Y)=(\mathcal V,\mathcal E),\qquad
	\mathcal V=[r],\quad
	\mathcal E:=\Bigl\{\{i,j\}\subseteq\mathcal V:\ \exists\,\ell\in[n]\ \text{s.t.}\ \{i,j\}\subseteq J_\ell^Y\Bigr\},
\end{equation}
\end{manualtaggedequation}
where self-loops $\{i,i\}$ are allowed.

With this notation, $\{i,j\}\notin\mathcal E$ means that the $i$-th and $j$-th columns of $Y$ have disjoint supports. Proposition~\ref{lem:M_1 and M_2 are not transverse} then gives
\[
\mathrm{span}\{YS_{ij}:\{i,j\}\notin\mathcal E\}\subset \rmN_Y\St_\Delta(n,r)\cap \rmN_Y \M_g(Y).
\]
As we will see, the connectivity structure of $\mathcal G(Y)$ governs the local
geometry of $\overline{\M}_\Delta(Y)$. We quantify this connectivity by the edge count
\begin{equation}\label{def:intersect_number}
	\gamma(Y):=|\mathcal E|.
\end{equation}
In particular, define the structured subspace of symmetric matrices
\[
\mathbb S_{r,\gamma}(Y):=\Bigl\{\Lambda\in{\color{blue}\mathbb S^r}:\ \Lambda_{ij}=0\ \text{whenever}\ \{i,j\}\notin \mathcal E\Bigr\}.
\]
Equivalently, $\Lambda_{ij}$ is allowed to be nonzero only if the vertices $i$ and $j$
are adjacent in $\mathcal G(Y)$ (including the case $i=j$).
By construction,
\begin{equation}\label{eq:dim_sparse_set_symmetric}
	\dim\bigl(\mathbb S_{r,\gamma}(Y)\bigr)=\gamma(Y).
\end{equation}

Let $\mathcal U_s(Y)\subset\M_g(Y)$ be a neighborhood of $Y$ on which
the support pattern agrees with that of $Y$. Define
\[
    \mathcal H_Y:\M_g(Y)\to\mathbb S^r,
    \qquad
    \mathcal H_Y(X)
    :=
    X^\top X-(I_r+\Delta).
\]
Then, we have
\(    \overline{\M}_\Delta(Y)=\mathcal H_Y^{-1}(0),
\)
and, for $X\in\M_g(Y)$ and $\eta\in\T_X\M_g(Y)$,
\[
    \mathrm D\mathcal H_Y(X)[\eta]
    =
    \eta^\top X+X^\top\eta.
\]
Moreover, it follows that
\[
    \operatorname{range}\bigl(\mathrm D\mathcal H_Y(X)\bigr)
    \subseteq \mathbb S_{r,\gamma}(Y),
\]
because the entries corresponding to missing edges vanish.

{\color{blue}
For later use, we introduce the normal--tangent interaction matrix
associated with this differential. For
$X\in\overline{\M}_\Delta(Y)\cap\mathcal U_s(Y)$,
let $x:=\vvec(X)$, set $q:=r(r+1)/2$ and
$s:=\dim\M_g(Y)$, and let
$U_r\in\mathbb R^{r^2\times q}$ be the duplication matrix satisfying
$U_r\svec(\Lambda)=\vvec(\Lambda)$ for every $\Lambda\in\mathbb S^r$.
Define
$B_x:=(I_r\otimes X)U_r\in\mathbb R^{nr\times q}$, whose columns form
a basis of $\vvec(\rmN_X\St_\Delta(n,r))$, and let
$E_x\in\mathbb R^{nr\times s}$ be an orthonormal basis matrix of
$\vvec(\T_X\M_g(Y))$. We set
\begin{equation}\label{eq:matrix_A}
    A_x:=B_x^\top E_x.
\end{equation}
Since $\M_g(Y)$ is a fixed coordinate subspace, $E_x$ may be chosen
independently of $x$ on $\mathcal U_s(Y)$. The calculation in
Appendix~\ref{app:section4-examples} gives
\begin{equation}\label{eq:rank_DH_rankA}
    \rank\bigl(\mathrm D\mathcal H_Y(X)\bigr)=\rank(A_x).
\end{equation}
}

Example~\ref{ex:support-graph-mixed} illustrates these definitions.

\begin{example}\label{ex:support-graph-mixed}
	\noindent
	\begin{minipage}[c]{0.54\textwidth}
		\centering
		\[
		Y=\begin{bmatrix}
			\frac{1}{\sqrt2} & \frac{1}{2} & 0 & 0\\
			\frac{1}{\sqrt2} & -\frac{1}{2} & 0 & 0\\
			0 & \frac{1}{2} & \frac{1}{\sqrt2} & 0\\
			0 & \frac{1}{2} & -\frac{1}{\sqrt2} & 0\\
			0 & 0 & 0 & 0\\
			0 & 0 & 0 & 1
		\end{bmatrix}\in\St(6,4).
		\]
	\end{minipage}\hfill
	\begin{minipage}[c]{0.42\textwidth}
		\centering
		\begin{tikzpicture}[scale=1]
			\tikzset{
				vertex/.style={draw,circle,minimum size=5mm,inner sep=0.6pt,line width=0.6pt},
				edge/.style={line width=0.6pt}
			}
			\node[vertex] (v1) at (0,1.6) {\scriptsize 1};
			\node[vertex] (v2) at (1.8,1.6) {\scriptsize 2};
			\node[vertex] (v4) at (0,0)   {\scriptsize 4};
			\node[vertex] (v3) at (1.8,0) {\scriptsize 3};

			\draw[edge] (v1) -- (v2);
			\draw[edge] (v2) -- (v3);

			\path (v1) edge[loop above,looseness=7] (v1);
			\path (v2) edge[loop above,looseness=7] (v2);
			\path (v3) edge[loop below,looseness=7] (v3);
			\path (v4) edge[loop below,looseness=7] (v4);
		\end{tikzpicture}
	\end{minipage}

	\medskip
	\captionof{figure}{Support graph $\mathcal G(Y)$ for a sparse orthogonal $Y\in\St(6,4)$ with mixed overlaps:
		$\mathcal V=\{1,2,3,4\}$ and $\mathcal E=\{(1,1),(2,2),(3,3),(4,4),(1,2),(2,3)\}$, hence $\gamma(Y)=6$.
		Columns $1$--$2$ and $2$--$3$ overlap, while $1$--$3$ do not; column $4$ is disjoint from the others.}
	\label{fig:support-graph-mixed}

	\medskip
	The induced sparse subspace $\mathbb{S}_{4,\gamma}(Y)$ consists of matrices of the form
	\[
	\mathbb{S}_{4,\gamma}(Y)
	= \left\{
	\begin{bmatrix}
		a & b & 0 & 0\\
		b & c & d & 0\\
		0 & d & e & 0\\
		0 & 0 & 0 & f
	\end{bmatrix}
	:\;
	a,b,c,d,e,f \in \mathbb{R}
	\right\}.
	\]
\end{example}

Motivated by Proposition~\ref{lem:M_1 and M_2 are not transverse}, we impose the following  condition on this intersection in a neighborhood where the support pattern remains fixed. It is a sufficient condition for the clean-intersection geometry.

\begin{assumption}\label{assump:intersection}
For every $X\in\mathcal U_s(Y)\cap\St_\Delta(n,r)$, it follows that 
\[
    \rmN_X\St_\Delta(n,r)\cap\rmN_X\M_g(Y)
    =
    \operatorname{span}\{
        XS_{ij}: i<j,\ \{i,j\}\notin\mathcal E(Y)
    \}.
\]
\end{assumption}
\begin{theorem}\label{thm:Mg-intersection-submanifold}
	Suppose Assumption~\ref{assump:intersection} holds and  that  $\overline{\M}_\Delta(Y)\cap \mathcal U_s(Y)$ is nonempty. Then, the set
	$\overline{\M}_\Delta(Y)\cap \mathcal U_s(Y)$ is an embedded submanifold of
	$\M_g(Y)$ with codimension $\gamma(Y)$. Moreover, for any
	$X\in \overline{\M}_\Delta(Y)\cap \mathcal U_s(Y)$,
	\[
	\T_X\overline{\M}_\Delta(Y)=\T_X\St_\Delta(n,r)\cap \T_X\M_g(Y).
	\]
\end{theorem}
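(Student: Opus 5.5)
The plan is to apply the constant-rank level set theorem (Proposition~\ref{prop:submanifold}) to the map \(\mathcal H_Y\), restricted to the open subset \(\mathcal U_s(Y)\) of the linear manifold \(\M_g(Y)\). Since \(\overline{\M}_\Delta(Y)\cap\mathcal U_s(Y)=\mathcal H_Y^{-1}(0)\cap\mathcal U_s(Y)\), everything reduces to one claim: \(\rank \mathrm D\mathcal H_Y(X)=\gamma(Y)\) for every \(X\in\mathcal U_s(Y)\). I will regard \(\mathcal H_Y\) as a map into the codomain \(\mathbb S_{r,\gamma}(Y)\), which is legitimate because its range lies there. The support pattern is constant on \(\mathcal U_s(Y)\), so the off-edge entries of \(X^\top X\) vanish identically. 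For \(\{i,j\}\notin\mathcal E\) with \(i\neq j\), the constraint therefore reads \(0=\Delta_{ij}\). If some such \(\Delta_{ij}\neq0\), the intersection is empty, which the hypothesis excludes. Hence the level set is unchanged when we use only the \(\gamma(Y)\) components indexed by edges.

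For the rank claim I first use the duality between the range of \(\mathrm D\mathcal H_Y(X)\) and the normal spaces. A symmetric \(\Lambda\) is orthogonal to the range of \(\mathrm D\mathcal H_Y(X)\) exactly when \(\langle X^\top\eta+\eta^\top X,\Lambda\rangle=2\langle \eta,X\Lambda\rangle=0\) for all \(\eta\in\T_X\M_g(Y)\). This holds if and only if \(X\Lambda\in\rmN_X\M_g(Y)\), that is, \(X\Lambda\in\rmN_X\St_\Delta(n,r)\cap\rmN_X\M_g(Y)\). Equivalently, \(\rank(A_x)=q-\dim\bigl(\rmN_X\St_\Delta\cap\rmN_X\M_g\bigr)\) by \eqref{eq:rank_DH_rankA}. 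The map \(\Lambda\mapsto X\Lambda\) is injective because \(X\) has full column rank. On points of \(\St_\Delta(n,r)\), Assumption~\ref{assump:intersection} says this intersection has dimension exactly the number of non-edges \(i<j\). Since self-loops are always edges (every column of \(X\in\St_\Delta\) is nonzero), the count is \(q-\gamma(Y)\). The rank is thus \(\gamma(Y)\), i.e.\ \(\mathrm D\mathcal H_Y(X)\) is surjective onto \(\mathbb S_{r,\gamma}(Y)\) at every point of the level set.

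The main obstacle is that Proposition~\ref{prop:submanifold} requires constant rank on an open set, whereas the assumption only controls points of \(\St_\Delta(n,r)\). I will avoid this by using the local defining-function version instead. The restricted map \(\mathcal H_Y:\mathcal U_s(Y)\to\mathbb S_{r,\gamma}(Y)\) is smooth (it is quadratic) and is a submersion at each point of its zero set. Surjectivity is an open condition, so it persists on an open neighborhood \(W\subset\mathcal U_s(Y)\) of the level set, and on \(W\) the rank is constantly \(\gamma(Y)\). Applying Proposition~\ref{prop:submanifold} on \(W\), or directly the regular value theorem, shows that \(\overline{\M}_\Delta(Y)\cap\mathcal U_s(Y)\) is an embedded submanifold of \(\M_g(Y)\) of codimension \(\gamma(Y)\).

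Finally, I obtain the tangent space from \eqref{eq:tangent-kernel}: \(\T_X\overline{\M}_\Delta(Y)=\ker\mathrm D\mathcal H_Y(X)=\{\eta\in\T_X\M_g(Y):X^\top\eta+\eta^\top X=0\}\). By \eqref{eq:T-N-St-Delta}, this set equals \(\T_X\M_g(Y)\cap\T_X\St_\Delta(n,r)\). The kernel of the full differential into \(\mathbb S^r\) coincides with the kernel into \(\mathbb S_{r,\gamma}(Y)\), since the off-edge components vanish identically. This gives the clean-intersection identity.
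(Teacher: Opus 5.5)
Your proposal is correct and follows essentially the same route as the paper: the paper also uses the adjoint identity $\langle X^\top\eta+\eta^\top X,\Lambda\rangle=2\langle\eta,X\Lambda\rangle$ to obtain $\rank\mathrm D\mathcal H_Y(X)=\gamma(Y)$ at points of the level set. It then combines $\operatorname{range}\subseteq\mathbb S_{r,\gamma}(Y)$ with persistence of a nonzero minor, which is your ``surjectivity onto $\mathbb S_{r,\gamma}(Y)$ is open,'' to get constant rank nearby, and reads off the tangent space as the kernel. Your explicit checks, that all self-loops are present (so the non-edge count is $r(r+1)/2-\gamma(Y)$) and that nonemptiness forces $\Delta_{ij}=0$ on non-edges, fill in details the paper leaves implicit.
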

\begin{proof}
Fix
$X\in\overline{\M}_\Delta(Y)\cap\mathcal U_s(Y)$ and set
\(
    L_X:=\mathrm D\mathcal H_Y(X):
    \T_X\M_g(Y)\to\mathbb S^r.
\)
With respect to the Frobenius inner product, its adjoint satisfies
\[
    L_X^*(\Lambda)
    =
    2\Proj_{\T_X\M_g(Y)}(X\Lambda),
    \qquad \Lambda\in\mathbb S^r.
\]
Consequently, we have
\[
    \ker L_X^*
    =
    \{\Lambda\in\mathbb S^r:
      X\Lambda\in\rmN_X\M_g(Y)\}.
\]
Since
$\rmN_X\St_\Delta(n,r)=\{X\Lambda:\Lambda\in\mathbb S^r\}$
and $X$ has full column rank, Assumption~\ref{assump:intersection}
gives
\[
    \dim\ker L_X^*
    =
    \frac{r(r+1)}2-\gamma(Y).
\]
It follows that
\(
    \rank L_X=\gamma(Y).
\)

For every $Z\in\mathcal U_s(Y)$,
$\operatorname{range}(L_Z)\subseteq\mathbb S_{r,\gamma}(Y)$, so
$\rank L_Z\leq\gamma(Y)$. Since $\rank L_X=\gamma(Y)$, a nonzero
$\gamma(Y)\times\gamma(Y)$ minor remains nonzero near $X$.
Hence $\rank L_Z=\gamma(Y)$ in a neighborhood of $X$.
The constant-rank level-set theorem therefore shows that
$\overline{\M}_\Delta(Y)\cap\mathcal U_s(Y)$ is locally an embedded
submanifold of $\M_g(Y)$ of codimension $\gamma(Y)$.

Finally, we obtain
\[
\begin{aligned}
\T_X\overline{\M}_\Delta(Y)
&=\ker L_X\\
&=\{\eta\in\T_X\M_g(Y):
      \eta^\top X+X^\top\eta=0\}\\
&=\T_X\St_\Delta(n,r)\cap\T_X\M_g(Y).
\end{aligned}
\]
\end{proof}

\begin{remark}\label{rmk:ident_fail_Stiefel}
	\begin{enumerate}[label={(\arabic*)}]
		\item In contrast with Proposition~\ref{lem:generic-transv}, this result does not rely on transversality in $\R^{n\times r}$. Instead, it is based on a clean-intersection condition; see \cite[Section~3]{drusvyatskiy2015transversality}. In particular, the intersection remains a manifold and satisfies
		\[
		\T_X(\M_1\cap \M_2)=\T_X\M_1\cap \T_X\M_2.
		\]
		Although the dimension formula again takes the form $\dim(\M_g)-\gamma(\cdot)$, the relevant quantity is evaluated on different active manifolds associated with different feasible intersection points. In the unperturbed case $\Delta=0$, the orthogonality constraints allow column supports to be disjoint, so $\St(n,r)\cap\M_g$ may have many missing edges and typically corresponds to sparser support patterns. By contrast, for the perturbed model $Y^\top Y=I_r+\Delta$, a nonzero off-diagonal entry $\Delta_{ij}$ enforces $\langle Y_i,Y_j\rangle=\Delta_{ij}\neq 0$, and hence $\supp(Y_i)\cap\supp(Y_j)\neq\emptyset$.

		\item {\color{blue}Theorem~\ref{thm:Mg-intersection-submanifold} relies on the local constancy of $\rank(\mathrm D\mathcal H_Y(X))$, or equivalently $\rank(A_x)$ by \eqref{eq:rank_DH_rankA}, on the support-invariant neighborhood $\mathcal U_s(Y)\subset \M_g(Y)$.} This is different from Theorem~\ref{thm:id-strong-crit}, which is stated on a neighborhood $\bar{\mathcal U}\subset \St_\Delta(n,r)$ and does not require the support pattern to remain fixed. In the ManPG algorithm, such support invariance is not automatic. Consequently, the identification result established for generic $\Delta$ does not directly extend to the unperturbed Stiefel manifold.
	\end{enumerate}
\end{remark}



\subsection{Verification of Assumption~\ref{assump:intersection}}
In this subsection, we verify Assumption~\ref{assump:intersection} deterministically in the following cases: (i) $\Delta=0$ with $r\le 3$; (ii) general $\Delta$ with $r=2$; and (iii) the \(2/3\)-row-cover condition for all \(r\ge3\), including the one-column witnesses required when \(\Delta\neq0\). We then establish a high-probability guarantee for the remaining cases under a sparsity model, with particular emphasis on the regime $n\gg r$.

Recall the support graph $\mathcal G(Y)=(\mathcal V,\mathcal E)$ defined in
\eqref{eq:graph}.

\paragraph{\textbf{1) The case  $r=2$}.}

\begin{lemma}\label{lem:diag-vanish}
	Let $Y\in\St_\Delta(n,r)$ and suppose that $\St_\Delta(n,r)\cap \M_g(Y)\neq \emptyset$. Let $X\in\St_\Delta(n,r)\cap\M_g(Y)$.
	If $X$ has the support as $Y$,
	\(
	Z\in \rmN_X\St_\Delta(n,r)\cap \rmN_X\M_g(Y),
	\)
	and $Z=X\Lambda$ for some $\Lambda\in{\color{blue}\mathbb S^r}$, then $\diag((I_r+\Delta)\Lambda)=0$. In particular, if $\Delta=0$, then
	\[
	\Lambda_{ii}=0,\qquad i=1,\dots,r.
	\]
\end{lemma}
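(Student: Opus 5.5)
The plan is a direct column-by-column computation; no earlier machinery is needed beyond the two normal-space descriptions.

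\emph{Setup.} By \eqref{eq:T-N-St-Delta}, every $Z\in\rmN_X\St_\Delta(n,r)$ has the form $Z=X\Lambda$ with $\Lambda\in\mathbb S^r$. From the proof of Proposition~\ref{lem:M_1 and M_2 are not transverse}, the normal space of the coordinate manifold is
\[
\rmN_X\M_g(Y)=\{W:\ W_{ij}=0\ \text{whenever } i\in\mathcal I^Y_j\}.
\]
Since $X$ has the same support as $Y$, this says $Z$ vanishes wherever $X$ is nonzero. Equivalently, $X\odot Z=0$ entrywise.

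\emph{Main step.} I will pair the $j$-th column of $X$ with the $j$-th column of $Z$. Write $X_j$ and $Z_j=\sum_k \Lambda_{kj}X_k$ for these columns. Because $X_j$ is supported on $\mathcal I^Y_j$ and $Z_j$ vanishes there,
\[
0=\langle X_j,Z_j\rangle=\sum_k \Lambda_{kj}\langle X_j,X_k\rangle .
\]
By feasibility $X^\top X=I_r+\Delta$, so $\langle X_j,X_k\rangle=(I_r+\Delta)_{jk}$. Hence
\[
0=\sum_k (I_r+\Delta)_{jk}\Lambda_{kj}=\bigl((I_r+\Delta)\Lambda\bigr)_{jj}
\]
for each $j$. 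This is exactly $\diag((I_r+\Delta)\Lambda)=0$.

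\emph{Case $\Delta=0$.} Setting $\Delta=0$ gives $\Lambda_{jj}=0$ for $i=1,\dots,r$ directly.

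The only point needing care is the support hypothesis. It is what makes the normal space of $\M_g(Y)$ at $X$ vanish precisely on $\supp(X)$. Once that is recorded, the orthogonality $\langle X_j,Z_j\rangle=0$ is immediate, and the rest is the index computation above.
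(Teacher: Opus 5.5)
Your proof is correct and follows essentially the same argument as the paper, which also uses that $Z$ vanishes on the active entries of $X$ to get $\langle X_i,Z_i\rangle=0$ and then rewrites this as $e_i^\top X^\top X\Lambda e_i=\bigl((I_r+\Delta)\Lambda\bigr)_{ii}$. As a small aside, the same-support hypothesis is not actually needed: $X\in\M_g(Y)$ already gives $\supp(X)\subseteq\supp(Y)$, and that inclusion is all the orthogonality $\langle X_j,Z_j\rangle=0$ requires.
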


\begin{proof}
	Since $Z\in \rmN_X\M_g(Y)$, we have $Z_{\ell i}=0$ whenever $X_{\ell i}\neq 0$.
	Therefore, for each $i$,
	\[
	\langle X_i,Z_i\rangle=0.
	\]
	On the other hand, using $Z=X\Lambda$ and $X^\top X=I_r+\Delta$ yields 
	\[
	0=	\langle X_i,Z_i\rangle
	=e_i^\top X^\top Z e_i
	=e_i^\top (X^\top X)\Lambda e_i
	=\Big((I_r+\Delta)\Lambda\Big)_{ii}
	\]
	for $i\in[r]$.
\end{proof}

\begin{proposition}\label{prop:intersection-for_r=2}
	Let $r=2$ and 
	assume   $\St_\Delta(n,2)\cap \M_g(Y)\neq\emptyset$.
	Let $\mathcal U_s(Y)$ be the neighborhood so that any $X\in\mathcal U_s(Y)$ has the same support as $Y$.
	Then, it holds for any  $X\in\St_\Delta(n,2)\cap \M_g(Y)\cap\mathcal U_s(Y)$ that
	\[
	\rmN_X\St_\Delta(n,2)\cap \rmN_X\M_g(Y)= \mathrm{span}\{\,XS_{12}:\ \{1,2\}\notin\mathcal E(Y)\,\},
	\]
	where $\mathcal E(Y)$ is the edge set introduced by the column supports of $Y$.
\end{proposition}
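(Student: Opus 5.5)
We will prove the two inclusions separately. The inclusion ``$\supseteq$'' comes directly from Proposition~\ref{lem:M_1 and M_2 are not transverse}. For ``$\subseteq$'' we parametrize a general element of the intersection as $Z=X\Lambda$ with $\Lambda\in\mathbb S^2$. We then use Lemma~\ref{lem:diag-vanish} to reduce $\Lambda$ to a one-parameter family, and finally split into two cases according to whether $\{1,2\}\in\mathcal E(Y)$.

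\emph{Setup.} Fix $X\in\St_\Delta(n,2)\cap\M_g(Y)\cap\mathcal U_s(Y)$. By the choice of $\mathcal U_s(Y)$, $X$ and $Y$ have the same support. Hence $\M_g(X)=\M_g(Y)$ and $\mathcal E(X)=\mathcal E(Y)$.

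\emph{Inclusion ``$\supseteq$''.} Apply Proposition~\ref{lem:M_1 and M_2 are not transverse} with $X$ in place of $Y$; this is legitimate since $X\in\St_\Delta(n,2)$. If $\{1,2\}\notin\mathcal E$, the proposition gives $XS_{12}\in\rmN_X\St_\Delta(n,2)\cap\rmN_X\M_g(Y)$. If $\{1,2\}\in\mathcal E$, the span on the right-hand side is $\{0\}$ and there is nothing to prove.

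\emph{Inclusion ``$\subseteq$'': reduction of $\Lambda$.} Let $Z$ lie in the intersection, so $Z=X\Lambda$ with $\Lambda\in\mathbb S^2$, and write $\delta:=\Delta_{12}$ and $c:=\Lambda_{12}$. Since $\diag(\Delta)=0$, Lemma~\ref{lem:diag-vanish} gives
\[
\Lambda_{11}+\delta c=0,\qquad \Lambda_{22}+\delta c=0,
\]
so $\Lambda=c\begin{bmatrix}-\delta&1\\1&-\delta\end{bmatrix}$. The columns of $Z$ are therefore
\[
Z_1=c\,(X_2-\delta X_1),\qquad Z_2=c\,(X_1-\delta X_2).
\]
Membership in $\rmN_X\M_g(Y)$ means that $Z_1$ vanishes on $\mathcal I_1^X$ and $Z_2$ vanishes on $\mathcal I_2^X$. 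We also note that $I_2+\Delta\succ0$ forces $|\delta|<1$.

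\emph{Case $\{1,2\}\in\mathcal E$.} Pick a row $\ell$ with $X_{\ell1}\neq0$ and $X_{\ell2}\neq0$. Suppose $c\neq0$. The vanishing conditions at row $\ell$ give $X_{\ell2}=\delta X_{\ell1}$ and $X_{\ell1}=\delta X_{\ell2}$. Together these yield $(1-\delta^2)X_{\ell1}=0$, so $X_{\ell1}=0$, a contradiction. Hence $c=0$ and $Z=0$, which matches the empty span.

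\emph{Case $\{1,2\}\notin\mathcal E$.} The column supports of $X$ are disjoint, so $\delta=(X^\top X)_{12}=\langle X_1,X_2\rangle=0$. Then $\Lambda=cS_{12}$ and $Z=c\,XS_{12}$, which lies in the claimed span.

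The only step with any content is the first case, where the bound $|\delta|<1$ coming from positive definiteness of $I_2+\Delta$ is exactly what rules out a nonzero $c$. Everything else is bookkeeping once Lemma~\ref{lem:diag-vanish} has reduced $\Lambda$ to the single parameter $c$.
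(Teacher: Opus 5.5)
Your argument is correct, and it differs from the paper's mainly in how the overlapping-support case is handled.

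The paper writes $I_2+\Delta$ with diagonal entries $d_1,d_2>0$ and splits on whether the off-diagonal entry $(I_2+\Delta)_{12}$ vanishes.
\begin{itemize}
\item When it is nonzero, the paper looks for a row of $X$ with a single active entry. Such a row forces one diagonal entry of $\Lambda$ to vanish, and the two equations of Lemma~\ref{lem:diag-vanish} then kill the rest of $\Lambda$. If no such row exists, every nonzero row is doubly active, so $Z=0$ outright and full column rank of $X$ gives $\Lambda=0$.
\item When $(I_2+\Delta)_{12}=0$, the diagonal of $\Lambda$ vanishes and a doubly active row kills $\Lambda_{12}$.
\end{itemize}

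You instead split directly on whether $\{1,2\}\in\mathcal E$. When the edge is present, you combine a single doubly active row with $\det(I_2+\Delta)>0$. This is shorter and avoids both the sub-case on singly active rows and the full-rank step.

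The paper's route has one advantage. Its key mechanism, a singly active row annihilating a diagonal entry of $\Lambda$, is exactly the one-column-witness device reused for $r\ge3$ in Proposition~\ref{prop:verify-intersection}. Your determinant trick is special to $r=2$. Example~\ref{ex:nonzero-intersection-r3-denseDelta} shows that for $r=3$, rows with exactly two active entries covering every edge, together with $I_3+\Delta\succ0$, do not suffice.

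One scope correction: you invoke $\diag(\Delta)=0$. The proposition is intended for arbitrary $\Delta\in\mathbb S^2$ with $I_2+\Delta\succ0$; the section announces ``general $\Delta$ with $r=2$'', and the paper's proof works with arbitrary $d_1,d_2>0$. This matters because the result is applied to $\St_{\Delta_k}(n,2)$ with $\Delta_k=\Delta+V_k^\top V_k$, whose diagonal is in general nonzero.

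Your proof adapts with only this change. Set $d_i:=1+\Delta_{ii}$.
\begin{itemize}
\item Lemma~\ref{lem:diag-vanish} gives $d_1\Lambda_{11}+\delta c=0$ and $\delta c+d_2\Lambda_{22}=0$.
\item In the edge case, a doubly active row $\ell$ with $c\neq0$ yields $X_{\ell2}=(\delta/d_1)X_{\ell1}$ and $X_{\ell1}=(\delta/d_2)X_{\ell2}$. This contradicts $\delta^2<d_1d_2$, which follows from positive definiteness.
\item In the edge-free case, $\delta=\langle X_1,X_2\rangle=0$ still forces $\Lambda_{11}=\Lambda_{22}=0$, since $d_1,d_2>0$.
\end{itemize}
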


The proof is given in
Appendix~\ref{app:proof-intersection-r2}.

\paragraph{\textbf{2) The case  $r=3$}.}

For $r=3$, we first show that the constant dimension holds for $\Delta=0$.
	\begin{proposition}[Case $r=3, \Delta=0$]\label{prop:intersection-r3-clean}
		Let $r=3$, let $Y$ be a reference point, and let
			$X\in\St(n,3)\cap \M_g(Y)\cap \mathcal U_s(Y)$. Then Assumption~\ref{assump:intersection} holds, i.e.,
				\[
				\rmN_X\St(n,3)\cap \rmN_X\M_g(Y)
				=\mathrm{span}\{\,XS_{ij}:\ i<j,\ \{i,j\}\notin\mathcal E(Y)\,\}.
				\]
	\end{proposition}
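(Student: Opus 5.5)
The plan is to prove the two inclusions separately. The inclusion ``$\supseteq$'' is already given by Proposition~\ref{lem:M_1 and M_2 are not transverse}, applied with $\Delta=0$. It is also useful that $X\in\mathcal U_s(Y)$ has the same support as $Y$, so $\mathcal E(X)=\mathcal E(Y)$. For ``$\subseteq$'', take $Z\in\rmN_X\St(n,3)\cap\rmN_X\M_g(Y)$ and use \eqref{eq:normal_Stiefel} to write $Z=X\Lambda$ with $\Lambda\in\mathbb S^3$. Lemma~\ref{lem:diag-vanish} with $\Delta=0$ gives $\Lambda_{ii}=0$, so
\[
\Lambda=\begin{bmatrix}0&a&b\\ a&0&c\\ b&c&0\end{bmatrix}.
\]
It then suffices to show that $\Lambda_{ij}=0$ whenever $i<j$ and $\{i,j\}\in\mathcal E(Y)$. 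Once this holds, $\Lambda$ is a combination of the $S_{ij}$ over non-edges, and hence $Z=X\Lambda$ lies in the claimed span.

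The key is to read the condition $Z\in\rmN_X\M_g(Y)$ row by row. Write $x_\ell^\top$ for the $\ell$-th row of $X$. Then the $\ell$-th row of $Z$ is $x_\ell^\top\Lambda$. The normal-space condition says that $(x_\ell^\top\Lambda)_j=0$ for every $j\in J_\ell^X$, where $J_\ell^X=J_\ell^Y$ is the row support. Explicitly, the rows satisfy
\[
a x_{\ell2}+b x_{\ell3}=0\ \ (\ell\in\mathcal I_1),\qquad a x_{\ell1}+c x_{\ell3}=0\ \ (\ell\in\mathcal I_2),\qquad b x_{\ell1}+c x_{\ell2}=0\ \ (\ell\in\mathcal I_3).
\]

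By symmetry in the column indices it is enough to treat the edge $\{1,2\}\in\mathcal E$. Suppose, for contradiction, that $a\neq0$, and pick a row $\ell$ with $x_{\ell1}\neq0$ and $x_{\ell2}\neq0$.
\begin{enumerate}
\item The first equation gives $b x_{\ell3}=-a x_{\ell2}\neq0$. Hence $b\neq0$, $x_{\ell3}\neq0$, and so $\ell\in\mathcal I_3$.
\item The third equation then gives $c x_{\ell2}=-b x_{\ell1}\neq0$, so $c\neq0$.
\item Row $\ell$ now lies in $\mathcal I_1\cap\mathcal I_2\cap\mathcal I_3$, so all three equations hold there. This means $\Lambda x_\ell=0$.
\item But $\det\Lambda=2abc\neq0$, which forces $x_\ell=0$. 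This contradicts $x_{\ell1}\neq0$.
\end{enumerate}
Therefore $a=0$. The same argument gives $b=0$ when $\{1,3\}\in\mathcal E$ and $c=0$ when $\{2,3\}\in\mathcal E$.

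The main obstacle is the chain of implications forcing all three off-diagonal entries to be nonzero at one common row. This is exactly where $r=3$ is used. There are only three off-diagonal unknowns, so a single nonzero one propagates to the others through one shared row, after which the nonsingularity of the hollow $3\times3$ matrix closes the argument. For $r\ge4$ this propagation can stall, consistent with Example~\ref{example:r_4_wrong}. I expect no further difficulty: the diagonal part is already handled by Lemma~\ref{lem:diag-vanish}, and Stiefel orthogonality is needed only there.
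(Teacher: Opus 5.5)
Your proof is correct and follows essentially the same route as the paper: $\Lambda_{ii}=0$ from Lemma~\ref{lem:diag-vanish}, a row-by-row reading of $Z\in\rmN_X\M_g(Y)$, a $3\times 3$ nonsingularity argument for ``$\subseteq$'', and the disjoint-support observation for ``$\supseteq$''. The only difference is in how the argument is organized. The paper splits globally on whether some row has all three columns active: if so, the $x$-dependent coefficient matrix with determinant $-2x_{\ell1}x_{\ell2}x_{\ell3}$ kills all off-diagonal entries at once, and otherwise every edge has a two-column witness row. You instead argue edge by edge and close with $\det\Lambda=2abc\neq0$.
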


The proof of Proposition~\ref{prop:intersection-r3-clean} is given in
	Appendix~\ref{app:proof-intersection-r3-clean}. Thus, for \(r=3\) and
	\(\Delta=0\), Assumption~\ref{assump:intersection} holds by the preceding
	proposition. For   nonzero   perturbations, however,
	the case \(r=3\) is not automatic: Assumption~\ref{assump:intersection}
	may fail without one-column witnesses; see
	Example~\ref{ex:nonzero-intersection-r3-denseDelta} in
	Appendix~\ref{app:section4-examples}.

\paragraph{\textbf{3) A $2/3$-row-cover condition for $r\ge 3$}.}

For $r\ge4$, Assumption~\ref{assump:intersection} is not
automatic even when $\Delta=0$; see Example~\ref{example:r_4_wrong} in
Appendix~\ref{app:section4-examples}.

For \(r\ge3\), we impose the following \emph{\(2/3\)-row cover} condition for the support pattern, under which
Assumption~\ref{assump:intersection} can be reduced to two- and three-column row arguments.
For convenience, for each row $\ell$ we denote the set of active columns by
\[
J_\ell^Y:=\{\,k\in[r]:\ Y_{\ell k}\neq 0\,\}\subseteq [r].
\]

\begin{assumption}[$2/3$-row cover]\label{assump:row-cover}
	Let $\mathcal G(Y)=(\mathcal V,\mathcal E)$ be the support graph defined in \eqref{eq:graph},
	and denote the off-diagonal edge set by
	\[
	\mathcal E_{\rm off}:=\{\{i,j\}\in\mathcal E:\ i\neq j\}.
	\]
	There exists a finite set of row indices $\mathcal L\subseteq [n]$ such that
	\[
	|J_\ell^Y|\in\{2,3\}\quad \forall\,\ell\in\mathcal L,
	\qquad\text{and}\qquad
	\mathcal E_{\rm off}
	=\bigcup_{\ell\in\mathcal L}\Big\{\{i,j\}\subseteq J_\ell^Y:\ i\neq j\Big\}.
	\]
	Equivalently, every $\{i,j\}\in\mathcal E_{\rm off}$ is witnessed by at least one row
	$\ell\in\mathcal L$ whose active set $J_\ell^Y$ has cardinality $2$ or $3$ and contains $\{i,j\}$.

	Furthermore, if $\Delta\neq 0$, we   assume that for each column $k\in[r]$ there exists a row index
	$\ell_k\in[n]$ (a \emph{column witness}) such that
	\begin{equation}\label{assump:one_colm_witness}
		Y_{\ell_k k}\neq 0,
		\qquad
		Y_{\ell_k j}=0, \ \ \forall\, j\in[r]\setminus\{k\},
	\end{equation}
	i.e., $J_{\ell_k}^Y=\{k\}$.
\end{assumption}

Then, Assumption~\ref{assump:intersection} holds for $r\ge3$ under Assumption~\ref{assump:row-cover}.  The proof is given in
Appendix~\ref{app:proof-verify-intersection}.
\begin{proposition}[Assumption~\ref{assump:intersection} under a $2/3$-row cover]\label{prop:verify-intersection}
	Fix \(r\ge3\) and \(\Delta\in \mathbb{S}^r, I_r+\Delta\succ 0\).
	Given a reference point $Y$, assume that $\St_\Delta(n,r)\cap\M_g(Y)$ is nonempty.
	Suppose Assumption~\ref{assump:row-cover} holds for $Y$.
	Then, for every
	\(	X\in \St_\Delta(n,r)\cap\M_g(Y)\cap \mathcal U_s(Y),
	\)
	it holds that
	\[
	\rmN_X\St_\Delta(n,r)\cap \rmN_X \M_g(Y)
	\;=\;
	\mathrm{span}\bigl\{\, XS_{ij} \ \big|\ i<j,\ \{i,j\}\notin \mathcal E_{\rm off}(Y)\,\bigr\},
	\]
	where $\mathcal E_{\rm off}(Y)=\{\{i,j\}\in\mathcal E(Y):i\neq j\}$.
\end{proposition}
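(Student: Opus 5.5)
The plan is to prove the two inclusions separately. The inclusion ``$\supseteq$'' is already available. Since $X\in\mathcal U_s(Y)$, the matrix $X$ has the same support as $Y$, so $\mathcal E(X)=\mathcal E(Y)$. Applying Proposition~\ref{lem:M_1 and M_2 are not transverse} at the point $X\in\St_\Delta(n,r)$ (with $\M_g(X)=\M_g(Y)$) then shows that $XS_{ij}\in\rmN_X\St_\Delta(n,r)\cap\rmN_X\M_g(Y)$ whenever $\{i,j\}\notin\mathcal E_{\rm off}(Y)$. The real content is therefore the inclusion ``$\subseteq$''.

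For ``$\subseteq$'', take $Z$ in the intersection and write $Z=X\Lambda$ with $\Lambda\in\mathbb S^r$. We must show that $\Lambda_{ij}=0$ for every $\{i,j\}\in\mathcal E(Y)$, including the diagonal. Once this holds, $\Lambda$ is a combination of the $S_{ij}$ over non-edges, and $Z$ lies in the claimed span. The key step is to translate $Z\in\rmN_X\M_g(Y)$ into a row-wise condition. Fix a row $\ell$, and let $x_\ell:=X_{\ell,J_\ell}$ denote the row restricted to its active set $J_\ell:=J_\ell^Y$; all its entries are nonzero. The condition $Z_{\ell k}=0$ for every $k\in J_\ell$ reads
\[
\sum_{m\in J_\ell}X_{\ell m}\Lambda_{mk}=0,\qquad k\in J_\ell .
\]
This holds because the entries of row $\ell$ outside $J_\ell$ vanish. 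In matrix form it says $\Lambda_{J_\ell J_\ell}\,x_\ell=0$. So each row gives a small kernel condition on a principal submatrix of $\Lambda$.

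The first step is to kill the diagonal of $\Lambda$. If $\Delta=0$, Lemma~\ref{lem:diag-vanish} gives $\Lambda_{kk}=0$ directly. If $\Delta\neq0$, Lemma~\ref{lem:diag-vanish} only yields $\diag((I_r+\Delta)\Lambda)=0$, which is not enough, and this is the step I expect to be the genuine obstacle. Here I would use the column witness \eqref{assump:one_colm_witness}: the row $\ell_k$ has $J_{\ell_k}=\{k\}$, so the kernel condition reduces to $X_{\ell_k k}\Lambda_{kk}=0$, and hence $\Lambda_{kk}=0$ for every $k$.

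The second step handles the off-diagonal edges using the witnessing rows $\ell\in\mathcal L$ from Assumption~\ref{assump:row-cover}. If $J_\ell=\{i,j\}$, the zero diagonal reduces the kernel condition to $X_{\ell j}\Lambda_{ij}=0$, so $\Lambda_{ij}=0$. If $J_\ell=\{i,j,k\}$, write the off-diagonal entries as $a=\Lambda_{ij}$, $b=\Lambda_{ik}$, $c=\Lambda_{jk}$, and the row entries as $x_i,x_j,x_k\neq0$. The zero-diagonal $3\times3$ system is
\[
ax_j+bx_k=0,\qquad ax_i+cx_k=0,\qquad bx_i+cx_j=0 .
\]
From the first and third equations, $a=-bx_k/x_j$ and $c=-bx_i/x_j$. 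Substituting into the second gives $-2bx_ix_k/x_j=0$, so $b=0$, and then $a=c=0$. Since every off-diagonal edge is witnessed by such a row, $\Lambda$ vanishes on all of $\mathcal E(Y)$, which completes the argument.
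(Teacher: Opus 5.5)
Your proposal is correct and follows the paper's proof essentially step for step. The paper obtains the ``$\supseteq$'' inclusion from the disjoint-support (non-transversality) proposition, kills the diagonal of \(\Lambda\) via Lemma~\ref{lem:diag-vanish} when \(\Delta=0\) and via the one-column witnesses when \(\Delta\neq0\), and kills the off-diagonal edges using the \(2\)- and \(3\)-row witnesses; the only cosmetic difference is that for the three-entry row it notes the determinant \(-2x_ix_jx_k\neq0\), whereas you eliminate variables directly.
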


  The row-cover condition is a support-level sufficient condition for this constant rank condition. We now use a Bernoulli support model only to estimate how often it occurs.
\begin{assumption}[Bernoulli support model]\label{assump:bernoulli-support}
	Fix $r\ge 2$ and $p\in(0,1)$. The support pattern of $X\in\R^{n\times r}$ is generated as follows:
	for each row $\ell\in[n]$ and column $j\in[r]$, let the indicator variables
	\[
	\delta_{\ell j}:=\mathbf 1\{X_{\ell j}\neq 0\}\sim{\rm Bernoulli}(p)
	\]
	be i.i.d.\ across $(\ell,j)$. 
	%
\end{assumption}

The Bernoulli model in
Assumption~\ref{assump:bernoulli-support} specifies only the support pattern
and is not a generative model for feasible points on \(\St_\Delta(n,r)\). It is
used only to estimate how often the support-level row-cover sufficient condition in
Assumption~\ref{assump:row-cover} occurs. Such support information alone does
not guarantee feasibility of the perturbed Stiefel constraint, and the
intersection \(\St_\Delta(n,r)\cap\M_g(Y)\) may be empty. Accordingly,
feasibility is imposed separately in Proposition~\ref{prop:verify-intersection},
which assumes \(\St_\Delta(n,r)\cap\M_g(Y)\neq\emptyset\) and then studies the
intersection geometry under this condition; verifiable support-level conditions
that also guarantee feasibility are left for future work.

Thus Proposition~\ref{prop:row-cover-bernoulli} should be read
as a probability bound for the support-level sufficient condition in
Assumption~\ref{assump:row-cover}, conditional on using such supports in a
feasible intersection model, rather than as a feasibility guarantee.
\begin{proposition}\label{prop:row-cover-bernoulli}
	 Under Assumption~\ref{assump:bernoulli-support}, the row-cover condition in Assumption~\ref{assump:row-cover} holds with probability at least
	\[
	1-\Bigl(\binom{r}{2}+r\,\mathbf 1_{\{\Delta\neq 0\}}\Bigr)e^{-n q_w},
	\]
	where
	\begin{equation}\label{eq:qw}
		q_w:=
		\begin{cases}
			p^2(1-p)^{r-2}+(r-2)p^3(1-p)^{r-3}, & \Delta=0,\\[4pt]
			\min\Bigl\{\,p^2(1-p)^{r-2}+(r-2)p^3(1-p)^{r-3},\ \ p(1-p)^{r-1}\Bigr\}, & \Delta\neq 0.
		\end{cases}
	\end{equation}
	In particular, for any $\delta\in(0,1)$, if
	\[
	n\ \ge\ \frac{\log\!\Bigl(\binom{r}{2}+r\,\mathbf 1_{\{\Delta\neq 0\}}\Bigr)+\log(1/\delta)}{q_w},
	\]
	then Assumption~\ref{assump:row-cover} holds with probability at least $1-\delta$.
\end{proposition}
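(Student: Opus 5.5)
The plan is a union bound over the finitely many "bad events", each controlled by independence across rows. Under Assumption~\ref{assump:bernoulli-support}, the rows $\{\delta_{\ell\cdot}\}_{\ell\in[n]}$ are i.i.d.\ random vectors in $\{0,1\}^r$. A sufficient (stronger) event for Assumption~\ref{assump:row-cover} is obtained by requiring a witness for \emph{every} pair $\{i,j\}$ with $i<j$, rather than only for pairs that are edges of $\mathcal G(Y)$.

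\textbf{Step 1: a stronger event implies the cover condition.} Suppose every pair $\{i,j\}$ has a row $\ell$ with $\{i,j\}\subseteq J_\ell$ and $|J_\ell|\in\{2,3\}$. Let $\mathcal L$ be the set of all such witnessing rows. Then every off-diagonal edge is covered. Conversely, every pair inside some $J_\ell$ with $\ell\in\mathcal L$ is an edge of $\mathcal G$ by definition \eqref{eq:graph}. Hence the union in Assumption~\ref{assump:row-cover} equals $\mathcal E_{\rm off}$. In fact, under this event $\mathcal E_{\rm off}$ is the complete graph. When $\Delta\neq0$, we add the events that each column $k$ has a row with $J_\ell=\{k\}$.

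\textbf{Step 2: per-row probabilities.} Fix a pair $\{i,j\}$. The row pattern witnesses it in one of two disjoint ways:
\begin{itemize}
\item exactly $J_\ell=\{i,j\}$, which has probability $p^2(1-p)^{r-2}$;
\item $J_\ell=\{i,j,m\}$ for one of the $r-2$ choices of $m$, each with probability $p^3(1-p)^{r-3}$.
\end{itemize}
Summing gives exactly the first expression in \eqref{eq:qw}. For a column witness, $J_\ell=\{k\}$ has probability $p(1-p)^{r-1}$. In either case the per-row success probability is at least $q_w$, since $q_w$ is defined as a minimum when $\Delta\neq0$.

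\textbf{Step 3: row independence and the union bound.} For a fixed target, the probability that no row among the $n$ i.i.d.\ rows succeeds is at most
\[
(1-q_w)^n\le e^{-nq_w},
\]
using $1-x\le e^{-x}$. There are $\binom r2$ pair targets, plus $r$ column targets when $\Delta\neq0$. A union bound gives
\[
\Pr(\text{Assumption~\ref{assump:row-cover} fails})\le \Bigl(\binom r2+r\,\mathbf 1_{\{\Delta\neq0\}}\Bigr)e^{-nq_w}.
\]
The sample-size statement then follows by solving
\[
\Bigl(\binom r2+r\,\mathbf 1_{\{\Delta\ne0\}}\Bigr)e^{-nq_w}\le\delta
\]
for $n$, which gives exactly the displayed bound.

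\textbf{Main obstacle.} The only delicate point is Step 1, the logical reduction. The assumption asks that the union over the witnessing rows equal $\mathcal E_{\rm off}$ exactly, with witnesses for the edges that are actually present. Edges arising only from rows with $|J_\ell|\ge4$ would otherwise be uncovered. I handle this by bounding the failure probability of the stronger event ``all pairs are witnessed,'' which makes $\mathcal E_{\rm off}$ complete and removes the issue. The remaining steps are routine computations.
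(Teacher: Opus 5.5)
Your proposal is correct and follows essentially the paper's argument. Both use a union bound over the $\binom r2$ pair events (plus $r$ column-witness events when $\Delta\neq0$), each bounded by $(1-q)^n\le e^{-nq}$ through row independence. The only cosmetic difference is that the paper defines $\mathsf{Bad}_{ij}$ as ``edge present but not witnessed by a row with $|J_\ell|\in\{2,3\}$'' and then discards the subtracted term $\Pr(\forall\ell:(\mathsf B^{ij}_\ell)^c)$. That leaves exactly your per-pair bound $(1-q_{\rm rc})^n$ for the stronger event ``every pair is witnessed.''
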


The proof is a union-bound argument over uncovered column
pairs and missing one-column witnesses and is deferred to
Appendix~\ref{app:section4-examples}.

\begin{remark} 
	Our focus is on problems whose solutions are \emph{sparse} (e.g., with localized/compactly
	supported columns), so the Bernoulli parameter $p$ is not intended to be large. In particular,
	the relevant regime is when the expected row sparsity $\mathbb{E}|J_\ell|=rp$ is $O(1)$, rather than
	of order $r$.   
	More concretely, in the sparse scaling $p=c/r$, one has the
	approximation
	\[
	q_w \approx \frac{c^2(1+c)e^{-c}}{r^2}.
	\]
Hence the sample size condition in Proposition~\ref{prop:row-cover-bernoulli} becomes
\[
n \;\gtrsim\; 
\frac{r^2}{c^2(1+c)e^{-c}}
\Bigl(\log\binom{r}{2}+\log(1/\delta)\Bigr)
=\Theta\!\bigl(r^2\log(r/\delta)\bigr).
\]

	In contrast, when $p$ is close to $1$ the model describes a dense Stiefel-type regime where
	the ``sparse intersection'' viewpoint is no longer essential: the active manifold constraint
	becomes vacuous. Finally, in applications such as {Compressed
		Modes}, numerical solutions are often observed to exhibit highly localized supports, which is
	consistent with row supports of size on the order of a few (e.g., $2$--$3$) and thus aligns
	well with the 2/3-cover condition. For entrywise-sparse PCA formulations with an $\ell_1$ regularizer,
	the same support-level condition can also be mild when $n\gg r$ (e.g., $n\gtrsim 2000$) and a small target rank (e.g., $r<10$), so the
	high-probability requirement in Proposition~\ref{prop:row-cover-bernoulli} could be satisfied
	for the sparse regimes of interest.
	In the numerical section, we therefore report direct
	support/rank diagnostics for the supports identified by MIX, rather than
	relying solely on the Bernoulli model.
	
\end{remark}

%% file: shared/section_optimization_on_intersection.tex
\section{Optimization on the intersection manifold}\label{sec:opt_on_intersection}

In this section, we specialize to the case
\(g(X)=\lambda\|X\|_1\), and let $\M_g(\bar Y)$ denote the active manifold defined in \eqref{eq:manifold_2}. We develop the differential-geometric ingredients needed for optimization on the intersection manifold
\[\overline{\M}_{\Delta}(\bar Y) =\M_g(\bar Y)\cap \St_\Delta(n,r),\]
including orthogonal projections onto tangent spaces, the Riemannian gradient and Hessian, and practical retractions. In particular, we use the alternating-projection result developed in \cite{chen2026alternating}\cite{zheng2026rnslra} to construct computable second-order retractions on the intersection manifold.
Yang et al. \cite{yang2026optimizationintersectionmanifolds}
recently also develop a first-order geometric framework for optimization over the intersection of
manifolds.
 
\subsection{Projection onto the affine tangent intersection}\label{sec:projection_to_inter_tangent}

In this subsection, we work in vectorized coordinates and write $\St_\Delta$ for $\St_\Delta(n,r)$ for simplicity. Let $x:=\vvec(X)$ and $\bar y:=\vvec(\bar Y)$. For $X\in \St_\Delta\cap \M_g(\bar Y)$, we study the projection of a vector  $c\in\R^{nr}$ onto the affine tangent intersection
\[
\T_\cap^b(x):=(b+\vvec(\T_X\St_\Delta))\cap \vvec(\T_X\M_g(\bar Y)).
\]
{\color{blue}Recall from Section~\ref{sec:intersection_property} that $B_x$ and
$E_x$ are basis matrices of $\vvec(\rmN_X\St_\Delta)$ and
$\vvec(\T_X\M_g(\bar Y))$, respectively, and that
$A_x=B_x^\top E_x$ by \eqref{eq:matrix_A}. Since the tangent space of
$\M_g(\bar Y)$ is the fixed coordinate subspace determined by the support
pattern, $E_x$ is independent of $x$, and we simply write it as $E$.}
In the fixed support coordinates, this projection is equivalent to
\begin{equation}\label{prob:projection_eq}
	\begin{aligned}
		&\min_{a\in\R^s}\frac12\|a-E^\top c\|^2,\\
		&\st \quad A_x a=B_x^\top b.
	\end{aligned}
\end{equation}
The derivation follows from the equality-constrained least-squares formulation and is given in Appendix~\ref{app:proof-projection-intersection}.

For generic perturbations $\Delta$, the transversality condition \eqref{eq:transv-Mg-StDelta} implies that, at every point
$X\in \overline{\M}_\Delta(\bar Y)$, the matrix $A_x$ has full row rank $r(r+1)/2$. For nongeneric \(\Delta\), transversality may fail.  Proposition~\ref{prop:verify-intersection} shows that Assumption~\ref{assump:row-cover} is a support-based sufficient condition for the constant-rank property. Under the Bernoulli support model, Proposition~\ref{prop:row-cover-bernoulli} gives a high-probability bound for this support condition.  

We now exploit the structure of $A_x$ under a fixed support pattern. By Proposition~\ref{lem:M_1 and M_2 are not transverse}, if two column supports are disjoint, say
$\mathcal I_\ell^X\cap \mathcal I_k^X=\emptyset$, then
$XS_{\ell k}\in \rmN_X\St_\Delta\cap \rmN_X\M_g(\bar Y)$.
Since $\vvec(XS_{\ell k})$ corresponds to a row of $B_x^\top$, the same row of
$A_x=B_x^\top E$ is zero. Thus every missing edge in the support graph produces a zero row of $A_x$.

Let $A_{\gamma(\bar Y)}\in\R^{\gamma(\bar Y)\times s}$ denote the submatrix of $A_x$ obtained by removing all zero rows. Correspondingly, let
\begin{equation}\label{eq:U_gamma}
	U_{\gamma(\bar Y)}^\top\in\R^{\gamma(\bar Y)\times r^2}
\end{equation}
be the submatrix of $U_r^\top$ consisting of the same rows as $A_{\gamma(\bar Y)}$, and define
\begin{equation}\label{eq:B_gamma}
	B_{\gamma(\bar Y)}^\top:=U_{\gamma(\bar Y)}^\top(I_r\otimes X^\top).
\end{equation}
Then, we have
\(
A_{\gamma(\bar Y)}=B_{\gamma(\bar Y)}^\top E.
\)

Under Assumption~\ref{assump:intersection}, it follows from Lemma~\ref{lem:simple_dim_lem} and the dimension formula for
$\rmN_X\St_\Delta\cap \rmN_X\M_g(\bar Y)$ that
\[\rank(A_x)=\gamma(\bar Y).\]
Hence $A_{\gamma(\bar Y)}$ has full row rank, and the projection problem reduces to
\[
\min_{a\in\R^s}\frac12\|a-E^\top c\|^2
\qquad \text{s.t.}\qquad
A_{\gamma(\bar Y)}a=B_{\gamma(\bar Y)}^\top b.
\]
Therefore,
\begin{equation}\label{eq:projection_vector_g_old}
	\Proj_{\T_\cap^b(x)}(c)
	=
	E\Bigl(E^\top c-
	A_{\gamma(\bar Y)}^\dagger\bigl(A_{\gamma(\bar Y)}E^\top c-B_{\gamma(\bar Y)}^\top b\bigr)\Bigr),
\end{equation}
where \(A_{\gamma(\bar Y)}^\dagger
=
A_{\gamma(\bar Y)}^\top\bigl(A_{\gamma(\bar Y)}A_{\gamma(\bar Y)}^\top\bigr)^{-1}\)
is the Moore--Penrose inverse of $A_{\gamma(\bar Y)}$.

In particular, for $b=0$, let
\[
M_x:=EE^\top,\qquad
G_x:=B_{\gamma(\bar Y)}^\top M_x,\qquad
\Gamma_x:=G_x^\top(G_xG_x^\top)^{-1}G_x.
\]
Then the orthogonal projection onto the tangent space of the intersection manifold is
\begin{equation}\label{eq:projection_vector_g}
	\Proj_{\T_X\overline{\M}_\Delta(\bar Y)}c
	=
	M_xc-G_x^\top(G_xG_x^\top)^{-1}G_xc
	=
	(M_x-\Gamma_x)c.
\end{equation}
\emph{Complexity.} Under a fixed support pattern, the projector $M_x$ is block diagonal:
\[
M_x=\diag(D_1,\dots,D_r),
\]
where each $D_j\in\R^{n\times n}$ is the diagonal matrix associated with the support of the
$j$th column. Hence
\[
K_x:=(I_r\otimes X^\top)M_x(I_r\otimes X)
=\diag(X^\top D_1X,\dots,X^\top D_rX)
\]
is also block diagonal, and
\[
G_xG_x^\top
=
U_{\gamma(\bar Y)}^\top K_x U_{\gamma(\bar Y)}.
\]
If $s_j$ denotes the support size of the $j$th column and $s:=\sum_{j=1}^r s_j$, then forming
$K_x$ costs $O(sr^2)$, while extracting $G_xG_x^\top$ from $K_x$ costs $O(\gamma(\bar Y)^2)$.
Its Cholesky factorization costs $O(\gamma(\bar Y)^3)$. Therefore, for a fixed $x$, the
one-time preprocessing cost is $O(sr^2+\gamma(\bar Y)^3)$. After this preprocessing, each subsequent projection in \eqref{eq:projection_vector_g} costs
$O(sr+\gamma(\bar Y)^2)$.

\subsection{Riemannian gradient and Riemannian Hessian}

Equipped with the Riemannian metric induced by the ambient Euclidean space, we next characterize the Riemannian gradient and Riemannian Hessian of a smooth function \(f\) on the intersection manifold \(\overline{\M}_{\Delta}(\bar Y)\). For any \(X\in \overline{\M}_{\Delta}(\bar Y)\), the Riemannian gradient is given by
\begin{equation}\label{eq:riemannian grad}
	\grad f(X)=\Proj_{\T_X\overline{\M}_{\Delta}(\bar Y)}\nabla f(X),
\end{equation}
where \(\nabla f(X)\) denotes the Euclidean gradient, and  {the tangent projection is computed by \eqref{eq:projection_vector_g}, the \(b=0\) case of \eqref{prob:projection_eq}.}

Using the Levi--Civita connection introduced in \eqref{def:Levi-Civita}, the Riemannian Hessian at \(X\) along a tangent vector \(\eta\in \T_X\overline{\M}_{\Delta}(\bar Y)\) is
\[
\Hess f(X)[\eta]
=\Proj_{\T_X\overline{\M}_{\Delta}(\bar Y)}
\bigl(\mathrm D(\grad f)(X)[\eta]\bigr).
\]
Here \(\mathrm D(\grad f)(X)[\eta]\) is the ambient directional derivative of the Riemannian gradient field, and the outer projection corresponds to the Levi--Civita connection on \(\overline{\M}_{\Delta}(\bar Y)\).

To compute the Riemannian Hessian, we need the differential of the tangent projector. This is naturally related to the second fundamental form and the Weingarten map; see \cite[Def.5.48]{boumal2023introduction}.

Applying \cite[Eq.~(5.34)]{boumal2023introduction} to the embedded manifold $\overline{\M}_\Delta(\bar Y)$, we obtain
\begin{equation}\label{hess_1}
\Hess f(X)[\eta]
=
\Proj_{\T_X\overline{\M}_{\Delta}(\bar Y)}
\Bigl(\nabla^2 f(X)[\eta]+\Proj_\eta(\nabla f(X))\Bigr),
\end{equation}
where \(\Proj_\eta:=\mathrm D\bigl(X\mapsto \Proj_{\T_X\overline{\M}_\Delta(\bar Y)}\bigr)(X)[\eta]\).
Let $g_x:=\vvec(\nabla f(X))$ denote the vectorized Euclidean gradient.
Since \(G_x=B_{\gamma(\bar Y)}^\top M_x\) and
\(B_{\gamma(\bar Y)}^\top=U_{\gamma(\bar Y)}^\top(I_r\otimes X^\top)\),
we define \[\dot G_x[\eta]
:=\mathrm D G_x(x)[\eta]
=
U_{\gamma(\bar Y)}^\top(I_r\otimes \eta_m^\top)M_x,\]
where $\eta_m\in\R^{n\times r}$ is the matrix representation of the vector $\eta\in\R^{nr}$.
Differentiating the projector \eqref{eq:projection_vector_g} and projecting away the normal component gives the explicit Hessian formula; the derivation is given in Appendix~\ref{app:proof-intersection-hessian}.
\begin{equation}\label{eq:Rieman_Hessian}
\Hess f(X)[\eta]
=
\Proj_{\T_X\overline{\M}_{\Delta}(\bar Y)}
\Bigl(
\nabla^2 f(X)[\eta]
-
\operatorname{mat}\!\Big(
\dot G_x[\eta]^\top(G_xG_x^\top)^{-1}G_x g_x
\Big)
\Bigr),
\end{equation}
where $\operatorname{mat}$ denotes the inverse operation of $\vvec$.

\emph{Complexity.} For a fixed $X$, the factorization of $G_xG_x^\top$ is a one-time cost. After this preprocessing, one application of the Riemannian Hessian in \eqref{eq:Rieman_Hessian} has the same order of complexity as two projections onto $\T_X\overline{\M}_{\Delta}(\bar Y)$, together with one Euclidean Hessian--vector product. Consequently, similar to the discussion of \eqref{eq:projection_vector_g}, one iteration of Algorithm~\ref{alg:tcg} costs $O(sr+\gamma(\bar Y)^2)$, in addition to one Euclidean Hessian--vector product.

\subsection{Retraction on the intersection manifold}

After computing a search direction in the tangent space, one needs a mechanism to map it back to the manifold. The exponential map is the canonical choice, but it is often too expensive to evaluate in practice. Retractions provide computable approximations of the exponential map and are therefore standard in manifold optimization. For embedded submanifolds of $\E$, many retractions are available, in particular projection-type retractions; see \cite{absil2012projection}. For the intersection manifold considered here, however, constructing such retractions directly is not straightforward. Fortunately, the recent work \cite{chen2026alternating} shows that alternating-projection-type algorithms induce second-order retractions on intersection manifolds. We briefly recall this construction below.

\paragraph{\bf{Alternating-projection-type retractions}}
Instead of computing the exact projection onto the intersection manifold,
alternating-projection-type methods generate approximations to this projection.
As shown in \cite{chen2026alternating}, the corresponding limiting maps can
induce retractions on manifold intersections.  More concretely, for our two cleanly intersecting manifolds $\M_g, \St_{\Delta}$, one alternating-projection-type iteration has the form \[\varphi := \phi_1\circ \phi_2,\]
where $\phi_1:\mathbb R^{n\times r}\to \M_g(\bar Y)$
and $\phi_2: {\mathbb R^{n\times r}\to \mathbb R^{n\times r}}$ are either an exact or an inexact projection mapping. Since $\M_g(\bar Y)$ is easy to project, we use the projection operator.
For a point $Y\in \overline{\M}_\Delta(\bar Y)$ and a tangent vector $\eta\in \T_Y\overline{\M}_\Delta(\bar Y)$,
we associate the one-step map $\varphi$ with the limiting map
\begin{manualtaggedequation}
\begin{equation}\label{eq:limiting_alter_proj}\tag{Alt-R}
		\psi(Y,\eta)
		:= \lim_{k\to\infty}\varphi^{k}(Y+\eta).
\end{equation}
\end{manualtaggedequation}
\cite{chen2026alternating} shows that $\psi$
is a second-order retraction for a class of alternating-projection methods.
Examples include the classical Alternating Projections method
(APM)~\cite{lewis2008alternating,andersson2013alternating,drusvyatskiy2015transversality,noll2016local}, a broader family of
	inexact Alternating Projections methods
(IAPM)~\cite{kruger2016regularity,drusvyatskiy2019local}, and
quadratically convergent variants such as NewtonSLRA~\cite{schost2016quadratically}
and RN-SLRA~\cite{zheng2026rnslra}.

We describe the alternating-projection-type retractions below.
\begin{enumerate}
    \item\textbf{Alternating projections method (APM)} 
Denote \(D=I_r+\Delta\).
The formulation of the projection onto $\St_\Delta$ is given by
\begin{equation} \label{eq:proj_closed_StDelta}
\Proj_{\St_\Delta}(Y)
= Y D^{1/2} \left( D^{1/2} Y^\top Y D^{1/2} \right)^{-1/2} D^{1/2}.
\end{equation}
The derivation is given in
Appendix~\ref{sec:append_projection_to_St}. 
Therefore, with
\begin{equation}\label{eq:APM}
    \varphi_{\mathrm{APM}}(Y)= S_{\bar Y}\odot (\Proj_{\St_\Delta}(Y)),
\end{equation}
the limiting map $\psi$ in \eqref{eq:limiting_alter_proj} defines a second-order retraction; see~\cite[Theorem 3]{chen2026alternating}. However,
the inverse square root of $D^{1/2} Y^\top Y D^{1/2}$ will be expensive when $r$ is large. We seek an approximation to it.\\
\smallskip
\emph{Complexity.} Each evaluation of $\varphi_{\mathrm{APM}}$ costs $O(nr^2+r^3)$ flops.
\item \textbf{Inexact Alternating Projections method (IAPM)} For the generalized Stiefel manifold
\(
\St_{\Delta}(n,r)
\)
we use the following first-order approximation to $\Proj_{\St_\Delta}$:
\(\phi_2(Y)=Y\bigl(I-S(Y)\bigr)\),
where $S(Y)\in\mathbb S^r$ is the unique symmetric solution of the Sylvester equation
\(DS(Y)+S(Y)D=Y^\top Y-D\).

Given $D$, we can precompute its eigendecomposition
\(
D=U\Lambda U^\top,  \Lambda=\mathrm{Diag}(\lambda_1,\ldots,\lambda_r),
\)
and define \(K_{ij}:=\frac{1}{\lambda_i+\lambda_j}\).
Then, for each $Y$, letting $Z:=YU$, the Sylvester solution can be applied without explicitly forming $S(Y)$:
\begin{equation}\label{eq:Sylvester}
   \widehat S = K\odot\bigl(Z^\top Z-\Lambda\bigr),\qquad
\phi_2(Y)=Y- Y(U\widehat S U^\top). 
\end{equation}
\emph{Complexity.} After a one-time $O(r^3)$ preprocessing for $D$, each evaluation of $\phi_2$ costs $O(nr^2+r^3)$ flops, which is more efficient than the APM update in \eqref{eq:APM}.
The derivation is given in Appendix~\ref{app:sylvester_corrected_phi2}.
 \begin{remark}
     When $D=I_r$, the Sylvester equation reduces to
\(
2S(Y)=Y^\top Y-I,
\)
hence
\[
\phi_2(Y)
=
Y\Bigl(I-\frac12(Y^\top Y-I)\Bigr)
=
\frac12\,Y(3I-Y^\top Y),
\]
which is exactly the standard Newton-Schulz iteration \cite{kovarik1970some,bjorck1971iterative}.
 \end{remark}

Therefore, the IAPM method we use is given as follows:
\begin{equation}\label{eq:IAPM}
    \varphi_{\mathrm{IAPM}}(Y) = S_{\bar Y} \odot\Big(Y(I- U\widehat S U^\top)\Big).
\end{equation}
To guarantee the convergence of the iteration \eqref{eq:IAPM}, the inexactness of $\phi_2$ should satisfy
\begin{equation}\label{eq:inexact_projection_St}
     \|\phi_2(Y)-\Proj_{\St_\Delta}(Y)\|
    = o\big(\dist(Y,\St_\Delta)\big),
    \quad \text{as } Y\to X,\ \ X\in \mathcal U_{\bar Y}\cap \St_\Delta,
\end{equation}
where $\mathcal U_{\bar Y} $ is a neighborhood of $\bar Y$ in $\E^n.$
The Sylvester-corrected map \(\phi_2\) satisfies the second-order residual expansion required by the alternating-projection framework; the precise statement and proof are given in Lemma~\ref{lem:expansion-phi} in Appendix~\ref{append_subsec:proof_lemma}. Consequently, under the clean-intersection and regularity assumptions of \cite[Assumption~3 and Theorem~3]{chen2026alternating}, the limiting map induced by \(\varphi_{\mathrm{IAPM}}\) defines a second-order retraction.

\item \textbf{Regularized NewtonSLRA.}
Although both APM and IAPM converge linearly to a point on
$\overline{\M}$, they may be slow when the angle between the two manifolds is
small. We use NewtonSLRA~\cite{schost2016quadratically} and its regularized
variant RN-SLRA~\cite{zheng2026rnslra} to accelerate the alternating-projection scheme
by projecting onto a linearized intersection at the current Stiefel point.

Given $Y_j\in\mathbb R^{n\times r}$, we first compute
\(X_j=\Proj_{\St_{\Delta}}(Y_j)\in\St_{\Delta}(n,r)\)
and write $x_j=\mathrm{vec}(X_j)\in\mathbb R^{nr}$. We then compute a
regularized projection of $x_j$ onto the affine linearized intersection
\[
    \T_{\cap}^b(x_j)
    :=
    \bigl(x_j+\vvec(\T_{X_j}\St_\Delta)\bigr)
    \cap \vvec(\M_g(\bar Y)).
\]
This is the affine version of the construction of $\T_\cap^b(x)$ above, with $b=x_j$ and the fixed coordinate subspace determined by $\M_g(\bar Y)$.
Unlike \eqref{prob:projection_eq}, this affine intersection may be
empty when transversality fails. To obtain a well-defined update, we use a
Levenberg--Marquardt-type regularization of the linearized projection
problem and solve
\begin{equation}\label{eq:newtonslra_penalized}
\min_{a\in\mathbb R^s}
\frac12\|a-E^\top x_j\|^2
+
\frac{1}{2\lambda_{\rm slra}}
\|A_{x_j}a-B_{x_j}^\top x_j\|^2,
\qquad \lambda_{\rm slra}>0,
\end{equation}
where $E\in\mathbb R^{nr\times s}$ is an orthonormal basis of
$\vvec(\M_g(\bar Y))$, $B_{x_j}\in\mathbb R^{nr\times q}$,
$q=r(r+1)/2$, is a basis of $\rmN_{X_j}\St_\Delta$, and
$A_{x_j}=B_{x_j}^\top E$. The solution is given by
\[
    a_j^{\lambda_{\rm slra}}
    =
    E^\top x_j-A_{x_j}^\top u_j,
    \qquad
    (A_{x_j}A_{x_j}^\top+\lambda_{\rm slra}I_q)u_j
    =
    A_{x_j}E^\top x_j-B_{x_j}^\top x_j .
\]
The RN-SLRA update is then
\[
    y_{j+1}
    :=
    E a_j^{\lambda_{\rm slra}}
    =
    E\bigl(E^\top x_j-A_{x_j}^\top u_j\bigr)
    \in \vvec(\M_g(\bar Y)).
\]
When $\T_{\cap}^b(x_j)$ is nonempty, the regularized projection converges to
the exact Euclidean projection onto $\T_{\cap}^b(x_j)$ as
$\lambda_{\rm slra}\downarrow0$.

The original NewtonSLRA method is locally quadratically convergent under
transversality. In the clean-intersection setting, transversality may fail, and
the linearized intersection can become ill-conditioned or empty. The
regularized step~\eqref{eq:newtonslra_penalized} avoids this difficulty and is
supported by the convergence theory of RN-SLRA~\cite{zheng2026rnslra}: it is
locally linearly convergent under weak intersection conditions and recovers
higher-order convergence under transversality. For a fixed regularization
parameter, the corresponding limiting map also admits a second-order retraction
on the clean intersection manifold.

\smallskip
\emph{Complexity.}
One RN-SLRA step consists of two parts. First, computing
$X_j=\Proj_{\St_\Delta}(Y_j)$ costs $O(nr^2+r^3)$. Second, solving the
$q\times q$ linear system
\[
(A_{x_j}A_{x_j}^{\top}+\lambda_{\rm slra}I_q)u_j
=
A_{x_j}E^\top x_j-B_{x_j}^{\top}x_j
\]
costs $O(sr^2+r^6)$, where $O(sr^2)$ comes from forming
$A_{x_j}A_{x_j}^\top$ using the block structure derived above, and
$O(r^6)$ is the cost of solving the resulting $q\times q$ system. Therefore,
one RN-SLRA step has overall complexity $O(nr^2+sr^2+r^6)$.

\end{enumerate}
In practice, we use the TAPR method proposed in
\cite{chen2026alternating}, which is a hybrid method combining APM, IAPM, and
NewtonSLRA-type corrections. Given $Y\in\overline{\M}_k$ and
$\eta\in\T_Y\overline{\M}_k$, let $\{Z_j\}$ denote the inner sequence generated
by TAPR with initial point $Z_0=Y+\eta$. In practice, we use APM while $\|Z_{j+1}-Z_j\|\ge 10^{-2}$, switch to IAPM
when $\|Z_{j+1}-Z_j\|\in[10^{-5},10^{-2}]$, and use RN-SLRA when $\|Z_{j+1}-Z_j\|<10^{-5}$. In our experiments,
no more than 30 APM/IAPM steps and  {2 RN-SLRA steps are} typically
sufficient. 

{\color{blue}
	For the later convergence analysis, $\Retr^{\rm TAPR}$ denotes the TAPR retraction in \cite[Theorem~5]{chen2026alternating}, where the numbers of APM and IAPM iterations are fixed independently of the input. The adaptive switching strategy used in the experiments is regarded as a practical approximation.
}

The following results follow from \cite{chen2026alternating,zheng2026rnslra}.
\begin{proposition}\label{fact:ret_Tapr}
Under the local intersection assumptions in
Theorem~\ref{thm:Mg-intersection-submanifold}, including the unperturbed case
$\Delta=0$, the APM, IAPM \cite{chen2026alternating}, and fixed-regularization RN-SLRA \cite{zheng2026rnslra} phases used in
TAPR induce second-order retractions on
$\overline{\M}_\Delta(\bar Y)$. Consequently, the TAPR limiting map with fixed
$\lambda_{\rm slra}>0$ defines a second-order retraction in the local
clean-intersection regime.

If, in addition, the transversality condition~\eqref{eq:transv-Mg-StDelta}
holds, then the standard NewtonSLRA phase is also second order \cite{chen2026alternating}. Thus the same
conclusion holds when the TAPR map uses the standard NewtonSLRA phase. This
applies, in particular, to generic perturbations
$\Delta\in\mathscr P_{\rm gen}$.
\end{proposition}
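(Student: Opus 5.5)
The statement has two parts: the APM, IAPM and fixed-regularization RN-SLRA limiting maps (hence TAPR) are second-order retractions on $\overline{\M}_\Delta(\bar Y)$ in the clean-intersection regime, and the standard NewtonSLRA phase is also second order under transversality. The plan is to reduce both parts to the abstract theorems of \cite{chen2026alternating} and \cite{zheng2026rnslra}. The work is mainly checking hypotheses.

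\emph{Step 1: clean-intersection hypothesis.} By Theorem~\ref{thm:Mg-intersection-submanifold}, $\overline{\M}_\Delta(\bar Y)\cap\mathcal U_s(\bar Y)$ is an embedded submanifold with $\T_X\overline{\M}_\Delta=\T_X\St_\Delta\cap\T_X\M_g(\bar Y)$. This is exactly the clean intersection of Definition~\ref{def:M1andM2_intersect_cleanly}. The rank of $A_x$ is locally constant and equal to $\gamma(\bar Y)$, by \eqref{eq:rank_DH_rankA} and the minor argument in that proof. This constant-rank property gives the local regularity (a uniform positive angle modulo the common normal directions) that \cite[Assumption~3]{chen2026alternating} requires. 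Both $\M_g(\bar Y)$ (a linear subspace) and $\St_\Delta$ (via $\Phi_\Delta$ with surjective differential, since $I_r+\Delta\succ0$) are smooth, and they have smooth local projections near the intersection.

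\emph{Step 2: the one-step maps.} For APM, the map $\phi_1=S_{\bar Y}\odot(\cdot)$ is the exact projection onto $\M_g(\bar Y)$, and $\phi_2=\Proj_{\St_\Delta}$ is the exact projection given by \eqref{eq:proj_closed_StDelta}. So \cite[Theorem~3]{chen2026alternating} applies directly and gives a second-order limiting map $\psi$. For IAPM, I will invoke Lemma~\ref{lem:expansion-phi}, which shows that the Sylvester-corrected $\phi_2$ satisfies \eqref{eq:inexact_projection_St}, with its error second order in $\dist(Y,\St_\Delta)$. This places IAPM inside the inexact class covered by the same theorem.

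For RN-SLRA with fixed $\lambda_{\rm slra}>0$, I will check two facts. First, the update map is smooth near the intersection: $A_{x}A_{x}^\top+\lambda_{\rm slra}I_q$ is uniformly invertible, so no rank condition is needed. Second, every point of $\overline{\M}_\Delta$ is a fixed point. I will then cite the local linear convergence and second-order retraction result of \cite{zheng2026rnslra}.

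Since TAPR with fixed, input-independent numbers of APM/IAPM steps is a composition of these maps, \cite[Theorem~5]{chen2026alternating} gives that its limit is a second-order retraction.

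\emph{Step 3: transverse case.} Under \eqref{eq:transv-Mg-StDelta}, $A_x$ has full row rank $r(r+1)/2$. The unregularized linearized projection onto $\T_\cap^b(x_j)$ is therefore well defined and smooth, and standard NewtonSLRA is quadratically convergent. The second-order property then follows from \cite{chen2026alternating}. By Proposition~\ref{lem:generic-transv} this covers all $\Delta\in\mathscr P_{\rm gen}$.

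\emph{Main obstacle.} The delicate point is the nontransverse case $\Delta=0$ with IAPM and RN-SLRA. There the convergence of the iterates must hold uniformly on a neighborhood of $(Y,0)$ in $\T\overline{\M}$, so that $\psi$ is smooth. I plan to handle this by arguing modulo the common normal space $\mathrm{span}\{XS_{ij}:\{i,j\}\notin\mathcal E\}$. On this space both projections act trivially, and on its complement the intersection is transverse within the reduced space. Constant rank then gives a uniform contraction factor.
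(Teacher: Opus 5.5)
Your proposal is correct and follows essentially the paper's route. The paper gives no self-contained proof; it obtains the proposition by invoking \cite[Theorems~3 and~5]{chen2026alternating} and \cite{zheng2026rnslra}, with Theorem~\ref{thm:Mg-intersection-submanifold} supplying the clean-intersection hypothesis, Lemma~\ref{lem:expansion-phi} supplying the IAPM inexactness condition, and full row rank of $A_x$ handling the transverse NewtonSLRA case, exactly as you plan. Your ``main obstacle'' paragraph (a uniform contraction modulo the common normal space $\mathrm{span}\{XS_{ij}:\{i,j\}\notin\mathcal E\}$) goes beyond the paper, which delegates this uniformity entirely to the cited works.
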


%% file: shared/section_algorithm_intersection.tex
\section{A hybrid algorithm: MIX}\label{sec:alg_on_intersection}

In this section, we introduce a new algorithm tailored to problem \((P_\Delta)\).
Its global descent and KKT-residual guarantees do not require the generic
perturbation condition and, in particular, yield vanishing KKT residual for the
original problem when \(\Delta=0\). The condition \(\Delta\in P_{\rm gen}\) is
used only as a sufficient way to verify the transversality condition needed
for finite identification; it can be replaced by direct transversality,
including when \(\Delta=0\).

The design of our algorithm is inspired by \cite{bareilles2023newton}, but the
Stiefel-type constraint $\St_\Delta$ makes the constrained setting substantially different
from the Euclidean one. To exploit the sparsity pattern identified by the ManPG step while
maintaining near-orthogonality, we work with both the fixed perturbed constraint $\St_\Delta$
and an iteration-dependent Stiefel-type constraint determined by the Gram matrix $Y_k^\top Y_k$
of the ManPG predictor $Y_k$. Although the proposed framework can be extended to more general equality-constrained manifolds, the intersection structure between the identification manifold of the nonsmooth term and the constraint manifold is generally difficult to characterize; therefore, we defer a brief discussion of such extensions to the appendix.

According to Theorem~\ref{thm:id-strong-crit}, near a nondegenerate critical point
$X^*$, the support of $\mathcal T_t(X)$ remains invariant. This suggests the following local smooth optimization problem:
\[
\min_{X\in \M_\Delta(X^\star)} \bar F(X),
\qquad
\bar F:=F|_{\M_\Delta(X^\star)} .
\]
Such a problem can be efficiently handled by second-order Riemannian methods under suitable
local assumptions. However, the support of the local solution is not known in advance. In
\cite{bareilles2023newton}, the support is estimated by a proximal-gradient-type step and
iteratively refined. In the present constrained setting, however, the set
\(
    \St_\Delta(n,r)\cap \M_g(\mathcal T_t(X_k))
\)
may be empty during the early iterations.

To avoid this difficulty, with 
\(V_k=\mathcal T_t(X_k)-X_k\) and
\(Y_k=\mathcal T_t(X_k)=X_k+V_k\), we have
\(Y_k^\top Y_k=I_r+\Delta+V_k^\top V_k\).
Define
\(\Delta_k=\Delta+V_k^\top V_k\) and
\(D_k=I_r+\Delta_k=Y_k^\top Y_k\).
Then $Y_k$ lies exactly on the iteration-dependent Stiefel-type manifold
\[
    \St_{\Delta_k}(n,r)
    :=
    \left\{
    X\in\mathbb R^{n\times r}\ \middle|\ X^\top X=D_k
    \right\}.
\]

We now present the proposed hybrid method in Algorithm~\ref{alg:manpg-mix}, termed the
\underline{M}anifold \underline{i}dentification pro\underline{x}imal Newton method (MIX).
The MIX algorithm combines a globally safeguarded ManPG-type step with a local Riemannian
Newton-CG step constructed on an identified smooth geometric model.

At iteration $k$, we first solve the tangent subproblem~\eqref{eq:tangent_problem} to obtain
a direction $V_k\in\T_{X_k}\St_\Delta$ and set $Y_k=X_k+V_k$. The quantity $\normfro{V_k}/t_k$
is used as the stationarity measure. If $\normfro{V_k}/t_k<\epsilon_{\rm stat}$, the algorithm terminates and
returns $Y_k$.
Indeed,
\(
    Y_k^\top Y_k-(I_r+\Delta)=V_k^\top V_k,
\)
and hence the feasibility violation of $Y_k$ is of order $O(\normfro{V_k}^2)$.

When $\normfro{V_k}<\delta$, where $\delta\in(0,1)$ is the threshold for activating the
identification step, we exploit the local structure revealed by the ManPG predictor $Y_k$.
Specifically, we identify the smooth intersection manifold
\[\overline{\M}_k=\St_{\Delta_k}\cap \M_g(Y_k),\]
where $\St_{\Delta_k}$ is the iteration-dependent Stiefel-type constraint defined above.
Although ${\M}_g(Y_k)$ need not coincide with the limiting active manifold $\M_g(X^*)$,
it provides a smooth low-dimensional model around $Y_k$ on which second-order information
can be used to accelerate the ManPG step.  The well-definedness of this intersection manifold
follows from Theorem~\ref{thm:Mg-intersection-submanifold} whenever
Assumption~\ref{assump:intersection} holds. Section~\ref{sec:intersection_property}
gives deterministic and probabilistic support conditions under which this assumption
is satisfied, including the unperturbed case $\Delta=0$ in the stated regimes.

\begin{algorithm}[!t]
\caption{A \underline{M}anifold \underline{i}dentification pro\underline{x}imal Newton method (MIX)}
\label{alg:manpg-mix}
\begin{algorithmic}[1]
\Require target tolerance $\epsilon>0$, perturbation $\Delta$ with
$I_r+\Delta\succ0$ and $\normfro{\Delta}<\epsilon/2$, stationarity tolerance
$\epsilon_{\rm stat} = O(\epsilon-\normfro{\Delta})$,  initial point $X_0\in\St_\Delta(n,r)$,
initial stepsize $t_0\in(0,t_{\max}]$, upper stepsize safeguard
$t_{\max}>0$, initial regularization scale $\lambda_0>0$,
exponent $\sigma\in[0,1]$, threshold $\delta\in(0,1)$
satisfying \(0<\delta\le\frac12\sqrt{\lambda_{\min}(I_r+\Delta)}\), parameters 
$\beta_t^\uparrow>1$, $\beta_t^\downarrow\in(0,1)$,
$\beta_\lambda^\uparrow>1$, $\beta_\lambda^\downarrow\in(0,1)$.
\State Set $k=0$.
\While{true}
    \State Solve the tangent subproblem~\eqref{eq:tangent_problem} with stepsize
    $t_k$ to obtain $V_k\in\T_{X_k}\St_\Delta$, and set $Y_k=X_k+V_k$.
    \If{$\normfro{V_k}/t_k \le \epsilon_{\rm stat}$}
        \State \Return $Y_k$.
    \EndIf

\State Set $\mathrm{tag}\gets\texttt{manpg}$,
$\mathrm{cg\_status}\gets\texttt{not-invoked}$, and
$\ell_{\rm N}\gets\infty$.
\If{$\normfro{V_k}<\delta$} 
    \State Identify $\overline{\M}_k$ as in~\eqref{eq:M_k}.
    \State
    $(X_k^{\rm N},\mathrm{tag},\mathrm{cg\_status},\ell_{\rm N})
    \gets
    \textsc{NewtonCG}(X_k,Y_k,V_k,t_k,\overline{\M}_k,\lambda_k,\sigma)$.
    \Comment{Algorithm~\ref{alg:newton-cg}}
\EndIf

    \If{$\mathrm{tag}=\texttt{accepted}$}
        \State $X_{k+1}=X_k^{\rm N}$.
    \Else
        \State Use the ManPG line-search procedure in~\cite{chen2020proximal}
        to obtain $\hat\alpha_k$.
        \State $X_{k+1}=\Proj_{\St_\Delta}(X_k+\hat\alpha_k V_k)$.
    \EndIf

    \State Update $(t_{k+1},\lambda_{k+1})$ according to~\eqref{eq:mix-update-rule}.
    \State $k\gets k+1$.
\EndWhile
\end{algorithmic}
\end{algorithm}

\begin{algorithm}[!t]
\caption{\textsc{NewtonCG}}
\label{alg:newton-cg}
\begin{algorithmic}[1]
\Require $X_k,Y_k,V_k,t_k,\overline{\M}_k,\lambda_k,\sigma$.
\Ensure A candidate $X_k^{\rm N}$, a tag, a tCG status $\mathrm{cg\_status}$,
and the number $\ell_{\rm N}$ of Newton backtracking steps.
\State Set line-search parameters $\bar\tau\in(0,1/2)$, $c_{\rm N}\in(0,1/2)$,
$\varsigma\in(0,1)$, $\alpha_{\min}>0$ chosen sufficiently small.
\State Set $X_k^{\rm N}\gets X_k$, $\mathrm{tag}\gets\texttt{newton-fail}$,
and $\ell_{\rm N}\gets\infty$.
\State Compute
$(w_k,\mathrm{cg\_status})
=\mathrm{tCG}(\overline{\M}_k,Y_k,\bar F_k,\lambda_k,\sigma)$.
\Comment{Algorithm~\ref{alg:tcg}}
\If{$\mathrm{cg\_status}\notin\{\texttt{linear},\texttt{superlinear}\}$}
    \State \Return $(X_k^{\rm N},\mathrm{tag},\mathrm{cg\_status},\ell_{\rm N})$.
\EndIf
\State Set $\alpha=1$, $\ell=0$, and $\mathrm{model\_ok}\gets\mathrm{false}$.
\While{$\alpha\ge\alpha_{\min}$}
    \State Generate $Z_k=\Retr_{Y_k}^{\rm TAPR}(\alpha w_k)$ by TAPR \cite{chen2026alternating}.
    \If{no valid trial point is generated} \Comment{TAPR is locally defined.}
        \State $\alpha\gets\varsigma\alpha$ and $\ell\gets\ell+1$.
        \State \textbf{continue}.
    \EndIf

  \If{\eqref{eq:newton-armijo} fails at $Z_k$}
  \State $\alpha\gets\varsigma\alpha$ and $\ell\gets\ell+1$.
  \State \textbf{continue}.
  \EndIf

  \State $\mathrm{model\_ok}\gets\mathrm{true}$ and
  $X_k^{\rm trial}=\Proj_{\St_\Delta}(Z_k)$.
  \If{$F(X_k^{\rm trial})\le F(X_k)-\bar\tau\normfro{V_k}^2/t_k$}
  \State $X_k^{\rm N}\gets X_k^{\rm trial}$,
  $\mathrm{tag}\gets\texttt{accepted}$, and
  $\ell_{\rm N}\gets\ell$.
  \State \Return $(X_k^{\rm N},\mathrm{tag},\mathrm{cg\_status},\ell_{\rm N})$.
  \Else
  \State $\mathrm{tag}\gets\texttt{projection-fail}$ and $\ell_{\rm N}\gets\ell$.
  \State \Return $(X_k^{\rm N},\mathrm{tag},\mathrm{cg\_status},\ell_{\rm N})$.
  \EndIf

\EndWhile

\State $\mathrm{tag}\gets\texttt{newton-fail}$. \Comment{Local smooth model is not good.}
\State $\ell_{\rm N}\gets\ell$.
\State \Return $(X_k^{\rm N},\mathrm{tag},\mathrm{cg\_status},\ell_{\rm N})$.
\end{algorithmic}
\end{algorithm}

On $\overline{\M}_k$, define the smooth restricted  objective
\[
    \bar F_k(Y):=F|_{\overline{\M}_k}(Y).
\]
The corresponding Riemannian gradient and Hessian of $\bar F_k$ are given by
\eqref{eq:riemannian grad} and \eqref{eq:Rieman_Hessian}, respectively. After the
identified manifold $\overline{\M}_k$ is constructed, we use a Riemannian
Newton-CG step (Algorithm~\ref{alg:newton-cg}) as a safeguarded acceleration of
the ManPG iteration. The Newton correction is computed from a regularized
Newton equation, since the restricted smooth problem on $\overline{\M}_k$ may be
nonconvex and the Riemannian Hessian may be indefinite or ill-conditioned. The
regularization stabilizes the Newton-CG subproblem and damps the computed
correction, but it is not used as the mechanism for the global
descent guarantee.

Indeed, the identified manifold $\overline{\M}_k$ is an iteration-dependent local
model induced by the ManPG predictor $Y_k$, and the TAPR retraction in
Proposition~\ref{fact:ret_Tapr} is only guaranteed within a neighborhood. The radius of
such a neighborhood may be very small and is difficult to estimate in practice;
for instance, it can be sensitive to nearly vanishing nonzero entries in the
identified support and sign pattern. Moreover, even when the TAPR step is successfully
computed on $\overline{\M}_k$, projecting the trial point back to $\St_\Delta$
may increase the objective value. Therefore, the Newton-CG trial point is
accepted only if it satisfies a   sufficient decrease condition; 
otherwise, the algorithm falls back to the ManPG step. In this way, the global descent and KKT-residual guarantees are supplied by the ManPG safeguard, while the Newton-CG step can further improve practical efficiency and provides local second-order acceleration. Under the identification property and {\color{blue}the SOSC}, this step yields the superlinear local convergence stated later.

Specifically, we compute an inexact regularized Riemannian Newton direction
by solving
\begin{equation}\label{eq:Newton_equation}
    \Hess \bar F_k(Y_k)[w_k]+\varrho_k w_k
    =
    -\grad \bar F_k(Y_k),
    \qquad
    \varrho_k:=\lambda_k\normfro{\grad \bar F_k(Y_k)}^\sigma,
\end{equation}
where $\lambda_k>0$ and $\sigma\in[0,1]$ are regularization parameters. The system
\eqref{eq:Newton_equation} is solved approximately by the tCG algorithm; see
Algorithm~\ref{alg:tcg} in Appendix, following the truncated  conjugate-gradient
framework~\cite{dembo1983truncated}. If negative curvature is detected, or if tCG fails to satisfy the stopping criterion, the
Newton-CG step is skipped, the regularization scale $\lambda_k$ is increased, and
the algorithm falls back to a ManPG step. Unlike capped Newton-CG methods \cite{royer2020newtoncg} that exploit
negative-curvature directions for second-order complexity guarantees, we use tCG
only to compute a safeguarded Newton correction. Incorporating negative-curvature steps on the moving identified manifolds is an interesting direction for future research.

When a Newton direction $w_k$ is obtained, we generate
\[
    Z_k(\alpha)=\Retr_{Y_k}^{\rm TAPR}(\alpha w_k),
    \qquad
    X_k^{\rm trial}(\alpha)=\Proj_{\St_\Delta}(Z_k(\alpha)),
\]
where $\Retr^{\rm TAPR}$ is the TAPR retraction~\cite{chen2026alternating}. The TAPR computation is treated as an internal part of the Newton-CG trial
generation rather than as a separate outcome. If TAPR does not produce a usable
trial point for a tested stepsize, the algorithm continues the backtracking
procedure. 
Then, the trial point is tested in two stages.  First, we check the Newton Armijo
condition on the smooth model $\overline{\M}_k$:
\begin{equation}\label{eq:newton-armijo}
    \bar F_k(Z_k(\alpha))
    \le
    \bar F_k(Y_k)
    +
    c_{\rm N}\alpha
    \langle \grad \bar F_k(Y_k),w_k\rangle,
\end{equation}
where $c_{\rm N}\in(0,1/2)$. This condition ensures that the Newton-CG correction
is consistent with the local smooth model. If it fails, the Newton-CG trial point
is rejected and the regularization parameter is adjusted.

If \eqref{eq:newton-armijo} holds, we then test the projected point
$X_k^{\rm trial}(\alpha)$ by the sufficient decrease condition
\begin{equation}\label{eq:projected-decrease}
    F(X_k^{\rm trial}(\alpha))
    \le
    F(X_k)-\bar\tau\frac{\normfro{V_k}^2}{t_k} .
\end{equation}
This condition determines whether the projected Newton-CG trial point is accepted
and preserves the ManPG-type descent safeguard. 
 
\paragraph{Parameter update.}
The parameters $t_k$ and $\lambda_k$ are updated according to the outcome of
Algorithm~\ref{alg:newton-cg}. If an accepted Newton-CG trial point is computed
with $\mathrm{cg\_status}=\texttt{superlinear}$, we keep $t_k$ unchanged in
order to preserve the local Newton regime. Otherwise, $t_k$ is adjusted
according to whether the accepted step is reliable or whether the projected
Newton-CG trial fails. The rationale for this rule is explained in
Remark~\ref{rmk:adjust_t} after the descent lemma.
 
For compactness, denote
\[
    t_k^+ := \min\{\beta_t^\uparrow t_k,t_{\max}\},
    \qquad
    t_k^- := \beta_t^\downarrow t_k,
    \qquad
    \lambda_k^+ := \beta_\lambda^\uparrow\lambda_k,
    \qquad
    \lambda_k^- := \beta_\lambda^\downarrow\lambda_k .
\]
The parameter update rule is given by
\begin{equation}\label{eq:mix-update-rule}
\renewcommand{\arraystretch}{1.18}
\begin{array}{@{}c@{\qquad}c@{\qquad}c@{\qquad}c@{}}
\hline
\mathrm{tag}
& \mathrm{cg\_status}\ /\ \mathrm{condition}
& t_{k+1}
& \lambda_{k+1}
\\
\hline
\texttt{accepted}
& \texttt{superlinear},\ \ell_{\rm N}=0
& t_k
& \lambda_k^-
\\
\texttt{accepted}
& \texttt{superlinear},\ \ell_{\rm N}>0
& t_k
& \lambda_k^+
\\[1mm]
\texttt{accepted}
& \texttt{linear},\ \ell_{\rm N}=0
& t_k^+
& \lambda_k^-
\\
\texttt{accepted}
& \texttt{linear},\ \ell_{\rm N}>0
& t_k
& \lambda_k^+
\\[1mm]
\texttt{projection-fail}
& -
& t_k^-
& \lambda_k
\\
\texttt{newton-fail}
& -
& t_k
& \lambda_k^+
\\[1mm]
\texttt{manpg}
& \hat\alpha_k=1
& t_k^+
& \lambda_k
\\
\texttt{manpg}
& \hat\alpha_k<1
& t_k^-
& \lambda_k
\\
\hline
\end{array}
\end{equation}

The tag \texttt{accepted} means that both the Newton Armijo condition
\eqref{eq:newton-armijo} and the projected sufficient decrease condition
\eqref{eq:projected-decrease} hold. When
$\mathrm{cg\_status}=\texttt{superlinear}$, $t_k$ is kept unchanged to
preserve the local convergence regime. When
$\mathrm{cg\_status}=\texttt{linear}$, an accepted full step permits an
increase of $t_k$, while an accepted backtracked step keeps $t_k$ unchanged.
For the regularization scale, a full accepted Newton-CG step decreases
$\lambda_k$, whereas any accepted backtracked step increases $\lambda_k$ as a
conservative damping strategy for subsequent Newton corrections.

The tag \texttt{projection-fail} means that a trial point has passed the Newton Armijo test on the moving intersection model \(\overline{\M}_k\), but its projection back to the fixed constraint \(\St_\Delta\) fails the projected sufficient-decrease test \eqref{eq:projected-decrease}. The algorithm then leaves the Newton branch immediately. This failure is attributed to the mismatch between
\(\St_{\Delta_k}\) and \(\St_\Delta\), rather than to the length of the Newton
step, and the update rule therefore reduces \(t_k\). The tag \texttt{newton-fail} means that no reliable
Newton-CG trial point is obtained, and hence only $\lambda_k$ is increased.
Finally, if $\mathrm{tag}=\texttt{manpg}$, the Newton-CG branch is not invoked,
and $t_k$ is updated according to the ManPG line-search outcome.

Table~\ref{tab:mix-complexity} reports the computational complexity of the main
steps in MIX. If the intrinsic dimension of the intersection manifold is small,
then each tCG iteration and the TAPR retraction can be performed at a substantially
lower cost. In particular, after the active manifold is identified, the Newton-CG
correction is computed on the low-dimensional tangent space of
$\overline{\M}_k$, rather than on the ambient Stiefel manifold. Therefore, the
additional cost of the second-order correction can be moderate when the identified
support is sparse and the intersection structure is low-dimensional.
\begin{table}[t]
\centering
\caption{Total computational cost of the main components in one iteration of
Algorithm~\ref{alg:manpg-mix}, with the active manifold determined by $Y_k$.
Here $\gamma:=\gamma(Y_k)$, $s:=\dim \M_g(Y_k)$, $T_f$ denotes the cost of one
function evaluation, $T_{\nabla f}$ the cost of one Euclidean gradient evaluation,
and $T_{\mathrm{Hv}}$ the cost of one Euclidean Hessian--vector product. Moreover,
$N_{\mathrm{ssn}}$, $N_{\mathrm{tCG}}$, $N_{\mathrm{AP}}$, and $N_{\mathrm{SLRA}}$
denote the numbers of semismooth Newton iterations, tCG iterations, alternating
projection iterations, and NewtonSLRA iterations, respectively. For the ManPG
subproblem, $\hat s$ denotes the current active-set size appearing in the
semismooth Newton iterations.}
\label{tab:mix-complexity}
\begin{tabular}{l c}
\toprule
Component & Total cost \\
\midrule
Function value evaluation & $T_f$ \\

Euclidean gradient evaluation & $T_{\nabla f}$ \\

ManPG subproblem & $N_{\mathrm{ssn}}\cdot O(\hat s r^2+r^6)$ \\

Projection onto $\T_X\overline{\M}_k$
& $O(sr+\gamma^2)+O(sr^2+\gamma^3)$ \\

Newton step (tCG) on $\overline{\M}_k$
& $O(sr^2+\gamma^3)+N_{\mathrm{tCG}}\cdot\bigl(O(sr+\gamma^2)+T_{\mathrm{Hv}}\bigr)$ \\

APM/IAPM retraction & $N_{\mathrm{AP}}\cdot O(nr^2+r^3)$ \\

NewtonSLRA retraction & $N_{\mathrm{SLRA}}\cdot O(nr^2+sr^2+r^6)$ \\

Projection onto $\St_\Delta$ & $O(nr^2+r^3)$ \\
\bottomrule
\end{tabular}
\end{table}

\subsection{\texorpdfstring{Global convergence}{Global convergence}}
In the following subsections, we establish the global descent and
KKT-residual guarantees, as well as the local convergence, of Algorithm~\ref{alg:manpg-mix} under mild
assumptions.
 
\begin{assumption}\label{ass:working_set}
The iterates $\{X_k\}$ and ManPG predictors $\{Y_k\}$ generated by
Algorithm~\ref{alg:manpg-mix} remain in a bounded set $\mathcal B$. Moreover,
whenever the Newton branch is invoked, the corresponding TAPR trial points and
their projections remain in a bounded neighborhood of
\[
\mathcal N_\delta
:=
\left\{
X\in\mathbb R^{n\times r}:
\|X^\top X-I_r\|_{\mathrm F}
\le \|\Delta\|_{\mathrm F}+\delta^2
\right\}.
\]
On an open bounded neighborhood containing these sets, $\nabla f$ is
$L_f$-Lipschitz continuous and bounded by $M_f$, and $g$ is
$L_g$-Lipschitz continuous. Consequently, $F=f+g$ is Lipschitz continuous on
this neighborhood; denote one such Lipschitz constant by $L_{\mathrm F}$.
Moreover, every $G\in\partial g(X)$ on this neighborhood satisfies
$\|G\|_{\mathrm F}\le L_g$.
%
\end{assumption}

To simplify the presentation, several technical lemmas are put in
Appendix~\ref{append:technical_last}.
Our global descent analysis relies on two standard descent-type inequalities, recalled below
for completeness.

\begin{lemma}\label{lem:key_global_ineq}
Let $\{X_k\}$ be   generated by Algorithm~\ref{alg:manpg-mix}$.$
\begin{enumerate}
\item 
Given $X_k\in\St_\Delta$ and stepsize $t_k>0$, let $V_k$ be the minimizer of the ManPG tangent subproblem. Then, it follows that
\begin{equation}\label{eq:manpg_decrease}
\langle \nabla f(X_k),V_k\rangle+\frac{1}{2t_k}\|V_k\|_{\mathrm{F}}^2+g(X_k+V_k)\ \le\ g(X_k) -\frac{1}{2t_k}\normfro{V_k}^2.
\end{equation}
This inequality follows from \cite[Lemma~5.1]{chen2020proximal}.

\item 
For each $k$ such that $\overline{\M}_k$ is identified and $\bar F_k:=F|_{\overline{\M}_k}$ is smooth,
there exists a constant $\hat L_k>0$ such that for any $X\in\overline{\M}_k$ and any
$w\in \T_X\overline{\M}_k$ in a neighborhood where the retraction is defined,
\begin{equation}\label{eq:retraction_upper_bound}
\bar F_k(\Retr_X(w))
\ \le\
\bar F_k(X) + \langle \grad \bar F_k(X), w\rangle + \frac{\hat L_k}{2}\|w\|_{\mathrm{F}}^2.
\end{equation}
A statement of this type is standard for retractions on embedded manifolds; see, e.g.,
\cite{boumal2019global}.
\end{enumerate}
\end{lemma}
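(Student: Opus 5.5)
For part (1), I will use strong convexity of the ManPG subproblem on the affine slice and compare its minimizer \(V_k\) with the feasible point \(V=0\). Write
\[
\ell_k(V):=\langle \nabla f(X_k),V\rangle+\tfrac{1}{2t_k}\|V\|_{\mathrm F}^2+g(X_k+V),
\]
which is \((1/t_k)\)-strongly convex on the linear space \(\T_{X_k}\St_\Delta\). Since \(V_k\) minimizes \(\ell_k\) over this space and \(0\) is feasible, strong convexity gives
\[
\ell_k(0)\ge \ell_k(V_k)+\tfrac{1}{2t_k}\|V_k\|_{\mathrm F}^2 .
\]
Because \(\ell_k(0)=g(X_k)\), this is exactly \eqref{eq:manpg_decrease}.

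To justify the strong-convexity inequality itself, I will use the optimality condition of Lemma~\ref{lem:tangt_prox_optcond}. It gives \(0\in \Proj_{T}(\nabla f(X_k)+V_k/t_k+\partial g(X_k+V_k))\), where \(T=\T_{X_k}\St_\Delta\). Choose a subgradient \(G\) realizing this inclusion. Convexity of \(g\) gives \(g(X_k)\ge g(X_k+V_k)-\langle G,V_k\rangle\). Since \(V_k\in T\), we have \(\langle G,V_k\rangle=\langle \Proj_T G,V_k\rangle=-\langle \nabla f(X_k)+V_k/t_k,V_k\rangle\). Substituting this and rearranging yields the claimed bound. This step is routine, since the inclusion is exactly what \cite[Lemma~5.1]{chen2020proximal} provides.

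For part (2), I will follow the argument of \cite{boumal2019global}. Fix \(k\) and work on a compact neighborhood \(\mathcal K\) of the zero section of \(\T\overline{\M}_k\) over a compact piece of \(\overline{\M}_k\) where TAPR is defined and smooth, using Proposition~\ref{fact:ret_Tapr}. The function \(\bar F_k\) is smooth there because the sign pattern is fixed on \(\mathcal U_s\). I will define the pullback \(\hat F(X,w)=\bar F_k(\Retr_X(w))\), which is \(C^2\) on \(\mathcal K\). Since \(\Retr_X(0)=X\) and \(\mathrm D\Retr_X(0)=\mathrm{id}\), the pullback satisfies \(\hat F(X,0)=\bar F_k(X)\) and its gradient in \(w\) at \(0\) is \(\grad\bar F_k(X)\). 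Taylor's theorem with integral remainder then gives \eqref{eq:retraction_upper_bound}, with \(\hat L_k\) equal to the maximum over \(\mathcal K\) of the operator norm of \(\nabla_w^2\hat F\), which is finite by compactness.

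The main obstacle is confirming that \(\bar F_k\) extends smoothly across the whole retraction range. Trial points \(\Retr_X(w)\) must stay in the region where the support and signs are those of \(Y_k\), so that \(g\) is linear there. I will handle this by shrinking the neighborhood: \(\Retr\) is continuous, lands in \(\M_g(Y_k)\), and equals \(X\) at \(w=0\), so for small \(w\) no nonzero entry can change sign. Since the lemma allows a \(k\)-dependent constant, I need no uniformity in \(k\).
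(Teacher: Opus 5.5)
Your proposal is correct and matches the paper, which gives no argument of its own: part (1) is deferred to the strong-convexity comparison with $V=0$ from ManPG's Lemma~5.1, which is exactly what you do, and part (2) is deferred to the standard Taylor-plus-compactness bound for retraction pullbacks in Boumal et al. The one detail to state explicitly in part (2) is the shape of $\mathcal K$: take $\mathcal K=\{(X,w): X\in K_0,\ \|w\|_{\mathrm F}\le\rho\}$ over a compact $K_0\subset\overline{\M}_k\cap\mathcal U_s(Y_k)$, so that the Taylor segment $\{(X,sw): s\in[0,1]\}$ stays in $\mathcal K$ and, by compactness of $K_0$, a single $\rho$ preserves the sign pattern for every $X\in K_0$ (which suffices, since the lemma is only used near $Y_k$).
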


{
	\color{blue}The next lemma shows that, when $t_k$ is sufficiently small, every
	well-defined trial point satisfying the Newton Armijo
	condition~\eqref{eq:newton-armijo} also satisfies the projected
	sufficient-decrease condition~\eqref{eq:projected-decrease}.}
\begin{lemma}\label{lem:suff_decrease}
{\color{blue}Suppose
that Algorithm~\ref{alg:tcg} returns $w_k$ with
$\mathrm{cg\_status}\in\{\texttt{linear},\texttt{superlinear}\}$, and let
\[
    Z_k=\Retr_{Y_k}^{\rm TAPR}(\alpha w_k)
\]
be a well-defined trial point satisfying the Newton Armijo
condition~\eqref{eq:newton-armijo}. If
\[
    t_k\leq
    \bar t_{\rm N}:=\frac{1}{L_f+2c_\Delta L_{\mathrm F}}
    \qquad\text{and}\qquad
    0<\bar\tau\leq\frac12,
\]
then
\[
    F\bigl(\Proj_{\St_\Delta}(Z_k)\bigr)
    \leq
    F(X_k)-\frac{1}{2t_k}\normfro{V_k}^2
    \leq
    F(X_k)-\bar\tau\frac{\normfro{V_k}^2}{t_k}.
\]
}
\end{lemma}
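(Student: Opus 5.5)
The plan is to chain four inequalities: Lipschitz continuity of $F$ across the projection onto $\St_\Delta$, the Newton Armijo condition on $\overline{\M}_k$, the descent lemma for $f$ along $V_k$, and the ManPG optimality inequality~\eqref{eq:manpg_decrease}. The projection is the only place where the mismatch between $\St_{\Delta_k}$ and $\St_\Delta$ enters. There I will use that $Z_k\in\overline{\M}_k\subset\St_{\Delta_k}$, so that
\[
Z_k^\top Z_k-(I_r+\Delta)=\Delta_k-\Delta=V_k^\top V_k .
\]
I take $c_\Delta$ to be the constant of the technical lemma in Appendix~\ref{append:technical_last} for which
\[
\normfro{\Proj_{\St_\Delta}(Z)-Z}\le c_\Delta\,\normfro{Z^\top Z-(I_r+\Delta)}
\quad\text{whenever } \normfro{Z^\top Z-(I_r+\Delta)}\le\delta^2 .
\]
The restriction $\delta\le\frac12\sqrt{\lambda_{\min}(I_r+\Delta)}$ keeps $D^{1/2}Z^\top ZD^{1/2}$ uniformly positive definite in the closed form~\eqref{eq:proj_closed_StDelta}, and a first-order expansion of the inverse square root then gives such a bound. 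Since the Newton branch is invoked only when $\normfro{V_k}<\delta$, this yields $\normfro{\Proj_{\St_\Delta}(Z_k)-Z_k}\le c_\Delta\normfro{V_k}^2$.

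\emph{Step 1 (projection).} By Assumption~\ref{ass:working_set}, $Z_k$ and its projection lie in the neighborhood where $F$ is $L_{\mathrm F}$-Lipschitz. Hence
\[
F(\Proj_{\St_\Delta}(Z_k))\le F(Z_k)+c_\Delta L_{\mathrm F}\normfro{V_k}^2 .
\]

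\emph{Step 2 (Armijo on the model).} Since $Z_k\in\overline{\M}_k$, we have $F(Z_k)=\bar F_k(Z_k)$. Condition~\eqref{eq:newton-armijo} then gives
\[
F(Z_k)\le \bar F_k(Y_k)+c_{\rm N}\alpha\langle\grad\bar F_k(Y_k),w_k\rangle\le F(Y_k).
\]
The second inequality needs $\langle\grad\bar F_k(Y_k),w_k\rangle\le0$. I will take this from the standard property of tCG: when it stops with status \texttt{linear} or \texttt{superlinear}, no negative curvature was met. All iterates then lie in a Krylov subspace on which the regularized Hessian $\Hess\bar F_k(Y_k)+\varrho_k\,\mathrm{id}$ is positive definite. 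Consequently the returned direction satisfies $\langle\grad\bar F_k(Y_k),w_k\rangle<0$, as is standard for Steihaug--Toint CG started from zero.

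\emph{Step 3 (ManPG predictor).} Apply the $L_f$-descent lemma to $f$ along the segment from $X_k$ to $Y_k=X_k+V_k$, and combine it with~\eqref{eq:manpg_decrease}. Here~\eqref{eq:manpg_decrease} is equivalent to $\langle\nabla f(X_k),V_k\rangle+g(Y_k)\le g(X_k)-\frac1{t_k}\normfro{V_k}^2$. This gives
\[
F(Y_k)\le F(X_k)-\Bigl(\tfrac1{t_k}-\tfrac{L_f}{2}\Bigr)\normfro{V_k}^2 .
\]

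\emph{Step 4 (combine).} Chaining Steps 1--3 gives
\[
F(\Proj_{\St_\Delta}(Z_k))\le F(X_k)-\Bigl(\tfrac1{t_k}-\tfrac{L_f}{2}-c_\Delta L_{\mathrm F}\Bigr)\normfro{V_k}^2 .
\]
The hypothesis $t_k\le 1/(L_f+2c_\Delta L_{\mathrm F})$ means $\frac{L_f}{2}+c_\Delta L_{\mathrm F}\le\frac1{2t_k}$, so the coefficient is at least $\frac1{2t_k}$. The final inequality follows from $\bar\tau\le\frac12$.

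The main obstacle is making Step 1 rigorous. This requires a uniform projection bound with an explicit constant $c_\Delta$ that is valid on all of $\mathcal N_\delta$, not merely locally. Step 2 needs care as well: it relies on the sign of $\langle\grad\bar F_k(Y_k),w_k\rangle$ and on the fact that $\bar F_k(Y_k)=F(Y_k)$ because $Y_k\in\overline{\M}_k$. Steps 3 and 4 are routine.
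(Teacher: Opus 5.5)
Your proposal is correct and follows the paper's proof essentially step for step. The paper chains the same four estimates: the projection bound from Lemma~\ref{lem:orthogonalization_increase_general_Delta}, which holds on the whole set where the feasibility defect is at most $\lambda_{\min}(I_r+\Delta)/4$ (applicable here since $\normfro{V_k^\top V_k}\le\normfro{V_k}^2<\delta^2\le\lambda_{\min}(I_r+\Delta)/4$, so the step you flag as the main obstacle is already covered); the Newton Armijo condition together with $\langle\grad\bar F_k(Y_k),w_k\rangle\le0$, obtained from CG residual orthogonality and conjugacy; and the ManPG inequality plus the descent lemma, combined under $t_k\le\bar t_{\rm N}$. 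One small correction: that inner product is only nonpositive, not strictly negative, since tCG returns $w_k=0$ when $\grad\bar F_k(Y_k)=0$, but nonpositivity is all the argument uses.
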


\begin{proof}
{\color{blue}
Set $g_k:=\grad\bar F_k(Y_k)$,
$H_k:=\Hess\bar F_k(Y_k)+\varrho_k I$, and
$q_k:=H_k[w_k]+g_k$. For a successful tCG termination, $q_k$ is
orthogonal to the generated Krylov subspace, which contains $w_k$.
Since all generated CG directions satisfy the positive-curvature test
and are $H_k$-conjugate, one has
\[
    \langle q_k,w_k\rangle=0,
    \qquad
    \langle g_k,w_k\rangle
    =-\langle H_k[w_k],w_k\rangle
    \leq0.
\]

Since the Newton branch is invoked, $\normfro{V_k}<\delta$. Furthermore,
$Z_k\in\overline{\M}_k\subseteq\St_{\Delta_k}$ and
$\Delta_k=\Delta+V_k^\top V_k$. The choice of $\delta$ in
Algorithm~\ref{alg:manpg-mix}, together with
Lemma~\ref{lem:orthogonalization_increase_general_Delta} and the Lipschitz
continuity of $F$, gives
$F(\Proj_{\St_\Delta}(Z_k))
\leq\bar F_k(Z_k)+c_\Delta L_{\mathrm F}\normfro{V_k}^2$.
It follows from the Newton Armijo condition~\eqref{eq:newton-armijo} and
$\langle g_k,w_k\rangle\leq0$ that
$F(\Proj_{\St_\Delta}(Z_k))
\leq\bar F_k(Y_k)+c_\Delta L_{\mathrm F}\normfro{V_k}^2$.
Combining the ManPG inequality~\eqref{eq:manpg_decrease} with the
$L_f$-Lipschitz continuity of $\nabla f$, we obtain
$\bar F_k(Y_k)\leq F(X_k)
-(1/t_k-L_f/2)\normfro{V_k}^2$. Putting these estimates together gives
\[
\begin{aligned}
    F\bigl(\Proj_{\St_\Delta}(Z_k)\bigr)
    &\leq
    F(X_k)
    -
    \left(
        \frac{1}{t_k}
        -\frac{L_f}{2}
        -c_\Delta L_{\mathrm F}
    \right)
    \normfro{V_k}^2  \\
    &\leq
    F(X_k)-\frac{1}{2t_k}\normfro{V_k}^2.
\end{aligned}
\]
The last inequality follows from $t_k\leq\bar t_{\rm N}$. Since
$\bar\tau\leq1/2$, we obtain the projected sufficient-decrease
condition~\eqref{eq:projected-decrease}.}
\end{proof}

\begin{remark}\phantomsection\label{rmk:adjust_t}
\begin{enumerate}
    \item 
 The Newton Armijo condition is closely related to the ratio
$-\langle g_k,w_k\rangle/\normfro{w_k}^2$. Indeed, by
\eqref{eq:retraction_upper_bound}, it holds whenever
\(
\alpha
\le
\frac{2(1-c_{\rm N})(-\langle g_k,w_k\rangle)}
{\hat L_k\normfro{w_k}^2}.
\)
For an exact regularized Newton step,
\(
    (\Hess \bar F_k(Y_k)+\varrho_k I)w_k=-g_k,
\)
and hence
\[
-\langle g_k,w_k\rangle
=
\langle w_k,(\Hess \bar F_k(Y_k)+\varrho_k I)w_k\rangle .
\]
Thus, increasing $\lambda_k$ increases the shift $\varrho_k$ in the subsequent
Newton-CG solve and tends to enlarge the range of acceptable Newton steps.

\item  The ManPG stepsize $t_k$ in
$Y_k=\mathcal T_{t_k}(X_k)=X_k+V_k$ plays two roles. A larger $t_k$ usually
promotes stronger identification, whereas an overly large $t_k$ may reduce the
accuracy of the iteration-dependent model and make the projected decrease test
harder to satisfy. The update rule~\eqref{eq:mix-update-rule} reflects this
tradeoff. When a full Newton-CG step is accepted with only the \texttt{linear}
tCG status, $t_k$ is increased to encourage stronger identification. Once the
\texttt{superlinear} tCG regime is reached, $t_k$ is kept fixed so that the local
convergence analysis is carried out with a fixed ManPG scale. If the Newton
model is acceptable but the projected decrease test fails, $t_k$ is reduced;
otherwise, the regularization scale $\lambda_k$ is adjusted while $t_k$ is kept
unchanged.
\end{enumerate}
\end{remark}
 
\noindent\textbf{Boundedness of the ManPG stepsize $t_k$.}
{\color{blue}Let $\bar t_{\rm M}>0$ guarantee acceptance of the full ManPG step
when $t_k\le\bar t_{\rm M}$, set
$\bar t:=\min\{\bar t_{\rm M},\bar t_{\rm N}\}$, and assume
$0<\bar\tau\le1/2$. When $t_k\le\bar t$, a failed Newton trial leaves $t_k$
unchanged, while a trial satisfying Newton Armijo passes the projected test by
Lemma~\ref{lem:suff_decrease}; hence no branch reduces $t_k$. Therefore,  we have
\[
t_k\ge t_{\min}:=\min\{t_0,\beta_t^\downarrow\bar t\}>0,\qquad \forall k\ge0.
\]
Together with the safeguard in~\eqref{eq:mix-update-rule}, this gives
$0<t_{\min}\le t_k\le t_{\max}$ throughout the algorithm.}

\begin{definition}[ManPG residual]
For $X\in\St_\Delta$ and $t>0$, let
$\mathcal T_t(X)=X+V_t(X)$ be defined by the tangent subproblem
\eqref{eq:tangent_problem}. The ManPG residual is defined by
\[
    \mathcal G_t(X):=\frac{\normfro{V_t(X)}}{t}.
\]
Given $\eta\ge0$, we call $X$ an $\eta$-ManPG stationary point if
$\mathcal G_t(X)\le\eta$.
\end{definition}
Algorithm~\ref{alg:manpg-mix} uses $\normfro{V_k}/t_k$ as the stopping residual,
whereas Definition~\ref{def:KKT-residual-P0} measures stationarity by the KKT
residual of the original problem~\eqref{prob:nonsmooth_stiefel}. The following
lemma connects these two residuals. It shows that a small ManPG residual for the
perturbed problem yields a small KKT residual for the original problem, up to the perturbation error $\normfro{\Delta}$.

\begin{lemma}\label{lem:approx_KKT_residual}
Suppose that Assumption~\ref{ass:working_set} holds and that $t_k\le t_{\max}$.
Let $Y_k=X_k+V_k$ be the ManPG predictor generated by
Algorithm~\ref{alg:manpg-mix}. Then there exists a constant $C_{\rm R}>0$,
independent of $k$, such that
\[
    \mathcal R_0(Y_k)
    \le
    \normfro{\Delta}
    +
    C_{\rm R}\frac{\normfro{V_k}}{t_k}
    +
    \normfro{V_k}^2 .
\]
Consequently, after enlarging $C_{\rm R}$ if necessary, if
$\normfro{V_k}\le1$, then
\[
    \mathcal R_0(Y_k)
    \le
    \normfro{\Delta}
    +
    C_{\rm R}\frac{\normfro{V_k}}{t_k}.
\]
\end{lemma}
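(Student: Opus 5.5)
The plan is to build an explicit pair $(G,\Lambda)$ feasible in the infimum defining $\mathcal R_0(Y_k)$ from the optimality condition of the tangent subproblem, and then estimate the two residual terms separately.

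\textbf{Feasibility term.} Since $V_k\in\T_{X_k}\St_\Delta$, we have $Y_k^\top Y_k=I_r+\Delta+V_k^\top V_k$. The triangle inequality and submultiplicativity then give
\[
\|Y_k^\top Y_k-I_r\|_{\mathrm F}\le\normfro{\Delta}+\normfro{V_k}^2 .
\]
This accounts for the terms $\normfro{\Delta}$ and $\normfro{V_k}^2$ in the bound.

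\textbf{Stationarity term.} By Lemma~\ref{lem:tangt_prox_optcond}, there exists $G_k\in\partial g(Y_k)$ with
\[
0=\tfrac1{t_k}V_k+\Proj_{\T_{X_k}\St_\Delta}\bigl(\nabla f(X_k)+G_k\bigr).
\]
The normal space at $X_k$ is $\{X_k\Lambda\}$, so write $\nabla f(X_k)+G_k=\Proj_{\T}(\cdot)+X_k\Lambda_k$ with $\Lambda_k\in\mathbb S^r$. Since $X_k^\top X_k=I_r+\Delta$, the multiplier is given by
\[
\Lambda_k=(I_r+\Delta)^{-1}\,\mathrm{sym}\bigl(X_k^\top(\nabla f(X_k)+G_k)\bigr)
\]
(more precisely, the symmetric solution of the corresponding Lyapunov-type equation, as in Appendix~\ref{sec:append:normal}). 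Its norm is bounded by a constant $C_\Lambda$ that depends only on $M_f$, $L_g$, $\lambda_{\min}(I_r+\Delta)$, and the bound on $\mathcal B$. Choose $G=G_k$ and $\Lambda=-\Lambda_k$, and evaluate at $Y_k$:
\[
\nabla f(Y_k)+G_k-Y_k\Lambda_k=\bigl(\nabla f(Y_k)-\nabla f(X_k)\bigr)-\tfrac1{t_k}V_k-V_k\Lambda_k .
\]
The sign of $\Lambda$ can be absorbed because $\Lambda$ ranges over all of $\mathbb S^r$.

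\textbf{Bounding the stationarity term.} Using $L_f$-Lipschitzness of $\nabla f$ on the neighborhood of Assumption~\ref{ass:working_set}, the norm of this expression is at most $(L_f+C_\Lambda)\normfro{V_k}+\normfro{V_k}/t_k$. Because $t_k\le t_{\max}$, we have $\normfro{V_k}\le t_{\max}\normfro{V_k}/t_k$. Hence the stationarity term is at most $C_{\rm R}\normfro{V_k}/t_k$ with
\[
C_{\rm R}=1+t_{\max}(L_f+C_\Lambda).
\]

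\textbf{Second claim.} If $\normfro{V_k}\le1$, then $\normfro{V_k}^2\le\normfro{V_k}\le t_{\max}\normfro{V_k}/t_k$. So it suffices to replace $C_{\rm R}$ by $C_{\rm R}+t_{\max}$.

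\textbf{Main obstacle.} The step needing care is the uniform bound on $\Lambda_k$. It requires $(I_r+\Delta)$ to be invertible with a fixed lower eigenvalue bound, and $X_k$, $\nabla f(X_k)$, $G_k$ to be uniformly bounded. Assumption~\ref{ass:working_set} supplies all of these, including $\|G_k\|_{\mathrm F}\le L_g$ since $Y_k\in\mathcal B$. A second point is ensuring that the segment between $X_k$ and $Y_k$ lies in the neighborhood where $\nabla f$ is Lipschitz. If that is not automatic, I would instead invoke the Lipschitz bound directly for the two points $X_k,Y_k\in\mathcal B$, which the assumption permits on its open bounded neighborhood.
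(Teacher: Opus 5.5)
Your proposal is correct and follows the paper's proof: you take the multipliers from the tangent-subproblem optimality condition, bound $\Lambda_k$ uniformly, and evaluate the stationarity residual at $Y_k$ as $\nabla f(Y_k)-\nabla f(X_k)-V_k/t_k$ plus a $V_k\Lambda_k$ term. The feasibility part uses $\|Y_k^\top Y_k-I_r\|_{\mathrm F}\le\|\Delta\|_{\mathrm F}+\|V_k\|_{\mathrm F}^2$, and your second claim matches the paper's. The only difference is minor: you read $\Lambda_k$ off the normal component of $\nabla f(X_k)+G_k$ via the Lyapunov equation, so $V_k/t_k$ never enters its bound. The paper instead multiplies the full optimality condition by $X_k^\top$, and so must first show that $\|V_k\|_{\mathrm F}/t_k$ is uniformly bounded.
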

\begin{proof}
The proof evaluates the KKT residual at multipliers supplied by the tangent subproblem, bounds these multipliers on the working set, and uses \(Y_k^\top Y_k-I_r=\Delta+V_k^\top V_k\) to control feasibility; see Appendix~\ref{app:proof-approx-KKT-residual}.
\end{proof}
The following theorem gives the global descent and KKT-residual
guarantee for Algorithm~\ref{alg:manpg-mix}. It does not rely on the
identification theorem; for the perturbed problem the ManPG residual tends to
zero, while the predictor \(Y_k\) satisfies
\(\limsup_{k\to\infty}\mathcal R_0(Y_k)\le\normfro{\Delta}\) for the original
problem, giving vanishing KKT residual when \(\Delta=0\).
\begin{theorem}\label{thm:global_convergence_mix}
Let $\{X_k\}$ and $\{Y_k\}$ be generated by
Algorithm~\ref{alg:manpg-mix}. Suppose that Assumption~\ref{ass:working_set}
holds and that $F(X)\ge F_{\inf}$ on $\St_\Delta$. Then there exists a
constant $c_{\rm dec}>0$ such that, for all $K\ge1$,
\[
    \min_{0\le j\le K-1}\normfro{V_j}^2
    \le
    \frac{F(X_0)-F_{\inf}}{c_{\rm dec}K}.
\]
Consequently,
\[
    \limsup_{k\to\infty}\mathcal R_0(Y_k)
    \le
    \normfro{\Delta}.
\]
Moreover, for any $\varepsilon>\normfro{\Delta}$, choose
\[
    \epsilon_{\rm stat}
    =
    \min\left\{
        \frac{1}{t_{\max}},
        \frac{\varepsilon-\normfro{\Delta}}{C_{\rm R}}
    \right\},
\]
where $C_{\rm R}$ is the constant in
Lemma~\ref{lem:approx_KKT_residual}. Then
Algorithm~\ref{alg:manpg-mix} returns an $\varepsilon$-KKT point of
problem~\eqref{prob:nonsmooth_stiefel} within
$O(\epsilon_{\rm stat}^{-2})$ iterations. If, in addition,
$\varepsilon-\normfro{\Delta}\le C_{\rm R}/t_{\max}$, then this bound becomes
\(
O\!\left((\varepsilon-\normfro{\Delta})^{-2}\right).
\)
\end{theorem}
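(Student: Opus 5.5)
The plan is to show that every iteration decreases $F$ by at least $c_{\rm dec}\normfro{V_k}^2$, telescope this, and then convert the resulting bound on $\normfro{V_k}$ into a KKT-residual statement using Lemma~\ref{lem:approx_KKT_residual}.

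\textbf{Per-iteration decrease.} I will treat the two branches of Algorithm~\ref{alg:manpg-mix} separately.

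If the Newton trial is \texttt{accepted}, the projected test \eqref{eq:projected-decrease} gives
\[
F(X_{k+1})\le F(X_k)-\bar\tau\,\normfro{V_k}^2/t_k\le F(X_k)-(\bar\tau/t_{\max})\normfro{V_k}^2 .
\]

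Otherwise the ManPG fallback is executed. Here I will invoke the line-search analysis of \cite{chen2020proximal}. It rests on \eqref{eq:manpg_decrease}, the $L_f$-Lipschitz gradient, the Lipschitz continuity of $g$ (Assumption~\ref{ass:working_set}), and the second-order boundedness of the projection retraction $\Proj_{\St_\Delta}$ on the bounded set $\mathcal B$. That analysis gives an accepted $\hat\alpha_k\ge\alpha_{\rm lb}>0$, uniform because $t_k\in[t_{\min},t_{\max}]$, with
\[
F(X_{k+1})\le F(X_k)-c\,\hat\alpha_k\normfro{V_k}^2/t_k .
\]
The bound $t_k\ge t_{\min}$ is the stepsize-boundedness paragraph, which uses Lemma~\ref{lem:suff_decrease}.

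Taking the minimum of the two constants gives $c_{\rm dec}>0$.

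\textbf{Telescoping.} Summing over $k=0,\dots,K-1$ and using $F\ge F_{\inf}$ on $\St_\Delta$ gives $c_{\rm dec}\sum_{j<K}\normfro{V_j}^2\le F(X_0)-F_{\inf}$, which is the first claim. The same sum with $K\to\infty$ is finite, so $V_k\to0$.

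\textbf{Residual consequences.} Since $t_k\ge t_{\min}$ and $\normfro{V_k}\to0$, Lemma~\ref{lem:approx_KKT_residual} gives $\limsup_k\mathcal R_0(Y_k)\le\normfro{\Delta}$.

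For the complexity bound, the algorithm stops at the first $k$ with $\normfro{V_k}/t_k\le\epsilon_{\rm stat}$. Before that, $\normfro{V_j}>t_{\min}\epsilon_{\rm stat}$, so the first claim forces
\[
K\le (F(X_0)-F_{\inf})/(c_{\rm dec}t_{\min}^2\epsilon_{\rm stat}^2)=O(\epsilon_{\rm stat}^{-2}).
\]
At termination $\normfro{V_k}\le t_k\epsilon_{\rm stat}\le t_{\max}/t_{\max}=1$, so the second form of Lemma~\ref{lem:approx_KKT_residual} applies and yields
\[
\mathcal R_0(Y_k)\le\normfro{\Delta}+C_{\rm R}\epsilon_{\rm stat}\le\varepsilon .
\]
When $\varepsilon-\normfro{\Delta}\le C_{\rm R}/t_{\max}$, the minimum defining $\epsilon_{\rm stat}$ is $(\varepsilon-\normfro{\Delta})/C_{\rm R}$, and this gives the final rate.

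\textbf{Main obstacle.} The delicate step is the uniform lower bound on $\hat\alpha_k$ in the ManPG branch, together with the fact that $t_k$ never drops below $t_{\min}$. The update rule \eqref{eq:mix-update-rule} can shrink $t_k$ through \texttt{projection-fail}, so I will rely on Lemma~\ref{lem:suff_decrease}: once $t_k\le\bar t$, no branch decreases $t_k$ any further. For the line search, I will reproduce the Armijo argument of \cite{chen2020proximal} with constants taken on $\mathcal B$.
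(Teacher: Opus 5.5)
Your proposal is correct and follows the paper's proof essentially step for step: a uniform decrease $F(X_{k+1})\le F(X_k)-c_{\rm dec}\normfro{V_k}^2$ (from \eqref{eq:projected-decrease} on accepted Newton steps and from the ManPG line-search estimate of \cite{chen2020proximal} otherwise), then telescoping, then Lemma~\ref{lem:approx_KKT_residual} combined with $t_{\min}\le t_k\le t_{\max}$ and $\epsilon_{\rm stat}\le 1/t_{\max}$, which gives $\normfro{V_k}\le 1$ at termination. The only cosmetic difference is in the complexity step. You bound the termination index directly, using that every pre-termination iterate has $\normfro{V_j}>t_{\min}\epsilon_{\rm stat}$, whereas the paper exhibits some $j<K$ with $\normfro{V_j}/t_j\le\epsilon_{\rm stat}$; the two arguments are equivalent.
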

 
\begin{proof}
If the Newton-CG trial point is accepted, then
\eqref{eq:projected-decrease} gives
\(F(X_{k+1})\le F(X_k)-\bar\tau\normfro{V_k}^2/t_k
\le F(X_k)-(\bar\tau/t_{\max})\normfro{V_k}^2\).
Otherwise, Algorithm~\ref{alg:manpg-mix} takes the ManPG fallback step.
According to the ManPG line-search decrease estimate in
\cite[Lemma~5.2]{chen2020proximal}, together with $t_k\le t_{\max}$ established above, there
exists a constant $c_{\rm M}>0$ such that
\(F(X_{k+1})\le F(X_k)-c_{\rm M}\normfro{V_k}^2\).
Thus, with $c_{\rm dec}:=\min\{c_{\rm M},\bar\tau/t_{\max}\}$, one has
\[
    F(X_{k+1})
    \le
    F(X_k)-c_{\rm dec}\normfro{V_k}^2
\]
for all $k$. Summing this inequality yields
\[
    \sum_{k=0}^{K-1}\normfro{V_k}^2
    \le
    \frac{F(X_0)-F_{\inf}}{c_{\rm dec}},
    \qquad
    \min_{0\le j\le K-1}\normfro{V_j}^2
    \le
    \frac{F(X_0)-F_{\inf}}{c_{\rm dec}K}.
\]
In particular, $V_k\to0$. By the lower bound $t_k\ge t_{\min}>0$
established above, one has
$\normfro{V_k}/t_k\to0$.
Lemma~\ref{lem:approx_KKT_residual} therefore yields
\(
    \limsup_{k\to\infty}\mathcal R_0(Y_k)
    \le
    \normfro{\Delta}.
\)
For the iteration complexity, the preceding estimate yields an index
$j\in\{0,\ldots,K-1\}$ such that
\[
    \frac{\normfro{V_j}}{t_j}
    \le
    \frac{1}{t_{\min}}
    \left(
        \frac{F(X_0)-F_{\inf}}{c_{\rm dec}K}
    \right)^{1/2}.
\]
Hence, if
\(
    K
    \ge
    \frac{F(X_0)-F_{\inf}}
         {c_{\rm dec}t_{\min}^2\epsilon_{\rm stat}^2},
\)
then
\(
    \frac{\normfro{V_j}}{t_j}\le\epsilon_{\rm stat}.
\)
Since $\epsilon_{\rm stat}\le1/t_{\max}$ and $t_j\le t_{\max}$, one has
$\normfro{V_j}\le1$. Therefore, Lemma~\ref{lem:approx_KKT_residual} gives
\[
\begin{aligned}
    \mathcal R_0(Y_j)
    \le
    \normfro{\Delta}
    +
    C_{\rm R}\frac{\normfro{V_j}}{t_j} \le
    \normfro{\Delta}
    +
    C_{\rm R}\epsilon_{\rm stat}
    \le
    \varepsilon.
\end{aligned}
\]
Thus, $Y_j$ is an $\varepsilon$-KKT point of
\eqref{prob:nonsmooth_stiefel}. Therefore, the iteration bound is
$O(\epsilon_{\rm stat}^{-2})$. If
\(
    \varepsilon-\normfro{\Delta}
    \le
    \frac{C_{\rm R}}{t_{\max}},
\)
then
\(
    \epsilon_{\rm stat}
    =
    \frac{\varepsilon-\normfro{\Delta}}{C_{\rm R}},
\)
and hence the bound becomes
   $ O\!\left((\varepsilon-\normfro{\Delta})^{-2}\right).$
\end{proof}

\subsection{Local convergence}
{\color{blue} In this section, we show the local superlinear convergence of MIX. For the asymptotic analysis, we consider an infinite sequence
	$\{X_k\}$ generated by Algorithm~\ref{alg:manpg-mix}.  We first establish full-sequence convergence to an accumulation
point and invoke finite identification only afterward.} Write
\[
\M_\Delta^* := \M_g(X^*)\cap \mathrm{St}_\Delta(n,r).
\]

\begin{assumption}\label{assump:mix-seq-converge}
	{\color{blue}
	Let $X^*\in\St_\Delta(n,r)$ be a nondegenerate critical point of
	$(P_\Delta)$.   Moreover, assume that $X^*$ is an accumulation point
	of the sequence $\{X_k\}$ generated by
	Algorithm~\ref{alg:manpg-mix}.
	}
\end{assumption}

%

{\color{blue}
	For the local convergence analysis, we assume the standard second-order
	sufficient condition (SOSC) on $\mathcal M_\Delta^*$: there exists
	$\mu_{\rm sos}>0$ such that
	\begin{equation}\label{eq:mix-sosc}
		\bigl\langle
		\xi,
		\Hess(F|_{\mathcal M_\Delta^*})(X^*)[\xi]
		\bigr\rangle
		\ge
		\mu_{\rm sos}\|\xi\|_{\mathrm F}^2,
		\qquad
		\forall\,\xi\in \T_{X^*}\mathcal M_\Delta^* .
	\end{equation}
}

{\color{blue}The following lemma shows that the local solution depends
Lipschitz continuously on the perturbation parameter $\Delta'$} for the family of problems
\[
\min F(X)\quad \st\quad X\in \M_g(X^*),\qquad X^\top X = I_r+\Delta'.
\]
It mainly depends on  the transversality condition and SOSC. 
 A generic perturbation $\Delta\in\mathscr P_{\rm gen}$ is one sufficient way to obtain this transversality condition by Proposition~\ref{lem:generic-transv}. Lemma~\ref{lem:local-solution-branch} also covers $\Delta=0$ whenever the same transversality holds at $X^*$.
\begin{lemma}\label{lem:local-solution-branch}
Assume that $\M_g(X^*)$ and $\St_\Delta(n,r)$ intersect transversely at $X^*$.
Define
\[
    \overline{\M}_{\Delta'}^*
    :=
    \M_g(X^*)\cap\St_{\Delta'}(n,r),
    \qquad \Delta'\in\mathbb S^r .
\]
{\color{blue}Suppose that $F|_{\M_g(X^*)}$ is $C^2$ around $X^*$ and that
the SOSC~\eqref{eq:mix-sosc} holds.} Then there exist
neighborhoods $\mathcal U$ of $X^*$ and $\mathcal V$ of $\Delta$, a constant
$C_\Delta>0$, and a unique $C^1$ mapping
\[
    \mathcal X:\mathcal V\to\mathcal U
\]
such that
\(
    \mathcal X(\Delta)=X^*,
\)
and, for every $\Delta'\in\mathcal V$, $\mathcal X(\Delta')$ is the unique local
minimizer of $F$ on $\overline{\M}_{\Delta'}^*\cap\mathcal U$. Moreover,
\[
    \|\mathcal X(\Delta')-X^*\|_{\mathrm F}
    \le
    C_\Delta\|\Delta'-\Delta\|_{\mathrm F},
    \qquad \forall\,\Delta'\in\mathcal V.
\]
{\color{blue}After shrinking $\mathcal U$ and $\mathcal V$, there is
$\mathfrak m>0$ such that
$\Hess(F|_{\overline{\M}_{\Delta'}^*})(Y)\succeq\mathfrak m I$ for all
$\Delta'\in\mathcal V$ and
$Y\in\overline{\M}_{\Delta'}^*\cap\mathcal U$.}
\end{lemma}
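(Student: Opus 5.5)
We will follow the standard parametric-KKT route and apply the implicit function theorem to the Lagrangian system of the restricted problem, with \(\Delta'\) as the parameter.

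\textbf{Setup.} Since \(\M_g(X^*)\) is a coordinate subspace, work in the support coordinates \(a\in\R^s\) with \(X=\mathrm{mat}(Ea)\). Let \(\tilde F(a):=F(\mathrm{mat}(Ea))\). By partial smoothness of \(g\) along \(\M_g(X^*)\) and the \(C^2\) assumption, \(\tilde F\) is \(C^2\) near \(a^*:=E^\top\vvec(X^*)\). Consider the map
\[
\Psi(a,\Lambda;\Delta'):=\Bigl(\nabla\tilde F(a)+E^\top\vvec\bigl(\mathrm{mat}(Ea)\Lambda\bigr),\ \mathrm{mat}(Ea)^\top\mathrm{mat}(Ea)-(I_r+\Delta')\Bigr),
\]
with values in \(\R^s\times\mathbb S^r\). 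Transversality at \(X^*\) means \(A_{x^*}\) has full row rank \(r(r+1)/2\) (LICQ). Hence the multiplier \(\Lambda^*\) with \(\Psi(a^*,\Lambda^*;\Delta)=0\) exists, and it is unique, because \(X^*\) is a critical point of \(F\) on \(\M_\Delta^*\).

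\textbf{Invertibility.} The Jacobian of \(\Psi\) in \((a,\Lambda)\) at \((a^*,\Lambda^*;\Delta)\) is the saddle matrix
\[
\begin{pmatrix}
\nabla^2_{aa}\mathcal L & A_{x^*}^\top\\
A_{x^*} & 0
\end{pmatrix}
\]
up to a factor 2 in the constraint block. Here \(\nabla^2_{aa}\mathcal L=\nabla^2\tilde F(a^*)+E^\top(\Lambda^*\otimes I_n)E\) is the Hessian of the Lagrangian. Its restriction to \(\ker A_{x^*}\cong \T_{X^*}\M_\Delta^*\) equals the Riemannian Hessian \(\Hess(F|_{\M_\Delta^*})(X^*)\). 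This is the standard identity: the Weingarten term is \(-\) the normal component of the Euclidean gradient contracted against the multiplier. By the SOSC~\eqref{eq:mix-sosc}, this restriction is positive definite. Full row rank of \(A_{x^*}\) together with a positive definite reduced Hessian gives nonsingularity of the KKT matrix, by the usual null-space argument. The implicit function theorem then yields neighborhoods and a \(C^1\) map \(\Delta'\mapsto(a(\Delta'),\Lambda(\Delta'))\) that is the unique local zero of \(\Psi\). Set \(\mathcal X(\Delta'):=\mathrm{mat}(Ea(\Delta'))\). The Lipschitz bound with constant \(C_\Delta\) follows from \(C^1\) smoothness after shrinking \(\mathcal V\) to a compact convex neighborhood on which \(\|\mathrm D\mathcal X\|\) is bounded.

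\textbf{Minimality, uniqueness and uniform Hessian bound.} LICQ is an open condition, so it persists at \((\mathcal X(\Delta'),\Delta')\) after shrinking. Hence \(\overline{\M}^*_{\Delta'}\cap\mathcal U\) is a submanifold with tangent space \(\ker A_x\). The reduced Lagrangian Hessian \(P_{\ker A_x}\nabla^2_{aa}\mathcal L(a,\Lambda_x)P_{\ker A_x}\) depends continuously on \((a,\Delta')\), where \(\Lambda_x\) is the least-squares multiplier from the normal component of the gradient. It equals \(\Hess(F|_{\overline{\M}^*_{\Delta'}})(Y)\) at each point \(Y\in\overline{\M}^*_{\Delta'}\). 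Continuity of the kernel projector (constant rank, Grassmannian convergence as in Definition~\ref{def:grassmann-conv}) then gives \(\Hess\succeq\mathfrak m I\) with, e.g., \(\mathfrak m=\mu_{\rm sos}/2\), uniformly on the shrunken neighborhoods. Proposition~\ref{lem:QG-implies-PDHess} at \(\mathcal X(\Delta')\) shows it is a strict (strong) local minimizer. Uniform positive definiteness on a geodesically convex-type neighborhood rules out other critical points, hence other local minimizers: any local minimizer in \(\mathcal U\) is a KKT point, and KKT points in \(\mathcal U\) are unique by the IFT.

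\textbf{Main obstacle.} The delicate step is making the neighborhoods uniform. The manifolds \(\overline{\M}^*_{\Delta'}\) move with \(\Delta'\), and the Hessian must be controlled at all points \(Y\), not only at \(\mathcal X(\Delta')\). I will handle this by working entirely with the ambient continuous objects \((a,\Delta')\mapsto\) reduced Lagrangian Hessian, defined on an open set of \(\R^s\times\mathbb S^r\) where \(A_x\) has full row rank. A single compactness and continuity argument then produces \(\mathcal U\), \(\mathcal V\) and \(\mathfrak m\) simultaneously.
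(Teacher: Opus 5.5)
Your proof is correct, but it takes a different route from the paper. The paper never introduces multipliers. It treats $\{(X,\Delta'):X\in\M_g(X^*),\ X^\top X=I_r+\Delta'\}$ as a manifold and uses transversality to make $(X,\Delta')\mapsto\Delta'$ a submersion. This yields one local parametrization $\phi(u,\Delta')$ of all the moving manifolds $\overline{\M}^*_{\Delta'}\cap\mathcal U$ over a fixed domain $\mathcal W$. The IFT is then applied to the reduced gradient equation $\nabla_u h(u,\Delta')=0$ with $h=F\circ\phi$. The SOSC enters only through $\nabla^2_{uu}h(0,\Delta)\succ0$, which is legitimate because at a critical point the chart Hessian represents the Riemannian Hessian.

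You instead apply the IFT to the primal--dual system $\Psi=0$. This requires two facts: nonsingularity of the KKT matrix (LICQ plus a positive definite reduced Hessian), and the identity between the projected Lagrangian Hessian with least-squares multiplier and $\Hess(F|_{\overline{\M}^*_{\Delta'}})$.

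The comparison is as follows.
\begin{itemize}
\item The paper's route is shorter and carries over verbatim to curved active manifolds. Yours uses that $\M_g(X^*)$ is a coordinate subspace, which is harmless for $\ell_1$ and $\ell_{2,1}$.
\item Your route gives an explicit formula for the Riemannian Hessian at every $Y$ near $X^*$, continuous in $Y$ alone. This genuinely justifies the uniform bound $\Hess\succeq\mathfrak m I$ at non-critical points $Y$. The paper dispatches that bound in one line (``by the SOSC and continuity''), which is looser, since away from critical points $\nabla^2_{uu}h$ no longer coincides with the Riemannian Hessian.
\item Your route also yields $C^1$ multipliers $\Lambda(\Delta')$ as a by-product.
\end{itemize}

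Two small points need fixing.
\begin{itemize}
\item The constraint block of your Jacobian is $C_rA_{x^*}$, a diagonal scaling by $2$ and $1$ in $\svec$ coordinates, rather than $2A_{x^*}$. This does not affect nonsingularity.
\item For uniqueness of the local minimizer, say explicitly why any KKT point $Y\in\overline{\M}^*_{\Delta'}\cap\mathcal U$ is the IFT solution. Its multiplier is the least-squares multiplier, which is continuous in $Y$ under uniform LICQ. So after shrinking $\mathcal U$ it lies in the IFT neighborhood of $\Lambda^*$. This step is needed because the IFT gives uniqueness only locally in $(a,\Lambda)$, not in $a$ alone. Once it is in place, your ``geodesically convex-type neighborhood'' remark is unnecessary.
\end{itemize}
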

\begin{proof}
The proof uses a submersion argument for the equality system defining \(\M_g(X^*)\cap\St_{\Delta'}(n,r)\), followed by an implicit-function argument for the restricted first-order condition; see Appendix~\ref{app:proof-local-solution-branch}.
{\color{blue}The Hessian bound follows directly from the SOSC and continuity
after shrinking the two neighborhoods.}
\end{proof}

\begin{lemma}\label{lem:alpha_one_eventually}
{\color{blue}
Suppose that Assumptions~\ref{assump:finite-active-manifolds}, \ref{ass:working_set} and
\ref{assump:mix-seq-converge} hold. Assume further that
$\M_g(X^*)$ and $\St_\Delta(n,r)$ intersect transversely at $X^*$,
and that the SOSC~\eqref{eq:mix-sosc} holds.
Let $\mathfrak m>0$ denote the uniform lower bound in
Lemma~\ref{lem:local-solution-branch}. Assume that $\varpi>0$ and that, for all sufficiently large $k$,
\(
\vartheta_k<\mathfrak m+\varrho_k .
\) Then
\(
    X_k\to X^*.
\)
Moreover, for all sufficiently large $k$,
\(    \M_g(Y_k)=\M_g(X^*),
\)
Algorithm~\ref{alg:tcg} returns
$\mathrm{cg\_status}=\texttt{superlinear}$, and
Algorithm~\ref{alg:newton-cg} accepts the full Newton step $\alpha=1$.
Consequently, $t_k$ is eventually constant and $\lambda_k\to0$.
}
\end{lemma}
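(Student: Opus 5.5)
The plan has four steps: get $V_k\to0$ from the global analysis, use SOSC to make $X^*$ an isolated accumulation point, identify the active manifold near $X^*$ via Theorem~\ref{thm:id-strong-crit}, and then show the Newton step eventually contracts. Throughout, $\vartheta_k$ and $\varpi$ are the tCG curvature and forcing quantities of Algorithm~\ref{alg:tcg}.

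\textbf{Step 1: $V_k\to0$ and the accumulation set.} By Theorem~\ref{thm:global_convergence_mix} we have $V_k\to0$ and $t_{\min}\le t_k\le t_{\max}$. By continuity of the tangent proximal map, every accumulation point of $\{X_k\}$ is then a critical point of \eqref{prob:P-Delta} (Lemma~\ref{lem:tangt_prox_optcond}). Since $\{X_k\}$ is bounded, its accumulation set is compact and connected: it is connected because $\|X_{k+1}-X_k\|_{\mathrm F}\to0$, which holds for both branches. For the ManPG branch this follows from $\|X_{k+1}-X_k\|_{\mathrm F}\lesssim\|V_k\|_{\mathrm F}$. For an accepted Newton step I need $\|w_k\|_{\mathrm F}\to0$, which I get from $\|w_k\|_{\mathrm F}\le\|\grad\bar F_k(Y_k)\|_{\mathrm F}/(\mathfrak m+\varrho_k-\vartheta_k)$ once $\|\grad \bar F_k(Y_k)\|_{\mathrm F}=O(\|V_k\|_{\mathrm F}/t_k)$. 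That bound comes from the optimality condition of the tangent subproblem projected onto $\T_{Y_k}\overline{\M}_k$.

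\textbf{Step 2: $X^*$ is isolated, so $X_k\to X^*$.} I show that $X^*$ is an isolated critical point. Near $X^*$, Theorem~\ref{thm:id-strong-crit} with $X=X'$ a critical point gives $\mathcal T_t(X')=X'\in\M_g(X^*)$. Hence any nearby critical point lies in $\M^*_\Delta$ and is a critical point of $F|_{\M_\Delta^*}$. Under SOSC, $X^*$ is a strict local minimizer and the only critical point of $F|_{\M^*_\Delta}$ near $X^*$ (Lemma~\ref{lem:local-solution-branch} with $\Delta'=\Delta$). So a small sphere around $X^*$ contains no accumulation points. A connected accumulation set containing $X^*$ must therefore equal $\{X^*\}$, which gives $X_k\to X^*$.

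\textbf{Step 3: identification and the local model.} With $X_k\to X^*$, Corollary~\ref{thm:manpg_tangent_identify} (with $t_k\in[t_{\min},t_{\max}]$) gives $\M_g(Y_k)=\M_g(X^*)$ for large $k$. Then $\overline{\M}_k=\overline{\M}^*_{\Delta_k}$ with $\Delta_k-\Delta=V_k^\top V_k\to0$, so Lemma~\ref{lem:local-solution-branch} applies. Its uniform Hessian bound $\mathfrak m$ on $\overline{\M}_{\Delta_k}^*$ near $X^*$, combined with $\vartheta_k<\mathfrak m+\varrho_k$, means tCG never detects negative curvature. The forcing term ($\varpi>0$, $\sigma$) then triggers the \texttt{superlinear} status for small gradients.

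\textbf{Step 4: full step acceptance (the main obstacle).} I expect this step to be the hardest. The model-level Armijo condition \eqref{eq:newton-armijo} at $\alpha=1$ follows from the standard argument: TAPR is a second-order retraction (Proposition~\ref{fact:ret_Tapr}), the Hessian is uniformly positive definite, the inexact Newton residual is $o(\|\grad\bar F_k\|_{\mathrm F})$, and $c_{\rm N}<1/2$; together these give $\bar F_k(Z_k)-\bar F_k(Y_k)\le(\tfrac12+o(1))\langle g_k,w_k\rangle$. The projected test \eqref{eq:projected-decrease} is harder, because $t_k$ is not yet known to be below $\bar t_{\rm N}$. Here I would use Lemma~\ref{lem:suff_decrease}'s chain directly, without requiring $t_k$ small. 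Since $Z_k$ is near the branch minimizer $\mathcal X(\Delta_k)$, I use $F(\Proj_{\St_\Delta}(Z_k))\le F(\mathcal X(\Delta))+O(\|V_k\|_{\mathrm F}^2)+o(\|Y_k-\mathcal X(\Delta_k)\|_{\mathrm F}^2)$. Against this I compare $F(X_k)-F(X^*)\gtrsim\|X_k-X^*\|_{\mathrm F}^2$, using quadratic growth from SOSC plus (ND). Finally I relate $\|V_k\|_{\mathrm F}$ to $\|X_k-X^*\|_{\mathrm F}$ through the $C^1$ map $Y^\sharp$ being Lipschitz with $\|V_k\|_{\mathrm F}\asymp\mathrm{dist}$; the constants in this comparison are where care is needed. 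If this comparison fails, a fallback is to note that \texttt{projection-fail} shrinks $t_k$, so after finitely many failures $t_k\le\bar t$ and Lemma~\ref{lem:suff_decrease} applies. Once full steps with \texttt{superlinear} status are eventually always accepted, rule \eqref{eq:mix-update-rule} keeps $t_k$ fixed and multiplies $\lambda_k$ by $\beta_\lambda^\downarrow$ at each step, so $\lambda_k\to0$.
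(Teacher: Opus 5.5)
There is a genuine gap in Step~1, and Step~2 depends on it. Connectedness of the accumulation set needs $\normfro{X_{k+1}-X_k}\to0$ along the \emph{entire} sequence, and for accepted Newton steps you bound $\normfro{w_k}$ through $\mathfrak m$. But Lemma~\ref{lem:local-solution-branch} provides $\mathfrak m$ only on $\overline{\M}^*_{\Delta'}\cap\mathcal U$, that is, only when $Y_k$ is near $X^*$ and $\M_g(Y_k)=\M_g(X^*)$.

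Away from $X^*$ three things fail:
\begin{itemize}
\item $\overline{\M}_k$ is built on other active manifolds, and there is no uniform curvature bound on them.
\item tCG only certifies curvature above $\vartheta_k$ along its search directions, and $\vartheta_k$ may tend to zero, since you are only told $\vartheta_k<\mathfrak m+\varrho_k$.
\item There is no uniform Lipschitz constant for $\Retr^{\rm TAPR}$ on those manifolds to turn $w_k\to0$ into $Z_k-Y_k\to0$.
\end{itemize}
Your denominator $\mathfrak m+\varrho_k-\vartheta_k$ also does not come from CG and can degenerate under your hypothesis. Where $\mathfrak m$ is available, residual orthogonality $\langle g_k,w_k\rangle=-\langle H_kw_k,w_k\rangle$ with $H_k\succeq(\mathfrak m+\varrho_k)I$ gives $\normfro{w_k}\le\normfro{g_k}/\mathfrak m$. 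As written, $X_k\to X^*$, the central claim of the lemma, is not established.

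The fix is to localize. Step~2 correctly shows that $X^*$ is isolated among critical points, hence among accumulation points. The localized Ostrowski (Mor\'e--Sorensen) lemma then gives $X_k\to X^*$ once $\normfro{X_{k+1}-X_k}\to0$ along every subsequence converging to $X^*$. That local bound needs no convergence, because Theorem~\ref{thm:id-strong-crit} holds on a fixed neighborhood of $X^*$. For large $k$ with $X_k$ near $X^*$, one therefore gets:
\begin{itemize}
\item $\overline{\M}_k=\overline{\M}^*_{\Delta_k}$;
\item $\normfro{w_k}\le C\normfro{V_k}$;
\item $\normfro{X_{k+1}-X_k}\le C\normfro{V_k}$, using a local TAPR Lipschitz bound and Lemma~\ref{lem:orthogonalization_increase_general_Delta}.
\end{itemize}
This is exactly the paper's displacement estimate.

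The paper then finishes with a capture argument. It combines monotonicity of $F(X_k)$ with quadratic growth of $F$ on $\St_\Delta$ near $X^*$, which it obtains from Hare--Lewis after checking that $X^*$ is a strong critical point of $f+g+\delta_{\St_\Delta}$. Your repaired route replaces this partial-smoothness and prox-regularity machinery with isolatedness of critical points, which needs only identification at nearby critical points plus SOSC.

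In Step~4, your primary comparison via quadratic growth does not close. It would need a relation between $\mu$ and the constants $C_V$, $c_\Delta L_{\mathrm F}$ and $\bar\tau/t_k$ that nothing guarantees. Your fallback is the paper's argument and is correct. Once \texttt{superlinear} status and the unit-step Armijo test hold for all large $k$, the only outcomes are a full accepted step ($t_k$ unchanged) or \texttt{projection-fail} ($t_k$ shrinks). Failures therefore stop once $t_k\le\bar t_{\rm N}$, by Lemma~\ref{lem:suff_decrease}.
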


\begin{proof}
{\color{blue}
See Appendix~\ref{app:proof-alpha-one-eventually}. {\color{blue}
	The proof follows the local convergence strategy of
	Bareilles, Iutzeler, and Malick~\cite{bareilles2023newton} for Euclidean
	composite optimization, adapted here to the moving intersection manifolds
	$\M_{\Delta_k}^*$ and the projection back to $\St_\Delta(n,r)$.
}
}
\end{proof}

	   We now establish the local convergence rate of
$\{X_k\}$.

\begin{theorem}[Local convergence rate of MIX]
\label{thm:local_superlinear_full_chain}
{\color{blue}
Under the assumptions of Lemma~\ref{lem:alpha_one_eventually}, the convergence
$X_k\to X^*$ is Q-superlinear.}
\end{theorem}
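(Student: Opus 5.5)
The plan is to compare each accepted Newton iterate with the local minimizer on the \emph{moving} intersection manifold, and then pay separately for the mismatch between \(\St_{\Delta_k}\) and \(\St_\Delta\). By Lemma~\ref{lem:alpha_one_eventually}, there is \(K\) such that for all \(k\ge K\) the following hold:
\[
\M_g(Y_k)=\M_g(X^*),\qquad t_k\equiv\bar t,\qquad \lambda_k\to0 ,
\]
tCG returns \texttt{superlinear}, and the full step is accepted. Hence \(\overline{\M}_k=\overline{\M}_{\Delta_k}^*\), and the iteration reads
\[
Z_k=\Retr^{\rm TAPR}_{Y_k}(w_k),\qquad X_{k+1}=\Proj_{\St_\Delta}(Z_k).
\]
Set \(e_k:=\normfro{X_k-X^*}\) and \(X_k^\natural:=\mathcal X(\Delta_k)\), where \(\mathcal X\) is the solution branch of Lemma~\ref{lem:local-solution-branch}. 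I will use the triangle-inequality split
\[
e_{k+1}\le \normfro{X_{k+1}-Z_k}+\normfro{Z_k-X_k^\natural}+\normfro{X_k^\natural-X^*},
\]
and show that each term is \(o(e_k)\).

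\textbf{Step 1: the outer terms are \(O(e_k^2)\).}
\begin{itemize}
\item By Theorem~\ref{thm:id-strong-crit} and Lemma~\ref{lem:Ysharp-smooth}, \(\mathcal T_{\bar t}=Y^\sharp_{\bar t}\) is \(C^1\) near \(X^*\) with \(\mathcal T_{\bar t}(X^*)=X^*\). Hence \(\normfro{Y_k-X^*}\le L_T e_k\) and \(\normfro{V_k}\le (L_T+1)e_k\).
\item Consequently \(\normfro{\Delta_k-\Delta}=\normfro{V_k^\top V_k}=O(e_k^2)\).
\item Lemma~\ref{lem:local-solution-branch} then gives \(\normfro{X_k^\natural-X^*}\le C_\Delta\normfro{V_k}^2=O(e_k^2)\).
\item Since \(Z_k\in\St_{\Delta_k}\), the projection estimate of Lemma~\ref{lem:orthogonalization_increase_general_Delta} (the distance form of the bound already used in Lemma~\ref{lem:suff_decrease}) gives \(\normfro{X_{k+1}-Z_k}\le c\,\normfro{\Delta_k-\Delta}=O(e_k^2)\).
\end{itemize}

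\textbf{Step 2: the inner term is a Newton-rate estimate on \(\overline{\M}_{\Delta_k}^*\).} The goal is
\[
\normfro{Z_k-X_k^\natural}\le \omega\bigl(\normfro{Y_k-X_k^\natural}\bigr)\,\normfro{Y_k-X_k^\natural}+o(e_k),\qquad \omega(s)\to0 \text{ as } s\to0 .
\]
Here \(\normfro{Y_k-X_k^\natural}\le (L_T+C_\Delta(L_T+1)^2e_k)\,e_k=O(e_k)\). The argument follows the standard inexact Riemannian Newton analysis:
\begin{itemize}
\item The stationarity \(\grad \bar F_k(X_k^\natural)=0\) and the uniform lower bound \(\Hess\succeq\mathfrak m I\) from Lemma~\ref{lem:local-solution-branch} give \(\normfro{\grad\bar F_k(Y_k)}\asymp\normfro{Y_k-X_k^\natural}\).
\item The superlinear tCG residual rule, of the form \(\|\text{residual}\|\le\normfro{\grad\bar F_k(Y_k)}^{1+\theta}\), bounds the error from inexact solves.
\item The shift \(\varrho_k=\lambda_k\normfro{\grad\bar F_k(Y_k)}^\sigma\) satisfies \(\varrho_k\to0\), so its error is \(\varrho_k\cdot O(\normfro{Y_k-X_k^\natural})=o(e_k)\).
\item Proposition~\ref{fact:ret_Tapr} says TAPR is a second-order retraction, so \(\Retr^{\rm TAPR}_{Y_k}(w_k)=Y_k+w_k+O(\normfro{w_k}^2)\) with a tangentially vanishing quadratic term.
\item Lipschitz continuity of the Hessian turns the linearization error into \(O(\normfro{Y_k-X_k^\natural}^2)\).
\end{itemize}
Combining these bounds yields the goal with \(\omega(s)=O(s^\theta)+o(1)\).

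\textbf{Main obstacle: uniformity of the constants in Step 2.} The manifold \(\overline{\M}_{\Delta_k}^*\) changes with \(k\), so the Lipschitz constants of the Hessian, the retraction remainder and the projector derivatives must be uniform over \(\Delta'\) near \(\Delta\). I would first build a parametrized chart. By transversality at \(X^*\), the map
\[
(X,\Delta')\mapsto \bigl(h(X),\,X^\top X-I_r-\Delta'\bigr)
\]
is a submersion in \(X\) near \((X^*,\Delta)\), where \(h\) is the defining function of \(\M_g(X^*)\). The implicit function theorem then gives a \(C^2\) family of local diffeomorphisms \(\Psi_{\Delta'}:\overline{\M}_{\Delta}^*\cap\mathcal U\to\overline{\M}_{\Delta'}^*\) with \(\Psi_\Delta=\mathrm{id}\). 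Consequently tangent projectors (via the full-rank \(A_x\) from \eqref{eq:projection_vector_g}), gradients and Hessians all depend continuously on \((Y,\Delta')\) over a compact neighborhood, and the constants can be taken uniform. Uniformity of the TAPR remainder in \(\Delta'\) should follow from the same continuity argument applied to the local convergence theory of \cite{chen2026alternating} in this transverse regime.

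\textbf{Conclusion.} With uniform constants, Steps 1 and 2 give \(e_{k+1}\le o(e_k)+O(e_k^2)\). Hence \(e_{k+1}/e_k\to0\), which is the claimed Q-superlinear convergence.
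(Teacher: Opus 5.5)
Your proposal is correct and follows essentially the same route as the paper. You use the same three-term split around $\mathcal X(\Delta_k)$ and bound the two outer terms by $O(e_k^2)$ via Lemma~\ref{lem:local-solution-branch} and Lemma~\ref{lem:orthogonalization_increase_general_Delta}. You control the inner term by the inexact regularized Newton estimate of Lemma~\ref{lem:local_regularized_newton_estimate}, with the same appeal to uniform constants over the moving manifolds $\overline{\M}_{\Delta_k}^*$.

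One minor point: the paper only assumes $C^2$ regularity of $F|_{\M_g(X^*)}$, so you should not invoke Lipschitz continuity of the Hessian. Uniform continuity of the parametrized Hessians gives an $o(\normfro{Y_k-X_k^\natural})$ linearization error. That is already enough for Q-superlinearity and matches the $o(1)$ term you allowed in $\omega$.
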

\begin{proof}
{\color{blue}
Lemma~\ref{lem:alpha_one_eventually} ensures that, for all sufficiently large $k$,
$\overline{\M}_k=\overline{\M}_{\Delta_k}^*$ and
$\Delta_k=\Delta+V_k^\top V_k$; moreover, the full Newton-CG step is
accepted, $t_k$ is fixed, and $\lambda_k\to0$. Set
$e_k:=\normfro{X_k-X^*}$, and let
$X_k^*:=\mathcal X(\Delta_k)$ be the local minimizer given by
Lemma~\ref{lem:local-solution-branch}.

It follows from Lemma~\ref{lem:Vk_dist_bound} that
$\normfro{V_k}=O(e_k)$. Combining this estimate with
Lemma~\ref{lem:local-solution-branch}, one has
$\normfro{X_k^*-X^*}=O(\normfro{\Delta_k-\Delta})
=O(\normfro{V_k}^2)=O(e_k^2)$. Since $Y_k=X_k+V_k$, we also have
$\normfro{Y_k-X_k^*}=O(e_k)$.

Applying Lemma~\ref{lem:local_regularized_newton_estimate}, we obtain
$\normfro{Z_k-X_k^*}
=o(e_k)+O(\varrho_k e_k+e_k^{1+\theta})$. Moreover, since
$Z_k\in\St_{\Delta_k}(n,r)$,
Lemma~\ref{lem:orthogonalization_increase_general_Delta} implies
$\normfro{X_{k+1}-Z_k}=O(\normfro{V_k}^2)=O(e_k^2)$.
Combining these estimates gives the central recursion
\begin{equation}\label{eq:full-local-recursion}
    e_{k+1}
    =
    o(e_k)+O\!\left(
        \varrho_k e_k+e_k^{1+\theta}
    \right).
\end{equation}

The estimate in Lemma~\ref{lem:grad_bound_by_manpg}, together with
$\normfro{V_k}=O(e_k)$, gives
$\normfro{\grad\bar F_k(Y_k)}=O(e_k)$. Thus
$\varrho_k=\lambda_k\normfro{\grad\bar F_k(Y_k)}^{\sigma}\to0$
for every $\sigma\in[0,1]$. Dividing
\eqref{eq:full-local-recursion} by $e_k$ gives
$e_{k+1}/e_k\to0$, which proves Q-superlinear convergence.
}
\end{proof}

\begin{remark}[Perturbations and degenerate local models]
	The perturbation used in the local analysis is meant to obtain identification,
	but it also changes the identified manifold and the restricted Riemannian
	Hessian. If the unperturbed restricted Hessian has zero eigenvalues, then a
	small perturbation may create negative curvature and the corresponding point
	need not remain a local minimizer. {\color{blue}The SOSC in
	Theorem~\ref{thm:local_superlinear_full_chain} rules out this instability for
	the perturbed identified model.}
	
	Degenerate unperturbed cases are not discussed in this paper. When transversality
	already holds, no perturbation is needed, and a superlinear theory may be
	possible under error-bound-type conditions for non-isolated minimizers; see
	\cite{rebjock2023fast,rebjock2023trust}. We also observe numerically that
	the unperturbed ManPG predictor often identifies the active manifold even in
	some nontransverse cases. We leave these issues for future work.
\end{remark}

%% file: shared/section_num_exp.tex
\section{Numerical experiments}\label{sec:numerical_exp}

In this section, we evaluate MIX on two standard sparse optimization problems
over the Stiefel manifold: sparse principal component analysis (SPCA) and the
compressed modes problem.

According to \cite{huang2024riemannian}, RPNCGH is more
efficient than algorithms such as ManPG~\cite{chen2020proximal} and
ManPQN~\cite{wang2023proximal}.
Therefore, we mainly compare MIX with RPNCGH. We test three
MIX variants, corresponding to perturbation scales \(\normfro{\Delta}=0\),
\(10^{-8}\), and \(10^{-7}\). For each perturbed run, we generate an i.i.d.\ Gaussian matrix
$G\in\R^{r\times r}$, set
\(
\widetilde\Delta
=
\frac{G+G^\top}{2}
-
\diag\!\left(\diag\!\left(\frac{G+G^\top}{2}\right)\right),
\)
and scale it as
\(
\Delta
=
\delta_\Delta
\frac{\widetilde\Delta}{\|\widetilde\Delta\|_{\mathrm F}}, \ 
\delta_\Delta\in\{10^{-8},10^{-7}\}.
\) The corresponding algorithms are denoted by
\(\mathrm{MIX}_0\), \(\mathrm{MIX}_{10^{-8}}\), and
\(\mathrm{MIX}_{10^{-7}}\), respectively. 
All algorithms were implemented in Julia under the same coding
standards\footnote{Our codes are available at \url{https://github.com/chenshixiang/manifold_mix/tree/main}}. RPNCGH was reimplemented in Julia for a fair comparison. We use the parameter settings provided in the authors'
public implementation of~\cite{huang2024riemannian}. All experiments
were run on an Apple M4 MacBook Pro.

 \paragraph{Parameter settings.}
In Algorithm~\ref{alg:manpg-mix}, we use
\(\epsilon_{\rm stat}=10^{-8}\).
The initial point is generated from an i.i.d. Gaussian matrix
and projected onto the corresponding constraint. We set \(t_0=1/L\), where $L$ is the Lipschitz constant of $\nabla f$, and set
\(t_{\max}=100t_0\), \(\delta=\sqrt{10^{-1}}\),
\(\beta_t^\uparrow=1.01\), and \(\beta_t^\downarrow=0.95\).
The regularized Newton equation~\eqref{eq:Newton_equation}
uses \(\lambda_0=1\) and \(\sigma=1/2\); in the update
rule~\eqref{eq:mix-update-rule}, the implementation uses the factors
\(\beta_\lambda^\downarrow=1/2\) and \(\beta_\lambda^\uparrow=2\), with an
additional factor \(4\) after a failed Newton Armijo test.
For Algorithm~\ref{alg:newton-cg}, backtracking starts from
\(\alpha=1\) and uses \(c_{\rm N}=10^{-3}\), \(\varsigma=1/2\), and at most
five reductions, so \(\alpha_{\min}=2^{-5}\).
The projected sufficient-decrease test
\eqref{eq:projected-decrease} is implemented with \(\bar\tau=10^{-3}\), i.e.,
\(F(X_k^{\rm trial})\le F(X_k)-10^{-3}\normfro{V_k}^2/t_k\).
The TAPR step in Algorithm~\ref{alg:newton-cg} is the
three-phase projection retraction described in
Section~\ref{sec:opt_on_intersection}: starting from \(Y_k+\alpha w_k\), it
uses APM while the inner difference is at least \(10^{-2}\), IAPM until it is
below \(10^{-5}\), and then RN-SLRA corrections; we cap the APM/IAPM phase at
\(30\) steps and the RN-SLRA phase at \(2\) steps.
The ManPG fallback line search uses the same decrease
constant \(10^{-3}\), halves the trial stepsize, and allows at most five
reductions.
{\color{blue}For Algorithm~\ref{alg:tcg}, we take \(\theta=1/2\) and
\(\varpi=10^{-1}\), use \(d_k=\dim(\overline{\M}_k)\) as the iteration cap,
and   set $\vartheta_k=\varrho_k$.}
The ManPG tangent subproblem is solved by semismooth Newton
\cite{chen2020proximal} with at most \(20\) iterations.

Since MIX returns the ManPG predictor \(Y_k=\mathcal T_t(X_k)\), we report
the same type of output for RPNCGH. This makes the comparison consistent at
the level of the sparse proximal predictor. In contrast, the numerical
experiments in~\cite{huang2024riemannian} report a thresholded version of the
manifold iterate \(X_k\); such post-processing promotes sparsity but may
degrade orthogonality. In our updated implementation, the reported RPNCGH
point is \(Y_k\), which is sparse and nearly orthogonal when the ManPG
subproblem is solved accurately, leading to a fair comparison with MIX.

At iteration \(k\), we use
\(\omega_k=\normfro{\mathcal T_t(X_k)-X_k}/t_k\) as the stationarity measure.
A run is assigned status \texttt{TOL} if
\(\omega_k\le10^{-8}\) and the semismooth Newton residual is below \(10^{-8}\);
status \texttt{ACC} if \(10^{-8}<\omega_k\le10^{-7}\) holds for 15
consecutive iterations. Otherwise, the run stops at the iteration cap
specified by the corresponding experiment script.

\subsection{Compressed modes problem}
For compressed modes, given a symmetric positive semidefinite matrix
\(H\in\mathbb R^{n\times n}\), typically a discretized Laplacian or Hamiltonian
operator \cite{Ozolins2013}, we solve
\begin{equation}\label{prob:cm-num}
	\min_{X\in\St(n,r)}
	\frac12\mathrm{Tr}(X^\top H X)+\mu\|X\|_1 .
\end{equation}
The quadratic term favors low-energy modes, whereas the \(\ell_1\) penalty
promotes spatial localization. The data are generated as in \cite{chen2020proximal}.

The initial point is generated from an i.i.d. Gaussian matrix and then projected onto the constraint manifold.
In Tables~\ref{tab:cms_varying_n}--\ref{tab:cms_varying_mu}, ``T/A'' reports the percentages of the 50 trials that terminate with status \texttt{TOL} and \texttt{ACC}, respectively. ``Iter(New)'' gives the average number of outer iterations, with the average number of Newton iterations  in parentheses. ``Time'', ``Obj'', and ``Sp'' denote the average CPU time, final objective value, and sparsity ratio (zero-entry fraction). ``O/F'' reports the orthogonality residual $\normfro{Y_k^\top Y_k-I_r}$ followed by the feasibility residual $\normfro{Y_k^\top Y_k-(I_r+\Delta)}$. For the perturbed variants, the orthogonality residual relative to \(\St(n,r)\) reflects the prescribed perturbation size, while the feasibility residual relative to \(\St_\Delta(n,r)\) is the relevant constraint residual. ``MProx'' gives the average ManPG subproblem iterations with its runtime percentage in parentheses; ``tCG'' gives the average inner tCG iterations for Newton directions with its runtime percentage; and ``TAPR'' gives the average TAPR iterations and runtime percentage, with ``--'' indicating that TAPR is not used.
Figure~\ref{fig:cms-convergence} shows a representative CM
convergence history. ManPG stagnates above the target
tolerance and does not reach high accuracy on this instance. Both MIX and RPNCGH exhibit local superlinear convergence, while MIX uses the fewest iterations.
\begin{figure}[!htbp]
\centering
\includegraphics[width=0.58\textwidth]{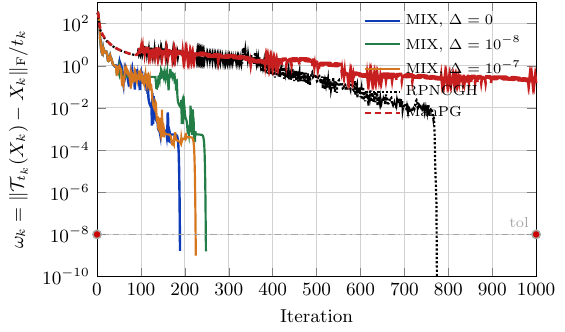}
\caption{CM convergence for \(n=512\), \(r=5\), and
\(\mu=0.1\).}
\label{fig:cms-convergence}
\end{figure}
\begin{table*}[t]
\centering
\scriptsize
\caption{CM performance comparison when varying the   dimension $n$, with fixed   $r=5, \mu=0.1$.}
\label{tab:cms_varying_n}
\resizebox{\textwidth}{!}{%
\begin{tabular}{@{}llccccccccc@{}}
\toprule
$n$ & Alg. & T/A & Iter(New) & Time & Obj & Sp & O/F & MProx & tCG & TAPR \\
\midrule
$128$ & RPNCGH & 100/0 & 181.1(65.5) & 0.028 & 2.35596 & 0.827 & 1.1e-12/1.1e-12 & 2.8(48.3\%) & 17.2(27.6\%) & -- \\
 & MIX$_0$ & 100/0 & 77.7(56.0) & 0.029 & 2.35594 & 0.827 & 1.2e-15/1.2e-15 & 5.4(35.4\%) & 15.7(25.7\%) & 9.5(15.5\%) \\
 & MIX$_{10^{-7}}$ & 100/0 & 77.3(55.5) & 0.031 & 2.35594 & 0.811 & 1.0e-07/1.8e-15 & 6.8(37.2\%) & 15.4(24.3\%) & 9.7(15.0\%) \\
 & MIX$_{10^{-8}}$ & 100/0 & 80.0(57.8) & 0.036 & 2.35594 & 0.811 & 1.0e-08/5.5e-13 & 7.8(41.6\%) & 15.5(22.3\%) & 9.3(13.5\%) \\
\addlinespace
$256$ & RPNCGH & 100/0 & 391.7(151.5) & 0.095 & 3.11171 & 0.850 & 7.3e-15/7.3e-15 & 2.0(32.0\%) & 21.3(43.8\%) & -- \\
 & MIX$_0$ & 100/0 & 115.0(89.6) & 0.083 & 3.11172 & 0.849 & 4.7e-15/4.7e-15 & 5.1(30.9\%) & 28.8(39.7\%) & 7.6(12.0\%) \\
 & MIX$_{10^{-7}}$ & 98/2 & 108.2(84.7) & 0.074 & 3.11172 & 0.842 & 1.0e-07/2.2e-15 & 5.4(26.9\%) & 28.8(43.6\%) & 7.9(12.8\%) \\
 & MIX$_{10^{-8}}$ & 100/0 & 113.7(88.5) & 0.087 & 3.11172 & 0.842 & 1.0e-08/5.8e-13 & 6.6(33.5\%) & 29.0(38.9\%) & 7.6(11.6\%) \\
\addlinespace
$512$ & RPNCGH & 100/0 & 630.5(221.3) & 0.314 & 4.10724 & 0.870 & 1.2e-14/1.2e-14 & 1.6(22.1\%) & 31.6(52.3\%) & -- \\
 & MIX$_0$ & 100/0 & 123.6(97.7) & 0.161 & 4.10724 & 0.869 & 9.8e-16/9.8e-16 & 4.6(24.1\%) & 40.0(55.4\%) & 6.7(10.0\%) \\
 & MIX$_{10^{-7}}$ & 100/0 & 122.5(97.8) & 0.163 & 4.10724 & 0.864 & 1.0e-07/2.0e-15 & 4.6(18.5\%) & 40.9(57.9\%) & 6.6(10.1\%) \\
 & MIX$_{10^{-8}}$ & 100/0 & 123.8(98.9) & 0.174 & 4.10724 & 0.865 & 1.0e-08/1.7e-15 & 5.5(24.5\%) & 39.7(53.2\%) & 6.6(9.6\%) \\
\addlinespace
$1024$ & RPNCGH & 100/0 & 734.1(379.5) & 1.428 & 5.42009 & 0.888 & 8.9e-13/8.9e-13 & 1.9(15.0\%) & 56.5(64.8\%) & -- \\
 & MIX$_0$ & 100/0 & 179.2(146.8) & 0.575 & 5.42009 & 0.888 & 1.5e-15/1.5e-15 & 3.8(15.0\%) & 65.2(70.0\%) & 5.3(7.1\%) \\
 & MIX$_{10^{-7}}$ & 100/0 & 179.8(149.2) & 0.595 & 5.42009 & 0.882 & 1.0e-07/2.0e-15 & 3.9(11.5\%) & 65.5(71.7\%) & 5.1(6.8\%) \\
 & MIX$_{10^{-8}}$ & 100/0 & 178.6(146.8) & 0.624 & 5.42009 & 0.885 & 1.0e-08/1.7e-15 & 4.5(14.2\%) & 66.6(69.6\%) & 5.2(6.8\%) \\
\addlinespace
$2048$ & RPNCGH & 100/0 & 789.2(521.5) & 4.417 & 7.1521 & 0.903 & 8.0e-15/8.0e-15 & 1.6(6.1\%) & 57.7(67.4\%) & -- \\
 & MIX$_0$ & 100/0 & 216.9(181.5) & 1.270 & 7.1521 & 0.902 & 1.7e-15/1.7e-15 & 4.4(20.4\%) & 85.6(67.3\%) & 5.3(5.5\%) \\
 & MIX$_{10^{-7}}$ & 100/0 & 209.9(175.9) & 1.280 & 7.1521 & 0.897 & 1.0e-07/2.1e-15 & 4.0(12.2\%) & 90.0(73.7\%) & 5.3(5.3\%) \\
 & MIX$_{10^{-8}}$ & 100/0 & 207.2(173.1) & 1.315 & 7.1521 & 0.899 & 1.0e-08/2.0e-15 & 4.6(14.5\%) & 91.1(71.5\%) & 5.4(5.3\%) \\
\bottomrule
\end{tabular}}
\end{table*}

\begin{table*}[t]
\centering
\scriptsize
\caption{CM performance comparison when varying the rank $r$, with fixed $n=256, \mu=0.1$.}
\label{tab:cms_varying_r}
\resizebox{\textwidth}{!}{%
\begin{tabular}{@{}llccccccccc@{}}
\toprule
$r$ & Alg. & T/A & Iter(New) & Time & Obj & Sp & O/F & MProx & tCG & TAPR \\
\midrule
$1$ & RPNCGH & 100/0 & 176.3(20.6) & 0.013 & 0.622338 & 0.859 & 4.3e-15/4.3e-15 & 1.1(17.2\%) & 9.5(3.8\%) & -- \\
 & MIX$_0$ & 100/0 & 36.7(28.8) & 0.005 & 0.622338 & 0.859 & 3.2e-16/3.2e-16 & 2.4(11.4\%) & 13.1(26.3\%) & -- \\
\addlinespace
$3$ & RPNCGH & 100/0 & 280.7(66.6) & 0.032 & 1.86702 & 0.856 & 2.4e-15/2.4e-15 & 1.3(24.0\%) & 21.4(31.7\%) & -- \\
 & MIX$_0$ & 100/0 & 61.2(47.6) & 0.018 & 1.86702 & 0.856 & 1.1e-15/1.1e-15 & 3.7(26.3\%) & 18.4(30.4\%) & 7.8(12.9\%) \\
 & MIX$_{10^{-7}}$ & 100/0 & 61.6(48.2) & 0.019 & 1.86702 & 0.847 & 1.0e-07/1.4e-15 & 4.1(24.1\%) & 18.8(28.5\%) & 7.0(13.4\%) \\
 & MIX$_{10^{-8}}$ & 100/0 & 60.6(47.1) & 0.021 & 1.86702 & 0.847 & 1.0e-08/1.6e-15 & 5.2(34.6\%) & 18.5(25.0\%) & 7.2(12.0\%) \\
\addlinespace
$5$ & RPNCGH & 100/0 & 391.7(151.5) & 0.091 & 3.11171 & 0.850 & 7.3e-15/7.3e-15 & 2.0(32.2\%) & 21.3(44.0\%) & -- \\
 & MIX$_0$ & 100/0 & 115.0(89.6) & 0.079 & 3.11172 & 0.849 & 4.7e-15/4.7e-15 & 5.1(31.8\%) & 28.8(41.0\%) & 7.6(12.2\%) \\
 & MIX$_{10^{-7}}$ & 98/2 & 108.2(84.7) & 0.071 & 3.11172 & 0.842 & 1.0e-07/2.2e-15 & 5.4(27.3\%) & 28.8(43.0\%) & 7.9(13.0\%) \\
 & MIX$_{10^{-8}}$ & 100/0 & 113.7(88.5) & 0.084 & 3.11172 & 0.842 & 1.0e-08/5.8e-13 & 6.6(33.6\%) & 29.0(39.3\%) & 7.6(11.4\%) \\
\addlinespace
$10$ & RPNCGH & 100/0 & 476.6(168.8) & 0.588 & 6.27228 & 0.758 & 2.8e-15/2.8e-15 & 6.7(53.9\%) & 25.0(38.9\%) & -- \\
 & MIX$_0$ & 100/0 & 139.4(102.1) & 0.401 & 6.27228 & 0.758 & 1.9e-15/1.9e-15 & 13.6(54.6\%) & 34.0(30.7\%) & 10.0(8.3\%) \\
 & MIX$_{10^{-7}}$ & 100/0 & 134.5(100.8) & 0.329 & 6.27228 & 0.760 & 1.0e-07/4.4e-15 & 11.0(44.5\%) & 35.8(36.9\%) & 10.4(10.2\%) \\
 & MIX$_{10^{-8}}$ & 100/0 & 134.9(99.6) & 0.366 & 6.27228 & 0.758 & 1.0e-08/3.4e-15 & 12.9(50.9\%) & 34.7(32.4\%) & 10.2(8.6\%) \\
\addlinespace
$15$ & RPNCGH & 100/0 & 483.7(102.1) & 4.167 & 10.0998 & 0.808 & 1.4e-11/1.4e-11 & 21.9(89.4\%) & 23.8(8.9\%) & -- \\
 & MIX$_0$ & 100/0 & 228.7(139.9) & 1.669 & 10.0998 & 0.808 & 1.2e-12/1.2e-12 & 17.7(74.7\%) & 18.7(13.5\%) & 13.8(8.0\%) \\
 & MIX$_{10^{-7}}$ & 100/0 & 220.4(134.5) & 1.524 & 10.1 & 0.791 & 1.0e-07/4.3e-13 & 16.5(70.5\%) & 18.0(15.4\%) & 14.8(8.5\%) \\
 & MIX$_{10^{-8}}$ & 100/0 & 226.6(138.6) & 1.645 & 10.0999 & 0.791 & 1.0e-08/1.4e-11 & 17.5(72.0\%) & 18.9(15.1\%) & 14.3(7.6\%) \\
\bottomrule
\end{tabular}}
\end{table*}

\begin{table*}[t]
\centering
\scriptsize
\caption{CM performance comparison when varying $\mu$, with fixed $n=512,r=5$.}
\label{tab:cms_varying_mu}
\resizebox{\textwidth}{!}{%
\begin{tabular}{@{}llccccccccc@{}}
\toprule
$\mu$ & Alg. & T/A & Iter(New) & Time & Obj & Sp & O/F & MProx & tCG & TAPR \\
\midrule
$0.01$ & RPNCGH & 100/0 & 901.5(675.0) & 1.460 & 0.661805 & 0.438 & 2.3e-15/2.3e-15 & 1.2(5.2\%) & 48.5(84.9\%) & -- \\
 & MIX$_0$ & 100/0 & 274.2(254.0) & 0.512 & 0.661805 & 0.438 & 1.7e-15/1.7e-15 & 3.4(8.4\%) & 54.6(75.7\%) & 5.0(8.1\%) \\
 & MIX$_{10^{-7}}$ & 100/0 & 268.4(248.0) & 0.540 & 0.661805 & 0.438 & 1.0e-07/2.3e-15 & 3.3(7.2\%) & 57.5(75.9\%) & 4.9(7.2\%) \\
 & MIX$_{10^{-8}}$ & 100/0 & 277.8(256.5) & 0.549 & 0.661805 & 0.438 & 1.0e-08/2.4e-15 & 3.3(7.3\%) & 57.7(75.6\%) & 4.9(7.5\%) \\
\addlinespace
$0.05$ & RPNCGH & 100/0 & 773.7(383.8) & 0.605 & 2.35915 & 0.822 & 5.4e-15/5.4e-15 & 1.7(14.0\%) & 44.1(71.2\%) & -- \\
 & MIX$_0$ & 100/0 & 197.3(158.5) & 0.349 & 2.35915 & 0.822 & 1.4e-15/1.4e-15 & 4.9(21.1\%) & 55.5(63.8\%) & 5.7(7.5\%) \\
 & MIX$_{10^{-7}}$ & 100/0 & 197.4(159.3) & 0.315 & 2.35915 & 0.816 & 1.0e-07/2.0e-15 & 4.2(13.5\%) & 55.3(67.9\%) & 5.6(7.8\%) \\
 & MIX$_{10^{-8}}$ & 100/0 & 195.6(157.3) & 0.333 & 2.35915 & 0.819 & 1.0e-08/1.9e-15 & 4.9(16.6\%) & 56.5(65.5\%) & 5.6(7.6\%) \\
\addlinespace
$0.10$ & RPNCGH & 100/0 & 630.5(221.3) & 0.308 & 4.10724 & 0.870 & 1.2e-14/1.2e-14 & 1.6(22.1\%) & 31.6(53.0\%) & -- \\
 & MIX$_0$ & 100/0 & 123.6(97.7) & 0.159 & 4.10724 & 0.869 & 9.8e-16/9.8e-16 & 4.6(24.2\%) & 40.0(55.3\%) & 6.7(10.1\%) \\
 & MIX$_{10^{-7}}$ & 100/0 & 122.5(97.8) & 0.161 & 4.10724 & 0.864 & 1.0e-07/2.0e-15 & 4.6(19.0\%) & 40.9(58.1\%) & 6.6(10.1\%) \\
 & MIX$_{10^{-8}}$ & 100/0 & 123.8(98.9) & 0.173 & 4.10724 & 0.865 & 1.0e-08/1.7e-15 & 5.5(24.4\%) & 39.7(53.0\%) & 6.6(9.4\%) \\
\addlinespace
$0.20$ & RPNCGH & 100/0 & 580.8(224.0) & 0.271 & 7.15027 & 0.904 & 1.2e-15/1.2e-15 & 1.8(27.1\%) & 24.9(45.7\%) & -- \\
 & MIX$_0$ & 100/0 & 104.9(85.1) & 0.102 & 7.15027 & 0.904 & 1.5e-15/1.5e-15 & 5.3(39.1\%) & 31.8(35.8\%) & 7.2(11.7\%) \\
 & MIX$_{10^{-7}}$ & 100/0 & 105.6(86.6) & 0.100 & 7.15027 & 0.897 & 1.0e-07/2.4e-15 & 5.6(33.8\%) & 31.6(38.3\%) & 6.9(12.0\%) \\
 & MIX$_{10^{-8}}$ & 100/0 & 104.1(84.5) & 0.113 & 7.15027 & 0.897 & 1.0e-08/2.2e-15 & 6.9(42.2\%) & 30.6(33.4\%) & 7.1(10.5\%) \\
\addlinespace
$0.50$ & RPNCGH & 100/0 & 830.6(590.6) & 0.467 & 14.878 & 0.937 & 1.4e-14/1.4e-14 & 2.0(23.8\%) & 22.5(57.2\%) & -- \\
 & MIX$_0$ & 100/0 & 146.7(114.2) & 0.092 & 14.878 & 0.937 & 1.1e-15/1.1e-15 & 3.8(41.6\%) & 23.1(28.2\%) & 9.1(12.2\%) \\
 & MIX$_{10^{-7}}$ & 100/0 & 149.5(116.5) & 0.110 & 14.878 & 0.924 & 1.0e-07/1.9e-15 & 4.6(43.0\%) & 24.3(27.3\%) & 6.3(12.3\%) \\
 & MIX$_{10^{-8}}$ & 96/4 & 147.8(115.3) & 0.141 & 14.878 & 0.924 & 1.0e-08/1.6e-15 & 6.3(55.1\%) & 24.3(20.9\%) & 6.3(10.0\%) \\
\bottomrule
\end{tabular}}
\end{table*}

\paragraph{Discussion.}
 Tables~\ref{tab:cms_varying_n}--\ref{tab:cms_varying_mu}
 show that all compared methods reach nearly the same objective values,
 sparsity levels, and feasibility residuals. This is expected, since both
 RPNCGH and MIX use a ManPG-type tangent proximal step and therefore tend to
 identify similar sparse patterns and converge to comparable critical points.

The main difference is computational efficiency. When the ambient dimension is
large but the identified intersection has small intrinsic dimension, MIX
computes Newton--CG corrections on \(\overline{\M}_k\), and the search is
performed in a much lower-dimensional manifold. This leads to
substantially fewer Newton iterations and lower wall-clock time, as shown in
Tables~\ref{tab:cms_varying_n} and~\ref{tab:cms_varying_mu}. For example,
in the \(\mu\)-sweep, the speedup of \(\mathrm{MIX}_0\) over RPNCGH increases
from \(0.605/0.349\approx1.7\) at \(\mu=0.05\) to
\(0.467/0.092\approx5.1\) at \(\mu=0.5\), as the solutions become sparser.

The three perturbation levels give broadly stable objective values and
runtimes. However, the perturbed cases may require slightly more time or more
outer iterations because the induced intersection problems can be more
ill-conditioned. This can lead to \texttt{ACC} rather than \texttt{TOL} in a
few cases. By contrast, \(\mathrm{MIX}_0\) reaches \texttt{TOL} in all CM
tests. The off-diagonal perturbation used to recover transversality may
slightly weaken sparsity, since \(\Delta_{ij}\ne0\) enforces
\(\langle X_i,X_j\rangle=\Delta_{ij}\) and hence requires support overlap
between columns \(i\) and \(j\), which explains the slightly denser solutions
observed for the perturbed variants.

The component timings show that TAPR is visible but not the dominant cost in
the Newton correction. Consistent with Table~\ref{tab:mix-complexity}, the
main cost of the Newton correction comes from the tCG solve together with the construction
and application of the tangent-projection and Riemannian-Hessian operators.
For the CM objective, a Euclidean Hessian--vector product is relatively
cheap; the more expensive part is forming and applying the geometry-dependent
operators on the identified intersection. In the rank sweep of
Table~\ref{tab:cms_varying_r}, the MProx/SSN cost and the cost of these
geometric ingredients become more prominent as \(r\) and the active set grow.
These costs partly offset the savings from fewer Newton steps and explain why
the rank-sweep speedups are not monotone in \(r\).

For the highest-rank CM case in Table~\ref{tab:cms_varying_r}
(\(r=15\)), the \(10^{-12}\)--\(10^{-11}\)-level feasibility residuals should
be interpreted as a subproblem-solve accuracy effect. In some cases, the
semismooth-Newton method for the \(\ell_1\) ManPG subproblem becomes severely
ill-conditioned, and the ManPG predictor is not reliably polished to the
\(10^{-15}\) level. Improving the robustness of the tangent-subproblem solver
for high-rank \(\ell_1\) instances is left for future work.

\paragraph{Geometry diagnostics.}
We next numerically check   Assumption~\ref{assump:row-cover} and the effects of perturbations on conditioning.
The geometry diagnostics are computed only on high-accuracy solved instances:
we require that \(\omega_k<10^{-8}\) and the feasibility
  \( \|Y^\top Y-(I_r+\Delta)\|_{\mathrm{F}}<10^{-14}\), where \(Y\) is the final ManPG predictor used for support
  identification. Runs that do not meet this accuracy
criterion are not counted;
instead, they are replaced by new random initializations until
50 high-accuracy samples are collected for each setting.

Table~\ref{tab:cms-geometry-diagnostics}
reports the unperturbed case \(\Delta=0\).
\input{shared/tables/table_cms_geometry_diagnostics.tex}

The columns in Table~\ref{tab:cms-geometry-diagnostics} are tied to the
support graph \(\mathcal G(Y)=(\mathcal V,\mathcal E)\) in
\eqref{eq:graph}. The column \({\rm Sp}\) is the zero fraction of the final
support mask. Let \(q_\ell=|J_\ell^Y|\) be the number of active columns in row
\(\ell\), with \(J_\ell^Y\) defined before \eqref{eq:graph}. Then
\(q_{\rm avg}=n^{-1}\sum_\ell q_\ell\), and \(p_2+p_3\) is the fraction of
rows for which \(q_\ell=2\) or \(q_\ell=3\). The edge density is
\(|\mathcal E_{\rm off}|/\binom{r}{2}\), where
\(\mathcal E_{\rm off}\) is the off-diagonal edge set in
Assumption~\ref{assump:row-cover}. The \(2/3\)-cov. column
reports the mean \(2/3\)-row-cover ratio associated with
Assumption~\ref{assump:row-cover}: the fraction of present off-diagonal edges
that are witnessed by rows with two or three active columns. Missing edges,
including edges incident to isolated columns, are not included in this ratio.

This explains why \(p_2+p_3\) can be small while the \(2/3\)-cover ratio is
close to one. The former uses all \(n\) rows as its denominator, and most rows
may be zero rows or singleton-support rows. The latter is conditional on the
much smaller set \(\mathcal E_{\rm off}\), whose size is at most
\(\binom{r}{2}\). Hence a small number of rows with \(q_\ell=2\) or
\(q_\ell=3\) can still witness nearly all present off-diagonal edges.

The final column reports the constant-rank test
\(\operatorname{rank}(A_{\gamma(Y)})=\gamma(Y)\), where
\(\gamma(Y)=|\mathcal E|\) is defined in \eqref{def:intersect_number} and
\(\dim(\mathbb S_{r,\gamma}(Y))=\gamma(Y)\) is given by
\eqref{eq:dim_sparse_set_symmetric}. {\color{blue}Numerically,
\(A_{\gamma(Y)}\) is obtained by removing from the matrix \(A_x\) defined
in \eqref{eq:matrix_A} the zero rows associated with missing support-graph
edges.}\footnote{A singular value \(s\) is counted as nonzero if
\(s>\epsilon_{\rm mach}\,\sigma_{\max}r(r+1)/2\), where
\(\epsilon_{\rm mach}\) is the double-precision machine epsilon
\((\epsilon_{\rm mach}\approx2.22\times10^{-16})\) and \(\sigma_{\max}\) is the
largest singular value of the tested matrix.} This condition is the constant-rank diagnostic used for the
clean-intersection construction in Theorem~\ref{thm:Mg-intersection-submanifold}.

Table~\ref{tab:cms-geometry-diagnostics} shows that the constant-rank test
\(\operatorname{rank}(A_{\gamma(Y)})=\gamma(Y)\) holds for all accepted
samples in the displayed CM settings.  The 2/3-cover ratios are also close to one in most settings, which is
qualitatively consistent with the high-probability result in
Proposition~\ref{prop:row-cover-bernoulli}.  
The main exception is the rank-sweep case \(n=256,r=10\), where the
2/3-cover ratio is $0.444$; nevertheless, the full-rank diagnostic in
Table~\ref{tab:cms-perturbation-conditioning} is satisfied for this case.
\input{shared/tables/table_cms_perturbation_conditioning.tex}

Table~\ref{tab:cms-perturbation-conditioning}
reports the full-rank transversality test
\(\operatorname{rank}(A_x)=r(r+1)/2\). The nonzero perturbations make this full-rank test hold in almost all
tested settings, while the high-rank case \(r=15\) remains sensitive at
\(\|\Delta\|_{\mathrm F}=10^{-8}\). The main numerical cost of the perturbation
is conditioning: the median \( L_\kappa=\log_{10}\kappa(A_\gamma)\) generally increases
when \(\Delta\ne0\).  The perturbation also affects the conditioning of the local intersection
geometry; see \eqref{eq:Rieman_Hessian}. In particular, a very small nonzero perturbation may restore
full rank while leaving the reduced constraint Jacobian $A_\gamma$
nearly rank deficient. This effect is particularly visible for $r=15$,
where $\|\Delta\|_{\mathrm F}=10^{-8}$ gives a substantially larger
median $L_\kappa=\log_{10}\kappa(A_\gamma)$ than
$\|\Delta\|_{\mathrm F}=10^{-7}$.

Interestingly, this severe geometric ill-conditioning does not lead to
a comparable increase in the iteration counts in Tables~\ref{tab:cms_varying_n}--\ref{tab:cms_varying_mu}. One possible
explanation is that both the RN-SLRA correction used in TAPR and the
Newton-CG step employ regularization, which stabilizes the corresponding
linear systems and prevents near-degeneracy of the intersection geometry
from directly translating into an ill-conditioned Newton correction.

\FloatBarrier

\subsection{Sparse PCA problem}
For sparse PCA, we use the row-sparse spiked covariance model
in~\cite{cai2013sparse}, where the target principal subspace is
row sparse. We therefore solve the \(\ell_{2,1}\)-regularized model
\[
\min_{X^\top X=I_r}
-\operatorname{Tr}(X^\top A X)+\mu\|X\|_{2,1}.
\]
The \(\ell_{2,1}\) norm is a standard row-sparsity penalty for
orthogonality-constrained sparse PCA and coordinate-independent sparse
estimation~\cite{xiao2020exact,chen2010coordinate}. In this case the identified
intersection is a row-support Stiefel manifold, so the Newton correction uses
the polar retraction  on the reduced Stiefel manifold and the Riemannian Hessian is computed
by \cite[Eq.(7.29)]{boumal2023introduction}. Since the row-support active
manifold is automatically transverse by Remark~\ref{rmk:identification}, we
solve the unperturbed problem \eqref{prob:nonsmooth_stiefel}.

The true row support is \(S_\star=\{1,\ldots,s\}\). We obtain
\(V_\star\) from the thin QR factor of a row-sparse matrix whose first \(s\)
rows are \(i^2g_i^\top\), sample
\(Y=H\operatorname{diag}(\sqrt{\lambda_1},\ldots,\sqrt{\lambda_r})V_\star^\top+E\),
and set \(A=Y^\top Y/N\), using the Gaussian model and spike strengths
from~\cite{cai2013sparse}. We set \(s=\max\{\lceil0.05n\rceil,5r\}\),
\(N=\lceil r(s-r)+s\log(\mathrm e n/s)\rceil\), and
\(\mu=\eta\|A\|_2/\sqrt{s}\). The dimension sweep varies \(n=500\), \(1000\),
\(2000\), and \(5000\) with \(r=5\), \(\eta=0.45\); the rank sweep varies
\(r=1\), \(5\), \(10\), and \(20\) with \(n=2000\), \(\eta=0.45\); and the
regularization sweep varies \(\eta=0.35,\ldots,0.55\) with spacing \(0.05\) and
fixed \(n=2000\), \(r=5\). Each setting has 50 trials.

For recovery diagnostics, we report
\(\|XX^\top-V_\star V_\star^\top\|_{\mathrm F}^2\) and its normalized value,
together with the precision, recall, and F1 score of
\(\widehat S=\{i:\|X_{i,:}\|_2>10^{-6}\}\). Figure~\ref{fig:spca-recovery-eta-path}
shows these metrics along the regularization path and the convergence history
for one representative instance. MIX achieves stable recovery along this path: the
F1 score and precision improve as \(\eta\) increases, recall remains stable,
and the subspace loss stays small. The convergence plot also shows that ManPG
stagnates above the tolerance and does not reach high accuracy on this
instance. Both MIX and RPNCGH exhibit local superlinear convergence.

\begin{figure}[!t]
\centering
\begin{minipage}[t]{0.46\textwidth}
\centering
\includegraphics[width=\linewidth]{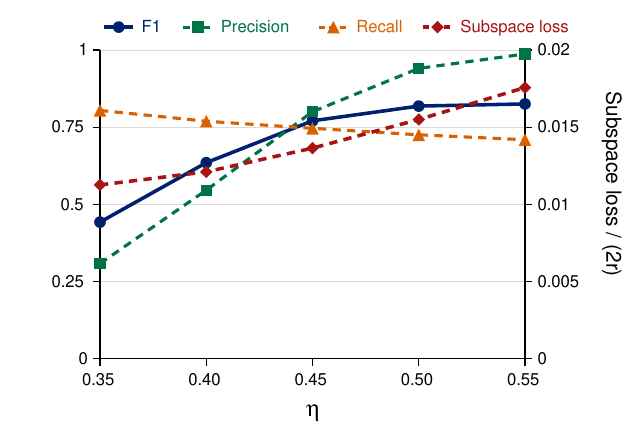}
\scriptsize (a) Recovery diagnostics.
\end{minipage}\hfill
\begin{minipage}[t]{0.48\textwidth}
\centering
\includegraphics[width=\linewidth]{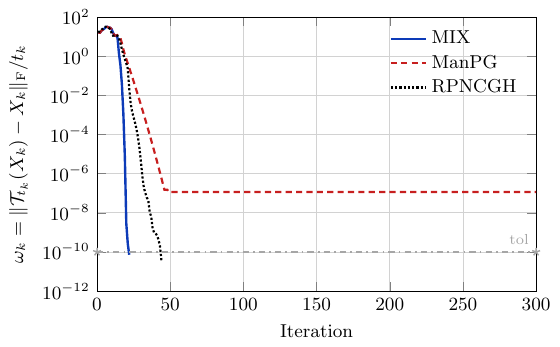}
\scriptsize (b) Convergence for \(n=2000\), \(r=5\), and \(\eta=0.45\).
\end{minipage}
\caption{Recovery and convergence diagnostics for SPCA.}
\label{fig:spca-recovery-eta-path}
\end{figure}

Table~\ref{tab:spca-combined} summarizes the three SPCA sweeps
using the notation of Tables~\ref{tab:cms_varying_n}--\ref{tab:cms_varying_mu}.
Only RPNCGH and MIX$_0$ are reported; all runs terminate at TOL, so the omitted
T/A column is 100/0 throughout.
\begin{table}[!t]
\centering
\scriptsize
\caption{SPCA performance comparison for the dimension $n$, rank $r$, and regularization $\mu=\eta\|A\|_2/\sqrt{s}$ sweeps.}
\label{tab:spca-combined}
\renewcommand{\arraystretch}{0.92}
\resizebox{\textwidth}{!}{%
\begin{tabular}{@{}lllccccccc@{}}
\toprule
\begin{tabular}{@{}c@{}}Var.\\fixed\end{tabular} & Val & Alg. & Iter(New) & Time & Obj & Sp & Orth & MProx & tCG \\
\midrule
\multirow{8}{*}{\begin{tabular}{@{}c@{}}$n$\\$r=5$\\$\eta=0.45$\end{tabular}} & $500$ & RPNCGH & 43.3(25.0) & 0.022 & -61.7027 & 0.95112 & 1.3e-15 & 1.4(17.6\%) & 2.4(44.0\%) \\
 &  & MIX$_0$ & 19.3(6.6) & 0.008 & -61.7027 & 0.95112 & 8.5e-16 & 2.1(29.7\%) & 10.7(12.4\%) \\
\addlinespace
 & $1000$ & RPNCGH & 38.9(21.1) & 0.034 & -63.3454 & 0.95136 & 1.3e-15 & 1.4(16.3\%) & 2.6(44.6\%) \\
 &  & MIX$_0$ & 19.4(6.3) & 0.015 & -63.3454 & 0.95136 & 8.4e-16 & 2.0(23.6\%) & 17.0(24.0\%) \\
\addlinespace
 & $2000$ & RPNCGH & 35.6(17.3) & 0.058 & -64.2027 & 0.953 & 1.6e-15 & 1.5(17.1\%) & 2.6(43.8\%) \\
 &  & MIX$_0$ & 20.2(6.2) & 0.041 & -64.2027 & 0.953 & 1.0e-15 & 2.0(25.8\%) & 21.1(34.7\%) \\
\addlinespace
 & $5000$ & RPNCGH & 35.6(15.4) & 0.308 & -64.0849 & 0.953372 & 2.0e-15 & 1.5(8.6\%) & 2.9(49.5\%) \\
 &  & MIX$_0$ & 22.2(6.7) & 0.141 & -64.0849 & 0.953372 & 1.3e-15 & 2.0(12.8\%) & 32.1(34.1\%) \\
\addlinespace[1.0ex]
\hline
\addlinespace[0.6ex]
\multirow{8}{*}{\begin{tabular}{@{}c@{}}$r$\\$n=2000$\\$\eta=0.45$\end{tabular}} & $1$ & RPNCGH & 41.9(3.7) & 0.022 & -8.09796 & 0.9839 & 2.1e-11 & 1.7(12.1\%) & 1.1(11.9\%) \\
 &  & MIX$_0$ & 22.2(12.6) & 0.014 & -8.09796 & 0.9839 & 2.7e-16 & 2.6(12.2\%) & 1.1(6.7\%) \\
\addlinespace
 & $5$ & RPNCGH & 35.6(17.3) & 0.057 & -64.2027 & 0.953 & 1.6e-15 & 1.5(18.2\%) & 2.6(43.2\%) \\
 &  & MIX$_0$ & 20.2(6.2) & 0.041 & -64.2027 & 0.953 & 1.0e-15 & 2.0(23.6\%) & 21.1(37.2\%) \\
\addlinespace
 & $10$ & RPNCGH & 35.4(17.2) & 0.151 & -135.841 & 0.94477 & 2.7e-15 & 1.5(20.1\%) & 2.6(43.8\%) \\
 &  & MIX$_0$ & 19.8(6.2) & 0.079 & -135.841 & 0.94477 & 1.4e-15 & 2.6(36.2\%) & 25.6(8.9\%) \\
\addlinespace
 & $20$ & RPNCGH & 35.3(16.3) & 0.526 & -284.257 & 0.93138 & 4.6e-15 & 1.5(34.8\%) & 2.6(45.0\%) \\
 &  & MIX$_0$ & 20.1(6.0) & 0.240 & -284.257 & 0.93138 & 2.3e-15 & 2.0(57.8\%) & 45.4(12.3\%) \\
\addlinespace[1.0ex]
\hline
\addlinespace[0.6ex]
\multirow{10}{*}{\begin{tabular}{@{}c@{}}$\eta$\\$n=2000$\\$r=5$\end{tabular}} & $0.35$ & RPNCGH & 39.5(22.1) & 0.077 & -67.883 & 0.86549 & 2.3e-15 & 1.4(16.9\%) & 2.6(46.2\%) \\
 &  & MIX$_0$ & 19.7(6.6) & 0.102 & -67.883 & 0.86549 & 1.5e-15 & 2.5(10.1\%) & 43.5(65.5\%) \\
\addlinespace
 & $0.4$ & RPNCGH & 37.6(19.8) & 0.068 & -66.0174 & 0.92811 & 1.7e-15 & 1.4(16.5\%) & 2.7(46.4\%) \\
 &  & MIX$_0$ & 20.0(6.6) & 0.073 & -66.0174 & 0.92811 & 1.2e-15 & 2.6(13.0\%) & 29.5(56.0\%) \\
\addlinespace
 & $0.45$ & RPNCGH & 35.6(17.3) & 0.063 & -64.2027 & 0.953 & 1.6e-15 & 1.5(16.7\%) & 2.6(43.4\%) \\
 &  & MIX$_0$ & 20.2(6.2) & 0.042 & -64.2027 & 0.953 & 1.0e-15 & 2.0(22.4\%) & 21.1(37.1\%) \\
\addlinespace
 & $0.5$ & RPNCGH & 35.7(16.9) & 0.055 & -62.414 & 0.96146 & 1.4e-15 & 1.5(17.8\%) & 2.5(41.6\%) \\
 &  & MIX$_0$ & 20.3(6.2) & 0.030 & -62.414 & 0.96146 & 9.2e-16 & 2.0(23.7\%) & 14.1(28.5\%) \\
\addlinespace
 & $0.55$ & RPNCGH & 37.1(17.8) & 0.057 & -60.6432 & 0.9642 & 1.5e-15 & 1.5(17.7\%) & 2.5(42.1\%) \\
 &  & MIX$_0$ & 20.8(6.4) & 0.029 & -60.6432 & 0.9642 & 8.3e-16 & 2.1(31.6\%) & 9.2(19.1\%) \\
\bottomrule
\end{tabular}}
\end{table}
\paragraph{Discussion.}
The table shows the same qualitative pattern as the CM experiments, with a
particularly clear iteration reduction for MIX$_0$. RPNCGH and MIX$_0$ reach
essentially identical objective values, sparsity ratios, and orthogonality
residuals, so the comparison is mainly about computational efficiency. Across
the dimension $n$ and rank $r$ sweeps, MIX$_0$ reduces the outer iterations to
about one half and the Newton-CG calls to about one third of those of RPNCGH, which
usually translates into faster runtimes, with speedups from about \(1.4\) to
\(2.9\) in these tests. Along the regularization path
\(\mu=\eta\|A\|_2/\sqrt{s}\), the gain becomes clearer as larger \(\eta\)
identifies sparser row supports: MIX$_0$ then performs Newton corrections on a
smaller active Stiefel manifold and its speedup grows to about \(2.0\) at
\(\eta=0.55\). For the less regularized cases \(\eta=0.35\) and \(0.40\), the
runtime is comparable or slower because the identified row support is larger
and the tCG cost dominates the MIX$_0$ runtime.

\section{Conclusion}

{\color{blue}
We developed a manifold-identification approach to Newton acceleration
for sparse composite optimization on the Stiefel manifold. After the
ManPG tangent proximal step identifies the active manifold, the Newton
correction is performed on its intersection with a nearby Stiefel
manifold. Generic off-diagonal perturbations provide transversality for
identification, while the clean-intersection analysis characterizes the
moving manifolds used by the Newton step.

The resulting MIX method combines a globally safeguarded ManPG step
with Newton-CG corrections on the identified intersections. Globally,
the objective values decrease monotonically and the ManPG residual
converges to zero.
Locally, if an accumulation point $X^*$ is nondegenerate, $g$ is
$C^2$-partly smooth at $X^*$, $\M_g(X^*)$ and
$\St_\Delta(n,r)$ intersect transversely at $X^*$, and the SOSC holds,
then the whole sequence converges to $X^*$, the ManPG predictors identify
$\M_g(X^*)$ in finitely many iterations, and $\{X_k\}$ converges
Q-superlinearly.

Numerical experiments on compressed modes and sparse PCA demonstrate
the computational advantage of performing the Newton correction on the
identified low-dimensional intersection. Future work includes reducing
the use of perturbations, relaxing the SOSC, and extending the framework
to broader equality-constrained manifold problems.
}

%% file: shared/tables/table_cms_geometry_diagnostics.tex
\begin{table}[!ht]
\centering
\small
\setlength{\tabcolsep}{7pt}
\caption{Geometry diagnostics for the CM problem. Percentages are computed over 50 instances. The \(2/3\)-cov. column reports the mean \(2/3\)-row-cover ratio associated with Assumption~\ref{assump:row-cover}. The CR column reports the percentage of accepted samples satisfying the constant-rank test \(\operatorname{rank}(A_{\gamma(Y)})=\gamma(Y)\).}
\label{tab:cms-geometry-diagnostics}
\begin{tabular}{@{}llcccccc@{}}
\toprule
\begin{tabular}{@{}c@{}}Var.\\fixed\end{tabular} & Val & Sp & \(q_{\rm avg}\) & \(p_2+p_3\) & dens. & \(2/3\)-cov. & CR (\%) \\
\midrule
\multirow{5}{*}{\begin{tabular}{@{}c@{}}\(n\)\\\(r=5\)\\\(\mu=0.1\)\end{tabular}} & \(128\) & 0.827 & 0.86 & 0.074 & 0.364 & 0.961 & 100 \\
 & \(256\) & 0.850 & 0.75 & 0.052 & 0.330 & 0.991 & 100 \\
 & \(512\) & 0.869 & 0.65 & 0.039 & 0.290 & 0.962 & 100 \\
 & \(1024\) & 0.888 & 0.56 & 0.021 & 0.214 & 0.978 & 100 \\
 & \(2048\) & 0.902 & 0.49 & 0.015 & 0.174 & 1.000 & 100 \\
\addlinespace[2pt]
\multirow{4}{*}{\begin{tabular}{@{}c@{}}\(r\)\\\(n=256\)\\\(\mu=0.1\)\end{tabular}} & \(3\) & 0.856 & 0.43 & 0.014 & 0.240 & 1.000 & 100 \\
 & \(5\) & 0.850 & 0.75 & 0.052 & 0.330 & 0.991 & 100 \\
 & \(10\) & 0.758 & 2.42 & 0.559 & 1.000 & 0.444 & 100 \\
 & \(15\) & 0.809 & 2.85 & 0.999 & 0.287 & 0.995 & 100 \\
\addlinespace[2pt]
\multirow{5}{*}{\begin{tabular}{@{}c@{}}\(\mu\)\\\(n=512\)\\\(r=5\)\end{tabular}} & \(0.01\) & 0.438 & 2.81 & 0.707 & 1.000 & 1.000 & 100 \\
 & \(0.05\) & 0.822 & 0.89 & 0.069 & 0.458 & 0.958 & 100 \\
 & \(0.10\) & 0.869 & 0.65 & 0.039 & 0.290 & 0.962 & 100 \\
 & \(0.20\) & 0.904 & 0.48 & 0.015 & 0.192 & 1.000 & 100 \\
 & \(0.50\) & 0.937 & 0.31 & 0.000 & 0.004 & 1.000 & 100 \\
\bottomrule
\end{tabular}
\end{table}

%% file: shared/tables/table_cms_perturbation_conditioning.tex
\begin{table}[!htb]
\centering
\footnotesize
\setlength{\tabcolsep}{7pt}
\caption{Effect of off-diagonal perturbations on CM transversality and conditioning. Each entry is computed over 50 instances. Rows use the same block format as Table~\ref{tab:cms-geometry-diagnostics}. The FR column reports the percentage of accepted samples satisfying the full-rank transversality test \(\operatorname{rank}(A_x)=r(r+1)/2\). Here \(L_\kappa=\log_{10}\kappa(A_\gamma)\), computed over samples satisfying \(\operatorname{rank}(A_{\gamma(Y)})=\gamma(Y)\).}
\label{tab:cms-perturbation-conditioning}
\begin{tabular}{@{}llcccccc@{}}
\toprule
& & \multicolumn{3}{c}{FR (\%)} & \multicolumn{3}{c}{median \(L_\kappa\)} \\
\cmidrule(lr){3-5}\cmidrule(l){6-8}
\begin{tabular}{@{}c@{}}Var.\\fixed\end{tabular} & Val & \(0\) & \(10^{-8}\) & \(10^{-7}\) & \(0\) & \(10^{-8}\) & \(10^{-7}\) \\
\midrule
\multirow{5}{*}{\begin{tabular}{@{}c@{}}\(n\)\\\(r=5\)\\\(\mu=0.1\)\end{tabular}} & \(128\) & 6 & 96 & 100 & 0.60 & 0.75 & 0.75 \\
 & \(256\) & 0 & 100 & 100 & 0.72 & 0.87 & 0.87 \\
 & \(512\) & 0 & 100 & 100 & 0.84 & 0.99 & 0.99 \\
 & \(1024\) & 0 & 100 & 100 & 0.61 & 0.96 & 0.96 \\
 & \(2048\) & 0 & 100 & 100 & 0.61 & 1.08 & 0.89 \\
\addlinespace[2pt]
\multirow{3}{*}{\begin{tabular}{@{}c@{}}\(r\)\\\(n=256\)\\\(\mu=0.1\)\end{tabular}} & \(3\) & 10 & 100 & 100 & 0.57 & 0.80 & 0.72 \\
 & \(10\) & 100 & 100 & 100 & 0.87 & 0.87 & 2.89 \\
 & \(15\) & 0 & 80 & 100 & 1.20 & 9.11 & 2.40 \\
\addlinespace[2pt]
\multirow{4}{*}{\begin{tabular}{@{}c@{}}\(\mu\)\\\(n=512\)\\\(r=5\)\end{tabular}} & \(0.01\) & 100 & 100 & 100 & 0.39 & 0.39 & 0.39 \\
 & \(0.05\) & 2 & 100 & 100 & 1.05 & 1.05 & 1.05 \\
 & \(0.20\) & 0 & 100 & 100 & 0.63 & 0.93 & 0.93 \\
 & \(0.50\) & 0 & 100 & 100 & 0.00 & 0.85 & 0.85 \\
\bottomrule
\end{tabular}
\end{table}

%% file: shared/appendix.tex
\appendix
\section*{Appendix}
\addcontentsline{toc}{section}{Appendix}
\input{shared/appendix_preliminaries}
\input{shared/appendix_identification}
\input{shared/appendix_geometry_examples}
\input{shared/appendix_optimization_intersection}
\section{Geometry and local corrections for $\St_\Delta(n,r)$}
\label{app:st-delta-geometry}

\subsection{Normal space and tangent projection of $\St_\Delta$}
\label{sec:append:normal}

Let
\(\St_\Delta(n,r)=\{X\in\mathbb R^{n\times r}:X^\top X=D\}\), where
\(D=I_r+\Delta\succ0\).
The tangent space at $X\in\St_\Delta(n,r)$ is
\[
    \T_X\St_\Delta(n,r)
    =
    \{Z\in\mathbb R^{n\times r}:X^\top Z+Z^\top X=0\}.
\]
Moreover, its Euclidean normal space is
\[
    \rmN_X\St_\Delta(n,r)
    =
    \{XS:S\in\mathbb S^r\}.
\]

Indeed, define $F(X):=X^\top X-D$. Then
\(\rmD F(X)[Z]=X^\top Z+Z^\top X\).
For any $\Lambda\in\mathbb S^r$,
\[
\begin{aligned}
    \langle \rmD F(X)[Z],\Lambda\rangle
    &=
    \langle X^\top Z+Z^\top X,\Lambda\rangle  \\
    &=
    2\langle Z,X\Lambda\rangle .
\end{aligned}
\]
Hence $\rmD F(X)^*[\Lambda]=2X\Lambda$, and the normal space is
$\range(\rmD F(X)^*)=\{XS:S\in\mathbb S^r\}$.

\paragraph{Orthogonal projection onto the perturbed tangent space.}
Given $Z\in\mathbb R^{n\times r}$, write its tangent projection as
\(H=Z-XS\), where \(S\in\mathbb S^r\).
The tangency condition $H\in\T_X\St_\Delta(n,r)$ gives
\(X^\top(Z-XS)+(Z-XS)^\top X=0\),
or equivalently,
\begin{equation}\label{eq:lyap-compact}
    DS+SD=X^\top Z+Z^\top X .
\end{equation}
Since $D\succ0$, the Lyapunov operator
$\mathcal L_D(S):=DS+SD$ is invertible on $\mathbb S^r$. Hence
\eqref{eq:lyap-compact} has a unique symmetric solution $S(Z)$, and
\begin{equation}\label{eq:proj-TX-StDelta-final}
    \Proj_{\T_X\St_\Delta}(Z)=Z-XS(Z).
\end{equation}

If $D=Q\Lambda Q^\top$ with
$\Lambda=\operatorname{diag}(\lambda_1,\ldots,\lambda_r)$, and if
\(\widetilde A:=Q^\top(X^\top Z+Z^\top X)Q\) and
\(\widetilde S:=Q^\top S(Z)Q\),
then
\begin{equation}\label{eq:lypunov_eigen}
    \widetilde S_{ij}
    =
    \frac{\widetilde A_{ij}}{\lambda_i+\lambda_j},
    \qquad
    S(Z)=Q\widetilde S Q^\top .
\end{equation}
In the unperturbed case $\Delta=0$, one has $D=I_r$ and therefore
\(S(Z)=\frac12(X^\top Z+Z^\top X)=\operatorname{sym}(X^\top Z)\),
so that
\(\Proj_{\T_X\St}(Z)=Z-X\operatorname{sym}(X^\top Z)\),
which is the classical Stiefel tangent projection.

\subsection{Euclidean projection onto $\St_\Delta(n,r)$}
\label{sec:append_projection_to_St}

We derive the Euclidean projection onto
\(\St_\Delta(n,r):=\{X\in\mathbb R^{n\times r}: X^\top X=I_r+\Delta\}\).
Let \(D:=I_r+\Delta\).
Throughout this subsection, we assume that $D\succ0$ and that $Y$ has full column
rank. The projection is defined by
\[
    \Proj_{\St_\Delta}(Y)
    :=
    \arg\min_{X\in\St_\Delta(n,r)}
    \frac12\|X-Y\|_{\mathrm F}^2 .
\]

Set $X=QD^{1/2}$, where $Q\in\St(n,r)$. Then
\[
\|X-Y\|_{\mathrm F}^2
=
\|QD^{1/2}-Y\|_{\mathrm F}^2
=
\operatorname{tr}(D)+\|Y\|_{\mathrm F}^2
-2\operatorname{tr}\bigl(Q^\top YD^{1/2}\bigr).
\]
Thus the projection problem is equivalent to the orthogonal Procrustes problem
\[
    \max_{Q\in\St(n,r)}
    \operatorname{tr}\bigl(Q^\top A\bigr),
    \qquad
    A:=YD^{1/2}.
\]
Since $Y$ has full column rank, so does $A$. Hence the unique maximizer is the
orthogonal polar factor of $A$, namely \(Q^*=A(A^\top A)^{-1/2}\).
Consequently,
\[
\begin{aligned}
    \Proj_{\St_\Delta}(Y)
    &=
    Q^*D^{1/2}  \\
    &=
    YD^{1/2}
    \left(D^{1/2}Y^\top YD^{1/2}\right)^{-1/2}
    D^{1/2}.
\end{aligned}
\]
Therefore,
\begin{equation}\label{eq:proj_closed_StDelta_app}
    \Proj_{\St_\Delta}(Y)
    =
    YD^{1/2}
    \left(D^{1/2}Y^\top YD^{1/2}\right)^{-1/2}
    D^{1/2}.
\end{equation}

Equivalently, if $YD^{1/2}=U\Sigma V^\top$ is the thin singular value
decomposition, then
\[
    \Proj_{\St_\Delta}(Y)=UV^\top D^{1/2}.
\]
When $D=I_r$, this reduces to the classical polar projection onto the standard
Stiefel manifold.

\subsection{A Sylvester-corrected one-step correction for $\St_\Delta(n,r)$}
\label{app:sylvester_corrected_phi2}

We seek a one-step correction that is first-order matched to the projection onto
\(\St_\Delta(n,r)=\{X\in\mathbb R^{n\times r}:X^\top X=D\}\), \(D\succ0\).
Recall from Subsection~\ref{sec:append_projection_to_St} that the exact Euclidean projection of $Y$ has the form
\(X=Y(I+\Gamma)^{-1}\),
where $\Gamma\in\mathbb{S}^r$ is the Lagrangian multiplier. 
  Motivated by this, we consider a one-step local approximation of the form
\[
\phi_2(Y)=Y(I-S(Y)),
\qquad S(Y)\in\mathbb S^r,
\]
where $S(Y)$ is a small symmetric matrix approximating the multiplier-induced factor
$(I+\Gamma)^{-1}-I$.

To determine $S(Y)$, we enforce first-order cancellation of the feasibility residual.
Let
\(M:=Y^\top Y\) and \(E:=M-D=Y^\top Y-D\).
For a symmetric matrix $S$, define \(X^+ := Y(I-S)\).
Then
\begin{align*}
(X^+)^\top X^+ - D
&= (I-S)M(I-S)-D \\
&= M-D - MS - SM + SMS \\
&= E - MS - SM + SMS.
\end{align*}
Near $\St_\Delta(n,r)$, both $E$ and $S$ are small. Replacing $M=D+E$ in the linear terms yields
\[
E-MS-SM
=
E-(DS+SD) - (ES+SE),
\]
and therefore
\[
(X^+)^\top X^+ - D
=
\underbrace{E-(DS+SD)}_{\text{first-order term}}
+\underbrace{\bigl[-(ES+SE)+SMS\bigr]}_{\text{higher-order terms}}.
\]
Thus, the first-order feasibility defect is cancelled by choosing $S$ as the unique symmetric solution of
\begin{equation}\label{eq:Sylvester_eq}
    DS+SD=E=Y^\top Y-D.
\end{equation}

This leads to the following map
\begin{equation}\label{eq:inexact_St_phi_2}
\phi_2(Y)=Y(I-S(Y)),
\qquad
DS(Y)+S(Y)D=Y^\top Y-D.
\end{equation}

\subsubsection*{Efficient evaluation for fixed $D$}

Although \eqref{eq:inexact_St_phi_2} is a Sylvester equation, in our alternating-projection setting
the matrix $D$ is fixed across all iterations. We therefore preprocess $D$ once.

Let \(D=U\Lambda U^\top\), where
\(\Lambda=\mathrm{Diag}(\lambda_1,\ldots,\lambda_r)\) and \(\lambda_i>0\), and define
\(K\in\mathbb R^{r\times r}\) by \(K_{ij}:=1/(\lambda_i+\lambda_j)\).
For any input $Y$, set \(Z:=YU\in\mathbb R^{n\times r}\). Then
\(U^\top(Y^\top Y-D)U = Z^\top Z-\Lambda\).
If we denote $\widehat S:=U^\top S(Y)U$, then \eqref{eq:inexact_St_phi_2} is equivalent to
\(\Lambda\widehat S+\widehat S\Lambda = Z^\top Z-\Lambda\),
whose entrywise solution is
\(\widehat S = K\odot (Z^\top Z-\Lambda)\).
Substituting $S(Y)=U\widehat S U^\top$ into \eqref{eq:inexact_St_phi_2} gives
\(\phi_2(Y)=Y-Z\widehat S U^\top\).

\subsubsection{Expansion of the Sylvester-corrected map}\label{append_subsec:proof_lemma}
\begin{lemma}\label{lem:expansion-phi}
Let $\bar X\in \St_\Delta(n,r)$, with $D=I_r+\Delta\succ0$, and define $\phi_2$ by \eqref{eq:Sylvester}.
Let $\lambda_{\min}:=\lambda_{\min}(D)$ and $\lambda_{\max}:=\lambda_{\max}(D)$, and define
\[
\rho_0:=\sqrt{\lambda_{\max}+\lambda_{\min}}-\sqrt{\lambda_{\max}}.
\]
Then for any $w\in\R^{n\times r}$ satisfying $\|w\|_{\mathrm{F}}\le \rho_0$, we have
\begin{equation}\label{eq:expansion_phi-inexact}
\phi_2(\bar{X}+w)=\bar{X}+\Proj_{\T_{\bar{X}}\St_{\Delta}}(w) +\bar{R}_{\T}(w)+\bar{R}_{\rmN}(w),
\end{equation}
where $\bar{R}_{\T}(w)=O(\|w\|_{\mathrm{F}}^2)$ and $\bar{R}_{\rmN}(w)=O(\|w\|_{\mathrm{F}}^2)$ are smooth mappings on the Frobenius ball $\{w:\|w\|_{\mathrm{F}}\le \rho_0\}$.
Furthermore, if $\Proj_{\rmN_{\bar{X}}\St_{\Delta}}(w)=O(\|w\|_{\mathrm{F}}^2)$, then $\bar{R}_{\T}(w)=O(\|w\|_{\mathrm{F}}^3)$.
\end{lemma}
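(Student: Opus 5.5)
We expand $\phi_2(\bar X+w)=(\bar X+w)(I-S(\bar X+w))$ directly. Since $\bar X^\top\bar X=D$, set $E(w):=(\bar X+w)^\top(\bar X+w)-D=\bar X^\top w+w^\top\bar X+w^\top w$. The map $\mathcal L_D(S)=DS+SD$ is an invertible linear operator on $\mathbb S^r$; its inverse has norm at most $1/(2\lambda_{\min})$, as seen from the eigen-representation \eqref{eq:lypunov_eigen}. Hence $S(\bar X+w)=\mathcal L_D^{-1}(E_1(w))+\mathcal L_D^{-1}(w^\top w)$, where $E_1(w):=\bar X^\top w+w^\top\bar X$ is linear. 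Each term is a polynomial in $w$, so $\phi_2(\bar X+w)$ is a cubic polynomial in $w$ and is smooth everywhere.

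The radius $\rho_0$ only has to guarantee that $\bar X+w$ keeps full column rank, which we verify briefly. We have $\sigma_{\min}(\bar X)=\sqrt{\lambda_{\min}}$ and $\|w\|_2\le\|w\|_{\mathrm F}\le\rho_0<\sqrt{\lambda_{\min}}$, since $\sqrt{\lambda_{\max}+\lambda_{\min}}-\sqrt{\lambda_{\max}}<\sqrt{\lambda_{\min}}$. The constants in the $O(\cdot)$ bounds below are then uniform on this compact ball.

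Multiplying out gives
\[
\phi_2(\bar X+w)=\bar X+\bigl(w-\bar X\,\mathcal L_D^{-1}(E_1(w))\bigr)-\bar X\,\mathcal L_D^{-1}(w^\top w)-w\,\mathcal L_D^{-1}(E_1(w))-w\,\mathcal L_D^{-1}(w^\top w).
\]
By \eqref{eq:lyap-compact}--\eqref{eq:proj-TX-StDelta-final} applied with $X=\bar X$, the linear term is exactly $w-\bar XS(w)$ with $\mathcal L_D(S(w))=E_1(w)$, that is, $\Proj_{\T_{\bar X}\St_\Delta}(w)$. The remainder $R(w)$ collects the last three terms: two quadratic terms and one cubic term. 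We split $R$ as $\bar R_{\T}:=\Proj_{\T_{\bar X}\St_\Delta}R$ and $\bar R_{\rmN}:=\Proj_{\rmN_{\bar X}\St_\Delta}R$. Both pieces are linear images of a smooth map that is $O(\|w\|_{\mathrm F}^2)$, which gives \eqref{eq:expansion_phi-inexact}.

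For the refined claim we identify the tangent part of the quadratic terms. The term $-\bar X\mathcal L_D^{-1}(w^\top w)$ lies in $\rmN_{\bar X}\St_\Delta=\{\bar XS\}$, so it contributes nothing to $\bar R_{\T}$. The cubic term is already $O(\|w\|^3)$. The only quadratic tangent contribution is therefore $\Proj_{\T}\bigl(-w\,\mathcal L_D^{-1}(E_1(w))\bigr)$.

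Now write $w=w_T+w_N$, where $w_N=\Proj_{\rmN}(w)=O(\|w\|^2)$ by assumption. Since $w_T\in\T_{\bar X}\St_\Delta$, we have $E_1(w_T)=0$, and hence $E_1(w)=E_1(w_N)=O(\|w_N\|)=O(\|w\|^2)$. It follows that $w\,\mathcal L_D^{-1}(E_1(w))=O(\|w\|^3)$, and therefore $\bar R_{\T}(w)=O(\|w\|_{\mathrm F}^3)$.

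The main point is this last observation: the linear constraint term $E_1$ annihilates tangent directions, which kills the only quadratic tangent contribution. The rest is bookkeeping of polynomial terms with uniform constants on the ball.
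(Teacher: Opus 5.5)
Your proof is correct and follows essentially the same route as the paper. You split $S(\bar X+w)=\mathcal L_D^{-1}(\bar X^\top w+w^\top\bar X)+\mathcal L_D^{-1}(w^\top w)$, identify the linear part as the tangent projection, observe that $\bar X\mathcal L_D^{-1}(w^\top w)$ is normal, and, for the cubic refinement, use that $\bar X^\top w+w^\top\bar X$ annihilates tangent directions. The only cosmetic difference is in how $\rho_0$ is used: you use $\rho_0<\sqrt{\lambda_{\min}}$ to keep $\bar X+w$ of full column rank, whereas the paper uses $2\sqrt{\lambda_{\max}}\,\rho_0+\rho_0^2=\lambda_{\min}$ to get $\|S(\bar X+w)\|\le\frac12$, so that $I-S$ is invertible; neither fact is needed for the expansion itself, since $\phi_2(\bar X+w)$ is polynomial in $w$.
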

\begin{proof}[Proof of Lemma~\ref{lem:expansion-phi}]
Fix $\bar X\in\St_\Delta(n,r)$ and let $w$ satisfy $\|w\|_F\le \rho_0$.
Set
\(E(w):=(\bar X+w)^\top(\bar X+w)-D\).
Since $\bar X^\top\bar X=D$, we can write
\(E(w)=B(w)+Q(w)\), where
\(B(w):=\bar X^\top w+w^\top\bar X\in\mathbb S^r\) and
\(Q(w):=w^\top w\in\mathbb S^r\).

Let $\mathcal L_D:\mathbb S^r\to\mathbb S^r$ denote the inverse Sylvester operator
\(\mathcal L_D(E)=S \Longleftrightarrow DS+SD=E\), \(S\in\mathbb S^r\).
Since $D\succ0$, $\mathcal L_D$ is bounded and satisfies
\begin{equation}\label{eq:LDbound}
\|\mathcal L_D(E)\|_F\le \frac{1}{2\lambda_{\min}}\|E\|_F,\qquad \forall E\in\mathbb S^r.
\end{equation}
Define
\(S_1(w):=\mathcal L_D(B(w))\) and \(S_2(w):=\mathcal L_D(Q(w))\).
By linearity, we obtain
\(S(\bar X+w)=\mathcal L_D(E(w))=S_1(w)+S_2(w)\).

We expand
\begin{align*}
\phi_2(\bar X+w)
&=(\bar X+w)\bigl(I-S_1(w)-S_2(w)\bigr) \\
&=\bar X+w-\bar X S_1(w)-\bar X S_2(w)-wS_1(w)-wS_2(w).
\end{align*}
On the other hand, the Euclidean projection onto the tangent space has the form
\[
    \Proj_{\T_{\bar X}\St_\Delta}(w)=w-\bar X S_1(w),
\]
because $S_1(w)\in\mathbb S^r$ solves
\(DS_1(w)+S_1(w)D=B(w)=\bar X^\top w+w^\top\bar X\).
Therefore,
\(\phi_2(\bar X+w)=\bar X+\Proj_{\T_{\bar X}\St_\Delta}(w)+R(w)\),
where
\(R(w):=-\bar X S_2(w)-wS_1(w)-wS_2(w)\).
Define
\(\bar R_\T(w):=\Proj_{\T_{\bar X}\St_\Delta}(R(w))\) and
\(\bar R_\rmN(w):=\Proj_{\rmN_{\bar X}\St_\Delta}(R(w))\).
Then \eqref{eq:expansion_phi-inexact} holds by construction.

We now estimate the remainder orders uniformly on $\{w:\|w\|_F\le \rho_0\}$.
Since $\|\bar X\|_2=\sqrt{\lambda_{\max}}$ and $\|w\|_2\le \|w\|_F$, we have
\[
\|B(w)\|_F \le 2\|\bar X\|_2\|w\|_F \le 2\sqrt{\lambda_{\max}}\|w\|_F,
\qquad
\|Q(w)\|_F \le \|w\|_F^2.
\]
Hence, by \eqref{eq:LDbound},
\[
\|S_1(w)\|_F\le \frac{1}{2\lambda_{\min}}\|B(w)\|_F=O(\|w\|_F),\qquad
\|S_2(w)\|_F\le \frac{1}{2\lambda_{\min}}\|Q(w)\|_F=O(\|w\|_F^2),
\]
and thus
\[
R(w)=-\bar X S_2(w)-wS_1(w)-wS_2(w)=O(\|w\|_F^2).
\]
Since $\Proj_{\T_{\bar X}\St_\Delta}$ and $\Proj_{\rmN_{\bar X}\St_\Delta}$ are fixed linear maps,
\[
\bar R_\T(w)=O(\|w\|_F^2),\qquad \bar R_\rmN(w)=O(\|w\|_F^2),
\]
uniformly for $\|w\|_F\le \rho_0$. Smoothness of $\bar R_\T,\bar R_\rmN$ on the ball follows because
$S(Y)$ depends smoothly on $Y$.

Moreover, for $\|w\|_F\le \rho_0$ we have
\[
\|Y^\top Y-D\|_2 = \|E(w)\|_2 \le \|B(w)\|_2+\|Q(w)\|_2
\le 2\sqrt{\lambda_{\max}}\|w\|_F+\|w\|_F^2 \le \lambda_{\min},
\]
so \eqref{eq:LDbound} also yields $\|S(\bar X+w)\|_2\le \|S(\bar X+w)\|_F\le \tfrac12$,
and hence $I-S(\bar X+w)$ is uniformly invertible on the ball.

For the final claim, assume $\Proj_{\rmN_{\bar X}\St_\Delta}(w)=O(\|w\|_F^2)$ and write
\(w=\eta+\nu\), where \(\eta\in \T_{\bar X}\St_\Delta\) and
\(\nu\in \rmN_{\bar X}\St_\Delta\),
so $\nu=O(\|w\|_F^2)$. Since $\eta\in \T_{\bar X}\St_\Delta$, we have
$\bar X^\top\eta+\eta^\top\bar X=0$, hence
\(B(w)=\bar X^\top\nu+\nu^\top\bar X=O(\|w\|_F^2)\), and therefore
\(S_1(w)=\mathcal L_D(B(w))=O(\|w\|_F^2)\).
Together with $S_2(w)=O(\|w\|_F^2)$, we obtain
\(wS_1(w)=O(\|w\|_F^3)\) and \(wS_2(w)=O(\|w\|_F^3)\).
Finally, since $S_2(w)\in\mathbb S^r$ implies $\bar X S_2(w)\in \rmN_{\bar X}\St_\Delta$, we have
$\Proj_{\T_{\bar X}\St_\Delta}(\bar X S_2(w))=0$, and thus
\[
\bar R_\T(w)=\Proj_{\T_{\bar X}\St_\Delta}\bigl(-wS_1(w)-wS_2(w)\bigr)=O(\|w\|_F^3).
\]
This completes the proof.
\end{proof}

\section{Technical lemmas for Section~\ref{sec:alg_on_intersection}}\label{append:technical_last}

\subsection{Proof of Lemma~\ref{lem:approx_KKT_residual}}
\label{app:proof-approx-KKT-residual}

\begin{proof}[Proof of Lemma~\ref{lem:approx_KKT_residual}]
Let $D:=I_r+\Delta$. Since $X_k\in\St_\Delta$, one has $X_k^\top X_k=D\succ0$.
By the optimality condition of the tangent subproblem~\eqref{eq:tangent_problem},
there exist $G_k\in\partial g(Y_k)$ and $\Lambda_k\in\mathbb S^r$ such that
\[
    \nabla f(X_k)+G_k+\frac1{t_k}V_k+X_k\Lambda_k=0 .
\]
We first show that $\{\Lambda_k\}$ is bounded. Since $V=0$ is feasible for
\eqref{eq:tangent_problem}, the optimality of $V_k$ gives
\[
    \langle \nabla f(X_k),V_k\rangle+\frac{1}{2t_k}\normfro{V_k}^2+g(Y_k)
    \le g(X_k).
\]
The boundedness of $\nabla f$ and the Lipschitz continuity of $g$ then yield
$\normfro{V_k}/t_k\le C$ for some constant $C>0$. Moreover, the Lipschitz
continuity of $g$ implies that   the subgradients $G_k\in\partial g(Y_k)$
are uniformly bounded. Multiplying the optimality condition by $X_k^\top$ gives
\[
    D\Lambda_k
    =
    -X_k^\top\left(\nabla f(X_k)+G_k+\frac1{t_k}V_k\right).
\]
Since $D^{-1}$ and $\{X_k\}$ are bounded, the preceding bounds imply
\begin{equation}\label{eq:bound_of_dual}
    \normfro{\Lambda_k}\le M_\Lambda
\end{equation}
for some constant $M_\Lambda>0$.

We now estimate the KKT residual at $Y_k$. Evaluating the stationarity part of
\eqref{eq:R0-def} at $(G,\Lambda)=(G_k,\Lambda_k)$, the tangent subproblem
optimality condition gives
\[
\begin{aligned}
\nabla f(Y_k)+G_k+Y_k\Lambda_k
&=
\nabla f(Y_k)-\nabla f(X_k)-\frac1{t_k}V_k+V_k\Lambda_k .
\end{aligned}
\]
Hence, by the Lipschitz continuity of $\nabla f$, the boundedness of
$\Lambda_k$, and $t_k\le t_{\max}$, one has
\[
\normfro{\nabla f(Y_k)+G_k+Y_k\Lambda_k}
\le
C_{\rm R}\frac{\normfro{V_k}}{t_k}
\]
for  some $C_{\rm R}>0$.

It remains to control the feasibility residual. One has
\(
    \normfro{Y_k^\top Y_k-I_r}
    \le
    \normfro{\Delta}+\normfro{V_k}^2 .
\)
Combining the stationarity and feasibility estimates with
Definition~\ref{def:KKT-residual-P0} proves the first claim. Finally, if
$\normfro{V_k}\le1$, then $\normfro{V_k}^2\le \normfro{V_k}\le
t_{\max}\normfro{V_k}/t_k$, and the second estimate follows by increasing the
constant.
\end{proof}

\subsection{Proof of Lemma~\ref{lem:local-solution-branch}}
\label{app:proof-local-solution-branch}

\begin{proof}[Proof of Lemma~\ref{lem:local-solution-branch}]
Define
\(\Gamma(X,\Delta'):=X^\top X-(I_r+\Delta')\), for
\((X,\Delta')\in\M_g(X^*)\times\mathbb S^r\).
Its differential is given by
\(\rmD\Gamma(X,\Delta')[\xi,H]=X^\top\xi+\xi^\top X-H\).
Since the derivative with respect to $\Delta'$ is $-I$ on $\mathbb S^r$, the
set
\[
    \mathcal S
    :=
    \{(X,\Delta'):\ X\in\M_g(X^*),\ \Gamma(X,\Delta')=0\}
\]
is a smooth embedded submanifold around $(X^*,\Delta)$.

By the transversality assumption in Lemma~\ref{lem:local-solution-branch}, the map
\(\xi\mapsto X^{*\top}\xi+\xi^\top X^*\)
from $\T_{X^*}\M_g(X^*)$ to $\mathbb S^r$ is surjective. This property is open.
Hence, after restricting to neighborhoods $\mathcal U$ of $X^*$ and
$\mathcal V$ of $\Delta$, the projection
\[
    \pi:\mathcal S\to\mathbb S^r,\qquad \pi(X,\Delta')=\Delta',
\]
is a submersion on $\mathcal S\cap(\mathcal U\times\mathcal V)$.

According to the submersion theorem, there exist a neighborhood
$\mathcal W\subset\mathbb R^q$ of the origin and a smooth mapping
\(\phi:\mathcal W\times\mathcal V\to\mathcal U\)
such that $\phi(0,\Delta)=X^*$ and, locally,
\[
    \phi(\mathcal W,\Delta')
    =
    \overline{\M}_{\Delta'}^*\cap\mathcal U,
    \qquad \forall\,\Delta'\in\mathcal V.
\]
Define
\(h(u,\Delta'):=F(\phi(u,\Delta'))\).
Since $F|_{\M_g(X^*)}$ is $C^2$, the function $h$ is $C^2$. Furthermore,
{\color{blue} since $X^*$ is critical,  $\nabla_u h(0,\Delta)=0$, while the SOSC
implies that $\nabla^2_{uu}h(0,\Delta)$ is positive definite and hence
invertible.}

Applying the implicit function theorem to
\(
    \nabla_u h(u,\Delta')=0
\)
gives, after possibly shrinking $\mathcal W$ and $\mathcal V$, a unique
$C^1$ mapping $u:\mathcal V\to\mathcal W$ satisfying
$u(\Delta)=0$ and
\[
    \nabla_u h(u(\Delta'),\Delta')=0,
    \qquad \forall\,\Delta'\in\mathcal V.
\]
Set
\(\mathcal X(\Delta'):=\phi(u(\Delta'),\Delta')\).
Then $\mathcal X(\Delta)=X^*$ and $\mathcal X(\Delta')$ is the unique critical
point of $F$ on $\overline{\M}_{\Delta'}^*\cap\mathcal U$.

{\color{blue}By the SOSC and continuity, after shrinking $\mathcal U$ and
$\mathcal V$, the Hessian bound in
Lemma~\ref{lem:local-solution-branch} holds on every nearby manifold $\M_{\Delta'}$; hence
$\mathcal X(\Delta')$ is the unique local minimizer of $F$ on
$\overline{\M}_{\Delta'}^*\cap\mathcal U$.} Finally, the $C^1$ smoothness of
$\mathcal X$ implies its local Lipschitz continuity, and hence
\(    \|\mathcal X(\Delta')-X^*\|_{\mathrm F}
    \le
    C_\Delta\|\Delta'-\Delta\|_{\mathrm F},
 \forall\,\Delta'\in\mathcal V.
\)
\end{proof}

\subsection{Proof of Lemma~\ref{lem:alpha_one_eventually}}
\label{app:proof-alpha-one-eventually}

\begin{proof}
{\color{blue}
The proof of Theorem~\ref{thm:global_convergence_mix} provides a constant
$c_{\rm dec}>0$ such that
\[
    F(X_{k+1})
    \leq
    F(X_k)-c_{\rm dec}\|V_k\|_{\mathrm F}^2,
    \qquad
    \sum_{k=0}^{\infty}\|V_k\|_{\mathrm F}^2<\infty.
\]
Thus $V_k\to0$, while $\{F(X_k)\}$ is nonincreasing. Let
$X_{k_j}\to X^*$ be a subsequence given by
Assumption~\ref{assump:mix-seq-converge}. Continuity gives
$F(X_{k_j})\to F(X^*)$, and monotonicity then yields
$F(X_k)\to F(X^*)$.

{\color{blue}
	Set
	\[
	\Phi_\Delta(X)
	:=
	f(X)+g(X)+\delta_{\St_\Delta(n,r)}(X).
	\]
	Under the transversality condition,
	$\Phi_\Delta$ is $C^2$-partly smooth at $X^*$ relative to
	\[
	\M_\Delta^*
	=
	\M_g(X^*)\cap\St_\Delta(n,r),
	\]
	and
	\[
	\partial\Phi_\Delta(X^*)
	=
	\nabla f(X^*)+\partial g(X^*)
	+\rmN_{X^*}\St_\Delta(n,r).
	\]
	Since $\operatorname{ri} \Proj_{\T_{X^*}\St_\Delta}\partial g = \Proj_{\T_{X^*}\St_\Delta} (\operatorname{ri} \partial g) $, the nondegeneracy condition gives
	\(
	0\in\operatorname{ri}\partial\Phi_\Delta(X^*).
	\)
	Moreover, $\Phi_\Delta$ is prox-regular at $X^*$ by the sum rule.
	
	Since $\Phi_\Delta|_{\M_\Delta^*}=F|_{\M_\Delta^*}$,
	the criticality of $X^*$ and the SOSC~\eqref{eq:mix-sosc} imply,
	by Proposition~\ref{lem:QG-implies-PDHess}, that $X^*$ is a strong
	local minimizer of $\Phi_\Delta|_{\M_\Delta^*}$. Hence $X^*$ is a
	strong critical point of $\Phi_\Delta$ relative to $\M_\Delta^*$ in
	the sense of Definition~\ref{def:strong-critical}.
	
	Since $\Phi_\Delta$ is $C^2$-partly smooth and prox-regular at $X^*$,
	\cite[Theorem~6.2(ii)]{hareLewis2004} yields constants $\mu>0$ and
	$\mathfrak{r}>0$ such that
	\begin{equation}\label{eq:mix-local-growth}
		F(X)
		\geq
		F(X^*)+\mu\|X-X^*\|_{\mathrm F}^2,
		\qquad
		X\in\St_\Delta(n,r),\quad
		\|X-X^*\|_{\mathrm F}<\mathfrak{r} .
	\end{equation}
}

It remains to bound one MIX displacement near $X^*$.
After decreasing $\mathfrak{r}$ if necessary, Theorem~\ref{thm:id-strong-crit}
and Lemma~\ref{lem:local-solution-branch} apply throughout the
corresponding neighborhood. Since $V_k\to0$ and
$\Delta_k-\Delta=V_k^\top V_k\to0$,
for all sufficiently large $k$, whenever
$\|X_k-X^*\|_{\mathrm F}<\mathfrak{r}$, one has
$\M_g(Y_k)=\M_g(X^*)$ and
$\overline{\M}_k=\overline{\M}_{\Delta_k}^*$.
The fixed-phase TAPR map in
\cite[Theorem~5]{chen2026alternating} is a smooth local retraction;
after decreasing the neighborhood if necessary, there is a constant
$C_{\rm R}>0$ such that
$\|\Retr^{\rm TAPR}_{Y}(w)-Y\|_{\mathrm F}
\leq C_{\rm R}\|w\|_{\mathrm F}$ on the nearby manifolds considered below.

On the ManPG branch,
Lemma~\ref{lem:orthogonalization_increase_general_Delta} gives
$\|X_{k+1}-X_k\|_{\mathrm F}
\leq C_{\rm M}\|V_k\|_{\mathrm F}$ for some $C_{\rm M}>0$.

On an accepted Newton-CG branch, set
$A_k:=\Hess\bar F_k(Y_k)$, $H_k:=A_k+\varrho_k I$, and
$g_k:=\grad\bar F_k(Y_k)$. Lemma~\ref{lem:local-solution-branch}
ensures $A_k\succeq\mathfrak m I$. Residual orthogonality for the
returned CG iterate yields
$\langle g_k,w_k\rangle=-\langle H_k w_k,w_k\rangle$. Consequently,
\[
    \mathfrak m\|w_k\|_{\mathrm F}^2
    \leq
    -\langle g_k,w_k\rangle
    \leq
    \|g_k\|_{\mathrm F}\|w_k\|_{\mathrm F},
\]
which, combined with Lemma~\ref{lem:grad_bound_by_manpg} and
$t_k\geq t_{\min}$, gives
$\|w_k\|_{\mathrm F}\leq C_w\|V_k\|_{\mathrm F}$.
The preceding retraction estimate and
Lemma~\ref{lem:orthogonalization_increase_general_Delta} then imply
$\|X_{k+1}-X_k\|_{\mathrm F}
\leq C_{\rm N}\|V_k\|_{\mathrm F}$.
Every unsuccessful Newton-CG trial reverts to the ManPG step.
Thus, for some $C>0$ and all sufficiently large $k$,
\begin{equation}\label{eq:mix-step-bound}
    \|X_{k+1}-X_k\|_{\mathrm F}
    \leq C\|V_k\|_{\mathrm F}
\end{equation}
whenever $\|X_k-X^*\|_{\mathrm F}<\mathfrak{r}$.

Choose $K$ so that $C\|V_k\|_{\mathrm F}<\mathfrak{r}/4$ for all $k\geq K$,
and then use $X_{k_j}\to X^*$ and $F(X_k)\to F(X^*)$ to select
$k_0\geq K$ such that
\[
    \|X_{k_0}-X^*\|_{\mathrm F}<\frac {\mathfrak{r}}{4},
    \qquad
    F(X_{k_0})-F(X^*)<\frac{\mu \mathfrak{r}^2}{4}.
\]
We claim that the sequence remains in the smaller neighborhood:
\[
    \|X_k-X^*\|_{\mathrm F}<\frac {\mathfrak{r}}{2},
    \qquad k\geq k_0.
\]
Suppose otherwise, and let $j>k_0$ be the smallest index for which
$\|X_j-X^*\|_{\mathrm F}\geq \mathfrak{r}/2$.
Then
$\|X_{j-1}-X^*\|_{\mathrm F}<\mathfrak{r}/2$, and
\eqref{eq:mix-step-bound} implies
$\|X_j-X^*\|_{\mathrm F}<\mathfrak{r}$. Equation~\eqref{eq:mix-local-growth}
and monotonicity would then give
\[
    \frac{\mu \mathfrak{r}^2}{4}
    \leq F(X_j)-F(X^*)
    \leq F(X_{k_0})-F(X^*)
    <\frac{\mu \mathfrak{r}^2}{4},
\]
which is impossible. The claim therefore holds. Combining
\eqref{eq:mix-local-growth} with $F(X_k)\to F(X^*)$ now yields
\[
    X_k\to X^*.
\]

Having proved convergence of the whole sequence, we may apply
Theorem~\ref{thm:id-strong-crit}, since
$t_k\in[t_{\min},t_{\max}]$. It furnishes an index $K_{\rm id}$ such that
\[
    \M_g(Y_k)=\M_g(X^*),
    \qquad
    \overline{\M}_k=\overline{\M}_{\Delta_k}^*,
    \qquad
    k\geq K_{\rm id}.
\]
In particular, $Y_k\to X^*$ and $\Delta_k\to\Delta$.

It remains to consider the Newton-CG step. For all sufficiently large
$k$, Lemma~\ref{lem:local-solution-branch} yields
$A_k\succeq\mathfrak m I$. Since
\[
H_k\succeq(\mathfrak m+\varrho_k)I
\]
and $\vartheta_k<\mathfrak m+\varrho_k$, the curvature test in
Algorithm~\ref{alg:tcg} cannot be triggered for all sufficiently large $k$.  
In addition, Lemma~\ref{lem:grad_bound_by_manpg} and $V_k\to0$ imply
$g_k\to0$. Because $\varpi>0$, the superlinear tCG stopping criterion
is active for all sufficiently large $k$. Positive definiteness of
$H_k$ ensures that CG reaches this criterion in finitely many iterations;
thus $\mathrm{cg\_status}=\texttt{superlinear}$ eventually. The estimate
above also gives $w_k\to0$.

Since $Y_k\to X^*$ and $\Delta_k\to\Delta$, the common local
parameterization in Lemma~\ref{lem:local-solution-branch}, together with
the smooth dependence of the fixed-phase TAPR construction, allows the
second-order pullback expansion to be taken uniformly for all sufficiently
large $k$.
For the full step, the standard pullback expansion for a second-order
retraction~\cite[Proposition~5.44]{boumal2023introduction} gives
\[
\begin{aligned}
    \bar F_k\bigl(\Retr^{\rm TAPR}_{Y_k}(w_k)\bigr)
    &=
    \bar F_k(Y_k)
    +\langle g_k,w_k\rangle
    +\frac12\langle A_kw_k,w_k\rangle
    +o(\|w_k\|_{\mathrm F}^2).
\end{aligned}
\]
Substituting
$\langle g_k,w_k\rangle
=-\langle (A_k+\varrho_kI)w_k,w_k\rangle$ into this expansion gives
\[
\begin{aligned}
&
    \bar F_k\bigl(\Retr^{\rm TAPR}_{Y_k}(w_k)\bigr)
    -\bar F_k(Y_k)
    -c_{\rm N}\langle g_k,w_k\rangle
\\
&\leq
    -\left(\frac12-c_{\rm N}\right)\mathfrak m\|w_k\|_{\mathrm F}^2
    +o(\|w_k\|_{\mathrm F}^2).
\end{aligned}
\]
Since $c_{\rm N}<1/2$, the Newton Armijo condition is satisfied with
$\alpha=1$ for all sufficiently large $k$.

Only the projected sufficient-decrease test can then reject the unit
Newton step. Once $t_k\leq\bar t_{\rm N}$,
Lemma~\ref{lem:suff_decrease} guarantees acceptance. Otherwise, each
failure reduces $t_k$ by the factor $\beta_t^\downarrow$, whereas an
accepted full superlinear step leaves $t_k$ unchanged. Thus only
finitely many failures occur. Consequently,
Algorithm~\ref{alg:newton-cg} eventually accepts $\alpha=1$ at every
iteration, $t_k$ remains constant, and the update
rule~\eqref{eq:mix-update-rule} reduces to
$\lambda_{k+1}=\beta_\lambda^\downarrow\lambda_k$. Therefore
$\lambda_k\to0$.
}
\end{proof}

\begin{lemma}\label{lem:orthogonalization_increase_general_Delta}
Let $\Delta\in\mathbb{S}^r$ and $D=I_r+\Delta\succ0$. 
Let $m:=\lambda_{\min}(D)$ and $M:=\lambda_{\max}(D)$.
For any $X\in\mathbb{R}^{n\times r}$ satisfying
\[
\normfro{X^\top X-D} \le \frac{m}{4},
\]
it holds that
\begin{equation}\label{eq:orthogonalization_increase_general_Delta}
\normfro{ \Proj_{\St_\Delta}(X)-X}\leq 
c_\Delta \normfro{X^\top X-D},
\end{equation}
where  
\[
c_\Delta \ :=\ \frac{2\sqrt5}{3\sqrt3}\cdot \frac{\sqrt{M}}{m}.
\]
In particular, when $\Delta=0$, \eqref{eq:orthogonalization_increase_general_Delta} holds with
$c_0=\frac{2\sqrt5}{3\sqrt3}<1$.
\end{lemma}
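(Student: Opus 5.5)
We reduce the claim to a scalar eigenvalue estimate via the closed form \eqref{eq:proj_closed_StDelta_app}. Set $E:=X^\top X-D$ and $M_X:=X^\top X=D+E$. The hypothesis $\normfro{E}\le m/4$ gives $\lambda_{\min}(M_X)\ge 3m/4>0$. Hence $X$ has full column rank and the projection formula applies. Write $A:=XD^{1/2}$ with polar factor $Q=A(A^\top A)^{-1/2}$, so that $\Proj_{\St_\Delta}(X)=QD^{1/2}$. Then
\[
\Proj_{\St_\Delta}(X)-X=(Q-A)D^{-1/2}D^{1/2}\cdot D^{-1/2}D^{1/2}=\bigl(Q-A\bigr)D^{-1/2}D^{1/2}\!,
\]
i.e.\ $\Proj_{\St_\Delta}(X)-X=(Q-A)D^{-1/2}\cdot D^{1/2}\cdot D^{-1/2}\cdot D^{1/2}$. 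More simply, since $X=AD^{-1/2}$, we have $\Proj_{\St_\Delta}(X)-X=QD^{1/2}-AD^{-1/2}$. It is cleaner to factor through the SVD $A=U\Sigma V^\top$, which gives $Q=UV^\top$ and
\[
\Proj_{\St_\Delta}(X)-X=U(I-\Sigma)V^\top D^{-1/2}\cdot D\ ?
\]
This mixing of $D^{1/2}$ and $D^{-1/2}$ is error prone, so I would instead use the identity $\Proj_{\St_\Delta}(X)=X\,D^{1/2}(D^{1/2}M_XD^{1/2})^{-1/2}D^{1/2}$. This gives
\[
\Proj_{\St_\Delta}(X)-X=X D^{-1/2}\bigl[\,P^{-1/2}D - D\,\bigr]\!\!\;\;\text{with } P:=D^{1/2}M_XD^{1/2},
\]
after writing $D^{1/2}(\cdot)D^{1/2}=D^{1/2}\cdot D^{-1/2}\cdot D\ldots$. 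Concretely, $\Proj_{\St_\Delta}(X)-X=XD^{1/2}\bigl(P^{-1/2}-D^{-1}\bigr)D^{1/2}$, where I use $XD^{1/2}D^{-1}D^{1/2}=X$.

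The key step is to bound $\normfro{XD^{1/2}(P^{-1/2}-D^{-1})D^{1/2}}$. Because $P$ and $D^{-1}$ do not commute, I would rather work with $A=XD^{1/2}$ and its Gram matrix $P=A^\top A$. Note that $A\,P^{-1/2}$ is the polar factor $Q$ and $\Proj_{\St_\Delta}(X)-X=(Q-AD^{-1})D^{1/2}\cdot$, which is again messy. The plan is therefore to use the polar/SVD representation directly: $\Proj_{\St_\Delta}(X)-X=QD^{1/2}-X$, and $\normfro{QD^{1/2}-X}=\normfro{Q-XD^{-1/2}}\cdot$(norm factor up to $\sqrt M$). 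Writing $B:=XD^{-1/2}$ with SVD $B=U\Sigma V^\top$, we have $B^\top B=D^{-1/2}M_XD^{-1/2}=I+\tilde E$, where $\tilde E:=D^{-1/2}ED^{-1/2}$ satisfies $\normfro{\tilde E}\le\normfro{E}/m\le 1/4$. The nearest point with orthonormal columns to $B$ is $UV^\top$, and $\normfro{B-UV^\top}=\normfro{\Sigma-I}$.

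It then suffices to show that $\Proj_{\St_\Delta}(X)$ is close to $UV^\top D^{1/2}$. This holds because $UV^\top D^{1/2}\in\St_\Delta$, so by optimality of the projection,
\[
\normfro{\Proj_{\St_\Delta}(X)-X}\le\normfro{UV^\top D^{1/2}-X}=\normfro{(UV^\top-B)D^{1/2}}\le\sqrt M\,\normfro{\Sigma-I}.
\]
This optimality comparison is the step that removes the noncommutativity obstacle. The scalar estimate then follows from the singular values: $\sigma_i^2=1+\epsilon_i$, where the $\epsilon_i$ are the eigenvalues of $\tilde E$, so $|\epsilon_i|\le 1/4$. Hence
\[
|\sigma_i-1|=\frac{|\epsilon_i|}{\sqrt{1+\epsilon_i}+1}\le \frac{|\epsilon_i|}{\sqrt{3/4}+1}.
\]
Summing gives $\normfro{\Sigma-I}\le c\,\normfro{\tilde E}\le c\,\normfro{E}/m$, and therefore a bound of the form $c\sqrt M/m\,\normfro{E}$. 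The remaining task is constant bookkeeping to reach at most $\tfrac{2\sqrt5}{3\sqrt3}$. The factor above is $1/(1+\sqrt3/2)\approx0.536$, which is below $2\sqrt5/(3\sqrt3)\approx0.861$, so the stated constant follows (it is weaker). The case $\Delta=0$ gives $m=M=1$.
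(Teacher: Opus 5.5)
Your final argument, from the definition $B:=XD^{-1/2}$ onward, is correct and is essentially the paper's proof. Your comparison point $UV^\top D^{1/2}$ is exactly the paper's $Y=\bar X G^{-1/2}D^{1/2}$ (with $\bar X=B$, $G=B^\top B$), and both arguments use optimality of the projection, $\|D^{1/2}\|_2=\sqrt M$, and $\|G-I_r\|_{\mathrm F}\le\|X^\top X-D\|_{\mathrm F}/m$. The only difference is the final scalar step: your exact identity $\|B-UV^\top\|_{\mathrm F}=\|\Sigma-I_r\|_{\mathrm F}$ together with $|\sigma_i-1|=|\epsilon_i|/(1+\sqrt{1+\epsilon_i})$ gives the sharper constant $2/(2+\sqrt3)\approx0.54$. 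The paper instead uses the lossy splitting $\|\bar X\|_2\|G^{-1/2}-I_r\|_{\mathrm F}$ with an integral bound, which yields $2\sqrt5/(3\sqrt3)\approx0.86$.

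Delete the exploratory identities in your first paragraph. Your first display collapses to $Q-A$, which is not $\Proj_{\St_\Delta}(X)-X$ unless $D=I_r$, and $XD^{-1/2}[P^{-1/2}D-D]$ is also wrong. Nothing downstream uses them.
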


\begin{proof}
Define
\(\bar X := X D^{-1/2}\) and
\(G:=\bar X^\top \bar X = D^{-1/2}X^\top X D^{-1/2}\).
Then, we have 
\(G-I_r = D^{-1/2}(X^\top X-D)D^{-1/2}\).
Using $\|D^{-1/2}\|_2=1/\sqrt{m}$ and the assumption $\|X^\top X-D\|_F\le m/4$, we have
\begin{equation}\label{eq:G_close_I}
\|G-I_r\|_F \le \|D^{-1/2}\|_2^2\,\|X^\top X-D\|_F \le \frac{1}{m}\cdot \frac{m}{4}=\frac14.
\end{equation}
Hence $G\succ0$ and its eigenvalues lie in $[3/4,\,5/4]$.

Now set
\[
    \bar Y := \bar X\,G^{-1/2},\qquad
    Y := \bar Y\,D^{1/2}.
\]
Then $\bar Y^\top \bar Y = I_r$ and
$Y^\top Y = D^{1/2}\bar Y^\top \bar Y D^{1/2}=D$, hence $Y\in\St_\Delta$.
Since $\widehat X =\Proj_{\St_\Delta}(X)$, it follows that 
\begin{equation}\label{eq:proj_is_best}
\|\widehat X-X\|_F \le \|Y-X\|_F.
\end{equation}

Note that
\(Y-X = \bar Y D^{1/2}-\bar X D^{1/2} = (\bar Y-\bar X)D^{1/2}\),
so
\begin{equation*}
\|Y-X\|_F \le \|D^{1/2}\|_2\,\|\bar Y-\bar X\|_F = \sqrt{M}\,\|\bar Y-\bar X\|_F.
\end{equation*}
Moreover,
\[
\bar Y-\bar X = \bar X(G^{-1/2}-I_r),
\quad\Rightarrow\quad
\|\bar Y-\bar X\|_F \le \|\bar X\|_2\,\|G^{-1/2}-I_r\|_F.
\]
Since $\|\bar X\|_2^2=\lambda_{\max}(G)\le 5/4$, we have
\(\|\bar X\|_2 \le \sqrt{5/4}\).
By now, we obtain
\begin{equation}\label{eq:barX_norm}
    \normfro{\hat{X}-X}\leq \sqrt{\frac{5M}{4}} \normfro{G^{-1/2}-I_r}.
\end{equation}
 
Since $G-I_r=D^{-1/2}(X^\top X-D)D^{-1/2}$, we have 
\begin{equation}\label{eq:Y_minus_X_1}
\|G-I_r\|_F \le \|D^{-1/2}\|_2^2\,\|X^\top X-D\|_F = \frac{1}{m}\|X^\top X-D\|_F.
\end{equation}

It remains to bound $\|G^{-1/2}-I_r\|_F$ in terms of $\|G-I_r\|_F$.
Let $E:=G-I_r$. For $t\in[0,1]$, $I_r+tE$ has eigenvalues in $[3/4,\,5/4]$, hence
\[
\|(I_r+tE)^{-3/2}\|_2 \le (3/4)^{-3/2}=\left(\frac{4}{3}\right)^{3/2}=\frac{8}{3\sqrt3}.
\]
Using the integral identity
\[
G^{-1/2}-I_r = (I_r+E)^{-1/2}-I_r
=\int_0^1 \frac{d}{dt}(I_r+tE)^{-1/2}\,dt
=-\frac12\int_0^1 (I_r+tE)^{-3/2}E\,dt,
\]
we obtain
\begin{equation}\label{eq:G_inv_sqrt_bound}
\|G^{-1/2}-I_r\|_F
\le \frac12\sup_{t\in[0,1]}\|(I_r+tE)^{-3/2}\|_2\,\|E\|_F
\le \frac12\cdot\frac{8}{3\sqrt3}\,\|G-I_r\|_F
=\frac{4}{3\sqrt3}\,\|G-I_r\|_F.
\end{equation}
Combining \eqref{eq:barX_norm} and \eqref{eq:G_inv_sqrt_bound} yields
\begin{equation}\label{eq:barY_minus_barX_bound}
\|\hat X - X\|_F
\le \sqrt{\frac{5M}{4}}\cdot \frac{4}{3\sqrt3}\,\|G-I_r\|_F
= \frac{2\sqrt{5M}}{3\sqrt3}\,\|G-I_r\|_F.
\end{equation}
Plugging \eqref{eq:barY_minus_barX_bound} into \eqref{eq:Y_minus_X_1} gives
\begin{equation*}\label{eq:proj_dist_bound_final}
\|\widehat X-X\|_F
\le
\frac{2\sqrt5}{3\sqrt3}\cdot \frac{\sqrt{M}}{m}\,\|X^\top X-D\|_F.
\end{equation*}
\end{proof}

\subsection{tCG algorithm and Newton-CG properties}

\begin{algorithm}[ht]
\caption{tCG interface for the regularized Newton equation}
\label{alg:tcg}
\begin{algorithmic}[1]
\Require $\overline{\M}_k$, $Y\in\overline{\M}_k$, smooth objective $\bar F_k$,
regularization scale $\lambda_k>0$, exponent $\sigma\in[0,1]$,
and tCG parameters $\varpi\ge0$, $\theta\in(0,1]$, $\vartheta_k>0$.
\Ensure A vector $w\in\T_Y\overline{\M}_k$ and a status flag.

\State Set $d_k=\dim(\overline{\M}_k)$, $g=\grad \bar F_k(Y)$, and
$\varrho_k=\lambda_k\normfro{g}^{\sigma}$.
\State Define
\[
    H_k[w]=\Hess \bar F_k(Y)[w]+\varrho_k w,
    \qquad b=-g.
\]
\If{$g=0$}
    \State \Return $w=0$ and $\mathrm{status}=\texttt{superlinear}$.
\EndIf
\State Set $w_0=0$, $r_0=b$, and $p_0=r_0$.
\For{$i=0,1,\ldots,d_k-1$}
    \State Compute $H_kp_i$.
    \If{$\langle p_i,H_kp_i\rangle\le\vartheta_k\normfro{p_i}^2$}
        \State \Return $w=\mathrm{null}$ and
        $\mathrm{status}=\texttt{negative-curvature}$.
    \EndIf

    \State $\alpha_i\gets
    \dfrac{\normfro{r_i}^2}{\langle p_i,H_kp_i\rangle}$.
    \State $w_{i+1}\gets w_i+\alpha_i p_i$.
    \State $r_{i+1}\gets r_i-\alpha_i H_kp_i$.

\If{$\normfro{r_{i+1}} \le \normfro{r_0}\min\{\normfro{r_0}^{\theta},\varpi\}$} \If{$\normfro{r_0}^{\theta}\le\varpi$} \State \Return $w=w_{i+1}$ and $\mathrm{status}=\texttt{superlinear}$. \Else \State \Return $w=w_{i+1}$ and $\mathrm{status}=\texttt{linear}$. \EndIf \EndIf

    \State $\beta_i\gets
    \dfrac{\normfro{r_{i+1}}^2}{\normfro{r_i}^2}$.
    \State $p_{i+1}\gets r_{i+1}+\beta_i p_i$.
\EndFor
\State \Return $w=w_{d_k}$ and $\mathrm{status}=\texttt{maxit}$.
\end{algorithmic}
\end{algorithm}

\begin{lemma}[Local Lipschitz estimate for the ManPG step]\label{lem:Vk_dist_bound}
Suppose that the assumptions of Theorem~\ref{thm:id-strong-crit} hold on a neighborhood $\mathcal U$ of $X^*$ and on the compact stepsize interval $\mathcal I=[t_{\min},t_{\max}]$. Then, after possibly shrinking $\mathcal U$, there exists a constant $C_V>0$ such that
\[
    \|\mathcal T_t(X)-X\|_{\mathrm F}
    \le C_V\|X-X^*\|_{\mathrm F},
    \qquad
    \forall\,X\in\mathcal U,\quad \forall\,t\in\mathcal I.
\]
In particular, if $X_k\in\mathcal U$ and $t_k\in\mathcal I$, then
\[
    \|V_k\|_{\mathrm F}
    \le C_V\|X_k-X^*\|_{\mathrm F}.
\]
\end{lemma}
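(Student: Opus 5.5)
The plan is to reduce the estimate to the $C^1$ structure of the tangent proximal map established in Lemma~\ref{lem:Ysharp-smooth} and Theorem~\ref{thm:id-strong-crit}. By Theorem~\ref{thm:id-strong-crit}, after shrinking $\mathcal U$ to lie inside $\bar{\mathcal U}$, one has $\mathcal T_t(X)=Y_t^\sharp(X)$ for all $(X,t)\in\mathcal U\times\mathcal I$. By Lemma~\ref{lem:Ysharp-smooth}, the map $(X,t)\mapsto Y_t^\sharp(X)$ is $C^1$ on $\mathcal U_1\times\mathcal I$. Hence the displacement
\[
\Psi(X,t):=\mathcal T_t(X)-X=Y_t^\sharp(X)-X
\]
is $C^1$ on $(\mathcal U\cap\mathcal U_1)\times\mathcal I$.

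Next, I will use that $X^*$ is a fixed point for every stepsize. Since $X^*$ is a critical point of \eqref{prob:P-Delta}, Lemma~\ref{lem:tangt_prox_optcond} gives $\mathcal T_t(X^*)=X^*$ for every $t>0$, so $\Psi(X^*,t)=0$ for all $t\in\mathcal I$. The estimate then follows from a mean-value argument in the $X$-variable at fixed $t$.

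Because $\mathcal U\subset\St_\Delta(n,r)$ is a piece of a manifold rather than an open subset of Euclidean space, I will handle the mean-value step in one of two ways. The first option is to work in a chart $\varphi$ around $X^*$ on a convex coordinate ball. The second option is to use that $\Psi$ admits a $C^1$ extension to an ambient neighborhood, which is possible by definition of smoothness on embedded manifolds; for example, compose with $\Proj_{\St_\Delta}$, which is smooth near $\St_\Delta$ by \eqref{eq:proj_closed_StDelta_app}. With $\tilde\Psi(Z,t):=\Psi(\Proj_{\St_\Delta}(Z),t)$ defined on a Euclidean ball $B$ around $X^*$, I would write
\[
\|\tilde\Psi(X,t)-\tilde\Psi(X^*,t)\|_{\mathrm F}\le \sup_{B\times\mathcal I}\|\mathrm D_Z\tilde\Psi\|\,\|X-X^*\|_{\mathrm F}.
\]
To make the supremum finite and uniform in $t$, I would shrink $B$ to a closed ball on which $\mathrm D_Z\tilde\Psi$ is continuous, so the supremum is taken over the compact set $\bar B\times\mathcal I$. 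That supremum serves as $C_V$.

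The main obstacle is this uniformity in $t$. It requires that the neighborhoods from Lemma~\ref{lem:Ysharp-smooth} are independent of $t\in\mathcal I$, which that lemma asserts, and that $C^1$ regularity holds jointly in $(X,t)$, so that the derivative is continuous on the compact product. Once this is in place, the second claim follows immediately by taking $X=X_k$, $t=t_k$, and $V_k=\mathcal T_{t_k}(X_k)-X_k$.
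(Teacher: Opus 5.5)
Your proposal is correct and follows the paper's proof. Both identify $\mathcal T_t$ with $Y_t^\sharp$ via Theorem~\ref{thm:id-strong-crit}, use the joint $C^1$ regularity from Lemma~\ref{lem:Ysharp-smooth} together with $\mathcal T_t(X^*)=X^*$ for all $t\in\mathcal I$, and apply the mean-value theorem with a derivative bound made uniform in $t$ by compactness of $\mathcal I$. Your extension of the displacement to an ambient ball via $\Proj_{\St_\Delta}$, so that the mean-value inequality holds on a convex Euclidean set, makes explicit a detail the paper leaves implicit.
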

\begin{proof}
By Theorem~\ref{thm:id-strong-crit} and Lemma~\ref{lem:Ysharp-smooth},
$\mathcal T_t(X)=Y_t^\sharp(X)$ is $C^1$ on $\mathcal U\times\mathcal I$ after
shrinking $\mathcal U$ if necessary. Moreover, $\mathcal T_t(X^*)=X^*$ for all
$t\in\mathcal I$. Define $V_t(X):=\mathcal T_t(X)-X$. Then $V_t(X^*)=0$ for all
$t\in\mathcal I$, and $V_t$ is $C^1$ in $(X,t)$. Since $\mathcal I$ is compact,
the derivative of $V_t$ with respect to $X$ is uniformly bounded on a smaller
neighborhood $\mathcal U\times\mathcal I$. The mean-value theorem gives the
desired bound.
\end{proof}

\begin{lemma}[Control of the relative gradient by the ManPG residual]
\label{lem:grad_bound_by_manpg}
Suppose that Assumption~\ref{ass:working_set} holds and that
$t_k\le t_{\max}$. Let $Y_k=X_k+V_k$ be generated by the tangent subproblem
\eqref{eq:tangent_problem}. Whenever the local manifold
$\overline{\M}_k$ is well defined and $\bar F_k:=F|_{\overline{\M}_k}$ is
smooth, there exists a constant $C_{\rm grad}>0$, independent of $k$, such that
\[
    \normfro{\grad \bar F_k(Y_k)}
    \le
    C_{\rm grad}\frac{\normfro{V_k}}{t_k}.
\]
\end{lemma}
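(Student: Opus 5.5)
We need to prove Lemma~\ref{lem:grad_bound_by_manpg}: $\normfro{\grad\bar F_k(Y_k)}\le C_{\rm grad}\normfro{V_k}/t_k$.

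The plan is to build an explicit ambient vector that represents $\grad\bar F_k(Y_k)$ after projection onto $\T_{Y_k}\overline{\M}_k$, and then bound it using the tangent-subproblem optimality condition. That condition is the one already used in the proof of Lemma~\ref{lem:approx_KKT_residual}. It provides $G_k\in\partial g(Y_k)$ and $\Lambda_k\in\mathbb S^r$ such that
\[
\nabla f(X_k)+G_k+\tfrac1{t_k}V_k+X_k\Lambda_k=0,
\qquad \normfro{\Lambda_k}\le M_\Lambda
\]
by \eqref{eq:bound_of_dual}.

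\textbf{Step 1: identify the gradient.} Since $\bar F_k=F|_{\overline{\M}_k}$ is smooth, the Riemannian gradient is $\grad\bar F_k(Y_k)=\Proj_{\T_{Y_k}\overline{\M}_k}(\nabla f(Y_k)+\widetilde G)$, where $\widetilde G$ is the gradient of $g|_{\M_g(Y_k)}$ at $Y_k$. Partial smoothness says that $\partial g(Y_k)\subset \widetilde G+\rmN_{Y_k}\M_g(Y_k)$; for $\ell_1$ this is exactly the statement that $G_k$ and $\lambda\,\mathrm{sign}(Y_k)$ agree on the support. Because $\T_{Y_k}\overline{\M}_k\subset\T_{Y_k}\M_g(Y_k)$ by Theorem~\ref{thm:Mg-intersection-submanifold} (clean intersection), the normal part is annihilated. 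Hence we may replace $\widetilde G$ by $G_k$:
\[
\grad\bar F_k(Y_k)=\Proj_{\T_{Y_k}\overline{\M}_k}\bigl(\nabla f(Y_k)+G_k\bigr).
\]

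\textbf{Step 2: subtract a normal vector.} The tangent space satisfies $\T_{Y_k}\overline{\M}_k\subset\T_{Y_k}\St_{\Delta_k}$, and $Y_k\Lambda_k\in\rmN_{Y_k}\St_{\Delta_k}$ by \eqref{eq:T-N-St-Delta}. So we can also add $Y_k\Lambda_k$ inside the projection without changing it. Using the optimality identity,
\[
\nabla f(Y_k)+G_k+Y_k\Lambda_k=\nabla f(Y_k)-\nabla f(X_k)-\tfrac1{t_k}V_k+V_k\Lambda_k.
\]

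\textbf{Step 3: bound.} Nonexpansiveness of the projection, $L_f$-Lipschitz continuity of $\nabla f$ (Assumption~\ref{ass:working_set}), and the multiplier bound give
\[
\normfro{\grad\bar F_k(Y_k)}\le (L_f+M_\Lambda)\normfro{V_k}+\normfro{V_k}/t_k\le (L_f t_{\max}+M_\Lambda t_{\max}+1)\,\normfro{V_k}/t_k.
\]
We therefore set $C_{\rm grad}:=1+t_{\max}(L_f+M_\Lambda)$, which is independent of $k$.

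The main obstacle is Step 1: justifying that an arbitrary subgradient $G_k$, and not only the manifold gradient, yields the Riemannian gradient after tangent projection. This needs two facts. First, $\M_g(Y_k)$ must be the active manifold at $Y_k$ itself, which is true by construction of \eqref{eq:manifold_2}. Second, $\para\partial g(Y_k)=\rmN_{Y_k}\M_g(Y_k)$, which is \eqref{eq:normal_space_eq_to_parallel_g} evaluated at the base point. For $\ell_1$ the second fact can be verified directly: on the support, $G_k$ equals $\lambda\,\mathrm{sign}(Y_k)$, which is the constant gradient of $g|_{\M_g}$.
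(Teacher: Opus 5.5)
Your proposal is correct and follows the paper's proof essentially line by line: the same tangent-subproblem multiplier identity with the bound \eqref{eq:bound_of_dual}, the identification $\grad\bar F_k(Y_k)=\Proj_{\T_{Y_k}\overline{\M}_k}(\nabla f(Y_k)+G_k)$, the removal of $Y_k\Lambda_k$ as a normal vector to $\St_{\Delta_k}$, and the same constant $C_{\rm grad}=1+t_{\max}(L_f+M_\Lambda)$. Your added justification of Step 1 fills in a step the paper only asserts, namely that any subgradient agrees with the restricted gradient of $g$ up to $\rmN_{Y_k}\M_g(Y_k)$. One small correction: the inclusion $\T_{Y_k}\overline{\M}_k\subset\T_{Y_k}\M_g(Y_k)$ holds for any submanifold contained in $\M_g(Y_k)$, so it does not need the clean-intersection property.
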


\begin{proof}
Let $D=I_r+\Delta\succ0$. By the optimality condition of
\eqref{eq:tangent_problem}, there exist
$G_k\in\partial g(Y_k)$ and $\Lambda_k\in\mathbb S^r$ such that
\begin{equation}\label{eq:manpg-opt-appendix}
    \nabla f(X_k)+G_k+\frac{1}{t_k}V_k+X_k\Lambda_k=0 .
\end{equation}
It follows from \eqref{eq:bound_of_dual} that
\(\normfro{\Lambda_k}\le M_\Lambda\)
for some constant $M_\Lambda>0$ independent of $k$.

Let
\(P_k:=\Proj_{\T_{Y_k}\overline{\M}_k}\).
Since $\overline{\M}_k\subseteq\M_g(Y_k)$ and $g$ is smooth relative to
$\M_g(Y_k)$, for any $\xi\in\T_{Y_k}\overline{\M}_k$ and any
$G_k\in\partial g(Y_k)$, one has
\[
    \mathrm D\bigl(g|_{\overline{\M}_k}\bigr)(Y_k)[\xi]
    =
    \langle G_k,\xi\rangle .
\]
Therefore,
\(\mathrm D\bar F_k(Y_k)[\xi]
=\langle \nabla f(Y_k)+G_k,\xi\rangle\) for all
\(\xi\in\T_{Y_k}\overline{\M}_k\),
and hence
\(\grad\bar F_k(Y_k)=P_k(\nabla f(Y_k)+G_k)\).
Since $Y_k\in\St_{\Delta_k}$ and $\Lambda_k\in\mathbb S^r$,
$Y_k\Lambda_k$ is normal to $\St_{\Delta_k}$ at $Y_k$, and hence
$P_k(Y_k\Lambda_k)=0$. It follows from
\eqref{eq:manpg-opt-appendix} that
\[
\begin{aligned}
    \grad\bar F_k(Y_k)
    &=
    P_k\bigl(\nabla f(Y_k)+G_k+Y_k\Lambda_k\bigr) \\
    &=
    P_k\left(
        \nabla f(Y_k)-\nabla f(X_k)
        -\frac{1}{t_k}V_k
        +V_k\Lambda_k
    \right).
\end{aligned}
\]
Using the $L_f$-Lipschitz continuity of $\nabla f$ and the bound on
$\Lambda_k$, we have
\[
\begin{aligned}
    \normfro{\grad\bar F_k(Y_k)}
    &\le
    \left(
        L_f+M_\Lambda+\frac{1}{t_k}
    \right)\normfro{V_k} \\
    &\le
    \left(
        1+t_{\max}(L_f+M_\Lambda)
    \right)
    \frac{\normfro{V_k}}{t_k}.
\end{aligned}
\]
Thus the claim holds with
$C_{\rm grad}:=1+t_{\max}(L_f+M_\Lambda)$.
\end{proof}

The following local estimate follows the regularized Riemannian Newton analysis
in \cite[Theorem~16]{hu2018adaptive}. We provide its proof because, in MIX, the
local intersection manifold varies with $\Delta_k$ and the accepted Newton point
is subsequently projected back to $\St_\Delta$.
\begin{lemma}\label{lem:local_regularized_newton_estimate}
Under the setting of Lemma~\ref{lem:alpha_one_eventually}, let
$X_k^*:=\mathcal X(\Delta_k)$ be given by
Lemma~\ref{lem:local-solution-branch}. Then there exists some constant 
$K<\infty$ such that, for all $k\ge K$,
\[
    \normfro{Z_k-X_k^*}
   = o\!\left(\|Y_k-X_k^*\|_{\mathrm F}\right)
   +
    O\left(
        \varrho_k\normfro{Y_k-X_k^*}
        +
        \normfro{Y_k-X_k^*}^{1+\theta}
    \right).
\]
\end{lemma}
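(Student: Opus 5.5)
We follow the local regularized Newton analysis of \cite[Theorem~16]{hu2018adaptive}, but carry it out in a single chart that covers all the moving manifolds $\overline{\M}_{\Delta_k}^*$ at once.

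\textbf{Step 1 (uniform chart and pullbacks).} By Lemma~\ref{lem:alpha_one_eventually}, for large $k$ we have $\overline{\M}_k=\overline{\M}_{\Delta_k}^*$, $Y_k\to X^*$, $\Delta_k\to\Delta$, the status is \texttt{superlinear}, and the full step $\alpha=1$ is accepted. Hence $Z_k=\Retr^{\rm TAPR}_{Y_k}(w_k)$. We use the parameterization $\phi(u,\Delta')$ from the proof of Lemma~\ref{lem:local-solution-branch} and set $h_k(u):=F(\phi(u,\Delta_k))$. The minimizer $X_k^*=\phi(u_k^*,\Delta_k)$ satisfies $\nabla h_k(u_k^*)=0$. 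Since $F|_{\M_g(X^*)}$ is $C^2$ and $\phi$ is smooth jointly in $(u,\Delta')$, the Hessians $\nabla^2 h_k$ are equicontinuous near $u_k^*$ and uniformly positive definite, by the bound $\mathfrak m$ of Lemma~\ref{lem:local-solution-branch}. The Riemannian gradient and Hessian at $Y_k$ are obtained from $\nabla h_k,\nabla^2h_k$ through the chart differential, with errors controlled uniformly in $k$. This gives three uniform estimates:
\[
\normfro{\grad\bar F_k(Y_k)}=\Theta\bigl(\normfro{Y_k-X_k^*}\bigr),
\]
a uniformly bounded $\Hess\bar F_k(Y_k)^{-1}$, and a Taylor expansion of $\grad\bar F_k$ around $X_k^*$ (through a transported vector $\xi_k$ with $\Retr_{Y_k}(\xi_k)=X_k^*$, or equivalently the inverse retraction) with remainder $o(\normfro{Y_k-X_k^*})$.

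\textbf{Step 2 (error of the inexact regularized step).} We write the tCG output as
\[
\bigl(\Hess\bar F_k(Y_k)+\varrho_k I\bigr)w_k=-g_k+q_k,\qquad \normfro{q_k}\le\normfro{g_k}^{1+\theta},
\]
where the bound on $q_k$ is the superlinear stopping rule. The exact Newton step $w_k^{\rm N}:=-\Hess\bar F_k(Y_k)^{-1}g_k$ satisfies $w_k^{\rm N}=\xi_k+o(\normfro{\xi_k})$ by the Taylor estimate from Step~1. Subtracting the two equations gives
\[
\normfro{w_k-w_k^{\rm N}}\le C\bigl(\varrho_k\normfro{w_k^{\rm N}}+\normfro{q_k}\bigr).
\]
Using $\normfro{g_k}=O(\normfro{Y_k-X_k^*})$ and $\normfro{\xi_k}=\Theta(\normfro{Y_k-X_k^*})$, this yields
\[
\normfro{w_k-\xi_k}=o(e'_k)+O\bigl(\varrho_k e'_k+e_k'^{1+\theta}\bigr),\qquad e'_k:=\normfro{Y_k-X_k^*}.
\]

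\textbf{Step 3 (back to the manifold).} The fixed-phase TAPR retraction is smooth and locally Lipschitz in $w$, uniformly over the nearby manifolds, by the smooth dependence used in Lemma~\ref{lem:alpha_one_eventually}. Therefore
\[
\normfro{Z_k-X_k^*}=\normfro{\Retr_{Y_k}(w_k)-\Retr_{Y_k}(\xi_k)}\le C\normfro{w_k-\xi_k},
\]
which gives the claimed estimate.

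\textbf{Main obstacle.} The hardest part is Step~1: showing that the $o(\cdot)$ remainder and the inverse-Hessian bound are uniform in $k$ even though the manifold changes with $\Delta_k$. We plan to handle this by pulling everything back through the single $C^1$-in-$\Delta'$ chart $\phi$. Uniform continuity of $\nabla^2 h$ on a compact neighborhood of $(0,\Delta)$ then replaces the fixed-manifold argument of \cite{hu2018adaptive}. The existence of $\xi_k$ follows from the inverse function theorem applied to the retraction, uniformly in $k$ for the same reason.
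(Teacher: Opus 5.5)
Your proposal is correct and follows essentially the same regularized-Newton argument as the paper. Both use the tCG residual $q_k$ with the superlinear forcing bound, uniform invertibility of $\Hess\bar F_k(Y_k)+\varrho_k I$ from the $\mathfrak m$-bound of Lemma~\ref{lem:local-solution-branch}, a uniform Taylor expansion about the critical point $X_k^*$, and local Lipschitz continuity of the TAPR retraction.

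The only differences are cosmetic:
\begin{itemize}
\item The paper compares $w_k$ directly with $s_k=\Exp_{Y_k}^{-1}(X_k^*)$ and then needs $\|\Retr^{\rm TAPR}_{Y_k}(s_k)-\Exp_{Y_k}(s_k)\|_{\mathrm F}=O(\|s_k\|_{\mathrm F}^2)$.
\item Your inverse-retraction vector $\xi_k$, together with the intermediate exact Newton step, avoids that comparison. The price is that you need $\nabla^2(\bar F_k\circ\Retr^{\rm TAPR}_{Y_k})(0)$ to agree with $\Hess\bar F_k(Y_k)$ up to $o(1)$. This holds because TAPR is second order, or simply because $g_k\to0$.
\end{itemize}
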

\begin{proof}
By Lemma~\ref{lem:alpha_one_eventually}, for all sufficiently large $k$,
Algorithm~\ref{alg:tcg} returns
$\mathrm{status}=\texttt{superlinear}$ and
Algorithm~\ref{alg:newton-cg} accepts the full trial step. Hence
\(Z_k=\Retr_{Y_k}^{\rm TAPR}(w_k)\).
Set
\(g_k:=\grad\bar F_k(Y_k)\) and
\(B_k:=\Hess\bar F_k(Y_k)+\varrho_k I\),
and define the residual of the regularized Newton equation by
\begin{equation}\label{eq:qk-superlinear}
    q_k:=B_k[w_k]+g_k .
\end{equation}
Since tCG returns the superlinear status, one has
\begin{equation}\label{eq:qk-forcing}
    \normfro{q_k}
    \le
    \normfro{g_k}^{1+\theta}.
\end{equation}

{\color{blue}For all sufficiently large $k$, both $Y_k$ and $X_k^*$ belong to
$\overline{\M}_k=\overline{\M}_{\Delta_k}^*$ and converge to $X^*$. Let
\[
    s_k:=\Exp_{Y_k}^{-1}(X_k^*)\in \T_{Y_k}\overline{\M}_k .
\]
After restricting to a sufficiently small neighborhood, the exponential
coordinates may be chosen uniformly for $\Delta_k$ near $\Delta$, and
there exists $C_s>0$ such that
\begin{equation}\label{eq:sk-distance}
    \normfro{s_k}
    \le
    C_s\normfro{Y_k-X_k^*}
\end{equation}
for all sufficiently large $k$.}

{\color{blue}Define
\[
    \widehat F_k(s)
    :=
    \bar F_k\bigl(\Exp_{Y_k}(s)\bigr).
\]
Since $X_k^*$ is a critical point of $\bar F_k$,
$\nabla\widehat F_k(s_k)=0$. Moreover,
\[
    \nabla\widehat F_k(0)=g_k,
    \qquad
    \nabla^2\widehat F_k(0)=\Hess\bar F_k(Y_k).
\]
Since $F|_{\M_g(X^*)}$ is $C^2$ and the local manifolds
$\M_{\Delta'}^*$ depend smoothly on $\Delta'$, the corresponding
exponential pullbacks have locally continuous Hessians.  Hence,
uniformly for all sufficiently large $k$,
\begin{equation}\label{eq:sk-newton-expansion}
    g_k+\Hess\bar F_k(Y_k)[s_k]
    =
    o(\normfro{s_k}).
\end{equation}
}

Combining \eqref{eq:qk-superlinear} and
\eqref{eq:sk-newton-expansion}, one has
\[
\begin{aligned}
    q_k
    &=
    B_k[w_k]+g_k \\
    &=
    B_k[w_k-s_k]
    +
    \varrho_k s_k
    +
    o(\normfro{s_k}).
\end{aligned}
\]
Equivalently,
\(B_k[w_k-s_k]=q_k-\varrho_k s_k+o(\normfro{s_k})\).
{\color{blue}By the Hessian bound in
Lemma~\ref{lem:local-solution-branch} and $\varrho_k\ge0$, one has
$B_k\succeq\mathfrak m I$, so the inverses of $B_k$ are uniformly bounded.}
It follows that
\begin{equation}\label{eq:wk-sk-bound}
    \normfro{w_k-s_k}
    \le
    C_t\left(
        \normfro{q_k}
        +
        \varrho_k\normfro{s_k}
        +
      o(  \normfro{s_k})
    \right),
\end{equation}
for some $C_t>0.$
{\color{blue}Since the fixed-phase TAPR map is a $C^2$ retraction,
\[
    \normfro{\Retr_{Y_k}^{\rm TAPR}(u)-\Retr_{Y_k}^{\rm TAPR}(v)}
    \leq C_{\rm R}\normfro{u-v}
\]
locally, with $C_{\rm R}$ independent of sufficiently large $k$. Moreover,
\[
    \normfro{\Retr_{Y_k}^{\rm TAPR}(s_k)-\Exp_{Y_k}(s_k)}
    =
    O(\normfro{s_k}^2).
\]
Therefore,
\[
\begin{aligned}
    \normfro{Z_k-X_k^*}
    &=
    \normfro{
        \Retr_{Y_k}^{\rm TAPR}(w_k)
        -
        \Exp_{Y_k}(s_k)
    } \\
    &\le
    C\left(
        \normfro{q_k}
        +
        \varrho_k\normfro{s_k}
        +
       o( \normfro{s_k})
    \right),
\end{aligned}
\]
for some $C>0$.}

Moreover, \eqref{eq:sk-newton-expansion} and the local boundedness of
$\Hess\bar F_k(Y_k)$ give
\[
    \normfro{g_k}
    =
    O(\normfro{s_k})
    =
    O(\normfro{Y_k-X_k^*}).
\]
Together with \eqref{eq:qk-forcing} and \eqref{eq:sk-distance}, this yields
\[
    \normfro{Z_k-X_k^*}
    \le
    o(  \normfro{Y_k-X_k^*})+
    C\left(
        \varrho_k\normfro{Y_k-X_k^*}
        +
        \normfro{Y_k-X_k^*}^{1+\theta}
    \right).
\]
\end{proof}

\input{shared/appendix_extension}

%% file: shared/appendix_preliminaries.tex
\section{\texorpdfstring{Subdifferential continuity in partial smoothness}{Subdifferential continuity in partial smoothness}}
\label{app:preliminaries}
\label{app:prelim-subcontinuity}

\begin{remark}\label{rem:subcont-meaning}
	In Definition~\ref{def:partial_smoothness}(4), the \emph{sub-continuity} refers to the
		continuity of the subdifferential mapping along $\M_g$. More precisely, define the
	set-valued mapping $G:\M_g \rightrightarrows \E$ by
	\(G(x):=\partial g(x)\), \(x\in\M_g\).
	We say that $G$ is continuous at $\bar x$ (relative to $\M_g$) if it is both outer and inner
	semicontinuous:

	\smallskip
	\noindent\emph{(outer semicontinuity)} for any sequence $x_r\in \M_g$ with $x_r\to \bar x$,
	and any $y_r\in G(x_r)$ with $y_r\to \bar y$, it holds that $\bar y\in G(\bar x)$; equivalently,
	\(\limsup_{r\to\infty} G(x_r)\subseteq G(\bar x)\).

	\smallskip
	\noindent\emph{(inner semicontinuity)} for any sequence $x_r\in \M_g$ with $x_r\to \bar x$,
	and any $\bar y\in G(\bar x)$, there exist $y_r\in G(x_r)$ such that $y_r\to \bar y$; equivalently,
	\(G(\bar x)\subseteq\liminf_{r\to\infty}G(x_r)\).
\end{remark}

%% file: shared/appendix_identification.tex
\section{\texorpdfstring{Proofs for the identification analysis in Section~\ref{sec:id_manpg}}{Proofs for the identification analysis in Section 3}}
\label{app:section3-identification-proofs}

\subsection{\texorpdfstring{Proof of Proposition~\ref{lem:generic-transv}}{Proof of Proposition 3.1}}
\label{app:proof-generic-transv}

\begin{proof}[Proof of Proposition~\ref{lem:generic-transv}]
Let $m:=r(r-1)/2$ and fix a linear isomorphism
$\overline\vet_{\rm off}:\mathbb S_0^r\to\R^m$.
Write \(h_{\rm diag}(X):=\diag(X^\top X-I_r)\) and
\(h_{\rm off}(X):=\overline\vet_{\rm off}(\mathrm{off}(X^\top X))\), so that
for every $\Delta\in\mathcal P$,
\[
\St_\Delta(n,r)=\{X:\ h_{\rm diag}(X)=0,\ h_{\rm off}(X)=d\},
\qquad d:=\overline\vet_{\rm off}(\Delta).
\]
Let $D:=\overline\vet_{\rm off}(\mathcal P)\subset\R^m$, which is open since $\mathcal P$ is open in $\mathbb S_0^r$.

For each $i$, let $h_i:\R^{n\times r}\to\R^{p_i}$ be the defining map of $\mathcal M_i$ from Assumption~\ref{assump:finite-active-manifolds}, and set
\(g_i(X):=(h_i(X),\,h_{\rm diag}(X))\).
By \eqref{eq:oblique-transv-condition}, we first get that $g_i$ satisfies LICQ at every point of $\mathcal M_i\cap\Ob(n,r)$.
Since $\rank \rmD h_i(X)=p_i$, it follows that
\(\rank \rmD g_i(X)=p_i+r\) for all
\(X\in\mathcal M_i\cap\Ob(n,r)\).

We now apply \cite[Theorem~4]{tang2024feasible} to the pair $(g_i,h_{\rm off})$. It yields a set $D_i\subset D$ of Lebesgue measure zero such that for every $d\in D\setminus D_i$, every solution of
$g_i(X)=0$ and $h_{\rm off}(X)=d$ satisfies LICQ for the joint system $(g_i,h_{\rm off})$.

Now define $D_e:=\bigcup_{i=1}^N D_i$. Since $N<\infty$ and each $D_i$ has measure zero, so does $D_e$. Set
\[
\mathcal P_{\rm gen}=\{\Delta\in\mathcal P:\ \overline\vet_{\rm off}(\Delta)\notin D_e\}.
\]
Then $\mathcal P\setminus\mathcal P_{\rm gen}$ has Lebesgue measure zero in $\mathbb S_0^r$, and \eqref{eq:transv-Mg-StDelta} holds for every $\Delta\in\mathcal P_{\rm gen}$ and every $i$.

The final submanifold statement follows from the
transverse-intersection theorem whenever the intersection is nonempty. Since
\(\mathrm{codim}\,\St_\Delta(n,r)= r(r+1)/2\), the dimension of
\(\M_i\cap \St_\Delta(n,r)\) is
\(\dim(\mathcal M_i)-r(r+1)/2\); nonemptiness therefore forces this number to
	be nonnegative.
\end{proof}

\subsection{\texorpdfstring{Proof of Lemma~\ref{lem:incidence-Tcap}}{Proof of Lemma 3.3}}
\label{app:proof-incidence-Tcap}

\begin{proof}[Proof of Lemma~\ref{lem:incidence-Tcap}]
Let $p := \mathrm{codim} \M_g(Y^*)$ and let $m:=\mathrm{codim}\St_\Delta(n,r)=\dim \rmN_{X^*}\St_\Delta(n,r)$.
Let   $h_g$ be the local defining function
\(\M_g(Y^*)=\{Y \mid h_g(Y)=0\}\), with
\(\mathrm{D}h_g(Y^*)\) surjective.

Since $\St_\Delta(n,r)$ is a smooth embedded submanifold, there exists a neighborhood
$\mathcal U_0\subset \St_\Delta(n,r)$ of $X^*$ and a smooth map
$Q_N:\mathcal U_0\to \mathbb R^{(nr)\times m}$ whose columns form an orthonormal basis
of $\rmN_X\St_\Delta(n,r)$ for every $X\in\mathcal U_0$.
Define the normal-coordinate operator
\[
\mathcal P_N(X)[Z]:=Q_N(X)^\top \vet(Z)\in\mathbb R^m,\qquad Z\in\mathbb R^{n\times r}.
\]
Then,	define the smooth map
\[
\Phi:\ \mathcal U_0\times \mathcal V\ \to\ \mathbb R^{p}\times \mathbb R^{m},
\qquad
\Phi(X,Y):=\begin{pmatrix}
	h_g(Y)\\
	\mathcal P_N(X)[\,Y-X\,]
\end{pmatrix}.
\]
By construction, for $(X,Y)\in \mathcal U_0\times \mathcal V$,
\(\Phi(X,Y)=0\) if and only if \(Y\in\T_\cap(X)\).
The derivative of $\Phi$ with respect to $Y$ is
\[
\mathrm{D}_Y\Phi(X,Y)[H]=\begin{pmatrix}
	\mathrm{D} h_g(Y)[H]\\
	\mathcal P_N(X)[H]
\end{pmatrix},\qquad H\in\mathbb R^{n\times r}.
\]
Since $\mathrm{range}(\mathrm{D} h_g(Y^*)^\top) = \rmN_Y \M_g$ and $\mathrm{range}( \Proj_N(X^*)^*) = \rmN_{X^*}\St_{\Delta},$
transversality of $\M_g(Y^*)$ and $X^*+\T_{X^*}\St_\Delta(n,r)$ at $Y^*$ means
\begin{equation}\label{eq:transversality_X_Y}
	\rmN_{Y^*}\M_g(Y^*)\ \cap\ \rmN_{X^*}\St_\Delta(n,r)=\{0\},
\end{equation}
which is equivalent to surjectivity of $\mathrm{D}_Y\Phi(X^*,Y^*)$ onto
$\mathbb R^{p}\times\mathbb R^{m}$. By continuity, shrinking $\mathcal U_0$ and choosing
a neighborhood $\mathcal V_0\subset \mathcal V$ of $Y^*$ if necessary, we get that
$\mathrm{D}_Y\Phi(X,Y)$ is surjective for all $(X,Y)\in \mathcal U_0\times \mathcal V_0$.
Hence the full Jacobian
\(\mathrm{D}\Phi(X,Y)=\bigl[\,\mathrm{D}_X\Phi(X,Y)\ \ \ \mathrm{D}_Y\Phi(X,Y)\,\bigr]\)
is surjective on $\mathcal U_0\times \mathcal V_0$.

Therefore, by Proposition~\ref{prop:submanifold}, the zero set
\(\{(X,Y)\in \mathcal U_0\times \mathcal V_0:\ \Phi(X,Y)=0\}\)
is a smooth embedded submanifold of $\St_\Delta(n,r)\times \mathbb R^{n\times r}$,
and it coincides with $\mathcal I_0$.

Finally, for any fixed $X\in\mathcal U_0$, the restricted map $Y\mapsto \Phi(X,Y)$
has surjective derivative $\rmD_Y\Phi(X,Y)$ on $\mathcal V_0$, so its zero set
$\{Y\in\mathcal V_0:\Phi(X,Y)=0\}=\T_\cap(X)\cap\mathcal V_0$ is a smooth embedded
submanifold of $\mathbb R^{n\times r}$.
\end{proof}

\subsection{\texorpdfstring{Proof of Proposition~\ref{lem:QG-implies-PDHess}}{Proof of Proposition 3.4}}
\label{app:proof-QG-implies-PDHess}

\begin{proof}[Proof of Proposition~\ref{lem:QG-implies-PDHess}]
Since $h$ is $C^2$ and $\bar y$ is a strict local minimizer of $h|_{\M}$, one has
$\grad_{\M} h(\bar y)=0$. Fix any $\eta\in \T_{\bar y}\M$ and consider the geodesic
\(\gamma(s)=\Exp_{\bar y}(s\eta)\).
Because $\M$ is $C^2$, the exponential map is well-defined and smooth on a neighborhood of
$(\bar y,0)$; hence there exists $\varepsilon>0$ such that $\gamma(s)\in U$ for all
$|s|<\varepsilon$. Moreover, it follows from $\gamma(0)=\bar y,\qquad \dot\gamma(0)=\eta$ that
\(\gamma(s)-\bar y=s\eta+O(s^2)\).
In particular, after possibly shrinking $\varepsilon$, we have
\[
\|\gamma(s)-\bar y\|\ge \frac12 |s|\,\|\eta\|,
\qquad \forall\,|s|<\varepsilon.
\]
Applying \eqref{eq:QG-on-M} to $x=\gamma(s)$ yields
\[
h(\gamma(s))
\ge h(\bar y)+\alpha \|\gamma(s)-\bar y\|^2
\ge h(\bar y)+\frac{\alpha}{4}s^2\|\eta\|^2,
\qquad |s|<\varepsilon.
\]
On the other hand, since $h$ is $C^2$ and $\gamma$ is a geodesic with $\dot\gamma(0)=\eta$,
the second-order expansion gives
\[
h(\gamma(s))=h(\bar y)+s\langle \grad_{\M} h(\bar y),\eta\rangle
+\frac{s^2}{2}\langle \eta,\Hess_{\M} h(\bar y)[\eta]\rangle+o(s^2).
\]
Using $\grad_{\M} h(\bar y)=0$ and dividing by $s^2>0$ then letting $s\to 0$ yields
\[
\langle \eta,\Hess_{\M} h(\bar y)[\eta]\rangle\ge \frac{\alpha}{2}\|\eta\|^2.
\]
Taking $\beta=\alpha/2$ completes the first part of the proof.

Conversely, suppose that $\grad h(\bar y)=0$ and that the Riemannian
Hessian is positive definite at $\bar y$. Let $\phi$ be a local
coordinate chart with $\phi(\bar y)=0$, and set
$\widehat h=h\circ\phi^{-1}$. Since $\bar y$ is critical, the Hessian
of $\widehat h$ at the origin represents the Riemannian Hessian of $h$
at $\bar y$ and is therefore positive definite. Taylor's theorem gives,
after restricting the neighborhood if necessary,
\[
\widehat h(u)
\geq
\widehat h(0)+c_1\|u\|^2
\]
for some $c_1>0$. Since $\phi^{-1}$ is locally bi-Lipschitz, there
exists $c_2>0$ such that
\[
\|u\|\geq c_2\|\phi^{-1}(u)-\bar y\|.
\]
Hence
\[
h(y)\geq h(\bar y)+c_1c_2^2\|y-\bar y\|^2
\]
for all $y\in\M$ sufficiently close to $\bar y$, proving that
$\bar y$ is a strong local minimizer.
\end{proof}

%% file: shared/appendix_geometry_examples.tex
\section{Examples and proofs for the intersection geometry in Section~\ref{sec:intersection_property}}
\label{app:section4-examples}
\subsection{\texorpdfstring{Vectorized representation of the differential}
{Vectorized representation of the differential}}

Let $\mathcal H_Y$ be the map defined in
Section~\ref{sec:intersection_property}. For
$X\in\M_g(Y)$ and $\eta\in\T_X\M_g(Y)$,
\[
    \mathrm D\mathcal H_Y(X)[\eta]
    =
    \eta^\top X+X^\top\eta.
\]
If $\{i,j\}\notin\mathcal E(Y)$, then the $i$th and $j$th column
supports are disjoint. Since both $X$ and $\eta$ have the support
pattern prescribed by $\M_g(Y)$,
\[
    \bigl(\eta^\top X+X^\top\eta\bigr)_{ij}=0.
\]
Consequently,
\[
    \operatorname{range}\bigl(\mathrm D\mathcal H_Y(X)\bigr)
    \subseteq \mathbb S_{r,\gamma}(Y).
\]

We now give the coordinate representation used in
Section~\ref{sec:opt_on_intersection}. Fix
\[
    X\in\overline{\M}_\Delta(Y),
    \qquad
    x:=\vvec(X),
    \qquad
    s:=\dim\M_g(Y).
\]
For \(1\leq i\leq j\leq r\), let \(S_{ij}\in\mathbb S^r\) be the
elementary symmetric matrices
\begin{equation}\label{eq:Sij}
    S_{ij}
    =
    \begin{cases}
        e_i e_i^\top, & i=j,\\
        e_i e_j^\top+e_j e_i^\top, & i<j,
    \end{cases}
\end{equation}
where \(e_i\) is the \(i\)th standard basis vector of \(\mathbb R^r\).

Since
\[
    \rmN_X\St_\Delta(n,r)
    =
    \{X\Lambda:\Lambda\in\mathbb S^r\}
\]
and \(X\) has full column rank,
\(\{\vvec(XS_{ij})\}_{1\leq i\leq j\leq r}\) is a basis of
\(\vvec(\rmN_X\St_\Delta(n,r))\). Hence
\begin{equation}\label{eq:basis B}
    B_x=(I_r\otimes X)U_r,
\end{equation}
where
\(U_r\in\mathbb R^{r^2\times r(r+1)/2}\) is the duplication matrix
satisfying
\[
    U_r\svec(\Lambda)=\vvec(\Lambda),
    \qquad
    \Lambda\in\mathbb S^r.
\]

Let
\begin{equation}\label{eq:tangent_M_g_basis}
    E_x\in\mathbb R^{nr\times s}
\end{equation}
be an orthonormal basis matrix of
\(\vvec(\T_X\M_g(Y))\). Thus every
\(\eta\in\T_X\M_g(Y)\) can be represented as
\[
    \vvec(\eta)=E_x\widehat{\vvec}(\eta).
\]
Because \(\M_g(Y)\) is a fixed coordinate subspace, \(E_x\) may be
chosen independently of \(x\); the subscript is retained here only
to match the notation for \(B_x\).

Order the coordinates of \(\svec\) by the pairs \(1\leq i\leq j\leq r\),
and define
\[
    C_r:=\operatorname{diag}(c_{ij}),
    \qquad
    c_{ij}
    =
    \begin{cases}
        2, & i=j,\\
        1, & i<j.
    \end{cases}
\]
Then \(C_r\) is invertible and
\[
    \svec\bigl(\eta^\top X+X^\top\eta\bigr)
    =
    C_rB_x^\top\vvec(\eta)
    =
    C_rB_x^\top E_x\widehat{\vvec}(\eta).
\]
{\color{blue}Consequently, the coordinate matrix of
$\mathrm D\mathcal H_Y(X)$ in these bases is $C_rA_x$, where $A_x$ is
defined in \eqref{eq:matrix_A}. Since $C_r$ is invertible, this proves
the rank identity \eqref{eq:rank_DH_rankA}.}

Let
\(P_x\in\mathbb R^{nr\times(nr-s)}\) be an orthonormal basis matrix
of \(\vvec(\rmN_X\M_g(Y))\). Then
\([E_x\ \ P_x]\) is orthogonal and \(P_x^\top E_x=0\).

\begin{lemma}\label{lem:simple_dim_lem}
It holds that
\[
    \rank
    \begin{pmatrix}
        B_x^\top\\[2pt]
        P_x^\top
    \end{pmatrix}
    =
    (nr-s)+\rank(A_x).
\]
\end{lemma}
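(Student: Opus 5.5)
The statement to prove is Lemma~\ref{lem:simple_dim_lem}: the stacked matrix
\[
\begin{pmatrix} B_x^\top\\ P_x^\top\end{pmatrix}
\]
has rank \((nr-s)+\rank(A_x)\). The plan is to change coordinates by the orthogonal matrix \([E_x\ \ P_x]\), which does not alter rank, and then read the rank off a block-triangular form.

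First, right-multiplying by \([E_x\ \ P_x]\) gives
\[
\begin{pmatrix} B_x^\top\\ P_x^\top\end{pmatrix}[E_x\ \ P_x]
=
\begin{pmatrix} B_x^\top E_x & B_x^\top P_x\\ P_x^\top E_x & P_x^\top P_x\end{pmatrix}
=
\begin{pmatrix} A_x & B_x^\top P_x\\ 0 & I_{nr-s}\end{pmatrix},
\]
where we use \(A_x=B_x^\top E_x\) from \eqref{eq:matrix_A}, together with \(P_x^\top E_x=0\) and \(P_x^\top P_x=I_{nr-s}\).

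Second, we eliminate the upper-right block. Right-multiplying by the invertible block matrix
\[
\begin{pmatrix} I_s & 0\\ 0 & I_{nr-s}\end{pmatrix}
\]
does nothing by itself, so instead we subtract \(B_x^\top P_x\) times the bottom block row from the top block row. This is left-multiplication by the unimodular matrix
\[
\begin{pmatrix} I & -B_x^\top P_x\\ 0 & I\end{pmatrix}.
\]
The result is \(\operatorname{diag}(A_x,\,I_{nr-s})\), up to the zero block in the lower left. Its rank is \(\rank(A_x)+(nr-s)\).

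Since every operation above was multiplication by an invertible matrix, rank is preserved, and the claim follows.

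The argument involves no real obstacle. The only points to check are that \([E_x\ \ P_x]\) is orthogonal, which holds by the construction of \(E_x\) and \(P_x\), and that the row-reduction matrix has compatible dimensions: its off-diagonal block \(B_x^\top P_x\) is \(\tfrac{r(r+1)}{2}\times(nr-s)\).

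As a geometric cross-check, the rows of \(B_x^\top\) span \(\rmN_X\St_\Delta\) and the rows of \(P_x^\top\) span \(\rmN_X\M_g\). The rank of the stacked matrix is therefore \(\dim(\rmN_X\St_\Delta+\rmN_X\M_g)\). Projecting \(\rmN_X\St_\Delta\) onto \(\T_X\M_g\) contributes exactly \(\rank(B_x^\top E_x)=\rank(A_x)\) new dimensions beyond \(\rmN_X\M_g\), which agrees with the computation.
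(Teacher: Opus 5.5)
Your proposal is correct and follows the paper's proof: right-multiply by the orthogonal matrix $[E_x\ \ P_x]$ and read the rank off the resulting block upper-triangular matrix with diagonal blocks $A_x$ and $I_{nr-s}$. Your extra row operation that clears the block $B_x^\top P_x$ just makes explicit the rank step the paper states without comment. The ``does nothing by itself'' aside about a trivial block matrix is unnecessary and can be deleted.
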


\begin{proof}
Right multiplication by the orthogonal matrix
\([E_x\ \ P_x]\) preserves rank. Therefore,
\[
\begin{aligned}
    \rank
    \begin{pmatrix}
        B_x^\top\\
        P_x^\top
    \end{pmatrix}
    &=
    \rank
    \begin{pmatrix}
        B_x^\top E_x & B_x^\top P_x\\
        0             & I_{nr-s}
    \end{pmatrix} \\
    &=
    \rank(B_x^\top E_x)+(nr-s) \\
    &=
    \rank(A_x)+(nr-s).
\end{aligned}
\]
\end{proof}

\subsection{\texorpdfstring{Counterexamples for Assumption~\ref{assump:intersection}}{Counterexamples for Assumption}}

\begin{example}\label{ex:nonzero-intersection-r3-denseDelta}
	Let \(n=r=3\) and set
	\[
	Y=\frac{1}{\sqrt{2}}
	\begin{pmatrix}
		1&1&0\\
		1&0&1\\
		0&1&1
	\end{pmatrix}.
	\]
	Then
	\[
	Y^\top Y=
	\begin{pmatrix}
		1&\frac12&\frac12\\
		\frac12&1&\frac12\\
		\frac12&\frac12&1
	\end{pmatrix}
	=I_3+\Delta,
	\qquad
	\Delta=\frac12
	\begin{pmatrix}
		0&1&1\\
		1&0&1\\
		1&1&0
	\end{pmatrix}.
	\]
	Thus \(\Delta\in\mathbb S^3_0\) and
	\(I_3+\Delta\succ0\), with eigenvalues \(2,\frac12,\frac12\).
	
	Define
	\[
	\Lambda=
	\begin{pmatrix}
		-1&1&1\\
		1&-1&1\\
		1&1&-1
	\end{pmatrix}\in\mathbb S_3,
	\qquad
	Z=Y\Lambda.
	\]
	A direct calculation gives
	\[
	Z=\frac{1}{\sqrt{2}}
	\begin{pmatrix}
		0&0&2\\
		0&2&0\\
		2&0&0
	\end{pmatrix}\neq 0.
	\]
	Since \(Z=Y\Lambda\) with \(\Lambda\in\mathbb S_3\), we have
	\(Z\in\rmN_Y\St_\Delta(3,3)\). Moreover,
	\(Z\in\rmN_Y\M_g(Y)\) because \(Z\) vanishes on the support of \(Y\).
	
	Therefore,
	\[
	0\neq Z\in  \rmN_Y\St_\Delta(3,3)\cap \rmN_Y\M_g(Y),
	\]
	and the support graph is complete. This shows that
	Assumption~\ref{assump:intersection} may fail for \(r=3\) under  
	nonzero zero-diagonal perturbations if the one-column-witness condition is absent.
\end{example}

\begin{example}\label{example:r_4_wrong}
	Fix $n=8$, $r=4$,   consider matrices $X\in\mathbb R^{8\times 4}$ in the following form
	\begin{equation}\label{eq:block-pattern-8x4}
		X=
		\begin{bmatrix}
			A & B\\
			C & 0\\
			0 & D
		\end{bmatrix},
		\qquad
		A,B\in\mathbb R^{4\times 2},\ \ C,D\in\mathbb R^{2\times 2},
	\end{equation}
	 where all   entries in $A,B,C,D$ are nonzero.
	Let $\M_g$ denote the active manifold associated with this pattern,
	\[
	\M_g:=\{X:\ \Proj_{\mathcal S^c}X=0\},
	\]
	where $\mathcal S$ denotes the support.
	Assume $C$ and $D$ are invertible.
	\medskip
	For any $X$ of the form \eqref{eq:block-pattern-8x4} with $\det(C)\det(D)\neq 0$,
	an element $X\Lambda$ belongs to $\rmN_X\St(8,4)\cap \rmN_X\M_g$ iff $\Lambda\in\mathbb S^4$ and $(X\Lambda)_{ij}=0$ for all $(i,j)\in\mathcal S$.
	Write
	\[
	\Lambda=\begin{bmatrix}\Lambda_{11}&K\\K^\top&\Lambda_{22}\end{bmatrix},\qquad
	\Lambda_{11},\Lambda_{22}\in\mathbb S^2,\ \ K\in\mathbb R^{2\times2}.
	\]
 It follows from	the support constraints on $C$ and $D$ that
	$C\Lambda_{11}=0$ and $D\Lambda_{22}=0$. One has $\Lambda_{11}=\Lambda_{22}=0$. Thus
	\[
	\Lambda=\begin{bmatrix}0&K\\K^\top&0\end{bmatrix},
	\qquad
	X\Lambda\in \rmN_X\St\cap \rmN_X\M_g
	\iff
	AK=0,\ \ BK^\top=0.
	\]
	Consequently,
	\begin{equation}\label{eq:dim-reduction}
		\dim\bigl(\rmN_X\St(8,4)\cap \rmN_X\M_g\bigr)
		=
		\dim\{K\in\mathbb R^{2\times2}:\ AK=0,\ BK^\top=0\}.
	\end{equation}
	
	\medskip
	\medskip
	\noindent{(i) A point $\tilde X$ with $\rank(A)=\rank(B)=2$ (hence $\dim=0$).}
	Take
	\[
	\tilde A=\frac{1}{10}\begin{bmatrix}
		1&-1\\
		1&-2\\
		1&-3\\
		1&-4
	\end{bmatrix},
	\qquad
	\tilde B=\frac{1}{20}\begin{bmatrix}
		3&-4\\
		-5&7\\
		1&-2\\
		1&-1
	\end{bmatrix}.
	\]
	Then $\rank(\tilde A)=\rank(\tilde B)=2$ and $\tilde A^\top \tilde B=0$.
	Moreover, we get two full rank matrices:
	\[
	I_2-\tilde A^\top \tilde A=
	\begin{bmatrix}
		\frac{24}{25}&\frac{1}{10}\\[2pt]
		\frac{1}{10}&\frac{7}{10}
	\end{bmatrix}=: \tilde M,
	\qquad
	I_2-\tilde B^\top \tilde B=
	\begin{bmatrix}
		\frac{91}{100}&\frac{1}{8}\\[2pt]
		\frac{1}{8}&\frac{33}{40}
	\end{bmatrix}=: \tilde N.
	\]
	Define invertible $\tilde C,\tilde D$ by the   factorizations $\tilde C^\top \tilde C=\tilde M$ and
	$\tilde D^\top \tilde D=\tilde N$:
	\[
	\tilde C=
	\begin{bmatrix}
		a_C & \tfrac{s_C+t_C}{2}\\[2pt]
		a_C & \tfrac{s_C-t_C}{2}
	\end{bmatrix},
	\qquad
	\begin{aligned}
		a_C&=\sqrt{\tfrac{\tilde M_{11}}{2}}=\tfrac{2\sqrt3}{5},\\
		s_C&=\tfrac{\tilde M_{12}}{a_C}=\tfrac{\sqrt3}{12},\\
		t_C&=\sqrt{2\tilde M_{22}-s_C^2}=\sqrt{\tfrac{331}{240}}.
	\end{aligned}
	\]
	\[
	\tilde D=
	\begin{bmatrix}
		a_D & \tfrac{s_D+t_D}{2}\\[2pt]
		a_D & \tfrac{s_D-t_D}{2}
	\end{bmatrix},
	\qquad
	\begin{aligned}
		a_D&=\sqrt{\tfrac{\tilde N_{11}}{2}}=\sqrt{\tfrac{91}{200}},\\
		s_D&=\tfrac{\tilde N_{12}}{a_D}=\tfrac{5\sqrt2}{4\sqrt{91}},\\
		t_D&=\sqrt{2\tilde N_{22}-s_D^2}=\sqrt{\tfrac{5881}{3640}}.
	\end{aligned}
	\]
	
	Take $\tilde X$ in \eqref{eq:block-pattern-8x4} with $(A,B,C,D)=(\tilde A,\tilde B,\tilde C,\tilde D)$.
	One has $\tilde X^\top \tilde X=I_4$. It follows from $\rank(\tilde A)=\rank(\tilde B)=2$ and 
	\eqref{eq:dim-reduction} that
	\[
	\dim\bigl(\rmN_{\tilde X}\St(8,4)\cap \rmN_{\tilde X}\M_g\bigr)=0.
	\]
	
	\medskip
	\noindent{(ii) A point $\hat X$ with $\rank(A)=\rank(B)=1$ (hence $\dim=1$) and the same signed-support as $\tilde{X}$.}
	Take
	\[
	\hat A=\frac{1}{10}\begin{bmatrix}
		1&-1\\
		1&-1\\
		1&-1\\
		1&-1
	\end{bmatrix},
	\qquad
	\hat B=\frac{1}{20}\begin{bmatrix}
		3&-3\\
		-5&5\\
		1&-1\\
		1&-1
	\end{bmatrix}.
	\]
	Then $\rank(\hat A)=\rank(\hat B)=1$, $\hat A^\top \hat B=0$, and
	\[
	I_2-\hat A^\top \hat A=
	\begin{bmatrix}
		\frac{24}{25}&\frac{1}{25}\\[2pt]
		\frac{1}{25}&\frac{24}{25}
	\end{bmatrix}=: \hat M,
	\qquad
	I_2-\hat B^\top \hat B=
	\begin{bmatrix}
		\frac{91}{100}&\frac{9}{100}\\[2pt]
		\frac{9}{100}&\frac{91}{100}
	\end{bmatrix}=: \hat N.
	\]
	Define $\hat C^\top \hat C=\hat M$ and $\hat D^\top \hat D=\hat N$ by
	\[
	\hat C=
	\begin{bmatrix}
		\bar a_C & \tfrac{\bar s_C+\bar t_C}{2}\\[2pt]
		\bar a_C & \tfrac{\bar s_C-\bar t_C}{2}
	\end{bmatrix},
	\qquad
	\begin{aligned}
		\bar a_C&=\sqrt{\tfrac{\hat M_{11}}{2}}=\tfrac{2\sqrt3}{5},\\
		\bar s_C&=\tfrac{\hat M_{12}}{\bar a_C}=\tfrac{\sqrt3}{30},\\
		\bar t_C&=\sqrt{2\hat M_{22}-\bar s_C^2}=\sqrt{\tfrac{23}{12}}.
	\end{aligned}
	\]
	\[
	\hat D=
	\begin{bmatrix}
		\bar a_D & \tfrac{\bar s_D+\bar t_D}{2}\\[2pt]
		\bar a_D & \tfrac{\bar s_D-\bar t_D}{2}
	\end{bmatrix},
	\qquad
	\begin{aligned}
		\bar a_D&=\sqrt{\tfrac{\hat N_{11}}{2}}=\sqrt{\tfrac{91}{200}},\\
		\bar s_D&=\tfrac{\hat N_{12}}{\bar a_D}=\tfrac{9\sqrt2}{10\sqrt{91}},\\
		\bar t_D&=\sqrt{2\hat N_{22}-\bar s_D^2}=\sqrt{\tfrac{164}{91}}.
	\end{aligned}
	\]
	
	Set $\hat X$  in \eqref{eq:block-pattern-8x4} with $(A,B,C,D)=(\hat A,\hat B,\hat C,\hat D)$.
	Then $\hat X^\top \hat X=I_4$, and $\ker(\hat A)=\ker(\hat B)=\mathrm{span}\{(1,1)^\top\}$, so
	we have $\dim(\rmN_{\hat X}\St\cap \rmN_{\hat X}\M_g)=1$.

	Even if $\tilde X$ and $\hat X$ have the same  sign and support, $\dim(\rmN_X\St\cap \rmN_X\M_g)$ takes different values $0$ and $1$ at $\tilde X$ and $\hat X$. Hence, Assumption~\ref{assump:intersection} fails.
\end{example}

\subsection{\texorpdfstring{Proof of the $r=2$ verification}{Proof of the r=2 verification}}
\label{app:proof-intersection-r2}

\begin{proof}[Proof of Proposition~\ref{prop:intersection-for_r=2}]
{\color{blue}
Let
\[
    D:=I_2+\Delta
    =
    \begin{pmatrix}
        d_1 & c\\
        c   & d_2
    \end{pmatrix}\succ0,
    \qquad
    \Lambda=
    \begin{pmatrix}
        a & u\\
        u & b
    \end{pmatrix},
\]
and take
\[
    Z=X\Lambda\in
    \rmN_X\St_\Delta(n,2)\cap \rmN_X\M_g(Y).
\]
Since $X$ and $Y$ have the same support in $\mathcal U_s(Y)$,
$Z$ vanishes on every active entry of $X$.
By Lemma~\ref{lem:diag-vanish},
\[
    d_1a+cu=0,
    \qquad
    cu+d_2b=0.
    \tag{*}
\]

Suppose first that $c\neq0$. Then the two columns of $X$ have
overlapping supports, so $\{1,2\}\in\mathcal E(Y)$. If some nonzero row of
$X$ has only one active entry, say
$X_{\ell1}\neq0$ and $X_{\ell2}=0$, then
\(
    0=Z_{\ell1}=aX_{\ell1},
\) and hence $a=0$. The first equation in $(*)$ gives $u=0$, and the
second gives $b=0$. The case in which only the second entry is active
is analogous.

If no nonzero row has only one active entry, then every nonzero row
of $X$ has both entries active. Since $Z$ vanishes on the active
entries, this gives $Z=X\Lambda=0$. As $X^\top X=D\succ0$, $X$ has
full column rank, and hence $\Lambda=0$. Therefore
\(
    \rmN_X\St_\Delta(n,2)\cap \rmN_X\M_g(Y)=\{0\}
\)
whenever $c\neq0$.

Now suppose that $c=0$. Since $d_1,d_2>0$, $(*)$ gives
$a=b=0$, and hence
\(
    Z=uXS_{12}.
\)
If $\{1,2\}\in\mathcal E(Y)$, there exists a row $\ell$ with
$X_{\ell1}X_{\ell2}\neq0$. Since both entries are active,
\(
    0=Z_{\ell1}=uX_{\ell2}, 
    0=Z_{\ell2}=uX_{\ell1},
\)
so $u=0$.

If $\{1,2\}\notin\mathcal E(Y)$, the column supports are disjoint.
Then $XS_{12}$ vanishes on the support of $X$, and therefore
\[
    XS_{12}\in \rmN_X\M_g(Y).
\]
Since $S_{12}$ is symmetric,
$XS_{12}\in \rmN_X\St_\Delta(n,2)$ as well. Consequently,
\[
    \rmN_X\St_\Delta(n,2)\cap \rmN_X\M_g(Y)
    =
    \operatorname{span}\{XS_{12}\}.
\]
Combining the two cases proves the result.
}
\end{proof}

\subsection{\texorpdfstring{Proof of the $r=3$ verification}{Proof of the r=3 verification}}
\label{app:proof-intersection-r3-clean}

\begin{proof}[Proof of Proposition~\ref{prop:intersection-r3-clean}]
	Take $Z\in \rmN_X\St(n,3)\cap \rmN_X\M_g(Y)$ and write $Z=X\Lambda$ with
	$\Lambda\in\mathbb S_3$.

	\emph{Step 1.}
	It follows from Lemma~\ref{lem:diag-vanish} that $\Lambda_{11}=\Lambda_{22}=\Lambda_{33}=0$. Therefore
	\begin{equation}\label{eq:Z_alpha}
			\Lambda=\alpha_{12}S_{12}+\alpha_{13}S_{13}+\alpha_{23}S_{23},
			\qquad
			Z=\alpha_{12}XS_{12}+\alpha_{13}XS_{13}+\alpha_{23}XS_{23}.
		\end{equation}

	We distinguish two cases.

	\smallskip
		\noindent{Case A: there exists a row $\ell$ with $X_{\ell1}X_{\ell2}X_{\ell3}\neq 0$.}
		Let $(x_1,x_2,x_3):=(X_{\ell1},X_{\ell2},X_{\ell3})$. Since all three entries are active,
		$Z_{\ell1}=Z_{\ell2}=Z_{\ell3}=0$, and using
		\[
		(XS_{12})_{\ell\cdot}=(x_2,x_1,0),\quad
		(XS_{13})_{\ell\cdot}=(x_3,0,x_1),\quad
		(XS_{23})_{\ell\cdot}=(0,x_3,x_2),
		\]
	we obtain
	\[
	\begin{bmatrix}
			x_2&x_3&0\\
			x_1&0&x_3\\
			0&x_1&x_2
		\end{bmatrix}
	\begin{bmatrix}\alpha_{12}\\ \alpha_{13}\\ \alpha_{23}\end{bmatrix}=0.
	\]
	Since  $\det	\begin{bmatrix}
			x_2&x_3&0\\
			x_1&0&x_3\\
			0&x_1&x_2
		\end{bmatrix}=-2x_1x_2x_3\neq 0$ , it follows that $\alpha_{12}=\alpha_{13}=\alpha_{23}=0$,
		hence $Z=0$.  Therefore, $\rmN_X\St(n,3)\cap \rmN_X\M_g(Y)=\{0\}$.

	\smallskip
	\noindent{Case B: no row has three nonzeros among columns $\{1,2,3\}$.}
	Then every row activates at most two columns. In particular, for any edge
		$\{i,j\}\in\mathcal E(Y)$ there exists a row $\ell$ such that
		$X_{\ell i}\neq 0$, $X_{\ell j}\neq 0$, and necessarily $X_{\ell k}=0$ for the remaining
		$k\in\{1,2,3\}\setminus\{i,j\}$.

		Evaluating $Z$ on this row, the terms in \eqref{eq:Z_alpha} involving $S_{ik}$ or $S_{jk}$ vanish because they
		carry a factor $X_{\ell k}=0$. Hence the constraints $Z_{\ell i}=Z_{\ell j}=0$ reduce to
		the $r=2$-type relations
		\[
		Z_{\ell i}=\alpha_{ij}X_{\ell j}=0,\qquad
		Z_{\ell j}=\alpha_{ij}X_{\ell i}=0,
		\]
		which force $\alpha_{ij}=0$ since $X_{\ell i},X_{\ell j}\neq 0$.
			Therefore $\alpha_{ij}=0$ for every present edge $\{i,j\}\in\mathcal E(Y)$,
			and $Z$ can only involve coefficients on missing edges:
		\[
		Z\in \mathrm{span}\{\,XS_{ij}: i<j,\ \{i,j\}\notin\mathcal E(Y)\,\}.
		\]

		Finally, the reverse inclusion follows from Proposition~\ref{lem:M_1 and M_2 are not transverse}:
		if $\{i,j\}\notin\mathcal E(Y)$ then the supports of the $i$th and $j$th columns are disjoint, and $X\in\mathcal U_s(Y)$ gives the same support pattern for $X$ and $Y$.
		Thus $XS_{ij}\in \rmN_X\St(n,3)\cap \rmN_X\M_g(Y)$. This completes the proof.
\end{proof}

\subsection{\texorpdfstring{Proof of the $2/3$-row cover verification}{Proof of the 2/3-row cover verification}}
\label{app:proof-verify-intersection}

\begin{proof}[Proof of Proposition~\ref{prop:verify-intersection}]
	Throughout the proof, we work on the support-invariant manifold $\M_g(Y)$, hence
	$\mathcal G(X)=\mathcal G(Y)$ for all $X\in\M_g(Y)\cap\mathcal U_s(Y)$.

	Let $\mathcal L\subseteq[n]$ be the set of rows in Assumption~\ref{assump:row-cover}, so that
	$| J_\ell^Y|\in\{2,3\}$ for all $\ell\in\mathcal L$ and these rows cover all present
	off-diagonal edges in $\mathcal E_{\rm off}(Y)$.

	\smallskip
	\noindent\emph{Step 1 (the ``$\supseteq$'' inclusion).}
	This follows from Proposition~\ref{lem:M_1 and M_2 are not transverse}:
	if $i<j$ and $\{i,j\}\notin\mathcal E_{\rm off}(Y)$, then the supports of
	columns $i$ and $j$ are disjoint, hence
	$XS_{ij}\in \rmN_X\St_\Delta(n,r)\cap \rmN_X\M_g(Y)$.

	\smallskip
	\noindent\emph{Step 2 (the ``$\subseteq$'' inclusion).}
	Take any $Z\in \rmN_X\St_\Delta(n,r)\cap \rmN_X\M_g(Y)$. Write $Z=X\Lambda$ with
	$\Lambda\in{\color{blue}\mathbb S^r}$.
	By Lemma~\ref{lem:diag-vanish}, we have
	\begin{equation}\label{eq:diag_IplusDelta}
		\diag\bigl((I_r+\Delta)\Lambda\bigr)=0.
	\end{equation}

	\smallskip
	\noindent\emph{Step 2a (vanishing diagonal of $\Lambda$).}
	If $\Delta=0$, then Lemma~\ref{lem:diag-vanish} already yields $\Lambda_{ii}=0$ for all $i$.
	Assume now $\Delta\neq 0$. By the condition~\eqref{assump:one_colm_witness} of Assumption~\ref{assump:row-cover}, for each $i\in[r]$ there exists
	a row $\ell_i$ such that  $J_{\ell_i}^X=\{i\}$ and hence the row vector
	$x:=X_{\ell_i,:}$ satisfies $x_i\neq 0$ and $x_j=0$ for $j\neq i$. Because
	$Z\in\rmN_X\M_g(Y)$, we have $Z_{\ell_i i}=0$, and thus
	\[
	0=Z_{\ell_i i}=(X\Lambda)_{\ell_i i}=x\Lambda e_i=x_i\Lambda_{ii},
	\]
	which implies $\Lambda_{ii}=0$. Therefore, $\diag(\Lambda)=0$ in both cases $\Delta=0$ and
	$\Delta\neq 0$.
	\smallskip
	\noindent\emph{Step 2b (vanishing on present edges).}
	Let $\{i,j\}\in\mathcal E_{\rm off}(Y)$. By Assumption~\ref{assump:row-cover}, there exists
	$\ell\in\mathcal L$ such that $\{i,j\}\subseteq  J_\ell^Y$ and
	$| J_\ell^Y|\in\{2,3\}$. Set $J:= J_\ell^Y$ and denote $x:=X_{\ell,:}$, so
	that $x_k=0$ for $k\notin J$ and $x_k\neq 0$ for $k\in J$.
	Since $Z\in \rmN_X\M_g(Y)$, we have $Z_{\ell k}=0$ for all $k\in J$, i.e.,
	\begin{equation}\label{eq:row_constraint}
		0=Z_{\ell J}=(X\Lambda)_{\ell J}=x_J\,\Lambda_{J,J}.
	\end{equation}
	\begin{itemize}
		\item If $|J|=2$, say $J=\{i,j\}$, then using $\Lambda_{ii}=\Lambda_{jj}=0$, the two coordinates
		of \eqref{eq:row_constraint} give
		\[
		0=x_i\Lambda_{ij},\qquad 0=x_j\Lambda_{ij},
		\]
		hence $\Lambda_{ij}=0$.

		\item If $|J|=3$, say $J=\{i,j,k\}$, then using $\Lambda_{ii}=\Lambda_{jj}=\Lambda_{kk}=0$,
		the three equations in \eqref{eq:row_constraint} reduce to
		\[
		\begin{bmatrix}
			x_j & x_k & 0\\
			x_i & 0   & x_k\\
			0   & x_i & x_j
		\end{bmatrix}
		\begin{bmatrix}
			\Lambda_{ij}\\ \Lambda_{ik}\\ \Lambda_{jk}
		\end{bmatrix}
		=0,
		\]
		whose determinant equals $-2x_i x_j x_k\neq 0$. Hence
		$\Lambda_{ij}=\Lambda_{ik}=\Lambda_{jk}=0$, in particular $\Lambda_{ij}=0$.
	\end{itemize}
	Since $\mathcal L$ covers all present off-diagonal edges, we conclude that
	$\Lambda_{ij}=0$ for every $\{i,j\}\in\mathcal E_{\rm off}(Y)$.

	\smallskip
	\noindent
	Thus $\Lambda$ has zero diagonal and is supported only on missing off-diagonal pairs
	$\{i,j\}\notin\mathcal E_{\rm off}(Y)$, so
	\[
	\Lambda=\sum_{\substack{i<j\\ \{i,j\}\notin\mathcal E_{\rm off}(Y)}} \Lambda_{ij} S_{ij},
	\qquad
	Z=X\Lambda=\sum_{\substack{i<j\\ \{i,j\}\notin\mathcal E_{\rm off}(Y)}} \Lambda_{ij}\,XS_{ij},
	\]
	which proves
	$Z\in \mathrm{span}\{XS_{ij}:i<j,\ \{i,j\}\notin\mathcal E_{\rm off}(Y)\}$ and completes the proof.
\end{proof}

\subsection{\texorpdfstring{Proof of the Bernoulli row-cover bound}{Proof of the Bernoulli row-cover bound}}

\begin{proof}[Proof of Proposition~\ref{prop:row-cover-bernoulli}]
	Let the active set of row $\ell$ be
	\(J_\ell=\{j\in[r]:\delta_{\ell j}=1\}\), and set \(K_\ell:=|J_\ell|\).

	\smallskip
	\noindent{(a) the probability of 2/3 row cover.}
	Fix a pair $\{i,j\}\subseteq[r]$ with $i\neq j$. For each row $\ell\in[n]$, define the events
	\[
	\mathsf B^{ij}_\ell:=\{\delta_{\ell i}=\delta_{\ell j}=1\}=\{\{i,j\}\subseteq J_\ell\},
	\qquad
	\mathsf A^{ij}_\ell:=\{\{i,j\}\subseteq J_\ell,\ |J_\ell|\in\{2,3\}\}.
	\]
	Clearly $\mathsf A^{ij}_\ell\subseteq \mathsf B^{ij}_\ell$. Under Assumption~\ref{assump:bernoulli-support},
	the row indicator vectors $(\delta_{\ell 1},\dots,\delta_{\ell r})$ are i.i.d.\ across $\ell$,
	hence for fixed $(i,j)$ the events $\{\mathsf A^{ij}_\ell\}_{\ell=1}^n$ are i.i.d.

	Moreover, $\mathsf A^{ij}_\ell$ occurs iff $\delta_{\ell i}=\delta_{\ell j}=1$ and either
	(i) all other $r-2$ indicators are zero (so $|J_\ell|=2$), or
	(ii) exactly one among the other $r-2$ indicators equals one (so $|J_\ell|=3$). Therefore
	\[
	q_{\rm rc}:=\Pr(\mathsf A^{ij}_\ell)
	=p^2(1-p)^{r-2}+(r-2)p^3(1-p)^{r-3}.
	\]
	Let $\mathsf{Bad}_{ij}$ be the event that $\{i,j\}$ is an off-diagonal edge, i.e., some
	row has $\delta_{\ell i}=\delta_{\ell j}=1$, but it is not covered by any $2/3$-cover row:
	\[
	\mathsf{Bad}_{ij}
	:=\Bigl(\exists\,\ell\in[n]:\mathsf B^{ij}_\ell\Bigr)\ \cap\
	\Bigl(\forall\,\ell\in[n]:(\mathsf A^{ij}_\ell)^c\Bigr).
	\]
	Since $\mathsf A^{ij}_\ell\subseteq \mathsf B^{ij}_\ell$, we have
	\[
	\begin{aligned}
		\Pr(\mathsf{Bad}_{ij})
		&=\Pr\Bigl(\forall\,\ell:(\mathsf A^{ij}_\ell)^c\Bigr)
		-\Pr\Bigl(\forall\,\ell:(\mathsf B^{ij}_\ell)^c\Bigr)\\
		&\le \Pr\Bigl(\forall\,\ell:(\mathsf A^{ij}_\ell)^c\Bigr)
		=(1-q_{\rm rc})^n
		\le e^{-nq_{\rm rc}},
	\end{aligned}
	\]
	where we use $1-x\le e^{-x}$. By the union bound over $1\le i<j\le r$,
	\[
	\Pr\bigl(\text{$2/3$-row cover fails}\bigr)
	\le \sum_{1\le i<j\le r}\Pr(\mathsf{Bad}_{ij})
	\le \binom{r}{2}e^{-nq_{\rm rc}}.
	\]

	\smallskip
	\noindent{(b) the probability One-column witness when $\Delta\neq 0$.}
	For $i\in[r]$ and $\ell\in[n]$, define
	\[
	\mathsf W^i_\ell:=\{J_\ell=\{i\}\}
	=\{\delta_{\ell i}=1\}\cap\bigcap_{k\neq i}\{\delta_{\ell k}=0\}.
	\]
	Then $\{\mathsf W^i_\ell\}_{\ell=1}^n$ are i.i.d.\ and
	\(q_{1}:=\Pr(\mathsf W^i_\ell)=p(1-p)^{r-1}\).
	Let $\mathsf{Bad}_i:=\bigcap_{\ell=1}^n (\mathsf W^i_\ell)^c$ be the event that column $i$
	has no witness row. Then \(\Pr(\mathsf{Bad}_i)=(1-q_{1})^n\le e^{-nq_{1}}\).
	A union bound over $i\in[r]$ yields
	\[
	\Pr\bigl(\text{one-column witness fails}\bigr)
	\le \sum_{i=1}^r \Pr(\mathsf{Bad}_i)
	\le r e^{-nq_{1}}.
	\]

	\smallskip
	\noindent{(c) Combine the two cases.}
	Let $\mathsf{Bad}$ denote the failure event of Assumption~\ref{assump:row-cover}.
	By part (a) and (b),
	\[
	\Pr(\mathsf{Bad})
	\le \binom{r}{2}e^{-nq_{\rm rc}} + r\,\mathbf 1_{\{\Delta\neq 0\}} e^{-nq_{1}}
	\le \Bigl(\binom{r}{2}+r\,\mathbf 1_{\{\Delta\neq 0\}}\Bigr)e^{-n q_w},
	\]
	where $q_w=q_{\rm rc}$ when $\Delta=0$ and
	$q_w=\min\{q_{\rm rc},q_1\}$ when $\Delta\neq 0$, as in \eqref{eq:qw}.
	This proves the stated probability bound.

	Finally, if
	$
	n \ge \bigl(\log(\binom{r}{2}+r\,\mathbf 1_{\{\Delta\neq 0\}})+\log(1/\delta)\bigr)/q_w,
	$
	then
	$
	(\binom{r}{2}+r\,\mathbf 1_{\{\Delta\neq 0\}})e^{-nq_w}\le \delta,
	$
	and hence Assumption~\ref{assump:row-cover} holds with probability at least $1-\delta$.
\end{proof}

%% file: shared/appendix_optimization_intersection.tex
\section{Derivations for optimization on the intersection manifold}
\label{app:section5-optimization-derivations}

\subsection{Derivation of the tangent-intersection projection}
\label{app:proof-projection-intersection}

We use the notation of Section~\ref{sec:projection_to_inter_tangent}.
Every vector $u\in \vvec(\T_X\M_g(\bar Y))$ can be written as
$u=Ea$ for some $a\in\R^s$. The condition
\(u\in b+\vvec(\T_X\St_\Delta)\) is equivalent to
\(B_x^\top(u-b)=0\), that is,
\(A_x a=B_x^\top b\), where \(A_x:=B_x^\top E\).
Therefore the projection of $c$ onto $\T_\cap^b(x)$ is obtained by solving
\[
	\begin{aligned}
		&\min_{a\in\R^s}\frac12\|Ea-c\|^2,\\
		&\st \quad A_x a=B_x^\top b.
	\end{aligned}
\]
Since $E^\top E=I_s$, this problem is equivalent to
\eqref{prob:projection_eq}.

Under Assumption~\ref{assump:intersection}, the zero rows of $A_x$ correspond
to missing support-graph edges, and the remaining submatrix
$A_{\gamma(\bar Y)}$ has full row rank. Hence the equality constraint can be
reduced to \(A_{\gamma(\bar Y)}a=B_{\gamma(\bar Y)}^\top b\),
and the constrained least-squares problem becomes
\[
\min_{a\in\R^s}\frac12\|a-E^\top c\|^2
\qquad \text{s.t.}\qquad
A_{\gamma(\bar Y)}a=B_{\gamma(\bar Y)}^\top b.
\]
The standard Moore--Penrose formula for this equality-constrained projection
gives
\[
a
=
E^\top c-
A_{\gamma(\bar Y)}^\dagger
\bigl(A_{\gamma(\bar Y)}E^\top c-B_{\gamma(\bar Y)}^\top b\bigr),
\]
where
\[
A_{\gamma(\bar Y)}^\dagger
=
A_{\gamma(\bar Y)}^\top
\bigl(A_{\gamma(\bar Y)}A_{\gamma(\bar Y)}^\top\bigr)^{-1}.
\]
Multiplying by $E$ gives the projection formula
\eqref{eq:projection_vector_g_old}.

\subsection{Derivation of the Riemannian Hessian formula}
\label{app:proof-intersection-hessian}

We start from \eqref{hess_1}. In vectorized coordinates, let
\(g_x:=\vvec(\nabla f(X))\).
By \eqref{eq:projection_vector_g},
\[
\Proj_{\T_X\overline{\M}_\Delta(\bar Y)}g_x=(M_x-\Gamma_x)g_x,
\qquad
\Gamma_x:=G_x^\top(G_xG_x^\top)^{-1}G_x.
\]
Since $M_x=EE^\top$ is constant on the fixed-support manifold,
differentiating with respect to $x$ in the direction $\eta$ yields
\[
\Proj_\eta(g_x)
=
-\dot G_x[\eta]^\top(G_xG_x^\top)^{-1}G_xg_x
+G_x^\top\omega,
\]
where $\dot G_x[\eta]:=\mathrm D G_x(x)[\eta]$ and
$\omega\in\R^{\gamma(\bar Y)}$ is a vector whose explicit form is not needed.
Since
\(G_x=B_{\gamma(\bar Y)}^\top M_x\) and
\(B_{\gamma(\bar Y)}^\top=U_{\gamma(\bar Y)}^\top(I_r\otimes X^\top)\),
we have
\(\dot G_x[\eta]=U_{\gamma(\bar Y)}^\top(I_r\otimes \eta_m^\top)M_x\),
where $\eta_m\in\R^{n\times r}$ is the matrix representation of the vector
$\eta\in\R^{nr}$.

For any $\xi\in \T_X\overline{\M}_\Delta(\bar Y)$, $M_x\xi=\xi$ and
$G_x \xi = B_{\gamma(\bar Y)}^\top \xi =0$. Hence
\(\langle G_x^\top\omega,\xi\rangle=\langle \omega,G_x\xi\rangle=0\).
Therefore the term $G_x^\top\omega$ is normal to
$\overline{\M}_\Delta(\bar Y)$ and vanishes after projection onto the tangent
space. Combining this identity with \eqref{hess_1} gives
\eqref{eq:Rieman_Hessian}.

%% file: shared/appendix_extension.tex
\section{Extension of MIX to general equality-constrained manifolds}
\label{app:mix-general-equality}

We briefly explain how the global-convergence mechanism of MIX extends beyond
Stiefel-type constraints. This discussion is intended as a template rather than
a complete local superlinear theory.

Let \(\E\simeq\mathbb R^N\), and consider
\begin{equation}\label{prob:general-equality}
	\min_{x\in\mathcal C_0} F(x):=f(x)+g(x),
	\qquad
	\mathcal C_0:=\{x\in\E:\ h(x)=0\},
\end{equation}
where \(f\) is smooth, \(g\) is convex and Lipschitz continuous, and
\(h:\E\to\mathbb R^m\) is \(C^2\). Assume that \(\mathrm Dh(x)\) has full row
rank on the region of interest, so that \(\mathcal C_0\) is a smooth embedded
manifold with \(\T_x\mathcal C_0=\ker \mathrm Dh(x)\).

At \(x_k\in\mathcal C_0\), the ManPG predictor is obtained from the tangent
subproblem
\begin{equation}\label{eq:general-tangent-prox}
	v_k\in\argmin_{v\in\T_{x_k}\mathcal C_0}
	\left\{
	\langle \nabla f(x_k),v\rangle
	+\frac{1}{2t_k}\|v\|^2
	+g(x_k+v)
	\right\},
	\qquad
	y_k:=x_k+v_k .
\end{equation}
Since \(v_k\in\ker\mathrm Dh(x_k)\), Taylor expansion gives
\begin{equation}\label{eq:general-feasibility-shift}
	h(y_k)=h(x_k)+\mathrm Dh(x_k)v_k+O(\|v_k\|^2)
	=O(\|v_k\|^2).
\end{equation}
Thus \(y_k\) lies exactly on the nearby level-set manifold
\(\mathcal C_{\theta_k}:=\{x:\ h(x)=\theta_k\}\), where
\(\theta_k:=h(y_k)\).
If \(\mathrm Dh\) remains full rank near \(\mathcal C_0\), then
\(\mathcal C_{\theta_k}\) is a smooth embedded manifold for all sufficiently
small \(\theta_k\).

The analogue of the intersection manifold used in MIX is
\begin{equation}\label{eq:general-intersection-model}
	\mathcal I_k
	:=
	\mathcal M_g(y_k)\cap \mathcal C_{\theta_k}.
\end{equation}
By construction, \(y_k\in\mathcal I_k\). To compute a Newton-type correction on
\(\mathcal I_k\), one needs this intersection to be a smooth embedded manifold
and to admit a computable local retraction. For example, this follows under a
clean-intersection condition between \(\mathcal M_g(y_k)\) and
\(\mathcal C_{\theta_k}\), together with a retraction construction such as a
projection-type or alternating-projection-type limiting map.
Smoothness of \(\mathcal I_k\) alone is not sufficient for the
global argument; the safeguard also requires a restoration map to \(\mathcal C_0\)
and an acceptance test of the form \eqref{eq:general-projected-decrease}.

The trial point produced on \(\mathcal I_k\) is not necessarily feasible for
the original constraint \(\mathcal C_0\). Therefore, as in MIX, the trial point
is restored to \(\mathcal C_0\). Let
\(\Pi_0\) denote a local restoration map onto \(\mathcal C_0\), such as the local
Euclidean projection or another smooth retraction-like correction. We require
the restoration estimate
\begin{equation}\label{eq:general-restoration-estimate}
	\|\Pi_0(z)-z\|
	\le c_{\rm res}\|h(z)\|,
	\qquad
	z\ \text{near}\ \mathcal C_0 .
\end{equation}
For \(z\in\mathcal C_{\theta_k}\), this gives
\(\|\Pi_0(z)-z\|\le c_{\rm res}\|\theta_k\|=O(\|v_k\|^2)\),
by \eqref{eq:general-feasibility-shift}.

The global-convergence argument only uses the Newton branch through an
acceptance safeguard. Specifically, a restored Newton trial
\(x_k^{\rm trial}:=\Pi_0(z_k)\) is accepted only if
	\begin{equation}\label{eq:general-projected-decrease}
	F(x_k^{\rm trial})
	\le
	F(x_k)-\bar\tau\frac{\|v_k\|^2}{t_k}
	\end{equation}
	for some \(0<\bar\tau\le1/2\). If this condition fails, the algorithm falls back to a
ManPG step on \(\mathcal C_0\) satisfying a standard descent estimate
\begin{equation}\label{eq:general-manpg-descent}
	F(x_{k+1})
	\le
	F(x_k)-c_{\rm M}\|v_k\|^2
\end{equation}
with \(c_{\rm M}>0\). Under the usual boundedness and Lipschitz assumptions,
\eqref{eq:general-projected-decrease} and \eqref{eq:general-manpg-descent}
imply
\[
F(x_{k+1})
\le
	F(x_k)-c_{\rm dec}\|v_k\|^2,
	\qquad
	c_{\rm dec}:=\min\{\bar\tau/t_{\max},c_{\rm M}\}>0.
	\]
If \(F\) is bounded below on \(\mathcal C_0\), then
\[
\sum_{k=0}^\infty \|v_k\|^2<\infty,
\qquad
\min_{0\le j\le K-1}\|v_j\|^2
\le
\frac{F(x_0)-F_{\inf}}{c_{\rm dec}K}.
\]
Thus \(v_k\to0\). If the stepsizes satisfy
\(0<t_{\min}\le t_k\le t_{\max}<\infty\), then
\(\|v_k\|/t_k\to0\).

Finally, define the equality-constrained KKT residual by
\begin{equation}\label{eq:general-kkt-residual}
	\mathcal R_{\mathcal C}(x)
	:=
	\inf_{\substack{G\in\partial g(x)\\ \lambda\in\mathbb R^m}}
	\left(
	\|\nabla f(x)+G+\mathrm Dh(x)^*\lambda\|
	+
	\|h(x)\|
	\right).
\end{equation}
The optimality condition for \eqref{eq:general-tangent-prox} gives
multipliers \(\lambda_k\) such that
\[
\nabla f(x_k)+G_k+\frac1{t_k}v_k+\mathrm Dh(x_k)^*\lambda_k=0,
\qquad
G_k\in\partial g(y_k).
\]
Under the full-rank and boundedness assumptions above, the multipliers
\(\lambda_k\) are locally bounded. Using the Lipschitz continuity of
\(\nabla f\), the \(C^1\) smoothness of \(\mathrm Dh\), and
\eqref{eq:general-feasibility-shift}, one obtains
\begin{equation}\label{eq:general-residual-bound}
	\mathcal R_{\mathcal C}(y_k)
	\le
	C_{\rm R}\frac{\|v_k\|}{t_k}
	+
	O(\|v_k\|^2).
\end{equation}
Consequently, the ManPG residual controls the KKT residual of the original
equality-constrained problem. In particular, if
\(\|v_k\|/t_k\to0\), then
\(\mathcal R_{\mathcal C}(y_k)\to0\).

The preceding argument shows that the global-convergence mechanism of MIX is
not specific to the Stiefel constraint. What is specific to the sparse Stiefel
setting is the explicit construction and verification of the intersection
manifolds, tangent projections, Hessian formulas, and retractions. For a
general equality-constrained manifold, the same global safeguard applies once
the following ingredients are available: a ManPG descent step on
\(\mathcal C_0\), smooth nearby level sets \(\mathcal C_{\theta}\), a smooth
intersection set \(\mathcal M_g(y_k)\cap\mathcal C_{\theta_k}\), a local
retraction on this intersection, and a restoration map satisfying
\eqref{eq:general-restoration-estimate}. Local superlinear convergence would
require additional identification and {\color{blue}an SOSC analogous to that
used for the Stiefel case}.